%
%
\documentclass[a4paper,10pt,leqno]{amsart}
        \title{Dualizability and index of subfactors}
       \author{Arthur Bartels}
      \address{Mathematisches Institut\\ 
               Westf\"alische Wilhelms-Universit\"at M\"unster\\
               D-48149 M\"unster, Deutschland}
        \email{bartelsa@math.uni-muenster.de}
      \urladdr{http://www.math.uni-muenster.de/u/bartelsa}
       \author{Christopher L. Douglas} 
      \address{Mathematical Institute\\ University of Oxford\\ Oxford OX1 3LB, United Kingdom}
        \email{cdouglas@maths.ox.ac.uk}
      \urladdr{http://people.maths.ox.ac.uk/cdouglas}
       \author{Andr{\'e} Henriques}
      \address{Mathematisch Instituut\\
               Universiteit Utrecht\\
               3508 TA Utrecht, The Netherlands}
        \email{a.g.henriques@uu.nl}
      \urladdr{http://www.staff.science.uu.nl/\!\raisebox{-1mm}{~}\!henri105}

\usepackage[pdftex, colorlinks=true, linkcolor=black, citecolor=black, urlcolor=black]{hyperref}
\usepackage{enumerate,amssymb}
\usepackage[arrow,curve,matrix,tips,2cell]{xy}
  \SelectTips{eu}{10} \UseTips
  \UseAllTwocells
\usepackage{tikz}                \usetikzlibrary{calc}               \usetikzlibrary{matrix}
\usepackage{pdfcolmk}
\usepackage{calc}
\usepackage{multirow}
\usepackage{stmaryrd}


  \newcommand{\IC}{\mathbb{C}}

  \newcommand{\IR}{\mathbb{R}}

  \newcommand{\bfB}{{\mathbf B}}

  \newcommand{\bfs}{{\mathbf s}}


  \newcounter{commentcounter}

    \definecolor{AHcolor}{rgb}{0.5,0.0,0.5}   
    \definecolor{CDcolor}{rgb}{0.7,0.0,0.3}   
    \definecolor{ABcolor}{rgb}{0.2,0.8,0.2}   


  \newcommand{\tikzmath}[2][]
     {\vcenter{\hbox{\begin{tikzpicture}[#1]#2
                     \end{tikzpicture}}}
     }


  \newcommand{\squarescale}{.07}

  \definecolor{spacecolor}{gray}{.7}
  \definecolor{antispacecolor}{gray}{.45}


  \theoremstyle{plain}
  \newtheorem{theorem}{Theorem}[section]
  
  \newtheorem{lemma}[theorem]{Lemma}
  \newtheorem{corollary}[theorem]{Corollary}
  \newtheorem{proposition}[theorem]{Proposition}
  \newtheorem{conjecture}[theorem]{Conjecture}
  
  \newtheorem*{theorem*}{Theorem}

  \theoremstyle{definition}
  \newtheorem{definition}[theorem]{Definition}

  \newtheorem{question}[theorem]{Question}

  \theoremstyle{remark}
  \newtheorem{remark}[theorem]{Remark}
  \newtheorem{example}[theorem]{Example}
  \newtheorem{warning}[theorem]{Warning}

  \makeatletter\let\c@equation=\c@theorem\makeatother


  %
     {\end{list}}


  \DeclareMathOperator{\Ind}{Ind}




  \newcommand{\alg}{{\mathit{alg}}}
  
  \newcommand{\op}{{\mathit{op}}}


  \newcommand{\x}{{\times}}
  \newcommand{\ox}{{\otimes}}

  \newcommand{\nid}{\noindent}


\begin{document}

	\begin{abstract}
	In this paper, we develop the theory of bimodules over von 
        Neumann algebras, with an emphasis on categorical aspects.
	We  clarify the relationship between dualizability and finite index.
        We also show that, for von Neumann algebras with finite-dimensional 
        centers, the Haagerup $L^2$-space and Connes fusion are functorial 
        with respect to	homorphisms of finite index.
	Along the way, we describe a string  diagram notation for maps
        between bimodules that are not necessarily bilinear.       
	\end{abstract}

\maketitle


\setcounter{tocdepth}{1}
\tableofcontents

\newcommand{\comment}[1]{.}

\section{Introduction}

The operation $({}_AH_B,{}_BK_C)\mapsto {}_AH\boxtimes_BK_C$ of Connes fusion 
is an associative product on bimodules between von Neumann 
algebras \cite{Connes(Non-commutative-geometry), Popa(Correspondences), Sauvageot(Sur-le-produit-tensoriel-relatif), Wassermann(Operator-algebras-and-conformal-field-theory)}.
It behaves formally like a tensor product, 
but its construction is somewhat involved and relies heavily on the notion of 
non-commutative $L^2$-space \cite{Haagerup(1975standard-form), Kosaki(PhD-thesis), 
                  Yamagami(Algebraic-aspects-in-modular-theory)}.
Connes fusion is designed so as to have the $L^2$-space as its identity: 
${}_AL^2A\boxtimes_A H_B\cong{}_AH_B\cong{}_AH\boxtimes_BL^2B_B$.
Altogether, von Neumann algebras, their bimodules, and bimodule intertwiners 
form a symmetric monoidal 
bicategory \cite{Landsman(Bicategories-of-operator-algebras)}.
As in any bicategory, one can talk about a morphism being 
dualizable\footnote{As written, equation \eqref{eq: intro R S} corresponds to the 
 notion of \emph{left} dualizability,  but since our bicategory has a 
 $*$-structure, there is no difference between left and right dualizability.} 
\cite{MacLane(Categories-for-the-working-mathematician), 
  Street(Low-dimensional-topology-and-higher-order-categories-I)}:
a bimodule ${}_AH_B$ is called dualizable, with dual ${}_B\bar H_A$, 
if it comes equipped with maps
\begin{equation}\label{eq: intro R S}
R^*:{}_AH\boxtimes_B \bar H_A \rightarrow {}_AL^2(A)_A
\qquad\quad
S:{}_BL^2(B)_B \rightarrow  {}_B\bar H\boxtimes_A H_B
\end{equation}
subject to the duality equations $(R^*\otimes 1)(1\otimes S)=1$, 
$(1\otimes R^*)(S\otimes 1)=1$.
The dual bimodule ${}_B\bar H_A$ is well defined up to unique isomorphism.
In fact, under suitable normalization conditions on the duality maps $R^*$ and $S$, 
the dual bimodule is well defined up to  unique \emph{unitary} isomorphism.
If $A$ and $B$ are factors one can then define the \emph{statistical dimension} of 
${}_AH_B$ as $R^* R = S^*S$ \cite{Longo-Roberts(A-theory-of-dimension)}. 

A subfactor $N \subset M$ has an invariant called the index $[M:N]\in\IR_{\ge1}\cup\{\infty\}$ \cite{Kosaki(Extension-of-Jones-index-to-arbitrary-factors), Kosaki-Longo(A-remark-on-the-minimal-index)}, 
and this index is finite if and only if the bimodule ${}_NL^2M_M$ is dualizable.  When that bimodule is dualizable, the index may be defined as the square of the statistical dimension of ${}_NL^2M_M$.  We show that this definition agrees with the traditional notion of index, by comparing the squared statistical dimension with the optimal bound of a Pimsner--Popa inequality for the subfactor \cite{Hiai(Minimizing-indices), Longo(Index-of-subfactors-and-statistics-of-quantum-fields-I), Pimsner-Popa}.
                    
Given two von Neumann algebras $A$ and $B$ that have finite-dimensional centers (in other words are finite direct sums of factors), we call a homomorphism $f:A\to B$ \emph{finite} if the bimodule ${}_AL^2B_B$ is dualizable.  Restricting attention to these finite homomorphisms makes the $L^2$ construction functorial:

\begin{theorem*}
The assignment 
\[A\mapsto L^2(A)\] 
is a functor from the category 
\[
\tikzmath{
\node[anchor=north west] at (.6,1.02) {$\bigg\{$};
\node[anchor=north west] at (1,1) {objects:};
\node[anchor=north west] at (3,1) {\parbox[t]{8.2cm}{von Neumann
algebras with finite-dimensional center}};
\node[anchor=north west] at (1,0.5) {morphisms:};
\node[anchor=north west] at (3,0.5) {\parbox[t]{8.2cm}{finite homomorphisms}};
}
\]
to the category of Hilbert spaces and bounded linear maps.
\end{theorem*}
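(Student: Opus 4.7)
The plan is to define, for each finite homomorphism $f\colon A\to B$, a bounded linear map $L^2(f)\colon L^2(A)\to L^2(B)$, and then to verify the functor axioms. Since the paper has already established that finiteness of $f$ is equivalent to dualizability of the bimodule ${}_AL^2(B)_B$, and has analyzed the Pimsner--Popa inequality for such bimodules, the main work lies in building $L^2(f)$ and checking boundedness.

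For the construction, I would use the canonical structure on the $L^2$-spaces afforded by the finite-dimensional centers. Specifically, on each factor summand of $A$ and of $B$ one has a canonical semifinite tracial weight (the normalized trace on a finite factor summand, or a canonically normalized semifinite weight on an infinite summand), producing GNS embeddings $\Lambda_A\colon\mathfrak{n}_A\to L^2(A)$ and $\Lambda_B\colon\mathfrak{n}_B\to L^2(B)$. I would then define $L^2(f)$ as the continuous extension of the algebraic map $\Lambda_A(a)\mapsto\Lambda_B(f(a))$ from the dense subspace $\Lambda_A(\mathfrak{n}_A\cap f^{-1}(\mathfrak{n}_B))\subset L^2(A)$. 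With this definition, both functor axioms are essentially tautological on the dense subspace: $L^2(\id_A)(\Lambda_A(a))=\Lambda_A(a)$, and $L^2(g)\bigl(L^2(f)(\Lambda_A(a))\bigr)=\Lambda_C(g(f(a)))=L^2(gf)(\Lambda_A(a))$, and they then extend uniquely by continuity.

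The main obstacle is therefore the boundedness of the map $\Lambda_A(a)\mapsto\Lambda_B(f(a))$. Boundedness is equivalent to an inequality of Pimsner--Popa type $\phi_B\circ f\le\lambda\,\phi_A$ relating the pull-back of the canonical weight on $B$ to the canonical weight on $A$. This is exactly what finiteness of $f$ supplies, via the identification---established earlier in the paper---of the optimal Pimsner--Popa bound with the squared statistical dimension of the dualizable bimodule ${}_AL^2(B)_B$. The scalar $\lambda$ one reads off is then the operator norm of $L^2(f)$, and equals the minimal index of $f$ up to the explicit normalization fixed by the choice of canonical weights on the centers.

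A subsidiary point needed for the source category to be well-defined is that composites of finite homomorphisms are finite. This follows from the Connes-fusion identification ${}_AL^2(C)_C\cong{}_AL^2(B)\boxtimes_BL^2(C)_C$ (with $A$-action on the left via $f$ and on the right via $g\circ f$) together with the standard fact that dualizable $1$-morphisms are closed under composition in any bicategory, both already available from the preceding development.
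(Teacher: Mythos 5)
Your overall architecture (build $L^2(f)$ from a reference weight via GNS, observe that functoriality is tautological on a dense subspace, and reduce everything to a boundedness estimate) founders at its very first step: the ``canonical semifinite tracial weight'' you invoke on each factor summand does not exist in general. A type~III factor carries no nonzero semifinite normal trace at all, and even a type~$\mathit{II}_\infty$ factor has its trace only up to an undetermined positive scalar; having finite-dimensional center does nothing to remedy this. Since the theorem is asserted (and the paper stresses this) for type~I, II \emph{and}~III algebras, your construction of $L^2(f)$ as the extension of $\Lambda_A(a)\mapsto\Lambda_B(f(a))$ is simply not defined in the main case of interest, and if one replaces the nonexistent canonical weights by arbitrary chosen faithful weights, the resulting map depends genuinely on those choices (the canonical unitaries between different GNS standard forms do not intertwine $\Lambda_\phi(a)$ with $\Lambda_{\phi'}(a)$), so no functor on the stated category is produced. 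Relatedly, your boundedness step is not secured by what the paper provides: the Pimsner--Popa inequality bounds a conditional expectation $E:B\to A$ from below by $\lambda^{-1}\mathrm{id}$; it says nothing about a comparison $\phi_B\circ f\le\lambda\,\phi_A$ between two independently chosen weights. To extract such a comparison you must first produce a distinguished $A$-bilinear map $E:B\to A$ from the duality data and then relate the weights through it --- and at that point you have been forced onto the paper's actual route.

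Indeed, the paper's proof is designed exactly to avoid any choice of weight: it defines $E_{A,B}(b):=R^*(b\otimes 1)R$ from the \emph{normalized} duality maps as in \eqref{eq: minimal conditional expectation}, sets $L^2(f):\sqrt{\phi}\mapsto\sqrt{\phi\circ E_{A,B}}$ on the Haagerup space, proves boundedness using the Radon--Nikodym cocycle identity \eqref{eq:Radon-Nikodym-E}, and obtains functoriality from $E_{A,B}\circ E_{B,C}=E_{A,C}$ as in \eqref{eq:E circ E = E}, which in turn hinges on the normalization condition \eqref{eq:duality normalization} making the duality data (hence $E_{A,B}$) canonical. Note also that the correct functor is \emph{not} implemented by isometries (for a subfactor $L^2(\iota)$ is $\llbracket B:A\rrbracket^{1/2}$ times an isometry, and the paper points out that forcing isometries destroys functoriality), whereas your tracial construction, where it makes sense (e.g.\ $\mathit{II}_1$ factors), produces isometries; this is a sign that the two constructions differ and that the tautological-composition argument does not transport to the general case. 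Your closing point --- that composites of finite homomorphisms are finite via ${}_AL^2C_C\cong{}_AL^2B\boxtimes_BL^2C_C$ and closure of dualizability under fusion --- is correct and agrees with the paper, but the heart of the theorem (a choice-free, type-III-capable construction of $L^2(f)$ with a boundedness proof) is missing from your proposal.
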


\noindent 
We conjecture that this functor in fact extends to the category of all von Neumann algebras and finite homomorphisms.

The Connes fusion $H\boxtimes_AK$ is certainly functorial in $H$ and $K$.
We show that it is moreover simultaneously functorial in the three variables $H$, $K$ and $A$:

\begin{theorem*}
The assignment 
\[
(H,\,A,\,K)\;\mapsto\; H\boxtimes_AK
\]
is a functor from the category
\[
\tikzmath{
\node[anchor=north west] at (.6,1.45)
{$\begin{cases}\\\\\\\phantom{\Bigg|}\\\end{cases}$};
\node[anchor=north west] at (1,1.5) {objects:};
\node[anchor=north west] at (3,1.5) {\parbox[t]{8.2cm}{triples $(H,A,K)$
where $A$ is a von Neumann algebra with finite-dimensional center, $H$
is a right $A$-module, and $K$ is a left $A$-module}};
\node[anchor=north west] at (1,0) {morphisms:};
\node[anchor=north west] at (3,0) {\parbox[t]{8.2cm}{triples $(h, \alpha, k)$ where $\alpha$ is a finite
homomorphism $A_1 \to A_2$, $h$ is a module map $H_1 \to H_2$, and $k$ is a module
map $K_1 \to K_2$}};
}
\]
to the category of Hilbert spaces and bounded linear maps.
\end{theorem*}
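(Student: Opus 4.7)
The plan is to reduce the three-variable functoriality to (i) the standard bifunctoriality of Connes fusion in its two module arguments for fixed $A$, combined with (ii) a ``restriction-of-scalars'' natural map $\Phi_\alpha$ attached to each finite homomorphism $\alpha:A_1\to A_2$. Given a morphism $(h,\alpha,k):(H_1,A_1,K_1)\to(H_2,A_2,K_2)$, I factor it as
\[
(H_1,A_1,K_1) \xrightarrow{(h,\,\id_{A_1},\,k)} (H_2|_{A_1},A_1,K_2|_{A_1}) \xrightarrow{(\id_{H_2},\,\alpha,\,\id_{K_2})} (H_2,A_2,K_2),
\]
where $H_2|_{A_1}$ and $K_2|_{A_1}$ denote $H_2$ and $K_2$ viewed as $A_1$-modules via $\alpha$. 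The first arrow is a morphism over the fixed algebra $A_1$, and the usual bifunctoriality of $\boxtimes_{A_1}$ gives a bounded map
\[
h\boxtimes_{A_1}k\;:\;H_1\boxtimes_{A_1}K_1 \longrightarrow H_2|_{A_1}\boxtimes_{A_1}K_2|_{A_1}
\]
with norm at most $\|h\|\,\|k\|$. The substance is thus to define a natural bounded map
\[
\Phi_\alpha\;:\;H_2|_{A_1}\boxtimes_{A_1}K_2|_{A_1}\longrightarrow H_2\boxtimes_{A_2}K_2
\]
for each finite $\alpha$, and to set $h\boxtimes_\alpha k:=\Phi_\alpha\circ(h\boxtimes_{A_1}k)$.

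To construct $\Phi_\alpha$ I would use that finiteness of $\alpha$ is equivalent to the dualizability of ${}_{A_1}L^2(A_2)_{A_2}$. Let ${}_{A_2}\overline{L^2(A_2)}_{A_1}$ be its dual, with duality data $R^*,S$ as in \eqref{eq: intro R S}. Since $L^2(A_2)$ is the monoidal unit for $\boxtimes_{A_2}$, there are canonical unitary identifications
\[
H_2|_{A_1}\;\cong\;H_2\boxtimes_{A_2}{}_{A_2}\overline{L^2(A_2)}_{A_1},\qquad K_2|_{A_1}\;\cong\;{}_{A_1}L^2(A_2)_{A_2}\boxtimes_{A_2}K_2.
\]
Under these, I would define $\Phi_\alpha$ to be the composite
\[
H_2\boxtimes_{A_2}\overline{L^2(A_2)}\boxtimes_{A_1}L^2(A_2)\boxtimes_{A_2}K_2 \xrightarrow{\,\id\,\boxtimes\,S^*\,\boxtimes\,\id\,} H_2\boxtimes_{A_2}L^2(A_2)\boxtimes_{A_2}K_2\;\cong\;H_2\boxtimes_{A_2}K_2.
\]
Boundedness is inherited from $S^*$ and naturality in $H_2,K_2$ is automatic.

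It then remains to verify the functor axioms. The identity $\Phi_{\id_A}=\id$ is straightforward: self-duality of ${}_AL^2(A)_A$ is trivial with $S=\id$. Functoriality under composition decouples into two pieces: the module-map layer $h\boxtimes_{A_1}k$ composes correctly by bifunctoriality of $\boxtimes_{A_1}$, and the scalar layer reduces to the single cocycle identity $\Phi_{\alpha_2\alpha_1}=\Phi_{\alpha_2}\circ\Phi_{\alpha_1}$ for composable finite homomorphisms $\alpha_1:A_1\to A_2$ and $\alpha_2:A_2\to A_3$. This in turn amounts to identifying the duality data of ${}_{A_1}L^2(A_3)_{A_3}$ with the datum obtained by fusing those of ${}_{A_1}L^2(A_2)_{A_2}$ and ${}_{A_2}L^2(A_3)_{A_3}$.

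The main obstacle is this last cocycle identity: one must show that the dual of a composite of dualizable bimodules is canonically the composite of the duals, with compatibly normalized evaluations. I would extract this compatibility from the uniqueness, up to unique unitary isomorphism, of duality data recalled in the introduction, and cross-check against the already-established identity $L^2(\alpha_2\alpha_1)=L^2(\alpha_2)\circ L^2(\alpha_1)$ of the first theorem. Once the cocycle is in hand, the remaining verifications are routine diagram chases.
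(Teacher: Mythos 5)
Your overall architecture coincides with the paper's: the paper's final (graphical) formula for $h\boxtimes_\alpha k$ is exactly your $\Phi_\alpha\circ(h\boxtimes_{A_1}k)$ --- fuse the module maps over $A_1$, then contract the middle strand using the adjoint of the duality map $S$ for ${}_{A_1}L^2(A_2)_{A_2}$ --- and, as in the paper, compatibility with composition is reduced to a statement about duality data of fused bimodules (Lemma \ref{lem: product of dualizables}).

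The gap is in the step you dismiss as the monoidal unit property. The unit axiom only gives $H_2|_{A_1}\cong H_2\boxtimes_{A_2}{}_{A_2}L^2(A_2)_{A_1}$ and $K_2|_{A_1}\cong{}_{A_1}L^2(A_2)_{A_2}\boxtimes_{A_2}K_2$; to replace ${}_{A_2}L^2(A_2)_{A_1}$ by the abstract dual ${}_{A_2}\overline{L^2(A_2)}_{A_1}$ you need a \emph{preferred unitary bimodule isomorphism} between the dual and ${}_{A_2}L^2(A_2)_{A_1}$. This is precisely the isomorphism $\Phi=(S^*\otimes 1)(1\otimes L^2(\alpha))$ of \eqref{eq: It's L2(f)}: its construction already uses the functor $L^2$ of Theorem \ref{thm: functoriality of L2}, and its unitarity is Proposition \ref{prop: unitarity of (6.14)}, a genuinely nontrivial argument (decomposition along the minimal projections of $A_1'\cap A_2$, Lemma \ref{lem: p_i sqrt(phi) p_j = 0}, and a norm computation); it does not follow from unit coherence, nor from Theorem \ref{thm: well defined up to unique unitary iso} alone, since when the centers are nontrivial $\mathrm{End}({}_{A_2}L^2(A_2)_{A_1})$ contains many unitaries and only this particular identification makes $\Phi_{\mathrm{id}}=\mathrm{id}$ and the cocycle $\Phi_{\alpha_2\alpha_1}=\Phi_{\alpha_2}\circ\Phi_{\alpha_1}$ hold. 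Once this identification is in place (encoded in the commutativity of \eqref{eq: iso between dual of A--L2B--B and B--L2B--A}), your cocycle does follow the way you indicate, from Lemma \ref{lem: product of dualizables} together with $E_{A,B}\circ E_{B,C}=E_{A,C}$, i.e.\ \eqref{eq:E circ E = E} and the functoriality of $L^2$. So the plan is the right one, but the two items you label ``canonical/straightforward'' --- the unitary identification of the dual with ${}_{A_2}L^2(A_2)_{A_1}$, and through it the cocycle --- are exactly the technical heart of the proof and must be supplied, not assumed.
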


Note that our techniques and results all apply equally well to type {\it I}, {\it II}, 
        and {\it III} von Neumann algebras.

\subsection*{Outlook}

Our motivation for studying von Neumann algebras and Connes fusion comes from their relationship to quantum field theory
and to the Stolz--Teichner program on elliptic cohomology. 
Their relevance to the former is evident in Wassermann's work~\cite{Wassermann(Operator-algebras-and-conformal-field-theory)}
where Connes fusion is used to model the fusion rules of superselection sectors of the chiral Wess--Zumino--Witten conformal field theory with gauge group $\mathit{SU}(N)$.
Moreover, for those theories, Wassermann computes the Connes fusion explicitly, and recovers the Verlinde formulas.

The ongoing program of Stolz and Teichner aims to construct
elliptic cohomology using \emph{local} quantum field 
theories~\cite{Stolz-Teichner(2004what-is), 
  Stolz-Teichner(SUSY-field-theories-and-cohomology)}.
Motivated by~\cite{Wassermann(Operator-algebras-and-conformal-field-theory)},
Stolz and Teichner proposed the use of Connes fusion in their
description local quantum field theories.  
Moreover, they asked  asked whether there 
exits an interesting $3$-category that deloops the bicategory of 
von Neumann algebras, their bimodules, and bimodule intertwiners.
Here, interesting can be taken to mean that the $3$-category should 
have many dualizable objects:
as a consequence of the cobordism 
hypothesis~\cite{Lurie(On-classification-TFT)} every
dualizable object corresponds to a $3$-dimensional 
local quantum field theory.
The present paper arose as a byproduct of our ongoing construction of
such a $3$-category using conformal 
nets~\cite{BDH(nets),
             BDH(1*1)}.

The construction of our $3$-category of conformal nets depends 
very much on the theory of von Neumann algebras, 
in particular non-commutative $L^2$-spaces and the index for subfactors. 
We hope that the present treatment of these topics will help make our 
future papers more accessible for readers who do not have
a strong background in von Neumann algebras. 
This paper is \emph{not} 
a complete survey of the index for subfactors;
we mostly only discuss what we will need later on.
Many of the results in this paper are surely well-known to experts;
for example the identification of the index, 
defined using statistical dimension, 
with what we later call the minimal index, is no doubt known, 
but we are not aware of a reference.

\subsection*{Outline}
Our new graphical notation is described in section~\ref{sec: Preliminaries}, along with preliminaries concerning von Neumann algebras and Haagerup's $L^2$-space. We emphasize the fact 
that it is not necessary to chose a state $\phi:A\to \mathbb C$ 
in order to define $L^2(A)$ \cite{Kosaki(PhD-thesis)}.
In section~\ref{sec: Connes fusion}, we discuss Connes fusion and 
some of its elementary properties.
In section~\ref{sec: Dualizability}, we investigate the 
concept of dualizable bimodules.
We prove that the endomorphism algebra $\mathrm{End}({}_AH_B)$ 
of a dualizable bimodule is finite-dimensional and is equipped with a canonical trace.
Moreover, we show the dual is well defined up to unique unitary isomorphism.
In section~\ref{sec: statistical dimension and minimal index}, 
we define the statistical dimension of a dualizable bimodule
and introduce
the categorical definition of the  index of 
a subfactor, namely $[M:N]:=\dim({}_NL^2M_M)^2$.
In section~\ref{sec: functoriality of L2 and of Connes fusion}, 
we present our new results: the functoriality of $L^2$ and of Connes fusion.
Finally, in section~\ref{sec: Pimsner-Popa inequality}, 
we use the Pimsner--Popa inequality to show that the 
categorical definition of the 
index agrees with other definitions
\cite{Hiai(Minimizing-indices), 
Kosaki(Extension-of-Jones-index-to-arbitrary-factors), 
Kosaki(Type-III-factors-and-index-theory), Longo(Index-of-subfactors-and-statistics-of-quantum-fields-I), Pimsner-Popa}.
We end the paper with some useful inequalities for the index.

\subsection*{Acknowledgments}
We are grateful to Dmitri Pavlov for help with the proof of Proposition \ref{Prop. 8.3}, and to Hideki Kosaki for making available to us a digital 
version of his book \cite{Kosaki(Type-III-factors-and-index-theory)} and for useful email discussions.

AB was supported by the SFB 878 in M\"unster.

\section{Preliminaries}\label{sec: Preliminaries}

\subsection*{Von Neumann algebras}
Given a complex Hilbert space $H$, let $\bfB(H)$ denote its algebra of bounded operators. 
The ultraweak topology on $\bfB(H)$ is the topology of pointwise convergence with respect to the pairing with its predual, the trace class operators.

\begin{definition}
A von Neumann algebra is a topological $*$-algebra that is embeddable as a closed subalgebra of $\bfB(H)$ with respect to the ultraweak topology.
\end{definition}

\begin{definition}
Let $A$ be a von Neumann algebra.
A left (right) $A$-module is a Hilbert space $H$ equipped with a continuous homomorphism from $A$ (respectively $A^\op$) to $\bfB(H)$.
We will use the notation ${}_AH$ (respectively $H_A$) to denote the fact that $H$ is a left (right) $A$-module.
\end{definition}

The main distinguishing feature of the representation theory of von 
Neumann algebras is the following:

\begin{proposition}[{\cite[Remark~2.1.3.~(iii)]{Jones-Sunder(Intro-to-subfactors)}}]
  \label{prop:modules-are-summands}
  Let $A$ be a von Neumann algebra and let $H$ and $K$ be two faithful 
  left $A$-modules.
  Then $H\otimes \ell^2\cong K\otimes \ell^2$.
  In particular, any $A$-module is isomorphic to a direct summand of $H\otimes \ell^2$. \hfill $\square$
\end{proposition}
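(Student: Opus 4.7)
The approach is to reduce to a Murray--von Neumann comparison statement in the commutant. Form the faithful $A$-module $H \oplus K$ and its amplification $(H \oplus K) \otimes \ell^2 \cong (H \otimes \ell^2) \oplus (K \otimes \ell^2)$, on which $A$ acts diagonally through a faithful normal representation $\pi$. Let $p$ and $q$ denote the projections onto the two summands; both belong to the commutant $N := \pi(A)'$, and a partial isometry $v \in N$ with $v^*v = p$, $vv^* = q$ restricts to the desired $A$-linear unitary $H \otimes \ell^2 \to K \otimes \ell^2$.

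The task then splits into two ingredients. First, both $p$ and $q$ are \emph{properly infinite} in $N$: the commutant contains $1_{H \oplus K} \otimes \bfB(\ell^2)$, and choosing an orthonormal basis $\{e_i\}_{i \in \IN}$ of $\ell^2$ together with the shift partial isometries between the rank-one projections $|e_i\rangle\langle e_i|$ decomposes $p$ into a countable sum $p = \sum_i p_i$ of mutually orthogonal nonzero subprojections that are pairwise equivalent in $N$, exhibiting $p$ as properly infinite; the same argument applies to $q$. Second, $p$ and $q$ have equal central support, namely $1$: since $\pi$ is faithful normal, $\pi(A)$ is ultraweakly closed and $Z(N) = Z(\pi(A)) \cong Z(A)$; any central projection annihilating $p$ corresponds to a central element of $A$ killing $H$, which vanishes by faithfulness of $H$.

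The classical comparison theorem, asserting that two properly infinite projections with equal central support in a von Neumann algebra are Murray--von Neumann equivalent, now furnishes the required partial isometry $v \in N$, and hence the isomorphism $H \otimes \ell^2 \cong K \otimes \ell^2$. For the ``in particular'' clause, given any $A$-module $L$, the module $L \oplus H$ is faithful, so the first part gives $(L \oplus H) \otimes \ell^2 \cong H \otimes \ell^2$; since $L$ is visibly a direct summand of the left-hand side, it is a direct summand of $H \otimes \ell^2$. I expect no substantial obstacle beyond correctly invoking comparison theory: the essential content is the observation that tensoring with $\ell^2$ forces the relevant projections to be properly infinite, while faithfulness supplies full central support.
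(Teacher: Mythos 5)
The paper does not actually prove this proposition---it is quoted from Jones--Sunder---so the relevant comparison is with the standard argument, and yours is exactly that argument: put both amplifications inside the single faithful representation $(H\oplus K)\otimes\ell^2$ and compare the two corner projections $p,q$ in the commutant $N=\pi(A)'$. Your verifications are correct: the projections $p_H\otimes|e_i\rangle\langle e_i|$ and the shift partial isometries between them do lie in $N$ (they commute with the diagonal action), so $p$ and $q$ are properly infinite; and $Z(N)=Z(\pi(A))\cong Z(A)$ together with faithfulness of $H$ and of $K$ gives both central supports equal to $1$. The ``in particular'' clause via $L\cong L\otimes\IC e_1\subset (L\oplus H)\otimes\ell^2$ is also fine.

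The one genuine caveat is your citation of comparison theory. The statement ``two properly infinite projections with equal central support in a von Neumann algebra are equivalent'' is false in general: in $\bfB(H)$ with $H$ nonseparable, a projection of rank $\aleph_0$ and the identity are both properly infinite with central support $1$ but are not equivalent. The correct classical theorem requires $N$ to be countably decomposable ($\sigma$-finite). When $H$ and $K$ are separable this is automatic, because $N$ acts on the separable space $(H\oplus K)\otimes\ell^2$, and your proof is then complete. This restriction is not an accident: as the remark following the proposition notes, for nonseparable modules the statement itself only holds with $\ell^2=\ell^2(X)$ for $X$ of sufficiently large cardinality, and in that generality your argument must be adjusted accordingly---use the rank-one projections of $\ell^2(X)$ to write $p$ and $q$ as sums of $|X|$ mutually equivalent subprojections of full central support (with $|X|$ at least the dimensions involved), from which equivalence follows; the bare properly-infinite comparison theorem does not suffice. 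So: correct in the separable case once you add the $\sigma$-finiteness hypothesis to the lemma you invoke, and in need of the $\ell^2(X)$ modification beyond that.
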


\nid If the Hilbert spaces $H$ and $K$ in this proposition are separable, then $\ell^2$ can be taken to mean $\ell^2(\mathbb N)$.
Otherwise, the proposition is true for $\ell^2=\ell^2(X)$, for $X$ some set of sufficiently large cardinality.

The spatial tensor product $A_1\bar\otimes A_2$ of von Neumann algebras 
$A_i\subset \bfB(H_i)$ is the closure in $\bfB(H_1\otimes H_2)$ of 
the algebraic tensor product $A_1\otimes_\alg A_2$; by the above proposition, this closure is independent of the choices of Hilbert spaces $H_1$ and $H_2$.  The spatial tensor product is a symmetric monoidal structure on the category of von Neumann algebras.

\subsection*{The Haagerup $L^2$-space} 

Given a von Neumann algebra $A$, the space of continuous linear functionals $A\to \IC$ forms a Banach space $A_*=L^1(A)$ called the predual of $A$.
It is equipped with two commuting $A$ actions given by $(a\phi b)(x):=\phi(bxa)$ and a cone $L^1_+(A):=\{\phi\in A_*\,|\,\phi(x)\ge0\,\, \forall x\in A_+\}$.
Here, $A_+:=\{a^*a\,|\,a\in A\}$ is the set of positive elements of $A$.

The Haagerup $L^2$-space of $A$ is an $A$-$A$-bimodule that is canonically associated to $A$.
It is denoted $L^2(A)$ and its construction does not depend on any 
choices \cite{Kosaki(PhD-thesis)}.
It is the completion of 
$$\bigoplus_{\phi\in L^1_+(A)} \IC\textstyle\sqrt{\phi}$$
with respect to some pre-inner product. 
We will provide more details of the construction of $L^2(A)$ at the beginning of section~\ref{sec: functoriality of L2 and of Connes fusion}. 

\begin{remark}\label{rem: algebra L_*(A) -- PRE}
At this point, $\sqrt{\phi}\in L^2A$ should be treated as a formal symbol. 
However, there exists a natural $*$-algebra structure on $\bigoplus_p L^pA$
in which $\sqrt{\phi}$ is the (unique positive) square root of $\phi\in L^1A$ --- see Remark \ref{rem: algebra L_*(A)} for further details.
As a consequence of that characterization, we learn that
\begin{equation}\label{eq: u sqrt(phi)u^*= sqrt(u phi u^*)}
u\sqrt{\phi}\,u^*=\sqrt{u\phi u^*}
\end{equation}
for every $\phi\in L^1_+(A)$ and every unitary $u\in A$.
\end{remark}

\begin{remark}\label{rem: L^2(A^op)}
There is an isomorphism 
$L^2(A) \cong L^2(A^\op)$ under which the left action of $A$ on $L^2A$
is equal to the right action of $A^\op$ on $L^2(A^\op)$, and the right action of $A$ on $L^2A$
is equal to the left action of $A^\op$ on $L^2(A^\op)$.
\end{remark}

The $L^2$ construction is compatible with direct sums, in the sense that $L^2(A\oplus B)=L^2(A)\oplus L^2(B)$.  This is a corollary of the relationship expressed in the following lemma, between the $L^2$-space construction and the operation of taking the corner algebra $pAp$ associated to a projection $p\in A$.

\begin{lemma}[{\cite[Lemma~2.6]{Haagerup(1975standard-form)}}]\label{lem: L^2(pAp) = pL^2(A)p}
Given any projection $p\in A$, there is a canonical unitary isomorphism $L^2(pAp)\cong p(L^2A)p$
sending $\sqrt\phi\in L^2(pAp)$ to $\sqrt{\phi\circ E}$, where $E(a)=pap$.
\hfill $\square$
\end{lemma}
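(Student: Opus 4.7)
The plan is to construct a map $\iota:L^2(pAp)\to L^2(A)$ sending the formal symbol $\sqrt\phi$ to $\sqrt{\phi\circ E}$, and to verify that $\iota$ extends by linearity and continuity to a unitary bimodule isomorphism onto $pL^2(A)p$.

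First, I would check that $E:A\to pAp$, $a\mapsto pap$, is a well-defined completely positive $*$-linear map (a conditional expectation, though not an algebra homomorphism), and that pre-composition $\phi\mapsto \phi\circ E$ sends $L^1_+(pAp)$ into $L^1_+(A)$, with image exactly $pL^1_+(A)p$. Positivity is immediate because $E(a^*a)=(ap)^*(ap)\in(pAp)_+$; and the image equals $pL^1_+(A)p$ because $p(\phi\circ E)p=\phi\circ E$, and conversely any $\psi\in pL^1_+(A)p$ satisfies $\psi(a)=\psi(pap)$, hence equals $(\psi|_{pAp})\circ E$.

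Next I would extend $\iota$ linearly to the dense subspace $\bigoplus_{\phi\in L^1_+(pAp)}\IC\sqrt\phi$ and verify it preserves the pre-inner product used in the construction of $L^2$. This is the key technical step: one needs an explicit handle on $\langle\sqrt\phi,\sqrt\psi\rangle_{L^2(A)}$ as a quantity depending only on the functionals $\phi,\psi$, so that its invariance under $\phi\mapsto \phi\circ E$ can be read off. Haagerup's spatial construction via the crossed product $A\rtimes_{\sigma}\IR$ gives such a handle, and the invariance is immediate from the identification $pAp\rtimes\IR=p(A\rtimes\IR)p$ together with the fact that the dual trace restricts compatibly. I would then show the image lies in $pL^2(A)p$: since $\phi\circ E\in pL^1(A)p$, its square root is fixed by left and right multiplication by $p$, i.e.\ $p\sqrt{\phi\circ E}\,p=\sqrt{\phi\circ E}$.

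For surjectivity onto $pL^2(A)p$, I would use that the span of $\{p\sqrt\psi\,p : \psi\in L^1_+(A)\}$ is dense in $pL^2(A)p$: setting $\phi:=\psi|_{pAp}\in L^1_+(pAp)$ gives $\phi\circ E=p\psi p$, and the square-root identity used in the isometry check identifies $\sqrt{p\psi p}$ with $p\sqrt\psi\,p$, so each such generator lies in the image of $\iota$. Finally, compatibility with the $pAp$-$pAp$ bimodule structure follows because the actions on both sides are transported from the canonical $A$-$A$ actions on $L^2(A)$ via the inclusion $pAp\subset A$, under which $\phi\circ E$ transforms the same way as $\phi$. The main obstacle is the isometry step: avoiding any choice of weight or state (as emphasized in the paper) forces one either to appeal to Haagerup's crossed-product construction or to develop the inner product abstractly from the predual data; once the identity $\langle\sqrt{\phi\circ E},\sqrt{\psi\circ E}\rangle_A=\langle\sqrt\phi,\sqrt\psi\rangle_{pAp}$ is established, the remaining verifications are essentially formal.
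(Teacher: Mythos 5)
The paper itself gives no argument for this lemma---it is quoted from Haagerup's standard-form paper and closed with a $\square$---so your proof has to stand on its own. Your overall plan is sound: transport along $E(a)=pap$, identify $L^1_+(pAp)$ with $pL^1_+(A)p$, prove isometry by matching the modular data of $\phi$ and $\phi\circ E$, and use functional calculus (in the spirit of Lemma \ref{lem: p_i sqrt(phi) p_j = 0}) to see that $\sqrt{\phi\circ E}$ lies in the corner. Two caveats on the isometry step, though. The identification $pAp\rtimes\IR\cong p(A\rtimes\IR)p$ is not the naive one: for a general projection $p$ the modular group of $\varphi|_{pAp}$ is \emph{not} the restriction of $\sigma^\varphi$, so you must invoke the weight-independent core, or better, stay inside this paper's toolkit and use the cocycle identity $[D(\phi\circ E):D(\psi\circ E)]_t=[D\phi:D\psi]_t$ (a corner analogue of \eqref{eq:Radon-Nikodym-E}; note $E$ is not faithful on $A$, so the support-projection conventions the paper uses are needed) fed into the definition \eqref{eq: def inner product on L^2A}.

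The genuine gap is in your surjectivity argument: the identity $\sqrt{p\psi p}=p\sqrt\psi\,p$ is false in general, since taking square roots does not commute with compression. Already for $A=M_2(\IC)$, $\psi=\left(\begin{smallmatrix}1&1\\1&1\end{smallmatrix}\right)$, $p=\left(\begin{smallmatrix}1&0\\0&0\end{smallmatrix}\right)$, one has $\sqrt{p\psi p}=p$ while $p\sqrt\psi\,p=2^{-1/2}p$; so the generator $p\sqrt\psi\,p$ is not $\iota\bigl(\sqrt{\psi|_{pAp}}\bigr)$, and as written the density of $\mathrm{span}\{p\sqrt\psi\,p\}$ does not give surjectivity of $\iota$. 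The step can be repaired without changing your density claim: $p\sqrt\psi\,p=pJpJ\sqrt\psi$ lies in the positive cone by the standard-form property $aJaJ(P)\subseteq P$, hence it equals $\sqrt\chi$ for the unique $\chi\in L^1_+(A)$ with $\chi=(p\sqrt\psi\,p)^2$; since $\sqrt\chi=p\sqrt\chi\,p$, also $p\chi p=\chi$, so $\chi\in pL^1_+(A)p$ and $p\sqrt\psi\,p=\iota\bigl(\sqrt{\chi|_{pAp}}\bigr)$ after all---just not with $\chi=p\psi p$. With that substitution (the image contains every positive vector of the corner, and their span is dense in $pL^2(A)p$ because $pJpJ$ is a bounded projection onto the corner and $\mathrm{span}\,L^2_+(A)$ is dense), your argument closes up, and the bimodule compatibility is, as you say, formal once the modular-data compatibility is in place.
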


The bimodule $L^2(A)$ may be characterized as follows.  It is a Hilbert space $H$ with faithful left and right actions of $A$, equipped with an antilinear isometric involution $J$ and a self-dual cone $P\subset H$ subject to the properties
	\begin{enumerate}
	\item $J A J = A'$ on $H$,
	\item $J c J = c^*$ for all $c \in Z(A)$,
	\item $J \xi = \xi$ for all $\xi \in P$,
	\item $a J a J (P) \subseteq P$ for all $a \in A$,  
	\item $\xi\, a = J a^* J \xi$ for all $\xi \in H$ and all $a \in A$.
	\end{enumerate}
Here, $A':=\{b\in\bfB(H)\,|\,[a,b]=0, \forall a \in A\}$ is the commutant of $A$; $J A J=\{JaJ\,|\,a\in A\}$; and the cone $P$ is called self-dual if $P=\{\eta\in H\,|\,\langle\xi,\eta\rangle\ge 0,\,\forall \xi\!\in\! P\}$.  The operator $J$ is called the modular conjugation.  A Hilbert space $H$, so equipped with a modular conjugation $J$ and a self-dual cone $P$, is called a \emph{standard form}.
Such a standard form is unique up to unique unitary isomorphism \cite{Haagerup(1975standard-form)}.

\begin{remark}
If $\phi$ is a faithful normal weight (an unbounded version of a state) on a von Neumann algebra $A$, then the GNS Hilbert space $L^2(A,\phi)$ is a standard form for $A$ \cite{Araki(Some-properties-of-modular-conjugation)} and therefore serves as a particular construction of the bimodule $L^2(A)$.
For example, taking $\phi$ to be the usual trace $\mathit{tr}$ on $\bfB(H)$, we see that the ideal of Hilbert-Schmidt operators on $H$ is a standard form for $\bfB(H)$.
\end{remark}

\begin{example}\label{ex:standard-forms-type-I} 
Let $H$ be a Hilbert space and $\bar H$ its complex conjugate.
Then $H \otimes \bar H$ is canonically identified with the ideal of Hilbert-Schmidt operators on $H$.
Let $P \subseteq H \otimes \bar H$ correspond to the positive Hilbert-Schmidt operators, and $J$ to the operation $x \mapsto x^*$, for $x$ a Hilbert-Schmidt operator.
Then $(H \otimes \bar H, J, P)$ is a standard form for $\bfB(H)$.
We have $J(\xi \otimes \bar \zeta) = \zeta \otimes \bar \xi$, and $\xi \otimes \bar \xi \in P$ for all $\xi \in H$.\footnote{Here, $\bar \xi\in \bar H$ is the image of $\xi\in H$ under the antilinear map $\mathrm{Id}_H:H\to \bar H$.}
\end{example}
 
\begin{example}
\label{ex:standard-forms-ox} 
Let $(H,J_A,P_A)$ and $(K, J_B, P_B)$ be standard forms for
von Neumann algebras $A$ and $B$.
Then there is a self-dual cone $P_{A \bar \otimes B}$ in 
$H \otimes K$ such that 
$(H \ox K, J_A \ox J_B, P_{A \bar \otimes B})$ is a standard form for $A \,\bar \otimes\, B$,
and such that $\xi \otimes \zeta \in P_{A \bar\otimes B}$ whenever $\xi \in P_A$ and $\zeta \in P_B$ \cite{Miura-Tomiyama(1984),Schmitt-Wittstock(1982)}.
Note that in general $P_{A \bar\otimes B}$ is strictly larger than the convex closure of $\{ \xi \otimes \zeta \mid \xi \in P_A, \zeta \in P_B \}$.
\end{example}

\subsection*{String diagrams}
String diagrams are a standard notation in monoidal categories 
and in bicategories \cite{Joyal-Street(The-geometry-of-tensor-calculus-I), Selinger(A-survey-of-graphical-languages-for-monoidal-categories)} 
and are often used
in the context of von Neuman algebras 
\cite{Jones(Planar-algebras-I), Ghosh(Planar-algebras:category-view)}.
We briefly recall this notation and discuss an extension that will be useful
later on.

In string diagrams, algebras are represented by shades, 
bimodules are represented by lines, and homomorphisms are nodes.
For example, an $A$-$B$-bilinear map $f$ between two bimodules ${}_AH_B$ and ${}_AK_B$ is depicted by the diagram
\[
\def\colA{black!10}
\def\colB{black!30}
\tikzmath[scale=\squarescale]
	{\fill[rounded corners=10, \colA] (-11,-12) rectangle (11,12);
	\fill[\colB]  (11,-12) -- (0,-12) -- (0,12) [rounded corners=10]-- (11,12) --  cycle;
	\draw (0,-12)node[below,yshift=2]{$\scriptstyle K$} -- (0,12)node[above,yshift=-2]{$\scriptstyle H$};
	\draw 	(0,0) node[fill=white, draw, inner xsep=3, inner ysep=4]{$f$};
	}\,,
\]
where the light shade corresponds to the algebra $A$ and the darker shade corresponds to the algebra $B$.
Other morphisms, such as $g:{}_AH\boxtimes_BK_C\to {}_AM_C$, $h:{}_AH_A\to{}_AL^2A_A$, or $k:{}_AL^2A_A\to {}_AH\boxtimes_BK_A$
are drawn similarly:
\[
\def\colA{black!7}
\def\colB{black!17}
\def\colC{black!35}
\tikzmath[scale=\squarescale]
	{\fill[rounded corners=10, \colA] (-14,-12) rectangle (14,12);
	\fill[\colC]  (14,-12) -- (0,-12) -- (0,12) [rounded corners=10]-- (14,12) --  cycle;
	\fill[\colB]  (-4,12) rectangle (4,0);
	\draw (0,-12)node[below,yshift=2]{$\scriptstyle M$} -- (0,0);
	\draw (-4,12)node[above,yshift=-2]{$\scriptstyle H$} -- (-4,0)
	(4,12)node[above,yshift=-2]{$\scriptstyle K$} -- (4,0);
	\draw 	(0,0) node[fill=white, draw, inner xsep=10, inner ysep=4]{$g$};
	}\,,\qquad
\def\colA{black!12}
\tikzmath[scale=\squarescale]
	{\fill[rounded corners=10, \colA] (-11,-12) rectangle (11,12);
	\draw (0,-12)node[below,yshift=2]{$\phantom {\scriptstyle K}$} (0,0) -- (0,12)node[above,yshift=-2]{$\scriptstyle H$};
	\draw 	(0,0) node[fill=white, draw, inner xsep=4, inner ysep=4]{$h$};
	}\,,\qquad\text{and}\quad
\def\colA{black!10}
\def\colB{black!30}
\tikzmath[scale=\squarescale]
	{\fill[rounded corners=10, \colA] (-14,-12) rectangle (14,12);
	\fill[\colB]  (-4,-12) rectangle (4,0);
	\draw (-4,-12)node[below,yshift=2]{$\scriptstyle H$} -- (-4,0)
	(4,-12)node[below,yshift=2]{$\scriptstyle K$} -- (4,0)
	(0,12)node[above,yshift=-2]{$\phantom {\scriptstyle H}$};
	\draw 	(0,0) node[fill=white, draw, inner xsep=10, inner ysep=4]{$k$};
	}\, .
\]
(Here, $\boxtimes$ is the operation of Connes fusion, which will be introduced in the following section, and ${}_AL^2A_A$ is the identity with respect to that operation.)
The identity morphism between bimodules is drawn as a single vertical line 
$
\def\colA{black!10}
\def\colB{black!30}
\tikzmath[scale=.085]
	{\fill[rounded corners=3.5, \colA] (-3,-2.2) rectangle (3,2.2);
	\fill[\colB]  (3,-2.2) -- (0,-2.2) -- (0,2.2) [rounded corners=3.5]-- (3,2.2) --  cycle;
	\draw (0,-2.2) -- (0,2.2);
	}
$\,.
Horizontal juxtaposition of pictures corresponds to Connes fusion, and vertical concatenation corresponds to composition of morphisms.
A more complicated composition of bimodule morphisms, such as 
\[
\tikzmath[scale=.1]{
\node at (-6.1,-1) {$\scriptstyle A$};
\node at (0,0) {$H\underset B\boxtimes K$};
\node at (5.9,-1) {$\scriptstyle D$};
}
\!\xrightarrow{1_H{\scriptscriptstyle\boxtimes}\, f}\!
\tikzmath[scale=.1]{
\node at (-9.5,-1) {$\scriptstyle A$};
\node at (0,0) {$H\underset B\boxtimes P\underset C\boxtimes N$};
\node at (9.4,-1) {$\scriptstyle D$};
}
\!\xrightarrow{g\,{\scriptscriptstyle\boxtimes} 1_N}\!
\tikzmath[scale=.1]{
\node at (-6.3,-1) {$\scriptstyle A$};
\node at (0,0) {$M\underset C\boxtimes N$};
\node at (5.9,-1) {$\scriptstyle D$};
}
\]
is denoted
\[
\def\colA{black!7}
\def\colB{black!17}
\def\colC{black!25}
\def\colD{black!37}
\tikzmath[scale=\squarescale]
	{\fill[rounded corners=10, \colA] (-18,-12) rectangle (18,12);
	\fill[\colD]  (18,-12) -- (4,-12) -- (4,12) [rounded corners=10]-- (18,12) --  cycle;
	\fill[\colB]  (-9,12) -- (-9,-5) -- (0,-5) -- (0,5) -- (4.5,5) -- (4.5,12) -- cycle;
	\fill[\colC]  (9,-12) -- (9,5) -- (0,5) -- (0,-5) -- (-4.5,-5) -- (-4.5,-12) -- cycle;
	\draw (-4.5,-12)node[below,yshift=2]{$\scriptstyle M$} -- (-4.5,-5);
	\draw (-9,12)node[above,yshift=-2]{$\scriptstyle H$} -- (-9,-5)
	(9,-12)node[below,yshift=2]{$\scriptstyle N$} -- (9,5)
	(4.5,12)node[above,yshift=-2]{$\scriptstyle K$} -- (4.5,5)  (0,-5) -- (0,5);
	\draw 	(-4.5,-5) node[fill=white, draw, inner xsep=10, inner ysep=4]{$g$};
	\draw 	(4.5,5) node[fill=white, draw, inner xsep=10, inner ysep=3]{$f$};
	}\,.
\]

Our addition is the introduction of a notation for morphisms that are only left-linear, or only right-linear.
We denote them by nodes that extend to the right and to the left of the diagram, respectively.
Thus, an $A$-linear morphism $f$ between bimodules ${}_AH_B$ and ${}_AK_C$ is denoted
\[
\def\colA{black!7}
\def\colB{black!17}
\def\colC{black!35}
\tikzmath[scale=\squarescale]
	{\coordinate (upper right corner) at (11,12);
	\fill[\colA] (-11,-12) -- (0,-12) -- (0,12) [rounded corners=10]-- (-11,12) -- cycle;
	\draw 	(0,0) node[fill=white, inner xsep=3, inner ysep=4] (O) {$f$};
	\fill[\colB] (11,12) -- (upper right corner |- O.north) -- (O.north) -- (0,12) [rounded corners=10] -- cycle;
	\fill[\colC] (11,-12) -- (upper right corner |- O.south) -- (O.south) -- (0,-12) [rounded corners=10] -- cycle;
	\draw (0,-12)node[below,yshift=2]{$\scriptstyle K$} -- (O.south) (O.north) -- (0,12)node[above,yshift=-2]{$\scriptstyle H$}
	(upper right corner |- O.north) -- (O.north west) -- (O.south west) -- (upper right corner |- O.south);
	}\,.
\] 
We will always use the color white for the algebra $\IC$.
For example, a $B$-linear map $g$ from ${}_AH_B$ to some right $B$-module $K_B$ is drawn like this:
\[
\def\colA{black!10}
\def\colB{white}
\def\colC{black!30}
\tikzmath[scale=\squarescale]
	{\coordinate (upper left corner) at (-11,12);
	\fill[\colC] (11,-12) -- (0,-12) -- (0,12) [rounded corners=10]-- (11,12) -- cycle;
	\draw 	(0,0) node[fill=white, inner xsep=3, inner ysep=4] (O) {$g$};
	\fill[\colA] (-11,12) -- (upper left corner |- O.north) -- (O.north) -- (0,12) [rounded corners=10] -- cycle;
	\fill[\colB] (-11,-12) -- (upper left corner |- O.south) -- (O.south) -- (0,-12) [rounded corners=10] -- cycle;
	\draw (0,-12)node[below,yshift=2]{$\scriptstyle K$} -- (O.south) (O.north) -- (0,12)node[above,yshift=-2]{$\scriptstyle H$}
	(upper left corner |- O.north) -- (O.north east) -- (O.south east) -- (upper left corner |- O.south);
	}\,.
\]

Our conventions also allow us to speak about algebra elements using the same graphical notation, as
every right (left) $A$-linear morphism $L^2(A)\to L^2(A)$ is given by left (right) multiplication by an element $a\in A$.
Such an element will be denoted
\(
\def\colA{black!12}
\tikzmath[scale=\squarescale]
	{\coordinate (x) at (-10,0);
	\draw 	(-5,0) node(a)[fill=white, inner ysep=2]{$a$};
	\fill[fill=\colA] (-10,6) --(a.north east -| x) -- (a.north east) -- (a.south east) -- (a.south east -| x) [rounded corners=6]-- (-10,-6) -- (4,-6) -- (4,6) -- cycle;
	\draw (a.north east -| x) -- (a.north east) -- (a.south east) -- (a.south east -| x);
	}
\)\,,
or
\(
\def\colA{black!12}
\tikzmath[scale=\squarescale]
	{\coordinate (x) at (10,0);
	\draw 	(5,0) node(a)[fill=white, inner ysep=2]{$a$};
	\fill[fill=\colA] (10,6) -- (a.north west -| x) -- (a.north west) -- (a.south west) -- (a.south west -| x) [rounded corners=6]-- (10,-6) -- (-4,-6) -- (-4,6) -- cycle;
	\draw (a.north west -| x) -- (a.north west) -- (a.south west) -- (a.south west -| x);
	}
\)\,,
depending on whether we view it as acting on the left or on right on $L^2(A)$.
The fact that an $A$-linear morphism $f:{}_AH_B\to{}_AH_B$ commutes with the left action of an element $a\in A$ is then nicely rendered by the equation
\[
\def\colA{black!10}
\def\colB{black!30}
\tikzmath[scale=\squarescale]
	{\coordinate (upper right corner) at (8,12);
	\coordinate (lower right corner) at (8,-12);
	\coordinate (upper left corner) at (-13,12);
	\coordinate (lower left corner) at (-13,-12);
	\draw 	(-9,4) node(a)[fill=white, inner xsep=3, inner ysep=3]{$a$};
	\fill[\colA] (lower left corner) -- (0,-12) -- (0,12) [rounded corners=10]-- (upper left corner) [sharp corners]-- (a.north east -| upper left corner) -- (a.north east) -- (a.south east) -- (a.south east -| upper left corner) [rounded corners=10]-- cycle;
	\draw 	(0,-4) node[fill=white, inner xsep=3, inner ysep=2] (O) {$f$};
	\draw (a.north east -| upper left corner) -- (a.north east) -- (a.south east) -- (a.south east -| upper left corner);
	\fill[\colB] (upper right corner) -- (upper right corner |- O.north) -- (O.north) -- (0,12) [rounded corners=10] -- cycle;
	\fill[\colB] (lower right corner) -- (upper right corner |- O.south) -- (O.south) -- (0,-12) [rounded corners=10] -- cycle;
	\draw (0,-12) -- (O.south) (O.north) -- (0,12)
	(upper right corner |- O.north) -- (O.north west) -- (O.south west) -- (upper right corner |- O.south);
	}\,\,\,=\,\,\,
\tikzmath[scale=\squarescale]
	{\coordinate (upper right corner) at (8,12);
	\coordinate (lower right corner) at (8,-12);
	\coordinate (upper left corner) at (-13,12);
	\coordinate (lower left corner) at (-13,-12);
	\draw 	(-9,-4) node(a)[fill=white, inner xsep=3, inner ysep=3]{$a$};
	\fill[\colA] (lower left corner) -- (0,-12) -- (0,12) [rounded corners=10]-- (upper left corner) [sharp corners]-- (a.north east -| upper left corner) -- (a.north east) -- (a.south east) -- (a.south east -| upper left corner) [rounded corners=10]-- cycle;
	\draw 	(0,4) node[fill=white, inner xsep=3, inner ysep=2] (O) {$f$};
	\draw (a.north east -| upper left corner) -- (a.north east) -- (a.south east) -- (a.south east -| upper left corner);
	\fill[\colB] (upper right corner) -- (upper right corner |- O.north) -- (O.north) -- (0,12) [rounded corners=10] -- cycle;
	\fill[\colB] (lower right corner) -- (upper right corner |- O.south) -- (O.south) -- (0,-12) [rounded corners=10] -- cycle;
	\draw (0,-12) -- (O.south) (O.north) -- (0,12)
	(upper right corner |- O.north) -- (O.north west) -- (O.south west) -- (upper right corner |- O.south);
	}\,.
\]
Finally, we can also denote vectors graphically, given that an element $\xi \in H$ is equivalent to a map $\IC\to H$.
For example, a vector in a bimodule ${}_AH_B$ is denoted
\[
\def\colA{black!10}
\def\colB{black!30}
\tikzmath[scale=\squarescale]{
\coordinate	(lower left corner) at (-9,-5);
\coordinate	(upper right corner) at (9,9);
\coordinate	(upper left corner) at (upper right corner -| lower left corner);
\coordinate(lower right corner) at (upper right corner |- lower left corner);
\fill[\colA] (lower left corner) --  (upper left corner) -- ($(upper right corner)!.5!(upper left corner)$) -- ($(lower right corner)!.5!(lower left corner)$) [rounded corners=10]-- cycle;
\fill[\colB] (lower right corner) --  (upper right corner) -- ($(upper right corner)!.5!(upper left corner)$) -- ($(lower right corner)!.5!(lower left corner)$) [rounded corners=10]-- cycle;
\draw (upper left corner) --node[above]{$\xi$} (upper right corner)
($(upper right corner)!.5!(upper left corner)$) -- ($(lower right corner)!.5!(lower left corner)$) node[below,yshift=2]{$\scriptstyle H$};
}\,.
\]
The node $\xi$ extends both to the right and to the left, as the map $\xi:\IC\to {}_AH_B$ is neither $A$- nor $B$-linear.
Also, the space above $\xi$ is white because the source of the above map is ${}_\IC\IC_\IC$.

\section{Connes fusion}\label{sec: Connes fusion}
\begin{definition}
Given two modules $H_A$ and ${}_AK$ over a von Neumann algebra $A$,
their Connes fusion $H\boxtimes_A K$ is the completion of
\begin{equation}\label{eq:def of CFus}
\hom\big(L^2(A)_A,H_A\big)\otimes_A L^2(A)\otimes_A \hom\big({}_AL^2(A),{}_AK\big)
\end{equation}
with respect to the inner product
$\big\langle\phi_1\otimes \xi_1\otimes \psi_1,\,\phi_2\otimes \xi_2\otimes \psi_2\big\rangle:=\big\langle(\phi_2^*\phi_1)\xi_1(\psi_1\psi_2^*),\xi_2\big\rangle$ 
\cite{Connes(Non-commutative-geometry), Popa(Correspondences), Sauvageot(Sur-le-produit-tensoriel-relatif), Wassermann(Operator-algebras-and-conformal-field-theory)}.
In the above equation, we have written the action of $\psi_i$ on the right, which means that
$\psi_1\psi_2^*$ stands for the composite $L^2(A)\xrightarrow{\psi_1} K\xrightarrow{\psi_2^*} L^2(A)$.
\end{definition}

The image in the Connes fusion of an element
\[
\phi\otimes \xi\otimes\psi \,\,\,=\,\,\, \def\colA{black!20}
\tikzmath[scale=\squarescale]{
\useasboundingbox (-33,-14) rectangle (33,13);

\coordinate	(lower left corner A) at (-27,-11);
\coordinate	(upper right corner A) at (-17,11);
\coordinate	(upper left corner A) at ($(upper right corner A -| lower left corner A) + (-6,0)$); \coordinate(lower right corner A) at (upper right corner A |- lower left corner A);
\draw 		(-26,0) node (a) {$\phantom{\varphi}$};
\draw 		(-27,0) node {$\phi$};
\fill[\colA] (lower left corner A) -- (a.south east -| lower left corner A)
-- (a.south east) -- (a.north east) -- (a.north east -| upper left corner A) {[rounded corners=10] --  (upper left corner A) -- (upper right corner A) -- (lower right corner A)} -- cycle;
\draw (lower left corner A) node[below,yshift=2]{$\scriptstyle H$} -- (a.south east -| lower left corner A)
(a.south east -| upper left corner A) -- (a.south east) -- (a.north east) -- (a.north east -| upper left corner A);

\node at (-12,0) {$\otimes$};

\coordinate	(lower left corner B) at (-7,-11);
\coordinate	(upper right corner B) at (7,10);
\coordinate	(upper left corner B) at (upper right corner B -| lower left corner B); \coordinate(lower right corner B) at (upper right corner B |- lower left corner B);
\def\v{5}
\fill[\colA, rounded corners=10] (lower left corner B) {[sharp corners] --  ($(upper left corner B) - (0,\v)$) -- ($(upper right corner B) - (0,\v)$)} -- (lower right corner B) -- cycle;
\draw ($(upper left corner B) - (0,\v)$) --node[above]{$\xi$} ($(upper right corner B) - (0,\v)$);

\node at (12,0) {$\otimes$};

\coordinate	(lower left corner C) at (17,-11);
\coordinate	(upper right corner C) at (33,11);
\coordinate	(upper left corner C) at (upper right corner C -| lower left corner C); \coordinate(lower right corner C) at ($(upper right corner C |- lower left corner C)-(6,0)$);
\draw 		(26,0) node (b) {$\phantom{\varphi}$};
\draw 		(27,0) node {$\psi$};
\fill[\colA] (lower right corner C) -- (b.south west -| lower right corner C)
-- (b.south west) -- (b.north west) -- (b.north west -| upper right corner C) {[rounded corners=10] --  (upper right corner C) -- (upper left corner C) -- (lower left corner C)} -- cycle;
\draw (lower right corner C)node[below,yshift=2]{$\scriptstyle K$} -- (b.south west -| lower right corner C)(b.south west -| upper right corner C) -- (b.south west) -- (b.north west) -- (b.north west -| upper right corner C);
	}
\vspace{-.2cm}\]
is equal to
\(
\def\colA{black!20}
\tikzmath[scale=\squarescale]{
\coordinate	(lower left corner) at (-8,-12);
\coordinate	(upper right corner) at (13,14);
\coordinate	(upper left corner) at ($(upper right corner -| lower left corner) + (-5,0)$); 
\coordinate(lower right corner) at ($(upper right corner |- lower left corner)-(5,0)$);
\draw 		(-8,0) node (a) {$\phantom{\varphi}$};
\draw 		(8,0) node (b) {$\phantom{\varphi}$};
\draw 		(a) node {$\phi$};
\draw 		(b) node {$\psi$};
\def\v{5}
\fill[\colA] (lower left corner) -- (a.south east -| lower left corner)
-- (a.south east) -- (a.north east) -- (a.north east -| upper left corner) 
--  ($(upper left corner) - (0,\v)$) -- ($(upper right corner) - (0,\v)$) 
-- (b.north west -| upper right corner) 
-- (b.north west) -- (b.south west) -- (b.south west -| lower right corner) -- (lower right corner) -- cycle;
\draw (lower left corner) node[below,yshift=2]{$\scriptstyle H$} -- (a.south east -| lower left corner)
(a.south east -| upper left corner) -- (a.south east) -- (a.north east) -- (a.north east -| upper left corner);
\draw (lower right corner)node[below,yshift=2]{$\scriptstyle K$} -- (b.south west -| lower right corner)(b.south west -| upper right corner) -- (b.south west) -- (b.north west) -- (b.north west -| upper right corner);
\draw ($(upper left corner) - (0,\v)$) --node[above]{$\xi$} ($(upper right corner) - (0,\v)$);
	}
\)\,.
Strictly speaking, the latter picture refers to the morphism
\[
\IC\xrightarrow{\xi} L^2A\cong L^2A\boxtimes_A L^2A \xrightarrow{\phi\boxtimes \psi} H\boxtimes_A K,
\]
but we can always identify a map from $\IC$ to some vector space with the vector that is the image of $1$ under that map.

\begin{remark}\label{rmk: alternative def of fusion}
It is useful to note that the completion map from \eqref{eq:def of CFus} to $H\boxtimes_A K$ factors through both
\(
H\!\otimes_A\! \hom({}_AL^2(A),{}_AK)
\)
and
\(
\hom(L^2(A)_A,H_A)\!\otimes_A\! K
\).
The Hilbert space $H\boxtimes_A K$ therefore admits two alternative asymmetric definitions, as completions of either of those tensor products.
\end{remark}

\begin{remark}
A pair of vectors $\xi\in H_A$, $\eta\in{}_AK$ does not represent anything in $H\boxtimes_A K$.
This is nicely reflected by the fact that it is not possible to assemble the pictures
$
\def\colA{white}
\def\colB{black!15}
\tikzmath[scale=\squarescale]{
\useasboundingbox (-5,-6) rectangle (5,8);
\coordinate	(lower left corner) at (-5,-4);
\coordinate	(upper right corner) at (5,4);
\coordinate	(upper left corner) at (upper right corner -| lower left corner);
\coordinate(lower right corner) at (upper right corner |- lower left corner);
\fill[\colA] (lower left corner) --  (upper left corner) -- ($(upper right corner)!.5!(upper left corner)$) -- ($(lower right corner)!.5!(lower left corner)$) [rounded corners=6]-- cycle;
\fill[\colB] (lower right corner) --  (upper right corner) -- ($(upper right corner)!.5!(upper left corner)$) -- ($(lower right corner)!.5!(lower left corner)$) [rounded corners=6]-- cycle;
\draw (upper left corner) --node[above, yshift=-1]{$\scriptstyle \xi$} (upper right corner)
($(upper right corner)!.5!(upper left corner)$) -- ($(lower right corner)!.5!(lower left corner)$) node[below,yshift=2]{$\scriptscriptstyle H$};
}
$\,\,
and\,
$
\def\colA{black!15}
\def\colB{white}
\tikzmath[scale=\squarescale]{
\useasboundingbox (-5,-6) rectangle (5,8);
\coordinate	(lower left corner) at (-5,-4);
\coordinate	(upper right corner) at (5,4);
\coordinate	(upper left corner) at (upper right corner -| lower left corner);
\coordinate(lower right corner) at (upper right corner |- lower left corner);
\fill[\colA] (lower left corner) --  (upper left corner) -- ($(upper right corner)!.5!(upper left corner)$) -- ($(lower right corner)!.5!(lower left corner)$) [rounded corners=6]-- cycle;
\fill[\colB] (lower right corner) --  (upper right corner) -- ($(upper right corner)!.5!(upper left corner)$) -- ($(lower right corner)!.5!(lower left corner)$) [rounded corners=6]-- cycle;
\draw (upper left corner) --node[above, yshift=-1]{$\scriptstyle \eta$} (upper right corner)
($(upper right corner)!.5!(upper left corner)$) -- ($(lower right corner)!.5!(lower left corner)$) node[below,yshift=2]{$\scriptscriptstyle K$};
}
$\,
into a meaningful diagram.
\end{remark}

\begin{remark} \label{rem:alternative-Connes-fusion}
There exist two algebraic alternatives to von Neumann bimodules and Connes fusion.
In both cases, the Connes fusion is replaced by a simpler, purely algebraic operation.

In the first alternative, bimodules are replaced by homomorphism (often endomorphisms) of von Neumann algebras, and one usually restricts attention to type {\it III} factors
\cite{Longo(Index-of-subfactors-and-statistics-of-quantum-fields-I), Longo(Index-of-subfactors-and-statistics-of-quantum-fields-II)}.
In this case, Connes fusion becomes merely the composition of homomorphisms.
The translation from homomorphisms back to bimodules is as follows:
given a homomorphism $\varphi:A\to B$, one precomposes the left action on $L^2(B)$ by $\varphi$ to get a bimodule ${}_\varphi L^2(B)$. 
Given a second homomorphism $\psi:B\to C$, there is a canonical isomorphism
${}_{\varphi} L^2(B) \boxtimes_B {}_\psi L^2(C) \cong {}_{\psi\circ\varphi} L^2(C)$ of $A$-$C$-bimodules.

The second alternative has been pointed out by 
Thom~\cite{Thom(Remark-Connes-fusion)}:
the functor $_A H_B \mapsto \hom\big(L^2(B)_B,H_B\big)$
provides an equivalence between the bicategory of dualizable bimodules (Definition \ref{def: finite bimodule})
that are topologically finitely generated 
(i.e., for which there is a finite set that spans a dense submodule)
both as left and as right modules,
and the bicategory of algebraic bimodules (i.e., no Hilbert space structure)
that are finitely generated projective both as left and as right modules.
Under that equivalence, Connes fusion corresponds to the usual 
algebraic tensor product.
Note that this does not provide a description of all dualizable bimodules.
For example, the bimodule $_{\bfB(H)} H_{\IC}$ is dualizable (it is even a Morita equivalence),
but the corresponding algebraic bimodule is certainly not finitely generated.
\end{remark}

\begin{lemma}\label{lem: bar H boxtimes_{bfB(H)} H}
Let $H$ be a Hilbert space. View its complex conjugate $\bar H$ as a right $\bfB(H)$-module by $\bar \xi a:=\overline{a^*\xi}$.  Then there is a canonical isomorphism
${\bar H \boxtimes_{\bfB(H)} H\cong \IC}$.
\end{lemma}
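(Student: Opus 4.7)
The plan is to exploit the type I structure of $\bfB(H)$. Using the explicit standard form $L^2(\bfB(H)) \cong H \otimes \bar H$ from Example~\ref{ex:standard-forms-type-I}, pick any unit vector $e \in H$ and set $p := e \otimes \bar e \in \bfB(H)$, the rank-one projection onto $\IC e$.

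First, I would realise $\bar H$ as a corner of $L^2(\bfB(H))$ via the map
\[
\iota : \bar H \to p L^2(\bfB(H)), \qquad \bar\xi \mapsto e \otimes \bar\xi.
\]
This is a unitary isomorphism of right $\bfB(H)$-modules. Unitarity is immediate from $\|e\|=1$, and right $\bfB(H)$-linearity amounts to the identity $(e \otimes \bar\xi)\cdot a = e \otimes \overline{a^*\xi}$, which matches the defining right action $\bar\xi \cdot a = \overline{a^*\xi}$ on $\bar H$.

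Next, I would combine $\iota$ with the unit isomorphism for Connes fusion. The unit $u : L^2(\bfB(H)) \boxtimes_{\bfB(H)} H \xrightarrow{\cong} H$ is a left-$\bfB(H)$-linear unitary, and on simple tensors it acts by $(v \otimes \bar w) \boxtimes \eta \mapsto v \langle w,\eta\rangle$. Because $u$ is left $\bfB(H)$-linear, it carries the subspace $pL^2(\bfB(H)) \boxtimes_{\bfB(H)} H$ onto $pH = \IC e \subset H$, which is canonically isomorphic to $\IC$ via $\lambda e \mapsto \lambda$.

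Composing the three isomorphisms, a simple tensor $\bar\xi \boxtimes \eta$ goes successively to $(e \otimes \bar\xi) \boxtimes \eta$, then to $e\langle \xi,\eta\rangle$, and finally to $\langle \xi,\eta\rangle \in \IC$. Since this composite formula $\bar\xi \boxtimes \eta \mapsto \langle \xi,\eta\rangle$ makes no reference to the auxiliary choice of $e$, the isomorphism is canonical, as claimed. The only subtle point I anticipate is justifying that left multiplication by the projection $p$ passes through Connes fusion in the expected way, but this is just the left $\bfB(H)$-linearity of the unit isomorphism, which is part of the bicategorical structure of von Neumann algebras, bimodules, and intertwiners.
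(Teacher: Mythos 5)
Your proof is correct, but it takes a different route from the paper's. The paper works directly from the (asymmetric) definition of Connes fusion: it identifies $\hom\bigl(L^2(\bfB(H))_{\bfB(H)},\bar H_{\bfB(H)}\bigr)$ with $\bar H$ itself, computes the resulting inner product on $\bar H\otimes_{\bfB(H)}H$ explicitly, and reads off that $\bar\eta\otimes\xi\mapsto\langle\xi,\eta\rangle$ is unitary onto $\IC$. You instead embed $\bar H$ as the corner $pL^2(\bfB(H))$ of the standard form via a rank-one projection $p=e\otimes\bar e$, and then invoke the unit isomorphism $L^2(\bfB(H))\boxtimes_{\bfB(H)}H\cong H$ together with its naturality under the action of $\mathrm{End}(L^2(\bfB(H))_{\bfB(H)})=\bfB(H)$, landing in $pH=\IC e$. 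Your argument is slicker and more structural, but it presupposes the bicategorical unit isomorphism and functoriality of fusion, whereas the paper's computation is self-contained and exhibits the inner product formula directly (which is what makes unitarity, and hence the later use in Proposition \ref{prop: characterization of invertible bimodules}, manifest). Two small points you should make explicit. First, your use of ``simple tensors'' $(v\otimes\bar w)\boxtimes\eta$ and $\bar\xi\boxtimes\eta$ needs justification, since a pair of vectors does not in general represent an element of a Connes fusion (the paper's own remark after Remark \ref{rmk: alternative def of fusion}); here it is legitimate because every $\eta\in H$ is a bounded vector for $\bfB(H)$ --- the map $L^2(\bfB(H))=\mathrm{HS}(H)\to H$, $x\mapsto x\eta$, satisfies $\|x\eta\|\le\|x\|_{\mathrm{HS}}\|\eta\|$ --- so $\bar\xi\boxtimes\eta$ means the class of $\bar\xi\otimes L_\eta$ with $L_\eta\in\hom({}_{\bfB(H)}L^2(\bfB(H)),{}_{\bfB(H)}H)$. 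Second, your claim of canonicity (independence of $e$) requires that these simple tensors span a dense subspace of $\bar H\boxtimes_{\bfB(H)}H$; this is easy (their images already exhaust $\IC e$, and your composite is isometric), but it is the step that turns ``the formula does not mention $e$'' into ``the isomorphism does not depend on $e$.'' With those two sentences added, your argument is a complete and valid alternative proof.
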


\begin{proof}
The Hilbert space $L^2(\bfB(H))$ is canonically isomorphic to the space of Hilbert-Schmidt operators on $H$, that is to $H\otimes \bar H$; see Example \ref{ex:standard-forms-type-I}.
Following Remark \ref{rmk: alternative def of fusion}, the Connes fusion $\bar H \boxtimes_{\bfB(H)} H$ is obtained from
\begin{equation}\label{eq: .. .. ..}
\hom((H\otimes \bar H)_{\bfB(H)},\bar H_{\bfB(H)})\otimes_{\bfB(H)} H
\end{equation}
by completing it with respect to the inner product
$\big\langle\phi_1\otimes \xi_1,\,\phi_2\otimes \xi_2\big\rangle:=\big\langle(\phi_2^*\phi_1)\xi_1,\xi_2\big\rangle$.
There is an isomorphism
\[
\begin{split}
\bar H&\to \hom((H\otimes \bar H)_{\bfB(H)},\bar H_{\bfB(H)})\\
\bar \eta \,&\mapsto\, \big(\xi\otimes \bar \zeta\mapsto \overline{\langle \eta,\xi\rangle \zeta}\big).
\end{split}
\]
Applying the inverse of this isomorphism to the first term of \eqref{eq: .. .. ..}, we obtain the vector space $\bar H \otimes_{\bfB(H)} H$ with inner product
$\big\langle\bar\eta_1\otimes \xi_1,\,\bar\eta_2\otimes \xi_2\big\rangle:=
\big\langle\eta_2\overline{\langle\eta_1,\xi_1\rangle},\xi_2\big\rangle=\langle\xi_1,\eta_1\rangle\langle\eta_2,\xi_2\rangle$.
The map $\bar\eta\otimes\xi\mapsto \langle\xi,\eta\rangle:\bar H \otimes_{\bfB(H)} H\to \IC$ is therefore a unitary isomorphism.
\end{proof}

\begin{remark} 
The functor $H\boxtimes_A -$
can be characterized by the existence of a right $A$-module isomorphism $H \boxtimes_A L^2(A) \cong H$ (see \cite{Sherman(Relative-tensor-products)}).
\end{remark}

Connes fusion shares the formal properties of the usual algebraic tensor product:

\begin{proposition}[\cite{Landsman(Bicategories-of-operator-algebras)}]
There is a bicategory whose objects are von Neumann algebras,
whose arrows are bimodules, and whose 2-morphisms are maps of bimodules.
The composition of arrows is given by Connes fusion.
\end{proposition}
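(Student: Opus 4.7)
The plan is to assemble the data of a bicategory from the structures already in hand and then construct and check the coherence isomorphisms. The hom-categories $\mathrm{Hom}(A,B)$ have $A$-$B$-bimodules as objects and bimodule intertwiners as morphisms, composing as bounded operators; horizontal composition will be Connes fusion, and the identity 1-morphism at $A$ will be ${}_AL^2(A)_A$. The first thing I would verify is that $\boxtimes$ is a bifunctor, i.e.\ that a pair of bimodule intertwiners $H_1\to H_2$, $K_1\to K_2$ induces a bounded $A$-balanced map $H_1\boxtimes_A K_1\to H_2\boxtimes_A K_2$. This is immediate from the defining formula \eqref{eq:def of CFus} and the inner product that controls its completion.

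Next I would construct the unitors. For any ${}_AH_B$, apply Remark~\ref{rmk: alternative def of fusion} to identify $L^2(A)\boxtimes_A H$ with the completion of $\hom(L^2(A)_A,L^2(A)_A)\otimes_A H$; since the hom space is canonically $A$, the natural action map $A\otimes_A H\to H$ descends to a unitary bimodule isomorphism $\lambda_H:L^2(A)\boxtimes_A H\xrightarrow{\sim} H$. Unitarity is a direct inner-product computation, and naturality in $H$ is automatic from bifunctoriality of $\boxtimes$. The right unitor $\rho_H$ is constructed analogously using the other asymmetric description.

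The heart of the proof is the associator $\alpha_{H,K,L}:(H\boxtimes_B K)\boxtimes_C L\to H\boxtimes_B(K\boxtimes_C L)$ for ${}_AH_B$, ${}_BK_C$, ${}_CL_D$. Iterating the asymmetric form of Remark~\ref{rmk: alternative def of fusion} twice on each side, both fusions can be identified with the completion of the common pre-Hilbert space
\[
H\otimes_B\hom({}_BL^2(B),{}_BK)\otimes_C\hom({}_CL^2(C),{}_CL),
\]
and $\alpha_{H,K,L}$ is the resulting unitary. The main technical obstacle is verifying that the two bracketings of $\boxtimes$ produce the \emph{same} inner product on this space; this reduces to a careful application of the defining formula for the Connes-fusion inner product together with the $B$- and $C$-bilinearity built into the tensor products. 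Naturality in all three variables then follows from bifunctoriality of fusion.

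Finally, I would verify the triangle and pentagon axioms. The triangle identity, expressed in the common asymmetric description, reduces to the trivial compatibility of the action map $A\otimes_A H\to H$ with itself. The pentagon is the lengthier check and is the secondary obstacle: the strategy is to describe the fourfold fusion ${}_{A_0}H_1\boxtimes_{A_1} H_2\boxtimes_{A_2} H_3\boxtimes_{A_3} H_4{}_{A_4}$ as the completion of a single symmetric pre-Hilbert space built out of three copies of $L^2$ interleaved with hom spaces (in the style of \eqref{eq:def of CFus}), and then to show that each of the five associators appearing in the pentagon diagram is a canonical rebracketing of this common object. Once the four sides of the pentagon are seen to factor through this model in compatible ways, commutativity is tautological, completing the construction of the bicategory.
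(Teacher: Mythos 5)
The paper does not actually prove this statement: it is imported wholesale from Landsman's work (and the underlying analytic facts go back to Sauvageot and Wassermann), so there is no internal argument to measure you against. Your outline is the standard direct construction that those references carry out, and as a route it is sound: hom-categories of bimodules and intertwiners, unitors from the asymmetric descriptions of Remark~\ref{rmk: alternative def of fusion}, an associator obtained by exhibiting a common dense pre-Hilbert model for both bracketings, and coherence checked in that model. Two places deserve more care than your ``immediate''/``tautological'' phrasing suggests. First, boundedness of $h\boxtimes_A k$ is not a formality read off from \eqref{eq:def of CFus}: one needs the positivity estimate $(h\phi_2)^*(h\phi_1)=\phi_2^*h^*h\,\phi_1\le \|h\|^2\,\phi_2^*\phi_1$ (and its mate for $k$) to compare the Gram matrices of the two inner products; this is the actual content behind $\|h\boxtimes k\|\le\|h\|\,\|k\|$. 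Second, in the associator you must justify that the algebraic triple tensor product $H\otimes_B\hom({}_BL^2B,{}_BK)\otimes_C\hom({}_CL^2C,{}_CL)$ maps with \emph{dense} image into $H\boxtimes_B(K\boxtimes_CL)$, which requires the natural map $\hom(L^2B,K)\otimes\hom(L^2C,L)\to\hom\bigl(L^2B,K\boxtimes_CL\bigr)$, $\psi\otimes\chi\mapsto(1\otimes\chi)\circ\psi$, together with a lemma saying that a family of bounded vectors whose images are dense in $K\boxtimes_CL$ already yields a dense subspace after fusing over $B$; density in one bracketing is by construction, in the other it is not. With those two points filled in (and the analogous well-definedness of the fourfold model for the pentagon), your plan gives a complete proof, just a longer one than the paper's citation.
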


\noindent Spatial tensor product of von Neumann algebras and tensor product of
   Hilbert spaces provides a symmetric tensor product on this bicategory,
   but since the formal definition of a symmetric monoidal bicategory is
   somewhat lengthy, we do not pursue this in detail here.

The invertible arrows of this bicategory are called \emph{Morita equivalences}, 
and have the following alternative characterization:

\begin{proposition}\label{prop: characterization of invertible bimodules}
A bimodule ${}_AH_B$ is invertible with respect to Connes fusion if and only if the two algebras act faithfully, and $B':=\{x\in\bfB(H)\,|\,[x,B]=0\}=A$.
In that case, the inverse bimodule is given by the complex conjugate Hilbert space $\overline H$, with actions 
$b\bar \xi := \overline{\xi b^*}$ and\, $\bar \xi a:= \overline{a^* \xi}$.
\end{proposition}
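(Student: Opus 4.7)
The plan is to treat the two implications separately: the forward direction is formal, obtained by identifying endomorphism algebras under fusion with the inverse bimodule, while the backward direction requires a concrete realization of $H$ as a corner of $L^2(A) \otimes \ell^2$ in which the duality maps can be written down.

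For necessity, assume ${}_AH_B$ is invertible with inverse ${}_BK_A$, so that $H \boxtimes_B K \cong L^2(A)$ and $K \boxtimes_A H \cong L^2(B)$. Faithfulness is immediate: an $a \in A$ killing $H$ would kill $H \boxtimes_B K \cong L^2(A)$, contradicting faithfulness of the standard form; the argument is symmetric for $B$. The inclusion $A \subseteq B'$ is tautological. For the reverse, note $B' = \mathrm{End}(H_B)$ by definition, and that fusion with $K$ yields an algebra homomorphism $\mathrm{End}(H_B) \to \mathrm{End}(L^2(A)_A)$, $T \mapsto T \boxtimes 1_K$, with inverse $S \mapsto S \boxtimes 1_H$ built using the canonical identifications $L^2(A) \boxtimes_A H \cong H \boxtimes_B L^2(B) \cong H$. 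The standard form characterization identifies $\mathrm{End}(L^2(A)_A)$ with $A$ acting by left multiplication, and this isomorphism $B' \cong A$ sends each $a \in A \subseteq B'$ to itself, so $B' = A$.

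For sufficiency, first verify that $b\bar\xi := \overline{\xi b^*}$ and $\bar\xi a := \overline{a^* \xi}$ are commuting continuous actions on $\bar H$, making it a $B$-$A$-bimodule; this is a direct translation of the commutation of the original actions. The remaining task is to produce a counit $R^*: H \boxtimes_B \bar H \to L^2(A)$ and a unit $S: L^2(B) \to \bar H \boxtimes_A H$ satisfying the triangle identities. I would reduce to a concrete model via Proposition~\ref{prop:modules-are-summands}: there is an isomorphism $H \otimes \ell^2 \cong L^2(A) \otimes \ell^2$ of left $A$-modules, hence a projection $p$ in the commutant of $A \bar\otimes 1$ with $H \cong p(L^2(A) \otimes \ell^2)$. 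The commutant of $A \bar\otimes 1$ on $L^2(A) \otimes \ell^2$ is $JAJ \bar\otimes \bfB(\ell^2)$ by the standard tensor product theorem, and the hypothesis $B' = A$ forces $B$ to coincide with the corresponding corner of $JAJ \bar\otimes \bfB(\ell^2)$. In this model, $R^*$ and $S$ are assembled from Lemma~\ref{lem: L^2(pAp) = pL^2(A)p} together with the type-$I$ calculation of Lemma~\ref{lem: bar H boxtimes_{bfB(H)} H}, and the triangle identities reduce to a direct computation.

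The main obstacle is the identification $\mathrm{End}(L^2(A)_A) = A$ used in the forward direction, which is the substance of standard form theory, but is black-boxed here by the axiomatic characterization of standard form recalled in the Preliminaries. Once that is in hand, both directions amount to routine manipulations, provided the corner picture is set up carefully enough that the bimodularity of $R^*$ and $S$, and their compatibility under the triangle identities, is transparent.
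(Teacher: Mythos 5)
Your necessity argument is correct and is essentially the paper's own: the paper likewise deduces $B'\subset\mathrm{End}\big((H\boxtimes_B\overline H)_A\big)=\mathrm{End}\big(L^2(A)_A\big)=A$ by fusing an endomorphism with the identity of the inverse bimodule, and your slightly more explicit version with a general inverse $K$ and the mutually inverse maps $T\mapsto T\boxtimes 1_K$, $S\mapsto S\boxtimes 1_H$ is fine.

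The gap is in the sufficiency direction. You set yourself the task of producing $R^*:H\boxtimes_B\overline H\to L^2(A)$ and $S:L^2(B)\to\overline H\boxtimes_A H$ satisfying the triangle identities; but that is exactly the duality data \eqref{eq: duality equations}, and dualizability is strictly weaker than invertibility: for any subfactor $N\subset M$ of finite index $>1$, the bimodule ${}_NL^2(M)_M$ admits such maps without being invertible. To prove the proposition you must show that the fusions are actually isomorphic to the identity bimodules, i.e.\ that your unit and counit are isomorphisms, or else produce the bimodule isomorphisms $H\boxtimes_B\overline H\cong L^2(A)$ and $\overline H\boxtimes_A H\cong L^2(B)$ directly. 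Moreover, the step where this would happen in your plan --- ``the triangle identities reduce to a direct computation'' in the corner model $H\cong p\,(L^2(A)\otimes\ell^2)$, $B\cong p\,(JAJ\bar\otimes\bfB(\ell^2))\,p$ --- is precisely the substance of the proof and is left undone; in particular you never record where faithfulness of the left $A$-action enters (it forces the central support of $p$ in $JAJ\bar\otimes\bfB(\ell^2)$ to be $1$; otherwise $H\boxtimes_B\overline H$ comes out as a proper corner $zL^2(A)$ rather than all of $L^2(A)$). The paper's argument uses the same ingredients but aims straight at the isomorphisms: amplify by $\ell^2$, use Proposition~\ref{prop:modules-are-summands} together with the hypothesis $B'=A$ to replace ${}_A(H\otimes\ell^2)$ with its right $B\bar\otimes\bfB(\ell^2)$-action by ${}_A(L^2(A)\otimes\ell^2)$ with its right $A\bar\otimes\bfB(\ell^2)$-action, absorb the $\bfB(\ell^2)$ factor with Lemma~\ref{lem: bar H boxtimes_{bfB(H)} H}, and finish with the modular conjugation $\overline{L^2(A)}\cong L^2(A)$. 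Your plan is repairable along those lines, but as written it both targets the wrong notion and omits the computation that carries the proof.
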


\begin{proof}
We first assume that the two actions are faithful and that $B' = A$.
Using a unitary $A$-module identification ${H\otimes\ell^2}\cong {L^2A\otimes\ell^2}$,
we get isomorphisms
\begin{equation} \label{eq:H-box-barH->-L2}
\begin{split}
{}_{A}H\boxtimes_B\overline H{}_{A}
&\cong {}_{A}(H\otimes \ell^2)\boxtimes_{B\bar \otimes \bfB(\ell^2)}\overline{H\otimes \ell^2}{}_{A}\\
&\cong {}_{A}(L^2A\otimes \ell^2)\boxtimes_{A\bar \otimes \bfB(\ell^2)}\overline{L^2A\otimes \ell^2}{}_{A}\\
&\cong {}_{A}L^2A\boxtimes_{A}\overline{L^2A}{}_{A}
\cong {}_{A}L^2A\boxtimes_{A}L^2A_{A}
\cong {}_{A}L^2A_{A}.
\end{split}
\end{equation}
The first isomorphism follows from Lemma \ref{lem: bar H boxtimes_{bfB(H)} H}, and the fourth one is given by the modular conjugation on $L^2A$.
Similarly, we have ${}_B\overline H\boxtimes_AH_B\cong {}_BL^2(B)_B$, and so ${}_AH_B$ is invertible.
Conversely, if ${}_{A}H\boxtimes_B\overline H{}_{A}\cong{}_{A}L^2A_{A}$, then the $A$-action is faithful and we have
\[
B'\subset \mathrm{End}\big(H\boxtimes_B\overline H{}_{A}\big)=\mathrm{End}\big(L^2A_{A}\big)=A,
\]
from which $B'=A$ follows.
Similarly, the faithfulness of the right $B$-action follows from the 
isomorphism ${}_B\overline H\boxtimes_AH_B\cong {}_BL^2(B)_B$.
\end{proof}

\begin{lemma} \label{lem:stay-injective}
  Let ${}_{A}H$ be a faithful $A$-module and let $f:K_A\to L_A$ be an $A$-linear map.
  Then $f$ is injective if and only if $f \otimes 1_H:K\boxtimes_A  H \to L\boxtimes_A H$ is injective. 
\end{lemma}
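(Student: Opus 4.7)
The plan is to reduce both implications to the trivial case $H = L^2(A)$, where Connes fusion with $H$ is the identity operation. Since ${}_AH$ is faithful, Proposition \ref{prop:modules-are-summands} gives an $A$-module isomorphism $H \otimes \ell^2 \cong L^2(A) \otimes \ell^2$. In particular, $H$ embeds as a direct summand of $L^2(A) \otimes \ell^2$ via some $A$-linear split inclusion $\iota$, and conversely $L^2(A)$ embeds as a direct summand of $H \otimes \ell^2$ via some $A$-linear split inclusion $\iota'$. The key technical ingredient I shall use is that Connes fusion distributes over exterior Hilbert tensor products, i.e.\ $K \boxtimes_A (H \otimes V) \cong (K \boxtimes_A H) \otimes V$ for any Hilbert space $V$, and similarly for $L$. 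Combined with the identity property $K \boxtimes_A L^2(A) \cong K$, this yields $K \boxtimes_A (L^2(A) \otimes \ell^2) \cong K \otimes \ell^2$.

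For the forward implication, suppose $f$ is injective, so that $f \otimes 1_{\ell^2} \colon K \otimes \ell^2 \to L \otimes \ell^2$ is injective. The map $1_K \boxtimes \iota \colon K \boxtimes_A H \to K \otimes \ell^2$ (and the analogous map for $L$) is a split inclusion, hence injective. The naturality identity $(1_L \boxtimes \iota)(f \boxtimes 1_H) = (f \otimes 1_{\ell^2})(1_K \boxtimes \iota)$ then exhibits $(1_L \boxtimes \iota)(f \boxtimes 1_H)$ as a composition of two injections, forcing $f \boxtimes 1_H$ injective. For the reverse implication, suppose $f \boxtimes 1_H$ is injective, so that $(f \boxtimes 1_H) \otimes 1_{\ell^2} \colon (K \boxtimes_A H) \otimes \ell^2 \to (L \boxtimes_A H) \otimes \ell^2$ is injective. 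The map $1_K \boxtimes \iota' \colon K \cong K \boxtimes_A L^2(A) \to (K \boxtimes_A H) \otimes \ell^2$ is a split inclusion, hence injective, and the analogous naturality identity $\big((f \boxtimes 1_H) \otimes 1_{\ell^2}\big)(1_K \boxtimes \iota') = (1_L \boxtimes \iota') \circ f$ exhibits $(1_L \boxtimes \iota') \circ f$ as a composition of two injections, forcing $f$ injective.

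The one non-routine input is the distributivity isomorphism $K \boxtimes_A (H \otimes V) \cong (K \boxtimes_A H) \otimes V$, which I expect to be the main obstacle. It should follow either directly from the Hilbert-space completion definition \eqref{eq:def of CFus}, using the natural identification $\hom\big({}_AL^2(A),{}_A(H \otimes V)\big) \cong \hom\big({}_AL^2(A),{}_AH\big) \otimes V$, or by writing $V \cong \bigoplus_i \IC$ and invoking the fact that Connes fusion is a continuous bifunctor that commutes with Hilbertian direct sums in each variable. Once this distributivity is in hand, the rest is a short diagram chase relying only on the naturality of fusion in its first variable and the fact that split inclusions are injective.
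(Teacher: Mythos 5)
Your argument is correct and is essentially the paper's own proof: both rest on the identification $H\otimes\ell^2\cong L^2(A)\otimes\ell^2$ furnished by Proposition \ref{prop:modules-are-summands}, the unit property $K\boxtimes_A L^2(A)\cong K$, and the compatibility of Connes fusion with $-\otimes\ell^2$, the only difference being presentational (the paper runs a chain of equivalences by transporting $f\otimes 1$ along the isomorphism, while you compose with split $A$-linear inclusions in each direction). The distributivity $K\boxtimes_A(H\otimes V)\cong(K\boxtimes_A H)\otimes V$ that you single out is the same routine input the paper uses implicitly, and your second justification (decomposing $V$ into a direct sum of lines, or equivalently using the asymmetric description of fusion in Remark \ref{rmk: alternative def of fusion}) settles it.
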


\begin{proof}
  Pick an $A$-module identification ${H\otimes\ell^2}\cong {L^2A\otimes\ell^2}$.
  We then have
\[
\begin{split}    f \,\, \text{is injective} \,\,\, &\Leftrightarrow\,\,\,
    K\boxtimes_A  L^2A \xrightarrow{f \otimes 1} L\boxtimes_A L^2A \,\,\,\,\,\,\,\, \text{is injective} \;  \\ &\Leftrightarrow\,\,\,
    K\boxtimes_A  L^2A \otimes \ell^2 \xrightarrow{f \otimes 1 \otimes 1} L\boxtimes_A L^2A \otimes \ell^2  \,\,\,\, \text{is injective} \; \\ &\Leftrightarrow\,\,\,
    K\boxtimes_A  H \otimes \ell^2 \xrightarrow{f \otimes 1 \otimes 1} L\boxtimes_A H \otimes \ell^2  \,\,\,\,\,\, \text{is injective} \; \\ & \Leftrightarrow\,\,\,
    K\boxtimes_A  H \xrightarrow{f \otimes 1} L\boxtimes_A H  \,\,\,\,\,\,\,\, \text{is injective}. \qedhere
\end{split}
\]
\end{proof}

\begin{remark} \label{rem:uniqueness-for-characterization-invertible-bimodules}
The construction of the isomorphism ${}_{A}H\boxtimes_B\overline H{}_{A} \cong {}_{A}L^2A_{A}$ in~\eqref{eq:H-box-barH->-L2} 
used the choice of an $A$-linear unitary $x \colon {H\otimes\ell^2}\cong {L^2A\otimes\ell^2}$.
Nevertheless, we claim that~\eqref{eq:H-box-barH->-L2} is canonical.
Note first that $x$ enters~\eqref{eq:H-box-barH->-L2} only through the isomorphism
\begin{equation}
\label{eq:using-varphi}
{}_{A}(H\otimes \ell^2)\boxtimes_{B\bar \otimes \bfB(\ell^2)}
 \overline{H\otimes \ell^2}{}_{A}
\cong 
{}_{A}(L^2A\otimes \ell^2)\boxtimes_{A\bar \otimes \bfB(\ell^2)}
\overline{L^2A\otimes \ell^2}{}_{A}.
\end{equation}
In order to understand~\eqref{eq:using-varphi} we provide a bit more notation.
Conjugation by $x$ yields an isomorphism $f \colon B\,\bar \otimes\, \bfB(\ell^2) \to A\,\bar \otimes\, \bfB(\ell^2)$.
Using this, $x$ can be viewed as an isomorphism of $A$-$B\,\bar \otimes\, \bfB(\ell^2)$-bimodules ${H\otimes\ell^2}\cong \left({L^2A\otimes\ell^2}\right)_{f}$, where 
the right action of $B\,\bar \otimes\, \bfB(\ell^2)$ on $L^2A\otimes\ell^2$ is defined by $f$.
Similarly, the complex conjugate $\overline x$ yields an isomorphism of $B\,\bar \otimes\, \bfB(\ell^2)$-$A$-bimodules
$\overline{H\otimes \ell^2} \cong \!\!\!{\phantom{\big|}}_f\Big(\overline{L^2A\otimes \ell^2}\Big)$.
The map~\eqref{eq:using-varphi} is then the composition of $x \boxtimes \bar x$ with the isomorphism
\[
\Big({L^2A\otimes\ell^2}\Big)\!\!\!{\phantom{\Big|}}_f \boxtimes_{B\bar \otimes \bfB(\ell^2)}
\!\!\!{\phantom{\Big|}}_f\Big(\overline{L^2A\otimes \ell^2}\Big)
\cong
(L^2A\otimes\ell^2) \boxtimes_{A\bar \otimes \bfB(\ell^2)} \overline{L^2A\otimes \ell^2}
\]
that sends $\varphi \otimes \xi\otimes \psi$ to $(\varphi\circ L^2(f^{-1}))\otimes L^2(f)\xi\otimes (\psi\circ L^2(f^{-1}))$.
Suppose that $y \colon {H\otimes\ell^2}\cong {L^2A\otimes\ell^2}$ is another left $A$-module identification. 
Conjugation by $y$ yields a second isomorphism $g \colon B\,\bar \otimes\, \bfB(\ell^2) \to A\,\bar \otimes\, \bfB(\ell^2)$.
Now, $yx^* \colon {L^2A\otimes\ell^2} \to {L^2A\otimes\ell^2}$ is left $A$-linear, and so there is a unitary $u \in A\,\bar \otimes\, \bfB(\ell^2)$ whose right action $R_u$ on $L^2A\otimes\ell^2$ is $yx^*$.
The left action $L_{u^*}$ on $\overline{L^2A\otimes \ell^2}$ is then given by $\bar y \bar x^*$.
Let also $v\in B\,\bar \otimes\, \bfB(\ell^2)$ be such that $R_v=x^*y$.
We then have $f(v)=u$, and $g^{-1}f=\mathrm{ad}(v)$.
Altogether, the two maps that we are trying to compare are along the top and along the bottom of the following diagram
\begin{center}
\begin{tikzpicture}
\node (m-1-2) at (4,3.2){$\displaystyle \Big({L^2A\otimes\ell^2}\Big)\!\!\!{\phantom{\Big|}}_f \boxtimes_{B\bar \otimes \bfB(\ell^2)}\!\!\!{\phantom{\Big|}}_f\Big(\overline{L^2A\otimes \ell^2}\Big)$};
\node (m-2-1) at (0,1.6) {$\displaystyle {}_{A}(H\otimes \ell^2)\boxtimes_{B\bar \otimes \bfB(\ell^2)} \overline{H\otimes \ell^2}{}_{A}$};
\node (m-2-3) at (7.8,1.6) {$\displaystyle{}_{A}(L^2A\otimes \ell^2)\boxtimes_{A\bar \otimes \bfB(\ell^2)} \overline{L^2A\otimes \ell^2}{}_{A}$};
\node (m-3-2) at (4,0) {$\displaystyle\Big({L^2A\otimes\ell^2}\Big)\!\!\!{\phantom{\Big|}}_g \boxtimes_{B\bar \otimes \bfB(\ell^2)} \!\!\!{\phantom{\Big|}}_g\Big(\overline{L^2A\otimes \ell^2}\Big)$};
\path[->,font=\scriptsize]
(m-2-1) edge node[above, pos=.40, xshift=-6] {$x \boxtimes \bar x$} (m-1-2)
(m-2-1) edge node[below, pos=.35, xshift=-5] {$y \boxtimes \bar y$} (m-3-2)
(m-1-2) edge node[fill=white, xshift=2, inner sep=2] {$R_u \boxtimes L_{u^*}$} (m-3-2)
(m-1-2) edge node[above, pos=.6] {$\cong$} (m-2-3)
(m-3-2) edge node[below, pos=.6] {$\cong$} (m-2-3);
\end{tikzpicture}
\end{center}
It is not hard to check that the left triangle commutes.
For the commutativity of the right triangle, we show that the vertical map $\varphi\otimes \xi\otimes \psi\mapsto (R_u\circ \varphi)\otimes \xi\otimes (L_{u^*}\circ \psi)$ agrees with the map
that goes down the right side of the diagram.
Indeed, recalling from \eqref{eq: u sqrt(phi)u^*= sqrt(u phi u^*)} that $L^2(g^{-1}f)=L^2(\mathrm{ad}(v))=L_vR_{v^*}$, that map is
\[
\begin{split}
\varphi\otimes \xi\otimes \psi\,\mapsto \,\,& (\varphi\circ L^2(f^{-1}g))\otimes L^2(g^{-1}f)\xi\otimes (\psi\circ L^2(f^{-1}g)\\
=\,\, & (\varphi\circ L_{v^*}R_v)\otimes v\,\xi\, v^*\otimes (\psi\circ L_{v^*}R_v)\\
=\,\,&(\varphi\circ R_v)\otimes \xi \otimes (\psi\circ L_{v^*}) =(R_u \circ \varphi)\otimes \xi \otimes (L_{u^*} \circ \psi).
\end{split}
\]
It follows that \eqref{eq:H-box-barH->-L2} is independent of $x$.
\end{remark}

\section{Dualizable bimodules}\label{sec: Dualizability}

A von Neumann algebra whose center is one dimensional is called a \emph{factor}.  A von Neumann algebra has finite-dimensional center if and only if it is a finite direct sum of factors.
Given an $A$-$B$-bimodule $H$ over von Neumann algebras with finite-dimensional center, we say that a $B$-$A$-bimodule $\bar H$ is dual to $H$ if it comes equipped with maps
\begin{equation}\label{eq:duality maps}
R:{}_AL^2(A)_A \rightarrow {}_AH\boxtimes_B \bar H_A\qquad\quad
S:{}_BL^2(B)_B \rightarrow  {}_B\bar H\boxtimes_A H_B
\end{equation}
subject to the duality equations $(R^*\otimes 1)(1\otimes S)=1$, $(S^*\otimes 1)(1\otimes R)=1$, and to
the normalization condition ${R^*(pxq\otimes 1)R} = {S^*(1\otimes pxq)S}$
for all $x\in \mathrm{End}({}_AH_B)$ and all minimal central projections $p\in Z(A)$ and $q\in Z(B)$.
The first two conditions are classical \cite{MacLane(Categories-for-the-working-mathematician)}.
The latter was inspired by \cite[Lemma 3.9]{Longo-Roberts(A-theory-of-dimension)}.
The above equations are best depicted using string diagrams.
The duality equations are given by
\begin{equation}\label{eq: duality equations}
\def\colA{black!10}
\def\colB{black!30}
\tikzmath[scale=\squarescale]
	{\fill[rounded corners=10, fill=\colA] (-19,-14) rectangle (19,14);
	\fill[rounded corners=9, fill=\colB] (12,-14) -- (12,6) -- (0,6) -- (0,-6) -- (-12,-6) [sharp corners]-- (-12,14)  [rounded corners=10]-- (19,14) -- (19,-14) [sharp corners]-- cycle;
	\draw[rounded corners=9] (12,-14) -- (12,6) -- (0,6) -- (0,-6) -- (-12,-6) -- (-12,14);
	\draw (-6,-6) node[fill=white, draw]{$R^*\!$} (6,6) node[fill=white, draw]{$S$};}
\,\,=\,\,\tikzmath[scale=\squarescale]
	{\fill[rounded corners=10, fill=\colA] (-9,-14) rectangle (9,14);
	\fill[fill=\colB] (0,-14) -- (0,14) [rounded corners=10] -- (9,14) -- (9,-14) [sharp corners]-- cycle;
	\draw (0,-14) -- (0,14);}
\!\qquad\text{and}\qquad\!\tikzmath[scale=\squarescale]
	{\fill[rounded corners=10, fill=\colA] (-19,-14) rectangle (19,14);
	\fill[rounded corners=9, fill=\colB] (12,-14) -- (12,6) -- (0,6) -- (0,-6) -- (-12,-6) [sharp corners]-- (-12,14)  [rounded corners=10]-- (-19,14) -- (-19,-14) [sharp corners]-- cycle;
	\draw[rounded corners=9] (12,-14) -- (12,6) -- (0,6) -- (0,-6) -- (-12,-6) -- (-12,14);
	\draw (-6,-6) node[fill=white, draw]{$S^*\!$} (6,6) node[fill=white, draw]{$R$};}
\,\,=\,\,\tikzmath[scale=\squarescale]
	{\fill[rounded corners=10, fill=\colA] (-9,-14) rectangle (9,14);
	\fill[fill=\colB] (0,-14) -- (0,14) [rounded corners=10] -- (-9,14) -- (-9,-14) [sharp corners]-- cycle;
	\draw (0,-14) -- (0,14);}
\end{equation}
and the normalization condition is
\begin{equation}\label{eq:duality normalization}
\def\colA{black!10}
\def\colB{black!30}
\tikzmath[scale=\squarescale]
	{\fill[rounded corners=10, fill=\colA] (-19,-18) rectangle (16,18);
	\draw[rounded corners=10, fill=\colB] (-9,-11) rectangle (9,11);
	\draw 	(-9,0) node[fill=white, draw, inner sep=4]{$x$}
			(0,-11) node[fill=white, draw]{$R^*\!$}
			(0,11) node[fill=white, draw] {$R$}
			(-1,0) node {$q$}
			(-16,0) node {$p$};
	}
\,\,=\,\,\tikzmath[scale=\squarescale]
	{\fill[rounded corners=10, fill=\colB] (19,-18) rectangle (-16,18);
	\draw[rounded corners=10, fill=\colA] (9,-11) rectangle (-9,11);
	\draw 	(9,0) node[fill=white, draw, inner sep=4]{$x$}
			(0,-11) node[fill=white, draw]{$S^*\!$}
			(0,11) node[fill=white, draw] {$S$}
			(1,0) node {$p$}
			(16,0) node {$q$};
	}\,\,.
\end{equation}
The two shades stand for the algebras $A$ and $B$, and the lines correspond to the bimodules ${}_AH_B$ and ${}_B\bar H_A$.
Note that the two sides of \eqref{eq:duality normalization} are in $p\,\mathrm{End}({}_AL^2(A)_A)\cong p\,Z(A)\cong \IC$ and $q\,\mathrm{End}({}_BL^2(B)_B)\cong q\,Z(B)\cong \IC$, 
respectively, and so it makes sense to ask for them to be equal.

\begin{definition}\label{def: finite bimodule}
A bimodule whose dual module exists is called \emph{dualizable}.
\end{definition}

We will show later, in Corollary \ref{cor: contragredient bimodule},
that the dual of a dualizable bimodule is canonically isomorphic to the complex conjugate of the bimodule.
For the time being, we now reserve the notation ${}_B\bar H_A$ for the dual.

\begin{remark}
In the literature, the term \emph{dual} typically refers to a solution of \eqref{eq: duality equations} only.
(When the conditions \eqref{eq: duality equations} are re-expressed purely in terms of $R$ and $S^*$ the triple $(\bar{H}, R, S^*)$ is called a right dual, and when the conditions are re-expressed in terms of $R^*$ and $S$ the triple $(\bar{H}, S, R^*)$ is called a left dual.)  Such a dual, if it exists, is well defined up to unique isomorphism.
However, in our Hilbert space context, having an object that is well defined up to unique isomorphism is not sufficient, as the isomorphism might fail to be unitary.
Condition \eqref{eq:duality normalization} is added to ensure that the dual is well defined up to unique unitary isomorphism --- see Theorem \ref{thm: well defined up to unique unitary iso}.
\end{remark}

\begin{lemma}\label{lem: characterization of duals}
Let ${}_AH_B$ and ${}_BK_A$ be irreducible bimodules. If $H$ is dualizable, then
\[
\hom_{A,A}\big(L^2(A),H\boxtimes_BK\big)\cong \begin{cases}
\IC&\text{if ${}_B\bar H_A\cong{}_BK_A$,}\\
0&\text{otherwise.}\\
\end{cases}
\]
\end{lemma}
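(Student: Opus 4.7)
The plan is to use the duality data $(R,S)$ for $H$ to produce a natural isomorphism
\[
\Phi\colon \hom_{A,A}\bigl(L^2(A),\,H\boxtimes_B K\bigr) \xrightarrow{\,\cong\,} \hom_{B,A}(\bar H, K),
\]
and then invoke a Schur-type argument for irreducible bimodules to identify the right-hand side.

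First I would define $\Phi(f) := (S^*\boxtimes 1_K)(1_{\bar H}\boxtimes f)$, silently identifying $\bar H\boxtimes_A L^2(A)$ with $\bar H$ and $L^2(B)\boxtimes_B K$ with $K$, and let its candidate inverse be $\Psi(g) := (1_H\boxtimes g)\,R$. Both constructions output bimodule maps of the correct bilinearity, since $R$ is $A$-$A$-bilinear and $S^*$ is $B$-$B$-bilinear. In string-diagram form, $\Phi$ bends the $\bar H$-strand of $f$ down through the cap $S^*$, while $\Psi$ prepends the cup $R$ to $g$; the two zig-zag relations \eqref{eq: duality equations} then collapse the composites $\Phi\Psi$ and $\Psi\Phi$ into the identity.

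Second, the same zig-zag trick yields an anti-isomorphism
\[
\mathrm{End}({}_AH_B)\;\cong\;\mathrm{End}({}_B\bar H_A),\qquad x\;\mapsto\;(S^*\boxtimes 1_{\bar H})(1_{\bar H}\boxtimes x\boxtimes 1_{\bar H})(1_{\bar H}\boxtimes R),
\]
so $\bar H$ is irreducible whenever $H$ is. To conclude, I would apply the standard Schur argument: for a nonzero $T\in \hom_{B,A}(\bar H, K)$, the positive operator $T^*T$ lies in $\mathrm{End}({}_B\bar H_A) = \IC$, so $T^*T = c\cdot 1$ for some $c>0$; then $TT^*/c$ is a nonzero idempotent in $\mathrm{End}({}_B K_A) = \IC$ and hence equals $1$, so $T/\sqrt c$ is a unitary isomorphism $\bar H\cong K$. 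Thus $\hom_{B,A}(\bar H,K) = 0$ when $\bar H\not\cong K$, and is $1$-dimensional when $\bar H\cong K$ (by a choice of such an isomorphism).

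The only real subtlety to watch is the implicit use of the unit isomorphisms $\bar H\boxtimes_A L^2(A) \cong \bar H$ and $L^2(B)\boxtimes_B K \cong K$ in the definitions of $\Phi$ and $\Psi$; these are part of the bicategory structure of Section \ref{sec: Connes fusion}, and their bilinearity is what ensures that the mate construction stays within bounded bimodule maps. Note that the normalization \eqref{eq:duality normalization} plays no role in this lemma — it will be needed only to make the identification $\bar H\cong K$, when it exists, canonical up to a \emph{unique} unitary.
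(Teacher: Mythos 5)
Your proposal is correct and follows essentially the same route as the paper: the paper's proof is precisely the observation that $f\mapsto(S^*\otimes 1)(1\otimes f)$ is an isomorphism onto $\hom_{B,A}(\bar H,K)$, with the Schur-type identification of that hom-space for irreducible bimodules left implicit. Your verification of the inverse via the zig-zag relations and your explicit Schur argument just fill in details the paper omits.
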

\begin{proof}
The map $f\mapsto(S^*\otimes 1)(1\otimes f)$ is an isomorphism between the vector spaces
$\hom({}_AL^2(A)_A,{}_AH\boxtimes_BK_A)$ and $\hom({}_B\bar H_A,{}_BK_A)$.
\end{proof}

We will see later, in Lemma \ref{lem: direct summand of dualizable bimodule},
that given two $A$-$B$-bimodules, their direct sum is dualizable if and only if they are both dualizable. 
One direction is straightforward, and is given presently as Lemma \ref{lem: sum of dualizables}.
Similarly, given a non-zero $A$-$B$-bimodule and a non-zero $B$-$C$-bimodule, their Connes fusion is dualizable if and only if they are both dualizable.  Again one direction is easier, and is given here as Lemma \ref{lem: product of dualizables}.  The other direction is established in Corollary \ref{cor: HK finite ==> H finite}.

\begin{lemma}\label{lem: sum of dualizables}
Let ${}_AH_B$ and ${}_AK_B$ be dualizable bimodules, with respective structure maps $R$, $S$, $\tilde R$, and $\tilde S$.
Then ${}_A(H\oplus K)_B$ is dualizable, with dual ${}_A(\bar H\oplus \bar K)_B$, and structure maps
\[\quad
\textstyle\begin{pmatrix}R \\ 0 \\ 0\\ \tilde R\end{pmatrix}\!:L^2(A) \rightarrow (H\oplus K)\underset B \boxtimes (\bar H\oplus \bar K)\,,\quad
\textstyle\begin{pmatrix}S \\ 0 \\ 0\\ \tilde S\end{pmatrix}\!:L^2(B) \rightarrow  (\bar H\oplus \bar K)\underset A \boxtimes (H\oplus K). \hfill\quad\square
\]
\end{lemma}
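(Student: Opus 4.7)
The plan is to verify the two duality equations from \eqref{eq: duality equations} and the normalization condition \eqref{eq:duality normalization} for the proposed structure maps. First I would unpack the identifications: since Connes fusion distributes over direct sums,
\[
(H\oplus K)\boxtimes_B(\bar H\oplus \bar K) \,\cong\, (H\boxtimes_B\bar H)\oplus(H\boxtimes_B\bar K)\oplus(K\boxtimes_B\bar H)\oplus(K\boxtimes_B\bar K),
\]
and by construction the proposed
$R_\oplus:=\begin{pmatrix}R\\0\\0\\ \tilde R\end{pmatrix}$
lands only in the diagonal summands $H\boxtimes_B\bar H$ and $K\boxtimes_B\bar K$; the analogous statement holds for $S_\oplus$. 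In particular $R_\oplus^*$ is the row $(R^*,0,0,\tilde R^*)$ and $S_\oplus^*$ is the row $(S^*,0,0,\tilde S^*)$.

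Next I would verify the duality equation $(R_\oplus^*\otimes 1)(1\otimes S_\oplus)=1_{H\oplus K}$ by tracing a vector through its summands. For $\xi\in H$, the map $1\otimes S_\oplus$ produces contributions in $H\boxtimes_B\bar H\boxtimes_A H$ (coming from $S$) and in $H\boxtimes_B\bar K\boxtimes_A K$ (coming from $\tilde S$). Since $R_\oplus^*$ vanishes on the off-diagonal fusion summand $H\boxtimes_B\bar K$, only the first piece survives, and by the duality equation for $H$ alone it yields $\xi$. An input from $K$ is handled symmetrically via $\tilde R$ and $\tilde S$. The second duality equation $(S_\oplus^*\otimes 1)(1\otimes R_\oplus)=1_{\bar H\oplus \bar K}$ is verified in exactly the same way.

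For the normalization, write an arbitrary $x\in\mathrm{End}({}_A(H\oplus K)_B)$ as a block matrix $x=(x_{ij})$ with $i,j\in\{H,K\}$. The off-diagonal blocks $x_{HK}$ and $x_{KH}$ send one diagonal summand of $H\oplus K$ into the other, so after tensoring with $1$ and composing with $R_\oplus$ their images sit entirely in the off-diagonal fusion pieces $H\boxtimes_B\bar K$ and $K\boxtimes_B\bar H$, and are therefore annihilated by $R_\oplus^*$. Hence
\[
R_\oplus^*(pxq\otimes 1)R_\oplus \,=\, R^*(px_{HH}q\otimes 1)R \,+\, \tilde R^*(px_{KK}q\otimes 1)\tilde R,
\]
and a parallel computation gives
\[
S_\oplus^*(1\otimes pxq)S_\oplus \,=\, S^*(1\otimes px_{HH}q)S \,+\, \tilde S^*(1\otimes px_{KK}q)\tilde S.
\]
Equality of the two expressions then follows immediately from the normalization condition on $H$ applied to the diagonal block $x_{HH}$ and the one on $K$ applied to $x_{KK}$.

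There is no real obstacle here; the only point requiring care is that $\mathrm{End}({}_A(H\oplus K)_B)$ generally contains off-diagonal intertwiners between $H$ and $K$, but the block structure of $R_\oplus$ and $S_\oplus$ is designed precisely so these off-diagonal components are projected away, reducing the normalization identity to the corresponding identities for $H$ and $K$ separately.
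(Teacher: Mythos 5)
Your verification is correct, and it is exactly the routine check the paper has in mind: the paper states this lemma with no written proof (the $\square$ marks it as immediate), since the block form of the structure maps reduces both the duality equations and the normalization condition \eqref{eq:duality normalization} to the corresponding identities for $H$ and $K$ separately, with the off-diagonal fusion summands and off-diagonal intertwiners killed as you describe. Nothing is missing.
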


\begin{lemma}\label{lem: product of dualizables}
Let ${}_AH_B$ and ${}_BK{}_C$ be dualizable bimodules, with respective structure maps $R$, $S$, and $\tilde R$, $\tilde S$.
Their fusion ${}_AH\boxtimes_BK{}_C$ is then also dualizable, with dual ${}_C\bar K\boxtimes_B\bar H_A$, and structure maps
$\widehat R := (1\otimes \tilde R\otimes 1)R$ and $\widehat S := (1\otimes S\otimes 1)\tilde S$.
\end{lemma}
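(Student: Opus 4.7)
The claim is a bicategorical version of the fact that duals compose, adapted to our setting with its extra normalization condition. I would first verify the two duality equations for $(\widehat R, \widehat S)$ by a nested zig-zag argument: in the string diagram for $(\widehat R^* \otimes 1)(1 \otimes \widehat S)$ on $H \boxtimes_B K$, the inner pair $(\tilde R^*, \tilde S)$ forms a zig-zag on the $K$-strand, which collapses to $1_K$ by the duality equation for $K$; the remaining outer pair $(R^*, S)$ then collapses on the $H$-strand by the duality equation for $H$. The second duality equation is verified symmetrically.

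For the normalization condition, I introduce the partial-trace endomorphisms
\[
\Phi(y) := (1_H \otimes \tilde R^*)(y \otimes 1_{\bar K})(1_H \otimes \tilde R) \in \mathrm{End}({}_A H_B), \qquad \Psi(y) := (S^* \otimes 1_K)(1_{\bar H} \otimes y)(S \otimes 1_K) \in \mathrm{End}({}_B K_C)
\]
defined for any $y \in \mathrm{End}({}_A H \boxtimes_B K_C)$. Unpacking the definitions of $\widehat R$ and $\widehat S$ directly gives
\[
\widehat R^*(pxq \otimes 1)\widehat R = R^*(\Phi(pxq) \otimes 1) R, \qquad \widehat S^*(1 \otimes pxq)\widehat S = \tilde S^*(1 \otimes \Psi(pxq)) \tilde S.
\]
Since $Z(B)$ is finite-dimensional, both sides further decompose as sums over the minimal central projections $r \in Z(B)$. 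Applied termwise, the normalization for $H$ yields $R^*(\Phi(pxq)r \otimes 1)R = S^*(1 \otimes \Phi(pxq)r)S$ as scalars, and the normalization for $K$ yields $\tilde R^*(r\Psi(pxq) \otimes 1)\tilde R = \tilde S^*(1 \otimes r\Psi(pxq))\tilde S$.

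To conclude, I observe that for each such $r$ the two scalars $R^*(\Phi(pxq)r \otimes 1)R$ and $\tilde R^*(r\Psi(pxq) \otimes 1)\tilde R$ are two evaluations of the same closed string diagram, consisting of four strands $H, K, \bar K, \bar H$ wrapped around $x$, with the central projections $p, q, r$ inserted on the appropriate strands, and with the $H \bar H$ and $K \bar K$ pairs capped off by $R, R^*$ and $\tilde R, \tilde R^*$ respectively. The two expressions differ only by a choice of horizontal slicing of this fixed picture and are therefore equal; summing over $r$ yields the normalization condition for $H \boxtimes_B K$. The main technical obstacle is justifying this slice-independence rigorously: one must commute the various cap and cup morphisms past each other and track how the central element $q \in Z(C)$ acts on $K \boxtimes_C \bar K$ as a $B$-bilinear endomorphism, and therefore passes freely through $\tilde R$ and $\tilde R^*$. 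Granted this bookkeeping, the identification is routine.
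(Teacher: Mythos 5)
Your overall architecture is exactly that of the paper: check the duality equations by the nested zig-zag, then prove the normalization by decomposing over the minimal central projections of $Z(B)$ and invoking the normalization conditions for $H$ and $K$ termwise. Your identities $\widehat R^*(pxq\otimes 1)\widehat R = R^*(\Phi(pxq)\otimes 1)R$ and $\widehat S^*(1\otimes pxq)\widehat S=\tilde S^*(1\otimes \Psi(pxq))\tilde S$, and the termwise applications of \eqref{eq:duality normalization}, are all correct and correspond to the paper's insertion of $\sum_i e_i$ in its chain of pictures.

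The flaw is in the final gluing step. You assert that $R^*(\Phi(pxq)r\otimes 1)R$ and $\tilde R^*(r\Psi(pxq)\otimes 1)\tilde R$ are two slicings of one closed diagram ``with the $H\bar H$ and $K\bar K$ pairs capped off by $R,R^*$ and $\tilde R,\tilde R^*$.'' That is false as stated: $\Psi$ is built from $S,S^*$, so the second expression's diagram caps the $\bar H,H$ strands with $S,S^*$, not with $R,R^*$; these are genuinely different diagrams, and no planar isotopy relates an $R$-closure to an $S$-closure --- if it did, the normalization hypotheses would be superfluous at this point, whereas the equality you want is precisely an instance of them. The repair is immediate and uses only what you have already established: compare instead the pair
\[
S^*\bigl(1\otimes \Phi(pxq)\,r\bigr)S \;=\; \tilde R^*\bigl(r\,\Psi(pxq)\otimes 1\bigr)\tilde R ,
\]
whose left side you obtained from the normalization for $H$ and whose right side you obtained from the normalization for $K$. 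These two \emph{are} evaluations of a single planar diagram --- the $S$-capped $\bar H H$ loop and the $\tilde R$-capped $K\bar K$ loop side by side, with $x$ across the adjacent $H$ and $K$ strands and $p$, $r$, $q$ in the $A$-, $B$-, $C$-regions --- read with two different horizontal slicings; algebraically this is just the interchange law for $\boxtimes$, i.e. whether one first traces out $K$ with the $\tilde R$-caps and then closes with $S$, or first traces out $H$ with the $S$-caps and then closes with $\tilde R$. Summing over $r$ then gives the normalization for $H\boxtimes_B K$, and with this correction your argument is the paper's graphical proof phrased algebraically.
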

\begin{proof}
The duality equations \eqref{eq: duality equations} for $\widehat R$ and $\widehat S$ are straightforward.
To verify the normalization condition \eqref{eq:duality normalization}, we make use of the graphical calculus introduced earlier:
\[
\def\colA{black!7}
\def\colB{black!17}
\def\colC{black!30}
\tikzmath[scale=\squarescale]
	{\fill[rounded corners=10, fill=\colA] (-19,-22) rectangle (13,22);
	\draw[rounded corners=10, double, double distance=2, fill=\colC, double=\colB] (-9,-12) rectangle (9,12);
	\draw 	(-9,0) node[fill=white, draw, inner sep=4]{$x$}
			(0,-12) node[fill=white, draw, inner xsep=2, inner ysep=3]{$\widehat R^*$}
			(0,12) node[fill=white, draw, inner ysep=3] {$\widehat R$}
			(-1,0) node {$q$}
			(-16,0) node {$p$};
	}
=\tikzmath[scale=\squarescale]
	{\fill[rounded corners=10, fill=\colA] (-20,-22) rectangle (14,22);
	\draw[rounded corners=10, fill=\colB] (-11,-18) rectangle (11,18);
	\draw[rounded corners=10, fill=\colC] (-9,-9) rectangle (9,9);
	\draw 	(-10,0) node[fill=white, draw, inner sep=4]{$x$}
			(0,-18) node[fill=white, draw, inner xsep=2]{$R^*$}
			(0,18) node[fill=white, draw] {$R$}
			(0,-9) node[fill=white, draw, inner ysep=3, inner xsep=2]{$\tilde R^*$}
			(0,9) node[fill=white, draw, inner ysep=3] {$\tilde R$}
			(0,0) node {$q$}
			(-17,0) node {$p$};
	}
=\sum_i\tikzmath[scale=\squarescale]
	{\fill[rounded corners=10, fill=\colA] (-20,-22) rectangle (18,22);
	\draw[rounded corners=10, fill=\colB] (-11,-18) rectangle (15.5,18);
	\draw[rounded corners=10, fill=\colC] (-9,-9) rectangle (9,9);
	\draw 	(-10,0) node[fill=white, draw, inner sep=4]{$x$}
			(0,-18) node[fill=white, draw, inner xsep=2]{$R^*$}
			(0,18) node[fill=white, draw] {$R$}
			(0,-9) node[fill=white, draw, inner ysep=3, inner xsep=2]{$\tilde R^*$}
			(0,9) node[fill=white, draw, inner ysep=3] {$\tilde R$}
			(0,0) node {$q$}
			(-17,0) node {$p$}
			(12.5,0) node {$e_i$};
	}
=\sum_i\tikzmath[scale=\squarescale]
	{\fill[rounded corners=10, fill=\colB] (-21,-22) rectangle (21,22);
	\draw[rounded corners=10, fill=\colA] (-18.5,-9) rectangle (-1.5,9);
	\draw[rounded corners=10, fill=\colC] (1.5,-9) rectangle (18.5,9);
	\draw 	(0,0) node[fill=white, draw, inner sep=4]{$x$}
			(-10,-9) node[fill=white, draw, inner xsep=2]{$S^*$}
			(-10,9) node[fill=white, draw] {$S$}
			(10,-9) node[fill=white, draw, inner ysep=3, inner xsep=2]{$\tilde R^*$}
			(10,9) node[fill=white, draw, inner ysep=3] {$\tilde R$}
			(10,0) node {$q$}
			(-10,0) node {$p$}
			(0,13) node {$e_i$};
	}
\]\[
\def\colA{black!7}
\def\colB{black!17}
\def\colC{black!30}
=\sum_i\tikzmath[scale=\squarescale]
	{\fill[rounded corners=10, fill=\colC] (20,-22) rectangle (-18,22);
	\draw[rounded corners=10, fill=\colB] (11,-18) rectangle (-15.5,18);
	\draw[rounded corners=10, fill=\colA] (9,-9) rectangle (-9,9);
	\draw 	(10,0) node[fill=white, draw, inner sep=4]{$x$}
			(0,-18) node[fill=white, draw, inner ysep=2, inner xsep=2]{$\tilde S^*$}
			(0,18) node[fill=white, draw, inner ysep=2] {$\tilde S$}
			(0,-9) node[fill=white, draw, inner xsep=2]{$S^*$}
			(0,9) node[fill=white, draw] {$S$}
			(0,0) node {$p$}
			(17,0) node {$q$}
			(-12.5,0) node {$e_i$};
	}
=\tikzmath[scale=\squarescale]
	{\fill[rounded corners=10, fill=\colC] (20,-22) rectangle (-15,22);
	\draw[rounded corners=10, fill=\colB] (11,-18) rectangle (-11,18);
	\draw[rounded corners=10, fill=\colA] (-9,-9) rectangle (9,9);
	\draw 	(10,0) node[fill=white, draw, inner sep=4]{$x$}
			(0,-18) node[fill=white, draw, inner ysep=2, inner xsep=2]{$\tilde S^*$}
			(0,18) node[fill=white, draw, inner ysep=2] {$\tilde S$}
			(0,-9) node[fill=white, draw, inner xsep=2]{$S^*$}
			(0,9) node[fill=white, draw] {$S$}
			(0,0) node {$p$}
			(17,0) node {$q$};
	}
=\tikzmath[scale=\squarescale]
	{\fill[rounded corners=10, fill=\colC] (19,-22) rectangle (-13,22);
	\draw[rounded corners=10, double, double distance=2, fill=\colA, double=\colB] (9,-12) rectangle (-9,12);
	\draw 	(9,0) node[fill=white, draw, inner sep=4]{$x$}
			(0,-12) node[fill=white, draw, inner xsep=2.5, inner ysep=3]{$\widehat S^*$}
			(0,12) node[fill=white, draw, inner xsep=4, inner ysep=3] {$\widehat S$}
			(1,0) node {$p$}
			(16,0) node {$q$};
	}\,\,.
\]
Here, $e_i\in Z(B)$ are the minimal central projections of $B$.
The shades correspond to the algebras $A$, $B$, $C$, and the lines stand for the bimodules $H$, $\bar H$, $K$, and $\bar K$.
\end{proof}

{\def\colA{black!10} \def\colB{black!30}
We henceforth often abbreviate the maps $R:{}_AL^2(A)_A\to{}_AH\boxtimes_B\bar H{}_A$ and $S:{}_BL^2(B)_B \rightarrow  {}_B\bar H\boxtimes_A H_B$ as $
\tikzmath[scale=.085]{ \fill[fill=\colA] (-4,-2.5) [rounded corners=3.5]-- (-4,2.5) --(4,2.5) -- (4,-2.5) [sharp corners]-- (2,-2.5) [rounded corners = 4.8]-- (2,.5) -- (-2,.5) [sharp corners]-- (-2,-2.5) [rounded corners=3.5]-- cycle; \fill[fill=\colB] (-2,-2.5) -- (2,-2.5) [rounded corners = 4.8]-- (2,.5) -- (-2,.5) [sharp corners]-- cycle; \draw (2,-2.5) [rounded corners = 4.8]-- (2,.5) -- (-2,.5) -- (-2,-2.5);}$ 
and $
\tikzmath[scale=.085]{ \fill[fill=\colB] (-4,-2.5) [rounded corners=3.5]-- (-4,2.5) --(4,2.5) -- (4,-2.5) [sharp corners]-- (2,-2.5) [rounded corners = 4.8]-- (2,.5) -- (-2,.5) [sharp corners]-- (-2,-2.5) [rounded corners=3.5]-- cycle; \fill[fill=\colA] (-2,-2.5) -- (2,-2.5) [rounded corners = 4.8]-- (2,.5) -- (-2,.5) [sharp corners]-- cycle;\draw (2,-2.5) [rounded corners = 4.8]-- (2,.5) -- (-2,.5) -- (-2,-2.5);}$ 
}respectively.  We will show, in Theorem \ref{thm: normalizing duality data}, that a bimodule between von Neumann algebras with finite-dimensional centers is ``non-normalized dualizable" if and only if it is dualizable.  We first record two lemmas regarding consequences of the duality equations \eqref{eq: duality equations}.

\begin{lemma}\label{lem: D >= 1}
Let ${}_AH_B$ be a non-zero bimodule between factors.
If $R$ and $S$ are maps as in \eqref{eq:duality maps} satisfying \eqref{eq: duality equations}, then $R$ and $S$ are injective and $(R^*R)(S^*S)\ge 1$.
\end{lemma}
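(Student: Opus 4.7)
I would split the proof into two parts, each using the duality equations \eqref{eq: duality equations} combined with the factor hypothesis $Z(A) = Z(B) = \mathbb{C}$. For the injectivity of $S$ and $R$: the first equation shows $1_H \otimes S$ has left inverse $R^* \otimes 1_H$, hence is injective. Because $B$ is a factor and $H \neq 0$, the right module $H_B$ is faithful (the kernel of any normal representation of a factor is a weakly closed two-sided ideal, hence trivial). Lemma \ref{lem:stay-injective}, applied in its mirror form for left tensoring with a faithful right module (justifiable via the symmetry $L^2(A) \cong L^2(A^\op)$ of Remark \ref{rem: L^2(A^op)}), then gives injectivity of $S$. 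The second duality equation yields injectivity of $R$ similarly, once one observes $R \neq 0$ (else the duality equation collapses to $0 = 1_H$, contradicting $H \neq 0$), so that $\bar H$ is nonzero and hence $\bar H_A$ is faithful.

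For the inequality: since $A$ is a factor, $\mathrm{End}({}_A L^2(A)_A) \cong Z(A) = \mathbb{C}$, so $R^*R = \alpha \cdot 1$ for some scalar $\alpha$, which is strictly positive because $R$ is nonzero and injective. Analogously $S^*S = \beta \cdot 1$ with $\beta > 0$. Note that $\|R\|^2 = \|R^*R\| = \alpha$ and $\|S\|^2 = \beta$. Taking operator norms in $1_H = (R^* \otimes 1_H)(1_H \otimes S)$ and using the standard fact that Connes fusion with an identity morphism is norm non-increasing, one obtains
\[
1 \;=\; \|1_H\| \;\leq\; \|R^* \otimes 1_H\| \cdot \|1_H \otimes S\| \;\leq\; \|R^*\| \cdot \|S\| \;=\; \sqrt{\alpha \beta},
\]
which gives $(R^*R)(S^*S) = \alpha\beta \geq 1$ on squaring.

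The main subtlety is the invocation of Lemma \ref{lem:stay-injective}: as stated it treats right tensoring by a faithful left module, whereas the injectivity step requires left tensoring by a faithful right module. The translation via the $A \leftrightarrow A^\op$ symmetry is routine, but should be made explicit in the write-up. Everything else is a direct consequence of basic Hilbert-space norm inequalities and the factor hypothesis.
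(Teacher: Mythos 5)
Your argument is correct, and the injectivity half is essentially the paper's: faithfulness of the modules over factors plus Lemma \ref{lem:stay-injective} applied to the split injections coming from \eqref{eq: duality equations} (your detour through $\bar H\neq 0$ for $R$ is fine; one can also avoid it by taking the adjoint of the first duality equation, $(1\otimes S^*)(R\otimes 1)=1_H$, and using faithfulness of ${}_AH$ with Lemma \ref{lem:stay-injective} as literally stated). Where you genuinely diverge is the inequality $(R^*R)(S^*S)\ge 1$. The paper forms the Jones projections $e_1=(R^*R)^{-1}(RR^*\otimes 1)$ and $e_2=(S^*S)^{-1}(1\otimes SS^*)$ and uses operator positivity: $e_1=e_1e_1\ge e_1e_2e_1=\big((R^*R)(S^*S)\big)^{-1}e_1$, which forces $(R^*R)(S^*S)\ge 1$ since $e_1\neq 0$. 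You instead take operator norms in $1_H=(R^*\otimes 1)(1\otimes S)$, using that fusion with an identity is contractive, to get $1\le\|R^*\|\,\|S\|=\sqrt{(R^*R)(S^*S)}$. Your route is shorter and more elementary, but it rests on the contractivity claim, which you should justify rather than only cite as standard: for a bounded $A$-linear $f$, $\|f\boxtimes 1\|^2=\|(f^*f)\boxtimes 1\|\le\|f^*f\|=\|f\|^2$, because $x\mapsto x\boxtimes 1$ is a unital $*$-homomorphism of endomorphism algebras and hence norm non-increasing. The paper's route buys something beyond the lemma itself: the projections $e_1,e_2$ reappear later (e.g.\ in the proof of Proposition \ref{prop:basic properties of index} and in the conditional-expectation picture of Section \ref{sec: Pimsner-Popa inequality}), so introducing them here is not wasted effort, whereas your norm argument is self-contained but yields only the numerical bound.
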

\begin{proof}
\def\colA{black!10}
\def\colB{black!30}
The expressions 
$R^*R=\tikzmath[scale=.085]{\fill[fill=\colA, rounded corners=3.2] (-3,-2.5) rectangle (3,2.5);\filldraw[fill=\colB] (0,0) circle (1.4);}$\, 
and
$S^*S=\tikzmath[scale=.085]{\fill[fill=\colB, rounded corners=3.2] (-3,-2.5) rectangle (3,2.5);\filldraw[fill=\colA] (0,0) circle (1.4);}$\, 
are in $\IC$ (in fact in $\IR$) because $A$ and $B$ are factors.
As $H$ is non-zero and $A$ and $B$ are factors, $H$ is faithful, both as
an $A$-module and a $B^{\op}$-module.
By~\eqref{eq: duality equations} and Lemma~\ref{lem:stay-injective}, this implies that $S$ and $R$ are injective.
In particular, 
$\tikzmath[scale=.085]{\fill[fill=\colA, rounded corners=3.2] (-3,-2.5) rectangle (3,2.5);\filldraw[fill=\colB] (0,0) circle (1.4);}$\, 
and 
$\tikzmath[scale=.085]{\fill[fill=\colB, rounded corners=3.2] (-3,-2.5) rectangle (3,2.5);\filldraw[fill=\colA] (0,0) circle (1.4);}$\, 
are nonzero.
Letting
$e_1:=\big(\tikzmath[scale=.085]{ \fill[fill=\colA, rounded corners=3.2] (-3,-2.5) rectangle (3,2.5); \filldraw[fill=\colB] (0,0) circle (1.4);} 
\big)^{-1}\!\!\cdot
\tikzmath[scale=\squarescale]{\fill[fill=\colA] (-4,-3) [rounded corners=2]-- (-4,3) [sharp corners]-- (-2,3) [rounded corners = 4]-- (-2,1) -- (2,1) [sharp corners]-- (2,3) --(5,3) -- (5,-3) -- (2,-3) [rounded corners = 4]-- (2,-1) -- (-2,-1) [sharp corners]-- (-2,-3) [rounded corners=2]-- cycle; \fill[fill=\colB] (-2,-3) -- (2,-3) [rounded corners = 4]-- (2,-1) -- (-2,-1) [sharp corners]-- cycle;
\fill[fill=\colB] (-2,3) -- (2,3) [rounded corners = 4]-- (2,1) -- (-2,1) [sharp corners]-- cycle; \draw (2,3) [rounded corners = 4]-- (2,1) -- (-2,1) -- (-2,3); \draw (2,-3) [rounded corners = 4]-- (2,-1) -- (-2,-1) -- (-2,-3); \fill[fill=\colB] (7,-3) [rounded corners=2]-- (7,3) [sharp corners]-- (5,3) -- (5,-3) [rounded corners=2]-- cycle; \draw (5,3) -- (5,-3);}$ 
and
$e_2:=\big(\tikzmath[scale=.085]{\fill[fill=\colB, rounded corners=3.2] (-3,-2.5) rectangle (3,2.5); \filldraw[fill=\colA] (0,0) circle (1.4);} 
\big)^{-1}\!\!\cdot
\tikzmath[scale=\squarescale]{\fill[fill=\colB] (4,-3) [rounded corners=2]-- (4,3) [sharp corners]-- (2,3) [rounded corners = 4]-- (2,1) -- (-2,1) [sharp corners]-- (-2,3) --(-5,3) -- (-5,-3) -- (-2,-3) [rounded corners = 4]-- (-2,-1) -- (2,-1) [sharp corners]-- (2,-3) [rounded corners=2]-- cycle; \fill[fill=\colA] (2,-3) -- (-2,-3) [rounded corners = 4]-- (-2,-1) -- (2,-1) [sharp corners]-- cycle;
\fill[fill=\colA] (2,3) -- (-2,3) [rounded corners = 4]-- (-2,1) -- (2,1) [sharp corners]-- cycle;\draw (-2,3) [rounded corners = 4]-- (-2,1) -- (2,1) -- (2,3);\draw (-2,-3) [rounded corners = 4]-- (-2,-1) -- (2,-1) -- (2,-3); \fill[fill=\colA] (-7,-3) [rounded corners=2]-- (-7,3) [sharp corners]-- (-5,3) -- (-5,-3) [rounded corners=2]-- cycle; \draw (-5,3) -- (-5,-3); }$\, 
(these are the Jones projections), we have
\[
e_1=e_1e_1\ge e_1e_2e_1 =
\big(\tikzmath[scale=.085]{\fill[fill=\colA, rounded corners=3.2] (-3,-2.5) rectangle (3,2.5);\filldraw[fill=\colB] (0,0) circle (1.4);} 
\cdot \tikzmath[scale=.085]{\fill[fill=\colB, rounded corners=3.2] (-3,-2.5) rectangle (3,2.5);\filldraw[fill=\colA] (0,0) circle (1.4);} 
\big)^{-1}\!
e_1 \Rightarrow 1\ge \big(\tikzmath[scale=.085]{\fill[fill=\colA, rounded corners=3.2] (-3,-2.5) rectangle (3,2.5);\filldraw[fill=\colB] (0,0) circle (1.4);} 
\cdot \tikzmath[scale=.085]{\fill[fill=\colB, rounded corners=3.2] (-3,-2.5) rectangle (3,2.5);\filldraw[fill=\colA] (0,0) circle (1.4);} 
\big)^{-1} \Rightarrow \, \tikzmath[scale=.085]{\fill[fill=\colA, rounded corners=3.2] (-3,-2.5) rectangle (3,2.5);\filldraw[fill=\colB] (0,0) circle (1.4);} 
\cdot \tikzmath[scale=.085]{\fill[fill=\colB, rounded corners=3.2] (-3,-2.5) rectangle (3,2.5);\filldraw[fill=\colA] (0,0) circle (1.4);} 
\ge 1. \qedhere
\]
\end{proof}

The next lemma is similar to \cite[Lemma 3.2]{Longo-Roberts(A-theory-of-dimension)}.

\begin{lemma}\label{lem: Lemma B}
Let ${}_AH_B$ be a bimodule between factors.
If there exist maps $R$ and $S$ as in \eqref{eq:duality maps} satisfying \eqref{eq: duality equations},
then ${}_AH_B$ is a finite direct sum of irreducible bimodules; its algebra of bimodule endomorphisms is therefore finite-dimensional.
Moreover, the (non-normalized) state $\varphi:\mathrm{End}({}_AH_B)\to \IC$ given by
\begin{equation}\label{eq: varphi:End(H) --> C}
\def\colA{black!10}
\def\colB{black!30}
\varphi\,:\,x\,\mapsto\,
\tikzmath[scale=\squarescale]
	{\fill[rounded corners=10, fill=\colA] (-13,-12) rectangle (12,12);
	\draw[rounded corners=10, fill=\colB] (-5,-7) rectangle (5,7);
	\draw 	(-5,0) node[fill=white, draw, inner sep=4]{$x$};
	}
\end{equation}
is faithful.
\end{lemma}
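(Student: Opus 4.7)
The plan is to deduce both conclusions---faithfulness of $\varphi$, and finite-dimensionality of $\mathrm{End}({}_AH_B)$---from a single operator identity extracted from the snake equations. I first take the adjoint of the duality relation $(R^*\otimes 1_H)(1_H\otimes S)=1_H$ to obtain the ``reverse snake'' $(1_H\otimes S^*)(R\otimes 1_H)=1_H$, viewed as the composite $H\cong L^2(A)\boxtimes_AH\to H\boxtimes_B\bar H\boxtimes_A H\to H\boxtimes_B L^2(B)\cong H$. For any $x\in\mathrm{End}({}_AH_B)$, the right $B$-linearity of $x$ combined with the functoriality of Connes fusion in its right factor gives $x\circ(1_H\otimes S^*)=(1_H\otimes S^*)\circ(x\otimes 1_{\bar H}\otimes 1_H)$; composing with $R\otimes 1_H$ then yields the key identity
\[
x \,=\, (1_H\otimes S^*)\bigl(((x\otimes 1_{\bar H})R)\otimes 1_H\bigr)\qquad(\dagger)
\]
as operators $H\to H$.

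For faithfulness, I note that $\varphi(x^*x)=R^*(x^*x\otimes 1_{\bar H})R=((x\otimes 1_{\bar H})R)^*\!\cdot\!(x\otimes 1_{\bar H})R$ sits in $\mathrm{End}_{A,A}(L^2(A))=\IC$ as a non-negative real, so $\varphi(x^*x)=0$ forces $(x\otimes 1_{\bar H})R=0$; substituting into $(\dagger)$ immediately gives $x=0$. For the lower bound on projections, I apply $(\dagger)$ with $x=p$ a nonzero projection and take operator norms, obtaining
\[
1=\|p\|\,\le\,\|1_H\otimes S^*\|\cdot\|((p\otimes 1_{\bar H})R)\otimes 1_H\|\,=\,\sqrt{S^*S}\cdot\sqrt{\varphi(p)}\,,
\]
where $\|S\|^2=S^*S$ (since $B$ is a factor, so $S^*S$ is a positive scalar) and $\|(p\otimes 1_{\bar H})R\|^2=R^*(p\otimes 1_{\bar H})^2R=\varphi(p)$ (using $p^2=p$). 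Squaring yields the uniform lower bound $\varphi(p)\ge(S^*S)^{-1}$ valid for every nonzero projection $p\in\mathrm{End}({}_AH_B)$.

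For the conclusion, any family $\{p_i\}_{i\in I}$ of mutually orthogonal nonzero projections in the von Neumann algebra $\mathrm{End}({}_AH_B)=A'\cap B'\subset\bfB(H)$ must satisfy $|I|/(S^*S)\le\sum_i\varphi(p_i)=\varphi\bigl(\sum_ip_i\bigr)\le\varphi(1_H)=R^*R<\infty$, forcing $|I|\le(R^*R)(S^*S)<\infty$. A von Neumann algebra with no infinite family of mutually orthogonal nonzero projections is a finite direct sum of matrix algebras, so $\mathrm{End}({}_AH_B)$ is finite-dimensional and ${}_AH_B$ correspondingly decomposes as a finite direct sum of irreducible sub-bimodules. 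The argument is short once $(\dagger)$ is identified; the two ingredients used to derive it---the adjoint of the snake equation and the naturality of Connes fusion in its module argument---are both standard, so no substantial obstacle arises.
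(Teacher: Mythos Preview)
Your proof is correct and follows essentially the same route as the paper: both arguments rewrite a projection $p$ via the snake relation (you use the adjoint form $(1\otimes S^*)(R\otimes 1)=1$, the paper uses $(R^*\otimes 1)(1\otimes S)=1$) and then bound $\|p\|=1$ by the product $\sqrt{S^*S}\cdot\sqrt{\varphi(p)}$, yielding the uniform lower bound $\varphi(p)\ge (S^*S)^{-1}$ from which both faithfulness and finite-dimensionality follow. The only cosmetic differences are that you argue symbolically rather than pictorially and extract faithfulness directly from the identity~$(\dagger)$ before specializing to projections, whereas the paper deduces it from the projection bound via spectral calculus.
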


\begin{proof}
\def\colA{black!10}
\def\colB{black!30}
For any non-zero projection $p\in\mathrm{End}({}_AH_B)$, we have
\[
\begin{split}
1=\big\|p\big\|
&=\Bigg\|
\tikzmath[scale=\squarescale]
	{\fill[rounded corners=9, fill=\colA] (-15,-9) rectangle (15,9);
	\fill[rounded corners=9, fill=\colB] (9,9) -- (9,-7) -- (0,-7) -- (0,7) -- (-9,7) [sharp corners]-- (-9,-9)  [rounded corners=9]-- (15,-9) -- (15,9) [sharp corners]-- cycle;
	\draw[rounded corners=9] (9,9) -- (9,-7) -- (0,-7) -- (0,7) -- (-9,7) -- (-9,-9);
	\draw (-9,0) node[fill=white, draw]{$p$};}
\Bigg\|
\,\le\,
\Bigg\|
\tikzmath[scale=\squarescale]
	{\clip[rounded corners=10] (-15,-2) rectangle (14,16);
	\fill[rounded corners=10, fill=\colA] (-15,-16) rectangle (15,16);
	\fill[rounded corners=9, fill=\colB] (8,16) -- (8,-12) -- (0,-12) -- (0,12) -- (-9,12) [sharp corners]-- (-9,-16)  [rounded corners=10]-- (15,-16) -- (15,16) [sharp corners]-- cycle;
	\draw[rounded corners=9] (8,16) -- (8,-12) -- (0,-12) -- (0,12) -- (-9,12) -- (-9,-16);
	\draw (-9,5) node[fill=white, draw]{$p$};}
\Bigg\|\cdot
\Bigg\|
\tikzmath[scale=\squarescale]
	{\clip[rounded corners=10] (-14,-16) rectangle (15,2);
	\fill[rounded corners=10, fill=\colA] (-15,-16) rectangle (15,16);
	\fill[rounded corners=9, fill=\colB] (9,16) -- (9,-12) -- (0,-12) -- (0,12) -- (-8,12) [sharp corners]-- (-8,-16)  [rounded corners=10]-- (15,-16) -- (15,16) [sharp corners]-- cycle;
	\draw[rounded corners=9] (9,16) -- (9,-12) -- (0,-12) -- (0,12) -- (-8,12) -- (-8,-16);}
\Bigg\|
\\&=
\Bigg\|
\tikzmath[scale=\squarescale]
	{\clip[rounded corners=10] (-15,-2) rectangle (6,16);
	\fill[rounded corners=10, fill=\colA] (-15,-16) rectangle (15,16);
	\fill[rounded corners=9, fill=\colB] (9,16) -- (9,-12) -- (0,-12) -- (0,12) -- (-9,12) [sharp corners]-- (-9,-16)  [rounded corners=10]-- (15,-16) -- (15,16) [sharp corners]-- cycle;
	\draw[rounded corners=9] (9,16) -- (9,-12) -- (0,-12) -- (0,12) -- (-9,12) -- (-9,-16);
	\draw (-9,5) node[fill=white, draw]{$p$};}
\Bigg\|\cdot
\Bigg\|
\tikzmath[scale=\squarescale]
	{\clip[rounded corners=10] (-6,-16) rectangle (15,2);
	\fill[rounded corners=10, fill=\colA] (-15,-16) rectangle (15,16);
	\fill[rounded corners=9, fill=\colB] (9,16) -- (9,-12) -- (0,-12) -- (0,12) -- (-9,12) [sharp corners]-- (-9,-16)  [rounded corners=10]-- (15,-16) -- (15,16) [sharp corners]-- cycle;
	\draw[rounded corners=9] (9,16) -- (9,-12) -- (0,-12) -- (0,12) -- (-9,12) -- (-9,-16);}
\Bigg\|=\sqrt{\phantom{\big\|}\!\!\!\varphi(p)}\cdot
\sqrt{\tikzmath[scale=.085]{\fill[fill=\colB, rounded corners=3.2] (-3,-2.5) rectangle (3,2.5);\filldraw[fill=\colA] (0,0) circle (1.4);} 
}\,,
\end{split}
\]
where the last step follows from the general identity $\|a^*a\|=\|a\|^2$.
Let $c:=(\tikzmath[scale=.085]{\fill[fill=\colB, rounded corners=3.2] (-3,-2.5) rectangle (3,2.5);\filldraw[fill=\colA] (0,0) circle (1.4);})^{-1}$.
By the above estimate, we have $\varphi(p)\ge c$ for any non-zero projection $p$.
In particular, $\varphi$ is faithful.
If $H$ failed to be a finite direct sum of irreducible bimodules, we could pick countably many non-zero mutually orthogonal projections $p_n\in \mathrm{End}({}_AH_B)$, and get
\[
\varphi(1)> \varphi\big(\sum_{n=1}^N p_n\big)=\sum_{n=1}^N \varphi(p_n)\ge \sum_{n=1}^N c = Nc
\]
for every $N$. This is clearly impossible. Our bimodule is therefore a finite direct sum of irreducible ones and its endomorphism algebra is finite-dimensional.
\end{proof}

We can now prove that a bimodule that admits a not-necessarily normalized dual in fact admits a normalized dual:

\begin{theorem} \label{thm: normalizing duality data}
Let ${}_AH_B$ and ${}_B\bar H_A$ be bimodules between von Neumann algebras with finite-dimensional center, and let 
\begin{equation}\label{eq: tR and tS}
\tilde R:{}_AL^2(A)_A \rightarrow {}_AH\boxtimes_B \bar H_A\qquad \text{and}\qquad
\tilde S:{}_BL^2(B)_B \rightarrow  {}_B\bar H\boxtimes_A H_B
\end{equation}
be bimodule maps satisfying \eqref{eq: duality equations}.
Then it is possible to find new maps $R$ and $S$ as in \eqref{eq:duality maps} that satisfy both \eqref{eq: duality equations} and \eqref{eq:duality normalization}.
\end{theorem}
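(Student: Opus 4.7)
The approach is to rescale the duality data $(\tilde R, \tilde S)$ so that it satisfies \eqref{eq:duality normalization} while continuing to satisfy \eqref{eq: duality equations}. The plan proceeds in two reductions.

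\emph{Reduction to the factor case.} Let $\{p_i\}\subset Z(A)$ and $\{q_j\}\subset Z(B)$ be the minimal central projections, so $A = \bigoplus_i A_i$ and $B = \bigoplus_j B_j$ with $A_i := p_iA$, $B_j := q_jB$ factors. Then $H = \bigoplus_{i,j} p_iHq_j$ and $\bar H = \bigoplus_{i,j} q_j\bar Hp_i$, and the bimodule maps $\tilde R$ and $\tilde S$ split along this decomposition as $\tilde R = \sum_{i,j}\tilde R_{ij}$, $\tilde S = \sum_{i,j}\tilde S_{ij}$. A direct calculation---using that $L^2A = \bigoplus L^2A_i$ is ``diagonal'' under the central-projection decomposition and that $A$-bilinearity of $\tilde R$ forces it to preserve this diagonality---shows that each pair $(\tilde R_{ij}, \tilde S_{ij})$ satisfies \eqref{eq: duality equations} on the bimodule $p_iHq_j$ between the factors $A_i$ and $B_j$, and that \eqref{eq:duality normalization} decouples into one scalar identity per pair $(i,j)$. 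This reduces the problem to the case when $A$ and $B$ are factors.

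\emph{Factor case.} Assume $A, B$ are factors. By Lemma~\ref{lem: Lemma B}, $\mathrm{End}({}_AH_B)$ is a finite-dimensional $C^*$-algebra, and the functional $x\mapsto \tilde R^*(x\otimes 1)\tilde R$ is faithful positive on it; symmetrically, so is $x\mapsto \tilde S^*(1\otimes x)\tilde S$. We decompose $H$ into a finite direct sum of irreducibles and, correspondingly, $\bar H$ into its matching isotypic pieces, using Lemma~\ref{lem: characterization of duals} to identify the irreducible constituents of $\bar H$ as the duals of those of $H$ with matching multiplicities. Within each isotypic block $K^{\oplus n}$ of $H$, Lemma~\ref{lem: characterization of duals} identifies the space of possible duality data with an $n\times n$ matrix algebra; we use the unitary freedom in choosing an orthonormal basis of the multiplicity space (equivalently, an SVD-style decomposition of the matrix data) to bring the duality into ``diagonal'' form. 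On each resulting irreducible summand, $\mathrm{End}$ is $\mathbb C$, so \eqref{eq:duality normalization} reduces to a single scalar equation: rescaling $R = \alpha\tilde R$, $S = \bar\alpha^{-1}\tilde S$ preserves \eqref{eq: duality equations}, and the choice $|\alpha|^4 = (\tilde S^*\tilde S)/(\tilde R^*\tilde R)$---a positive real number by Lemma~\ref{lem: D >= 1}---achieves the normalization.

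The main obstacle is handling isotypic multiplicities. One must verify that within a multiplicity-$n$ block the matrix-valued duality data can always be diagonalized by a unitary basis change compatible with the rigid duality structure, and that the per-irreducible scalar rescalings then assemble into a well-defined global $(R,S)$ still satisfying both equations of \eqref{eq: duality equations}. This amounts to a matrix computation exploiting the precise way in which the duality equations couple the matrix representations of $\tilde R$ and $\tilde S$ on each block.
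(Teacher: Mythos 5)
Your reduction to the factor case is sound and is the same reduction the paper performs, and your scalar rescaling on a single irreducible summand is correct. But in the factor case there is a genuine gap: the entire difficulty of the theorem is concentrated in the step you defer, namely showing that within an isotypic block $K^{\oplus n}$ the matrix-valued data carried by $(\tilde R,\tilde S)$ can be simultaneously brought to diagonal form by unitaries in $\mathrm{End}({}_AH_B)$ and $\mathrm{End}({}_B\bar H_A)$ compatibly with \emph{both} equations of \eqref{eq: duality equations}, and you explicitly leave this as ``a matrix computation.'' Moreover, even setting up that matrix picture is not yet justified: you invoke Lemma \ref{lem: characterization of duals} to match the irreducible constituents of $\bar H$ with duals of those of $H$, but that lemma assumes the irreducible bimodule is dualizable in the paper's normalized sense, which is precisely what has not been established at this stage --- all you have is non-normalized data for $H$ as a whole, and cutting it down by a projection $p\in\mathrm{End}({}_AH_B)$ in the style of Lemma \ref{lem: direct summand of dualizable bimodule} produces an idempotent on $\bar H$ that need not be self-adjoint when the data is unnormalized, so the passage to irreducible summands needs an extra argument. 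As written, the proof is incomplete exactly at its central point.

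For comparison, the paper avoids any decomposition into irreducibles: it keeps the given $\bar H$ and makes the ansatz $R=(x\otimes1)\tilde R$, $S=(1\otimes x^{-1})\tilde S$ with $x$ a positive invertible element of the finite-dimensional algebra $\mathrm{End}({}_AH_B)$ (finite-dimensionality and faithfulness of the relevant states come from Lemma \ref{lem: Lemma B}). The duality equations hold automatically for any such $x$, and \eqref{eq:duality normalization} becomes $\varphi(xyx)=\psi(x^{-1}yx^{-1})$ for all $y$, i.e.\ $xax=x^{-1}bx^{-1}$ where $\varphi=a\tau$ and $\psi=b\tau$ relative to a trace $\tau$; this operator equation has the unique positive solution
\[
x=\sqrt{\sqrt a^{-1}\sqrt{\sqrt a\, b\sqrt a\,}\,\sqrt a^{-1}}\, ,
\]
which finishes the factor case in one stroke. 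If you want to complete your route, the block-diagonalization you postponed would in effect have to reproduce this computation; it is cleaner to do it globally, without choosing bases or multiplicities.
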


\begin{proof}
\def\colA{black!10}
\def\colB{black!30}
We first assume that $A$ and $B$ are factors.
For this proof, we write $
\tikzmath[scale=.085]{ \fill[fill=\colA] (-4,-2.5) [rounded corners=3.5]-- (-4,2.5) --(4,2.5) -- (4,-2.5) [sharp corners]-- (2,-2.5) [rounded corners = 4.8]-- (2,.5) -- (-2,.5) [sharp corners]-- (-2,-2.5) [rounded corners=3.5]-- cycle; \fill[fill=\colB] (-2,-2.5) -- (2,-2.5) [rounded corners = 4.8]-- (2,.5) -- (-2,.5) [sharp corners]-- cycle; \draw (2,-2.5) [rounded corners = 4.8]-- (2,.5) -- (-2,.5) -- (-2,-2.5);}$ 
for $\tilde R$ and $
\tikzmath[scale=.085]{ \fill[fill=\colB] (-4,-2.5) [rounded corners=3.5]-- (-4,2.5) --(4,2.5) -- (4,-2.5) [sharp corners]-- (2,-2.5) [rounded corners = 4.8]-- (2,.5) -- (-2,.5) [sharp corners]-- (-2,-2.5) [rounded corners=3.5]-- cycle; \fill[fill=\colA] (-2,-2.5) -- (2,-2.5) [rounded corners = 4.8]-- (2,.5) -- (-2,.5) [sharp corners]-- cycle;\draw (2,-2.5) [rounded corners = 4.8]-- (2,.5) -- (-2,.5) -- (-2,-2.5);}$ 
for $\tilde S$,
and let $\varphi,\psi:\mathrm{End}({}_AH_B)\to \IC$ be given by
\[
\varphi:m\mapsto
\tikzmath[scale=\squarescale]
	{\fill[rounded corners=10, fill=\colA] (-12,-11) rectangle (11,11);
	\draw[rounded corners=10, fill=\colB] (-5,-7) rectangle (5,7);
	\draw 	(-5,0) node[fill=white, draw, inner sep=4]{$m$};
	}\quad\text{and}\quad
\psi:m\mapsto
\tikzmath[scale=\squarescale]
	{\fill[rounded corners=10, fill=\colB] (-11,-11) rectangle (12,11);
	\draw[rounded corners=10, fill=\colA] (-5,-7) rectangle (5,7);
	\draw 	(5,0) node[fill=white, draw, inner sep=4]{$m$};
	}\,.
\]
The state $\varphi$ is faithful by the previous lemma, and so is $\psi$ for similar reasons.
Pick a trace $\tau:\mathrm{End}({}_AH_B)\to \IC$; one 
exists because the algebra is finite-dimensional.
Let $a,b\in\mathrm{End}({}_AH_B)$ be the unique solutions to the equations $\varphi = a\tau$ and
$\psi = b\tau$; here, we use the action of the algebra $\mathrm{End}({}_AH_B)$ on its 
 $L^1$-space, as introduced in section~\ref{sec: Preliminaries}.  Since $\varphi$ and $\psi$ are positive and faithful, $a$ and $b$ are positive and invertible.

The new structure maps $R$ and $S$ are given in terms of the old ones $\tilde R$ and $\tilde S$ by
\[
R:=(x\otimes 1)\tilde R=
\tikzmath[scale=\squarescale]
	{\clip[rounded corners=10] (-13,-6) rectangle (12,12); 
	\fill[rounded corners=10, fill=\colA] (-13,-12) rectangle (12,12);
	\draw[rounded corners=10, fill=\colB] (-5,-12) rectangle (5,8);
	\draw 	(-5,1) node[fill=white, draw]{$x$};
	}
\quad\text{and}\quad
S:=(1\otimes x^{-1})\tilde S=
\tikzmath[scale=\squarescale]
	{\clip[rounded corners=10] (-12,-6) rectangle (13,12); 
	\fill[rounded corners=10, fill=\colB] (-12,-12) rectangle (13,12);
	\draw[rounded corners=10, fill=\colA] (-5,-12) rectangle (5,8);
	\draw 	(5,1) node[fill=white, draw, inner sep=2]{$x^{-1}$};
	}
\]
for some appropriately chosen positive element $x\in \mathrm{End}({}_A H_B)$.
Clearly $R$ and $S$ satisfy the duality equations \eqref{eq: duality equations}.
To ensure that they also satisfy the normalization equation \eqref{eq:duality normalization},
the element $x$ needs to satisfy $\varphi(xyx)=\psi(x^{-1}yx^{-1})$ for all $y\in \mathrm{End}({}_A H_B)$,
which is to say $x\varphi x = x^{-1}\psi x^{-1}$ or, equivalently, $xax = x^{-1}bx^{-1}$.
That equation has a unique positive solution\footnote{Courtesy of {\sf http://mathoverflow.net/questions/70838}.}:
\[
\begin{split}
xax = x^{-1}bx^{-1}
&\Leftrightarrow x^2ax^2 = b \\
&\Leftrightarrow \sqrt{a}x^2ax^2\sqrt{a} = \sqrt{a}b\sqrt{a}\\
&\Leftrightarrow \sqrt{a}x^2\sqrt{a} = \sqrt{\sqrt{a}b\sqrt{a}\,}\\
&\Leftrightarrow x^2 = \sqrt{a}^{-1}\sqrt{\sqrt{a}b\sqrt{a}\,}\sqrt{a}^{-1}\\
&\Leftrightarrow x = \sqrt{\sqrt{a}^{-1}\sqrt{\sqrt{a}b\sqrt{a}\,}\sqrt{a}^{-1}}\, .
\end{split}
\]

When $A=\bigoplus A_i$ and $B=\bigoplus B_j$ are direct sums of factors, then we can write $H$ as a direct sum of $A_i$-$B_j$-bimodules $H=\bigoplus H_{ij}$, and similarly $\bar H=\bigoplus \bar H_{ji}$.
We also have $L^2A\cong\bigoplus L^2A_i$ and $L^2B\cong\bigoplus L^2B_j$ by Lemma \ref{lem: L^2(pAp) = pL^2(A)p}.
The maps \eqref{eq: tR and tS} induce structure maps $\tilde R_{ij}:L^2A_i \rightarrow H_{ij}\boxtimes_{B_j} \bar H_{ji}$
and $\tilde S_{ij}:L^2B_j \rightarrow  \bar H_{ji}\boxtimes_{A_i} H_{ij}$ to which we can apply the above argument and get
\[
R_{ij}:{}_{A_i}L^2(A_i)_{A_i} \rightarrow {}_{A_i}H_{ij}\boxtimes_{B_j} \bar H_{ji}\,{}_{A_i}\quad \text{and}\quad
S_{ij}:{}_{B_j}L^2(B_j)_{B_j} \rightarrow  {}_{B_j}\bar H_{ji}\boxtimes_{A_i} H_{ij}\,{}_{B_j}
\]
subject to \eqref{eq: duality equations} and \eqref{eq:duality normalization}.
The desired maps $R$ and $S$ are then given by
\[
L^2A\cong \bigoplus_i L^2A_i \xrightarrow{\bigoplus_{ij}R_{ij}} \bigoplus_{ij} \big(H_{ij}\underset{B_j}\boxtimes \bar H_{ji}\big)\subset \bigoplus_{ijk} \big(H_{ij}\underset{B_j}\boxtimes \bar H_{jk}\big) \cong H\boxtimes_B \bar H
\]
and
\[
L^2B\cong \bigoplus_j L^2B_j \xrightarrow{\bigoplus_{ij}S_{ij}} \bigoplus_{ij} \big(\bar H_{ji}\underset{A_i}\boxtimes H_{ij}\big)\subset \bigoplus_{lij} \big(\bar H_{li}\underset{A_i}\boxtimes H_{ij}\big) \cong \bar H\boxtimes_A H. \qedhere
\]
\end{proof}

\begin{remark}\label{remark: non-zero maps are enough}
We will see later, in Proposition \ref{Prop: existence of non-zero
maps is enough},
that when $H$ is irreducible the mere existence of non-zero maps $\tilde R:L^2(A) \rightarrow
H\boxtimes_B \bar H$ and
$\tilde S:L^2(B) \rightarrow  \bar H\boxtimes_A H$ implies that
${}_AH_B$ is dualizable.
\end{remark}

We now discuss two lemmas that we will need in order to prove, in Theorem \ref{thm: well defined up to unique unitary iso}, that the dual is well defined up to unique unitary isomorphism.

\begin{lemma}\label{lem: phi is a trace}
Let $A$ and $B$ be factors, and let ${}_AH_B$ be a dualizable bimodule, with structure maps $R$ and $S$ subject to \eqref{eq: duality equations} and \eqref{eq:duality normalization}.
Then the state $\varphi$ defined in \eqref{eq: varphi:End(H) --> C} is a trace.
\end{lemma}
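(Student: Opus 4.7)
The proof will be a diagrammatic calculation using the string calculus of Section~\ref{sec: Preliminaries}. For each $z\in\mathrm{End}({}_AH_B)$, I first introduce the \emph{categorical transpose} $\bar z\in\mathrm{End}({}_B\bar H_A)$ by rotating $z$ through the duality zig-zag:
\[
\bar z:=(1_{\bar H}\otimes R^*)\circ(1_{\bar H}\otimes z\otimes 1_{\bar H})\circ(S\otimes 1_{\bar H}).
\]
The assignment $z\mapsto\bar z$ is an anti-homomorphism, and the snake equations \eqref{eq: duality equations} yield the sliding identity $(1_{\bar H}\otimes z)\circ S=(\bar z\otimes 1_H)\circ S$, which lets us transfer $z$ between the $H$-strand and the $\bar H$-strand of any closed diagram.

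Using the normalization condition \eqref{eq:duality normalization}, which in the factor case simplifies to the statement $\varphi(z)=\psi(z)$ for all $z$ (where $\psi(z):=S^*(1\otimes z)S$), together with two applications of the sliding identity and the commutativity of operators acting on different tensor factors, I compute
\[
\begin{split}
\varphi(xy)=\psi(xy)=S^*(1\otimes x)(1\otimes y)S
&=S^*(1\otimes x)(\bar y\otimes 1)S\\
&=S^*(\bar y\otimes 1)(1\otimes x)S\\
&=S^*(\bar y\otimes 1)(\bar x\otimes 1)S=S^*(\bar y\bar x\otimes 1)S,
\end{split}
\]
and symmetrically $\varphi(yx)=S^*(\bar x\bar y\otimes 1)S$. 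Cyclicity of $\varphi$ therefore reduces to cyclicity of the analogous state $\varphi_{\bar H}(w):=S^*(w\otimes 1)S$ on $\mathrm{End}({}_B\bar H_A)$.

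Since $\bar H$ is itself dualizable, with duality data obtained by swapping the roles of $(R,S)$, and since the normalization condition is symmetric under the exchange $(H,R)\leftrightarrow(\bar H,S)$, the mirror image of the above computation expresses $\varphi_{\bar H}(\bar x\bar y)$ back in terms of $\varphi(xy)$, and similarly with $x$ and $y$ interchanged. The two cyclicity statements may then be established simultaneously by a diagrammatic rotational symmetry argument: rotating the entire closed diagram defining $\varphi(xy)$ by $180^\circ$ through the duality caps and cups, combined with the normalization condition, identifies it with $\varphi(yx)$.

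The main obstacle is closing this logical loop without circular reasoning: cyclicity of $\varphi$ and of $\varphi_{\bar H}$ are each reduced to the other, so one must extract both conclusions from the symmetric formal system. I expect this to follow by an argument in the spirit of \cite[Lemma 3.2]{Longo-Roberts(A-theory-of-dimension)}, invoking the finite-dimensionality of $\mathrm{End}({}_AH_B)$ and $\mathrm{End}({}_B\bar H_A)$ (Lemma~\ref{lem: Lemma B}) together with the positivity and faithfulness of both states, to rule out the commutator $\bar y\bar x-\bar x\bar y$ contributing to the faithful state.
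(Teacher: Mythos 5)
Your reduction is fine as far as it goes: the sliding identities and the factor-case form of \eqref{eq:duality normalization} (namely $\varphi=\psi$) do give $\varphi(xy)=S^*(\bar y\bar x\otimes 1)S$ and $\varphi(yx)=S^*(\bar x\bar y\otimes 1)S$, which is essentially the paper's ``twirl'' identity \eqref{eq: twirlies}. But the step you need to close the argument is missing, and the two ways you propose to close it do not work. First, ``rotating the entire closed diagram by $180^\circ$'' is not an available move: the graphical calculus only licenses planar isotopy together with \eqref{eq: duality equations} and \eqref{eq:duality normalization}, and invariance of closed diagrams under such a rotation (sphericality) is precisely the content of the lemma being proved --- for non-normalized duality data it is false, so no purely formal symmetry of the setup can yield it. Second, ``positivity and faithfulness rule out the commutator contributing to the faithful state'' cannot be right as stated: a faithful positive functional on a finite-dimensional $C^*$-algebra is in general not a trace, so faithfulness alone never kills $\varphi([x,y])$; some further use of \eqref{eq:duality normalization} is indispensable, and your argument as written has consumed the normalization only in the form $\varphi=\psi$, which is symmetric between $H$ and $\bar H$ and hence can only ever reproduce the circular reduction you already noticed.

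For comparison, the paper closes the loop as follows. From \eqref{eq: twirlies} and the normalization one deduces, for all $y$, an identity of the form $\varphi(\,\cdot\,y)$ evaluated on the clockwise full rotation of $x$ equals the same evaluated on the counterclockwise full rotation; since $\mathrm{End}({}_AH_B)$ is finite-dimensional and $\varphi$ is faithful (Lemma~\ref{lem: Lemma B}), the sesquilinear form $\varphi(b^*a)$ is non-degenerate and one concludes that the full rotation $x\mapsto\hat x$ is an involution. Independently, writing $\varphi=a\tau$ for an auxiliary trace $\tau$ and a positive invertible $a$ (again finite-dimensionality), the half-twirl identity gives $a\hat x=xa$, i.e.\ $\hat x=a^{-1}xa$; involutivity then forces $a^2$, and hence the positive element $a$, to be central, so $\varphi=a\tau$ is a trace. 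If you want to salvage your route, you must supply an argument of exactly this kind (involutivity of the rotation plus a Radon--Nikodym/Perron--Frobenius step), not an appeal to faithfulness or to a rotational symmetry that the calculus does not provide.
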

\begin{proof}
By a few applications of \eqref{eq: duality equations} and some planar isotopies, we get
\def\colA{black!10}
\def\colB{black!30}
\begin{equation}\label{eq: twirlies}
\hspace{-.5cm}\tikzmath[scale=\squarescale]
	{\fill[rounded corners=10, fill=\colA] (-11,-19) rectangle (10,19);
	\draw[rounded corners=8, fill=\colB] (-4,-13) -- (5,-13) -- (5,13) -- (-4,13) -- cycle;
	\draw 	(-4,5) node[fill=white, draw, inner xsep=3, inner ysep=4]{$x$};
	\draw 	(-4,-5) node[fill=white, draw, inner sep=3]{$y$};
	}\,=\,
\tikzmath[scale=\squarescale]
	{\fill[rounded corners=10, fill=\colA] (-13,-22) rectangle (11,16);
	\draw[rounded corners=5, fill=\colB]  (-4,-7) 
	+(0,7) [rounded corners=6]-- +(-5.5,7) [rounded corners=5]-- +(-5.5,-5.5) [rounded corners=4]-- +(4,-5.5) [rounded corners=3]-- +(4,4) -- +(0,4) -- +(0,-4) -- +(-4,-4) [rounded corners=4]-- +(-4,5.5) [rounded corners=5]-- +(5.5,5.5) [rounded corners=6]-- +(5.5,-7) [rounded corners=3]-- +(-.15,-7) 
	[rounded corners=8]-- (-4.15,-20) -- (5,-20) -- (5,13) -- (-4,13) [rounded corners=4]-- cycle;
	\draw 	(-4,6) node[fill=white, draw, inner sep=3]{$y$};
	\draw 	(-4,-7) node[fill=white, draw, inner xsep=2, inner ysep=3]{$x$};
	}
\qquad\text{and}\qquad
\tikzmath[scale=\squarescale]
	{\fill[rounded corners=10, fill=\colB] (-10,-19) rectangle (11,19);
	\draw[rounded corners=8, fill=\colA] (-4,-13) -- (5,-13) -- (5,13) -- (-4,13) -- cycle;
	\draw 	(4,5) node[fill=white, draw, inner xsep=3, inner ysep=4]{$x$};
	\draw 	(4,-5) node[fill=white, draw, inner sep=3]{$y$};
	}\,=\,
\tikzmath[scale=\squarescale]
	{\fill[rounded corners=10, fill=\colB] (-11,-22) rectangle (13,16);
	\draw[rounded corners=5, fill=\colA]  (4,-7) 
	+(0,7) [rounded corners=6]-- +(5.5,7) [rounded corners=5]-- +(5.5,-5.5) [rounded corners=4]-- +(-4,-5.5)[rounded corners=3] -- +(-4,4) -- +(0,4) -- +(0,-4) -- +(4,-4) [rounded corners=4]-- +(4,5.5) [rounded corners=5]-- +(-5.5,5.5) [rounded corners=6]-- +(-5.5,-7) [rounded corners=3]-- +(.15,-7) 
	[rounded corners=8]-- (4.15,-20) -- (-5,-20) -- (-5,13) -- (4,13) [rounded corners=4]-- cycle;
	\draw 	(4,6) node[fill=white, draw, inner sep=3]{$y$};
	\draw 	(4,-7) node[fill=white, draw, inner xsep=2, inner ysep=3]{$x$};
	}\,.
\end{equation}
Combining these equations with \eqref{eq:duality normalization} yields
\[
\hspace{.5cm}\tikzmath[scale=\squarescale]
	{\fill[rounded corners=10, fill=\colA] (-13,-22) rectangle (11,16);
	\draw[rounded corners=5, fill=\colB]  (-4,-7) 
	+(0,7) [rounded corners=6]-- +(-5.5,7) [rounded corners=5]-- +(-5.5,-5.5) [rounded corners=4]-- +(4,-5.5) [rounded corners=3]-- +(4,4) -- +(0,4) -- +(0,-4) -- +(-4,-4) [rounded corners=4]-- +(-4,5.5) [rounded corners=5]-- +(5.5,5.5) [rounded corners=6]-- +(5.5,-7) [rounded corners=3]-- +(-.15,-7) 
	[rounded corners=8]-- (-4.15,-20) -- (5,-20) -- (5,13) -- (-4,13) [rounded corners=4]-- cycle;
	\draw 	(-4,6) node[fill=white, draw, inner sep=3]{$y$};
	\draw 	(-4,-7) node[fill=white, draw, inner xsep=2, inner ysep=3]{$x$};
	}\,=\,
\tikzmath[scale=\squarescale]
	{\fill[rounded corners=10, fill=\colA] (-11,-19) rectangle (10,19);
	\draw[rounded corners=8, fill=\colB] (-4,-13) -- (5,-13) -- (5,13) -- (-4,13) -- cycle;
	\draw 	(-4,5) node[fill=white, draw, inner xsep=3, inner ysep=4]{$x$};
	\draw 	(-4,-5) node[fill=white, draw, inner sep=3]{$y$};
	}\,=\,
\tikzmath[scale=\squarescale]
	{\fill[rounded corners=10, fill=\colB] (-10,-19) rectangle (11,19);
	\draw[rounded corners=8, fill=\colA] (-4,-13) -- (5,-13) -- (5,13) -- (-4,13) -- cycle;
	\draw 	(5,5) node[fill=white, draw, inner xsep=3, inner ysep=4]{$x$};
	\draw 	(5,-5) node[fill=white, draw, inner sep=3]{$y$};
	}\,=\,
\tikzmath[scale=\squarescale]
	{\fill[rounded corners=10, fill=\colB] (-11,-22) rectangle (13,16);
	\draw[rounded corners=5, fill=\colA]  (4,-7) 
	+(0,7) [rounded corners=6]-- +(5.5,7) [rounded corners=5]-- +(5.5,-5.5) [rounded corners=4]-- +(-4,-5.5) [rounded corners=3]-- +(-4,4) -- +(0,4) -- +(0,-4) -- +(4,-4) [rounded corners=4]-- +(4,5.5) [rounded corners=5]-- +(-5.5,5.5) [rounded corners=6]-- +(-5.5,-7) [rounded corners=3]-- +(.15,-7) 
	[rounded corners=8]-- (4.15,-20) -- (-5,-20) -- (-5,13) -- (4,13) [rounded corners=4]-- cycle;
	\draw 	(4,6) node[fill=white, draw, inner sep=3]{$y$};
	\draw 	(4,-7) node[fill=white, draw, inner xsep=2, inner ysep=3]{$x$};
	}\,=\,
\tikzmath[scale=\squarescale]
	{\fill[rounded corners=10, fill=\colA] (-13,-22) rectangle (11,16);
	\draw[rounded corners=5, fill=\colB]  (-4,-7) 
	+(0,7) [rounded corners=6]-- +(5.5,7) [rounded corners=5]-- +(5.5,-5.5) [rounded corners=4]-- +(-4,-5.5) [rounded corners=3]-- +(-4,4) -- +(0,4) -- +(0,-4) -- +(4,-4) [rounded corners=4]-- +(4,5.5) [rounded corners=5]-- +(-5.5,5.5) [rounded corners=6]-- +(-5.5,-7) [rounded corners=3]-- +(.15,-7) 
	[rounded corners=8]-- (-3.85,-20) -- (5,-20) -- (5,13) -- (-4,13) [rounded corners=4]-- cycle;
	\draw 	(-4,6) node[fill=white, draw, inner sep=3]{$y$};
	\draw 	(-4,-7) node[fill=white, draw, inner xsep=2, inner ysep=3]{$x$};
	}\,.
\]
The latter being true for any $y\in\mathrm{End}({}_AH_B)$ and the state $\varphi$ being faithful by Lemma \ref{lem: Lemma B},
it follows that
\[
\hat x:=\,\tikzmath[scale=\squarescale]
	{\clip[rounded corners=10] (-4,-7) +(-10,-13) rectangle +(10,13);
	\fill[rounded corners=10, fill=\colA] (-15,-22) rectangle (11,16);
	\draw[rounded corners=5, fill=\colB]  (-4,-7) 
	+(0,8) [rounded corners=6]-- +(-6,8) [rounded corners=5]-- +(-6,-6) [rounded corners=4]-- +(4,-6) [rounded corners=3]-- +(4,4) -- +(0,4) -- +(0,-4) -- +(-4,-4) [rounded corners=4]-- +(-4,6) [rounded corners=5]-- +(6,6) [rounded corners=6]-- +(6,-8) [rounded corners=3]-- +(0,-8) 
	[rounded corners=8]-- (-4,-25) -- (7,-25) -- (7,13) -- (-4,13) [rounded corners=4]-- cycle;
	\draw 	(-4,-7) node[fill=white, draw, inner xsep=2, inner ysep=3]{$x$};
	}\,=\,
\tikzmath[scale=\squarescale]
	{\clip[rounded corners=10] (-4,-7) +(-10,-13) rectangle +(10,13);
	\fill[rounded corners=10, fill=\colA] (-15,-22) rectangle (11,16);
	\draw[rounded corners=5, fill=\colB]  (-4,-7) 
	+(0,8) [rounded corners=6]-- +(6,8) [rounded corners=5]-- +(6,-6) [rounded corners=4]-- +(-4,-6) [rounded corners=3]-- +(-4,4) -- +(0,4) -- +(0,-4) -- +(4,-4) [rounded corners=4]-- +(4,6) [rounded corners=5]-- +(-6,6) [rounded corners=6]-- +(-6,-8) [rounded corners=3]-- +(0,-8) 
	[rounded corners=8]-- (-4,-25) -- (7,-25) -- (7,13) -- (-4,13) [rounded corners=4]-- cycle;
	\draw 	(-4,-7) node[fill=white, draw, inner xsep=2, inner ysep=3]{$x$};
	}\,.
\]
Equivalently, the map $x\mapsto \hat x$ is an involution.

As in the proof of the previous lemma, pick a trace $\tau$ and a positive invertible element $a$ such that $\varphi=a\tau$.
Our goal is to show that $\varphi$ is a trace; this is true provided $a$ is central.
Equation \eqref{eq: twirlies} implies $a\hat x=x a$ for all $x$.
Equivalently, we have $\hat x=a^{-1}xa$.
Because the map $x\mapsto \hat x$ is an involution, we have $x=\hat{\hat x}=a^{-2}xa^2$.  Since $a$ is positive and its square is central, $a$ is also central.
\end{proof}

As a corollary of the above proof, we see
\begin{equation}\label{eq: left half twirl = right half twirl}
\def\colA{black!10}
\def\colB{black!30}
\hspace{-.5cm}\tikzmath[scale=\squarescale]
	{\clip[rounded corners=10] (-4,-7) +(-10,-12) rectangle +(10,12);
	\fill[rounded corners=10, fill=\colA] (-15,-22) rectangle (11,16);
	\draw[rounded corners=5, fill=\colB]  (-4,-7) 
	+(0,8) [rounded corners=6]-- +(-6,8) [rounded corners=5]-- +(-6,-6) [rounded corners=4]-- +(4,-6) [rounded corners=3]-- +(4,4) -- +(0,4) -- +(0,-4) -- +(-4,-4) [rounded corners=4]-- +(-4,6) [rounded corners=5]-- +(6,6) [rounded corners=6]-- +(6,-8) [rounded corners=3]-- +(0,-8) 
	[rounded corners=8]-- (-4,-25) -- (7,-25) -- (7,13) -- (-4,13) [rounded corners=4]-- cycle;
	\draw 	(-4,-7) node[fill=white, draw, inner xsep=2, inner ysep=3]{$x$};
	}\,=\,
\tikzmath[scale=\squarescale]
	{\clip[rounded corners=10] (-4,-7) +(-10,-12) rectangle +(10,12);
	\fill[rounded corners=10, fill=\colA] (-15,-22) rectangle (11,16);
	\draw[rounded corners=5, fill=\colB]  (-4,-22) rectangle (11,16);
	\draw 	(-4,-7) node[fill=white, draw, inner xsep=3, inner ysep=4]{$x$};
	}\,,\quad\text{and thus also}\quad
\bar x:=\tikzmath[scale=\squarescale]
	{\clip[rounded corners=10] (-4,-7) +(-10,-12) rectangle +(10,12);
	\fill[rounded corners=10, fill=\colB] (-15,-22) rectangle (11,16);
	\draw[rounded corners=6, fill=\colA]  (-4,-7) 
	+(-6,20) -- +(-6,-6) -- +(0,-6) -- +(0,6) -- +(6,6) -- +(6,-20) -- (7,-25) -- (7,13) -- cycle;
	\draw 	(-4,-7) node[fill=white, draw, inner xsep=3, inner ysep=4]{$x$};
	}\,=\,
\tikzmath[scale=\squarescale]
	{\clip[rounded corners=10] (-4,-7) +(-10,-12) rectangle +(10,12);
	\fill[rounded corners=10, fill=\colB] (-15,-22) rectangle (11,16);
	\draw[rounded corners=6, fill=\colA]  (-4,-7) 
	+(6,20) -- +(6,-6) -- +(0,-6) -- +(0,6) -- +(-6,6) -- +(-6,-20) -- (7,-25) -- (7,13) -- cycle;
	\draw 	(-4,-7) node[fill=white, draw, inner xsep=3, inner ysep=4]{$x$};
	}\,.
\end{equation}

\begin{remark}
The first equation in \eqref{eq: left half twirl = right half twirl} is essentially the same as \cite[Theorem 4.1.18]{Jones(Planar-algebras-I)}
or \cite[Corollaries~2.35 and~2.39]{Jones-Penneys(The-embedding-theorem)},
which states that the $n$th power of the operation
\begin{equation}\label{eq: Jones rho}
\def\colA{black!10}
\def\colB{black!30}
\raisebox{2pt}{$\rho_n\,\,:\,\,\,\,$}\tikzmath[scale=\squarescale]
	{\def\h{10}\def\X{19}
	\useasboundingbox (-\X,-14) rectangle (\X,\h);
	\fill[\colA] (-\X,-\h) -- (-12,-\h) -- (-12,\h) [rounded corners=10]-- (-\X,\h) -- cycle;
	\fill[\colA]  \foreach \x/\y in {-8/-4,0/4,8/12} {(\x,-\h) rectangle (\y,\h)};
	\fill[\colB]  \foreach \x/\y in {-12/-8,-4/0,4/8} {(\x,-\h) rectangle (\y,\h)};
	\fill[\colB] (\X,-\h) -- (12,-\h) -- (12,\h) [rounded corners=10]-- (\X,\h) -- cycle;
	\draw \foreach \x in {-12,-8,-4,0,4,8,12}{(\x,\h) -- (\x,-\h)};
	\draw 	(0,0) node[fill=white, draw, inner xsep=25, inner ysep=3]{$x$};
	\node[rotate = 90] at (0,-11) {$\scriptstyle\left\{\phantom{\begin{matrix}.\\.\\.\\\Big|^i\end{matrix}}\right.$};
	\node at (0,-16) {$\scriptstyle n$};
	}
	\quad\raisebox{2pt}{$\mapsto$}\quad
\tikzmath[scale=\squarescale]
	{\def\h{11}\def\X{19}
	\useasboundingbox (-\X,-14) rectangle (\X,\h);
	\clip[rounded corners=10] (-\X,-\h) rectangle (\X,\h);
	\fill[\colA] (-\X,-\h) rectangle (\X,\h);
	\filldraw[fill=\colB, rounded corners=4] (-12,-14) -- (-12,-9) -- (-4,-5) -- (-4,5) -- (4,9) -- (4,14) -- (8,14) -- (8,9) -- (0,5) -- (0,-5) -- (-8,-9) -- (-8,-14) -- cycle;
	\filldraw[fill=\colB, rounded corners=4] (-4,-14) -- (-4,-9) -- (4,-5) -- (4,5) -- (12,9) -- (12,14) -- (21,14) -- (21,-14) -- (4,-14) -- (4,-9) -- (12,-5) [rounded corners = 3]-- (12,4.5) -- (15,4.5) -- (15,-5)
	[rounded corners = 4] -- (8,-9) -- (8,-14) -- (12,-14) -- (12,-9) -- (17,-5.5) [rounded corners = 7.5]-- (17,7) [rounded corners = 4]-- (11,6.7) [rounded corners = 3]-- (8,4) [rounded corners = 4]-- (8,-5) -- (0,-9) -- (0,-14) -- cycle;
	\filldraw[fill=\colB, rounded corners=4]  (-4,14) -- (-4,9) -- (-12,5) [rounded corners = 3]-- (-12,-4.5) -- (-15,-4.5) -- (-15,5)
	[rounded corners = 4] -- (-8,9) -- (-8,14) -- (-12,14) -- (-12,9) -- (-17,5.5) [rounded corners = 7.5]-- (-17,-7) [rounded corners = 4]-- (-11,-6.7) [rounded corners = 3]-- (-8,-4) [rounded corners = 4]-- (-8,5) -- (-0,9) -- (-0,14) -- cycle;
	\draw 	(0,0) node[fill=white, draw, inner xsep=24, inner ysep=3]{$x$};
	}
\end{equation}
is the identity.
One should note that Jones' rotation $\rho_n$ does not always agree with our way of interpreting figure \eqref{eq: Jones rho}.
It agrees when the type $\mathit{II}_1$ subfactor $A\subset B$ is extremal, that is, when the normalized traces on 
$A'$ and $B$ coincide on $A'\cap B$ or, equivalently, when the minimal conditional expectation $B\to A$ is equal to the trace preserving one.  See also Warning \ref{warn: Jones index}.
\end{remark}

\begin{lemma}\label{lem: direct summand of dualizable bimodule}
\def\colA{black!10}
\def\colB{black!30}
Let ${}_AH_B$ be a dualizable bimodule with dual ${}_B\bar H_A$, and let $p\in \mathrm{End}({}_AH_B)$ be a projection.
The $A$-$B$-bimodule $pH$ is then dualizable and its dual is given by $\bar p \bar H$, where
\begin{equation}\label{eq: pibar}
\bar p\,:=\,\,\tikzmath[scale=\squarescale]
	{\clip[rounded corners=10] (-4,-7) +(-10,-9) rectangle +(10,9);
	\fill[rounded corners=10, fill=\colB] (-15,-22) rectangle (11,16);
	\draw[rounded corners=6, fill=\colA]  (-4,-7) 
	+(-6,20) -- +(-6,-6) -- +(0,-6) -- +(0,6) -- +(6,6) -- +(6,-20) -- (7,-25) -- (7,13) -- cycle;
	\draw 	(-4,-7) node[fill=white, draw]{$p$};
	}\,\,=\,\,
\tikzmath[scale=\squarescale]
	{\clip[rounded corners=10] (-4,-7) +(-10,-9) rectangle +(10,9);
	\fill[rounded corners=10, fill=\colB] (-15,-22) rectangle (11,16);
	\draw[rounded corners=6, fill=\colA]  (-4,-7) 
	+(6,20) -- +(6,-6) -- +(0,-6) -- +(0,6) -- +(-6,6) -- +(-6,-20) -- (7,-25) -- (7,13) -- cycle;
	\draw 	(-4,-7) node[fill=white, draw]{$p$};
	}\,\,\in\, \mathrm{End}({}_B\bar H_A)\,.
\end{equation}
Moreover, its statistical dimension (see Definition \ref{def: statistical dimension}) is given by
$
{\dim(pH)\,=\,
\tikzmath[scale=\squarescale]
	{\fill[rounded corners=5, fill=\colA] (-9,-7) rectangle (4,7);
	\draw[rounded corners=6, fill=\colB] (-5,-5) rectangle (1,5);
	\draw 	(-5,0) node[fill=white, draw, inner ysep=2, inner xsep=2]{$p$};
	}
}$\,.
\end{lemma}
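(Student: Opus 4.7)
The plan is to build duality data for $pH$ by restricting that of $H$ via the projections $p$ and $\bar p$. I would propose
\[
R_p := (p\otimes\bar p)\circ R\colon L^2(A)\to pH\boxtimes_B \bar p\bar H, \qquad S_p := (\bar p\otimes p)\circ S\colon L^2(B)\to \bar p\bar H\boxtimes_A pH,
\]
using that $pH\boxtimes_B \bar p\bar H$ sits as a direct summand of $H\boxtimes_B\bar H$ since Connes fusion distributes over direct sums.

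The key technical ingredient is the \emph{pass-through identities}
\[
(p\otimes 1_{\bar H})\circ R \,=\, (1_H\otimes \bar p)\circ R, \qquad (1_{\bar H}\otimes p)\circ S \,=\, (\bar p\otimes 1_H)\circ S,
\]
which follow by a planar-isotopy argument: substituting the expression $\bar p=(S^*\otimes 1)(1\otimes p\otimes 1)(1\otimes R)$ from \eqref{eq: pibar} into the right-hand side of the first equation and invoking the duality \eqref{eq: duality equations} for the $S$--$S^*$ pair reduces it to the left-hand side. From these I would also deduce that $\bar p$ is self-adjoint (the two equivalent forms for $\bar p$ in \eqref{eq: pibar} are adjoints of one another) and idempotent (the chain $(1\otimes\bar p^{\,2})R=(p\otimes\bar p)R=(p^2\otimes 1)R=(p\otimes 1)R=(1\otimes\bar p)R$ together with the uniqueness provided by the zigzag $(S^*\otimes 1)(1\otimes R)=1_{\bar H}$ forces $\bar p^{\,2}=\bar p$). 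Consequently $R_p$ simplifies to $(p\otimes 1)R=(1\otimes\bar p)R$, and analogously for $S_p$.

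With the pass-through identity in hand, the duality equations and normalization for $(R_p,S_p)$ reduce to those for $(R,S)$. The zigzag $(R_p^*\otimes 1)(1\otimes S_p)$ expands as $(R^*\otimes 1)(p\otimes 1\otimes p)(1\otimes S)$; applying the zigzag \eqref{eq: duality equations} for $R,S$ with the two inserted $p$'s collapses it to $p\circ p=p=1_{pH}$, and the other duality equation is symmetric. For the normalization \eqref{eq:duality normalization}, any $x\in\mathrm{End}({}_{A}pH_{B})$ corresponds to $pxp\in\mathrm{End}({}_{A}H_{B})$, and combining pass-through with $p^2=p$ yields $R_p^*(x\otimes 1)R_p=R^*(pxp\otimes 1)R$ and $S_p^*(1\otimes x)S_p=S^*(1\otimes pxp)S$, which agree by the normalization assumed for $(R,S)$ applied to $pxp$. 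Finally, $\dim(pH)=R_p^*R_p=R^*(p\otimes 1)R$, which is exactly the pictured scalar.

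The main obstacle is the pass-through identity itself: although visually obvious in the graphical calculus, its rigorous derivation uses the duality axioms \eqref{eq: duality equations} together with the half-twirl equivalence \eqref{eq: left half twirl = right half twirl} (a consequence of Lemma~\ref{lem: phi is a trace}), and it is this identity that makes the whole construction run.
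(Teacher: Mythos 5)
Your proposal is correct and follows essentially the same route as the paper: the paper likewise compresses the duality data of $H$ by $p$ and $\bar p$ (writing the new structure maps as $(\pi\otimes\bar\pi)R$ and $(\bar\pi\otimes\pi)S$ for the orthogonal projections $\pi,\bar\pi$) and computes $\dim(pH)$ by unwinding $\bar p$ with a zigzag, exactly as you do. The only difference is one of explicitness: the paper simply asserts that these maps exhibit the duality, whereas you spell out the pass-through identities, the self-adjointness and idempotence of $\bar p$ via \eqref{eq: left half twirl = right half twirl}, and the normalization check, all of which are the intended (implicit) justification.
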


\begin{proof}
\def\colA{black!10}
\def\colB{black!30}
Let $\pi:H\twoheadrightarrow pH$, $\bar\pi:\bar H\twoheadrightarrow \bar p\bar H$ be the orthogonal projections, so that $p=\pi^*\pi$ and $\bar p=\bar \pi^*\bar \pi$.
The maps 
\[
\tikzmath[scale=\squarescale]
	{\clip[rounded corners=10] (-13,-6) rectangle (13,12); 
	\fill[rounded corners=10, fill=\colA] (-13,-12) rectangle (13,12);
	\draw[rounded corners=10, fill=\colB] (-5,-12) rectangle (5,8);
	\draw 	(-5.5,1) node[fill=white, draw, inner xsep=3]{$\pi$};
	\draw 	(5.5,1) node[fill=white, draw, inner ysep=2.65, inner xsep=3]{$\bar \pi$};
	}
\qquad\text{and}\qquad
\tikzmath[scale=\squarescale]
	{\clip[rounded corners=10] (-13,-6) rectangle (13,12); 
	\fill[rounded corners=10, fill=\colB] (-13,-12) rectangle (13,12);
	\draw[rounded corners=10, fill=\colA] (-5,-12) rectangle (5,8);
	\draw 	(-5.5,1) node[fill=white, draw, inner ysep=2.65, inner xsep=3]{$\bar \pi$};
	\draw 	(5.5,1) node[fill=white, draw, inner xsep=3]{$\pi$};
	}
\]
exhibit $\bar p\bar H$ as dual to $pH$.
The statistical dimension is therefore given by
\[
\dim\big(pH\big)=\,\tikzmath[scale=\squarescale]
	{\fill[rounded corners=10, fill=\colA] (-12,-12) rectangle (12,12);
	\draw[rounded corners=10, fill=\colB] (-5,-10.5) rectangle (5,10.5);
	\draw 	(-5.5,3.8) node[fill=white, draw, inner xsep=3]{$\pi$};
	\draw 	(5.5,3.8) node[fill=white, draw, inner ysep=2.65, inner xsep=3]{$\bar \pi$};
	\draw 	(-5.5,-3.8) node[fill=white, draw, inner ysep=2, inner xsep=1.3]{$\pi^*$};
	\draw 	(5.5,-3.8) node[fill=white, draw, inner ysep=2, inner xsep=1.3]{$\bar \pi^*$};
	}
\,=\,\tikzmath[scale=\squarescale]
	{\fill[rounded corners=10, fill=\colA] (-12,-12) rectangle (12,12);
	\draw[rounded corners=10, fill=\colB] (-5,-7.1) rectangle (5,7.1);
	\draw 	(-5.5,0) node[fill=white, draw]{$p$};
	\draw 	(5.5,0) node[fill=white, draw, inner ysep=2.65]{$\bar p$};
	}
\,=\,\tikzmath[scale=\squarescale]
	{\fill[rounded corners=10, fill=\colA] (-12,-12) rectangle (17,12);
	\draw[rounded corners=8, fill=\colB] (-5,-8) -- (-5,8) -- (3,8) [rounded corners=5]-- (3,-5) -- (8,-5) -- (8,5) -- (13,5) [rounded corners=10]-- (13,-8) -- cycle;
	\draw 	(-5.5,0) node[fill=white, draw, inner xsep=3]{$p$};
	\draw 	(8,0) node[fill=white, draw, inner ysep=2.5, inner xsep=2.5]{$p$};
	}
\,=\,\tikzmath[scale=\squarescale]
	{\fill[rounded corners=10, fill=\colA] (-12,-12) rectangle (11,12);
	\draw[rounded corners=10, fill=\colB] (-5,-7) rectangle (5,7);
	\draw 	(-5,0) node[fill=white, draw]{$p$};
	}\,. \qedhere 
\] 
\end{proof}

\begin{theorem}\label{thm: well defined up to unique unitary iso}
Let ${}_AH_B$ be a dualizable bimodule.
Then its dual $({}_B\bar H_A, R, S)$ is well defined up to unique unitary isomorphism.
\end{theorem}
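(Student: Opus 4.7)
The plan is to first reduce to the case where $A$ and $B$ are factors. Decomposing along minimal central projections $p \in Z(A)$ and $q \in Z(B)$ as in the final paragraph of the proof of Theorem~\ref{thm: normalizing duality data}, any two normalized duals split into componentwise normalized duals between the factor summands, so the desired unitary assembles from componentwise ones.

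Assuming $A$ and $B$ are factors, I take as the candidate unitary the standard bicategorical comparison map
\[
u := (S^* \boxtimes 1_{\bar H'}) \circ (1_{\bar H} \boxtimes R') \colon \bar H \to \bar H'.
\]
The snake identities coming from the duality equations~\eqref{eq: duality equations} show that $u$ is a $B$-$A$-bilinear isomorphism with inverse $u^{-1} = (S'^* \boxtimes 1_{\bar H})(1_{\bar H'} \boxtimes R)$, and that it intertwines the structure maps as $R' = (1_H \boxtimes u)R$ and $S' = (u \boxtimes 1_H)S$.

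The main obstacle is to show that $u$ is unitary. I plan to polar-decompose $u = v|u|$, with $v \colon \bar H \to \bar H'$ a unitary bilinear isomorphism and $|u| = (u^*u)^{1/2} \in \mathrm{End}({}_B\bar H_A)$ positive and invertible, and then prove $|u| = 1_{\bar H}$. Because $v$ is a unitary bimodule isomorphism, the transported maps $\tilde R := (1_H \boxtimes v^{-1})R' = (1_H \boxtimes |u|)R$ and $\tilde S := (v^{-1} \boxtimes 1_H)S' = (|u| \boxtimes 1_H)S$ still satisfy the duality equations on $\bar H$. Using the half-twirl identity~\eqref{eq: left half twirl = right half twirl} established after the proof of Lemma~\ref{lem: phi is a trace}, there is a positivity-preserving $*$-anti-isomorphism $\sigma\colon \mathrm{End}({}_AH_B) \to \mathrm{End}({}_B\bar H_A)$ satisfying $(1_H \boxtimes \sigma(m))R = (m \boxtimes 1_{\bar H})R$ together with its dual form $(\sigma(m) \boxtimes 1_H)S = (1_{\bar H} \boxtimes m)S$. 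Writing $|u| = \sigma(m)$, one has $\tilde R = (m \boxtimes 1)R$ and $\tilde S = (1 \boxtimes m)S$, while a direct snake computation shows that any duality extension of $(m \boxtimes 1)R$ forces the $S$-side to be $(1 \boxtimes m^{-*})S$. Comparing yields $m^* m = 1$, so $m$ is unitary; since $\sigma$ preserves positivity and $|u| \ge 0$, $m$ is also positive, hence $m = 1$, and $|u| = \sigma(1_H) = 1_{\bar H}$ by the snake identity $(1_{\bar H} \boxtimes R^*)(S \boxtimes 1_{\bar H}) = 1_{\bar H}$. Therefore $u = v$ is unitary.

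For uniqueness: if $u_1, u_2$ are both unitary bilinear intertwiners, then $w := u_2^{-1} u_1 \in \mathrm{End}({}_B\bar H_A)$ satisfies $(1_H \boxtimes w)R = R$, and the Hom-adjunction $w \mapsto (1_H \boxtimes w)R$ from $\mathrm{End}({}_B\bar H_A)$ to $\mathrm{Hom}_{A,A}(L^2(A), H\boxtimes_B \bar H)$, whose inverse $g \mapsto (S^* \boxtimes 1_{\bar H})(1_{\bar H} \boxtimes g)$ is a two-sided inverse by the snake identities, forces $w = 1_{\bar H}$ and hence $u_1 = u_2$. The delicate diagrammatic points are the precise form of the dual half-twirl identity on the $S$-side and the positivity preservation under $\sigma$, both of which reduce to closer inspection of~\eqref{eq: left half twirl = right half twirl}.
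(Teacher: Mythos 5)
There is a genuine gap, and it sits exactly at the step you flag as routine: the intertwining of the $S$-maps. From the duality equations \eqref{eq: duality equations} alone, the comparison map $u=(S^*\boxtimes 1)(1\boxtimes R')$ satisfies $R'=(1_H\boxtimes u)R$, but on the other side the snake identities give $S^*=S'^*(u\boxtimes 1_H)$, i.e.\ $S'=\bigl((u^*)^{-1}\boxtimes 1_H\bigr)S$, \emph{not} $S'=(u\boxtimes 1_H)S$. Your claimed relation is equivalent to $(u^*)^{-1}=u$, i.e.\ to the unitarity you are trying to prove, so the argument is circular at this point. Concretely, redoing your transport with the correct relation gives $\tilde R=(1\boxtimes |u|)R$ but $\tilde S=(|u|^{-1}\boxtimes 1)S$; writing $|u|=\sigma(m)$, your ``forced $S$-side'' computation then yields $m^{-1}=(m^*)^{-1}$, i.e.\ that $m$ is self-adjoint --- which is automatic, since $|u|$ is positive and $\sigma$ is a $*$-anti-isomorphism --- and no unitarity follows. (The other ingredients are fine: the reduction to factors, the existence of $\sigma$ with both intertwining properties via \eqref{eq: left half twirl = right half twirl}, and the uniqueness argument at the end all hold up.)

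The structural reason the unitarity step cannot work as written is that your factor-case argument never uses the normalization condition \eqref{eq:duality normalization} for the second pair $(R',S')$; it only uses that of $(R,S)$, implicitly, through \eqref{eq: left half twirl = right half twirl}. But at that level of generality the statement is false: for any positive invertible non-unitary $a\in\mathrm{End}({}_B\bar H_A)$, the triple $\bigl(\bar H,\,(1\boxtimes a)R,\,((a^*)^{-1}\boxtimes 1)S\bigr)$ satisfies \eqref{eq: duality equations}, and its comparison map with $(\bar H,R,S)$ is $a$. So normalization of the primed data must enter, and this is exactly how the paper argues: it writes $R'=(1\boxtimes v)R$ and $S'=((v^*)^{-1}\boxtimes 1)S$, substitutes these into the normalization condition \eqref{eq:duality normalization} for $(R',S')$, and then uses the jointly faithful traces $tr_{pq}$ (built from Lemma \ref{lem: phi is a trace} applied to the corners $q\bar Hp$, with joint faithfulness from Lemma \ref{lem: Lemma B}) to conclude $tr_{pq}(v^*v\,\bar x)=tr_{pq}\bigl(v^{-1}(v^*)^{-1}\bar x\bigr)$ for all $x$, hence $v^*v=(v^*v)^{-1}$ and $v^*v=1$. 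To repair your proof you would need to feed \eqref{eq:duality normalization} for $(R',S')$ into your transported pair $(\tilde R,\tilde S)=\bigl((m\boxtimes 1)R,\,(1\boxtimes m^{-1})S\bigr)$ and show it forces $m=1$; that is essentially the paper's computation.
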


\begin{proof}
\def\colA{black!10}
\def\colB{black!30}
Let ${}_B\bar H_A$ and ${}_B\bar H'_A$ be two bimodules that are dual to ${}_AH_B$, with respective structure maps $R$, $S$, $R'$, $S'$:
\[
R\,=\,\tikzmath[scale=.1]{\fill[fill=\colA] (-4,-2.5) [rounded corners=4]-- (-4,3) --
(4,3) -- (4,-2.5) [sharp corners]-- (2,-2.5) [rounded corners = 5.5]-- (2,.5) -- (-2,.5) [sharp corners]-- (-2,-2.5) [rounded corners=4]-- cycle;
\fill[fill=\colB] (-2,-2.5) -- (2,-2.5) [rounded corners = 5.5]-- (2,.5) -- (-2,.5) [sharp corners]-- cycle;
\draw[rounded corners = 5.5] (0,.5) -- (-2,.5) -- (-2,-2.5);\draw[rounded corners = 5.5, very thick] (0,.5) -- (2,.5) -- (2,-2.5);}\,,
\quad
S\,=\,\tikzmath[scale=.1]{\fill[fill=\colB] (-4,-2.5) [rounded corners=4]-- (-4,3) --
(4,3) -- (4,-2.5) [sharp corners]-- (2,-2.5) [rounded corners = 5.5]-- (2,.5) -- (-2,.5) [sharp corners]-- (-2,-2.5) [rounded corners=4]-- cycle;
\fill[fill=\colA] (-2,-2.5) -- (2,-2.5) [rounded corners = 5.5]-- (2,.5) -- (-2,.5) [sharp corners]-- cycle;
\draw[rounded corners = 5.5] (0,.5) -- (2,.5) -- (2,-2.5);\draw[rounded corners = 5.5, very thick] (0,.5) -- (-2,.5) -- (-2,-2.5);}\,,
\quad
R'\,=\,\tikzmath[scale=.1]{\fill[fill=\colA] (-4,-2.5) [rounded corners=4]-- (-4,3) --
(4,3) -- (4,-2.5) [sharp corners]-- (2,-2.5) [rounded corners = 5.5]-- (2,.5) -- (-2,.5) [sharp corners]-- (-2,-2.5) [rounded corners=4]-- cycle;
\fill[fill=\colB] (-2,-2.5) -- (2,-2.5) [rounded corners = 5.5]-- (2,.5) -- (-2,.5) [sharp corners]-- cycle;
\draw[rounded corners = 5.5] (0,.5) -- (-2,.5) -- (-2,-2.5);\draw[rounded corners = 5.5, very thick, densely dotted] (0,.5) -- (2,.5) -- (2,-2.5);}\,,
\quad
S'\,=\,\tikzmath[scale=.1]{\fill[fill=\colB] (-4,-2.5) [rounded corners=4]-- (-4,3) --
(4,3) -- (4,-2.5) [sharp corners]-- (2,-2.5) [rounded corners = 5.5]-- (2,.5) -- (-2,.5) [sharp corners]-- (-2,-2.5) [rounded corners=4]-- cycle;
\fill[fill=\colA] (-2,-2.5) -- (2,-2.5) [rounded corners = 5.5]-- (2,.5) -- (-2,.5) [sharp corners]-- cycle;
\draw[rounded corners = 5.5] (0,.5) -- (2,.5) -- (2,-2.5);\draw[rounded corners = 5.5, very thick, densely dotted] (0,.5) -- (-2,.5) -- (-2,-2.5);}\,.
\]
Here, thick lines represent $\bar H$, and thick dotted lines represent $\bar H'$.
The isomorphism between $\bar H$ and $\bar H'$ is given by
$
v:=(S^*\otimes1)(1\otimes R')=\tikzmath[scale=.1]{
\fill[fill=\colA, rounded corners=4]
(6,2.75) -- (6,-2.75) [sharp corners]-- (3,-2.75) [rounded corners = 4]-- (3,1.5) -- (0,1.5) -- (0,-1.5) -- (-3,-1.5) [sharp corners]-- (-3,2.75) [rounded corners=4]-- cycle;
\fill[fill=\colB, rounded corners=4]
(-6,-2.75) -- (-6,2.75) [sharp corners]-- (-3,2.75) [rounded corners = 4]-- (-3,-1.5) -- (0,-1.5) -- (0,1.5) -- (3,1.5) [sharp corners]-- (3,-2.75) [rounded corners=4]-- cycle;
\draw[rounded corners = 4, very thick] (-1.5,-1.5) -- (-3,-1.5) -- (-3,2.75);
\draw[rounded corners = 4] (-1.5,-1.5)  -- (0,-1.5) -- (0,1.5) -- (1.5,1.5);
\draw[rounded corners = 4, very thick, densely dotted] (1.5,1.5) -- (3,1.5) -- (3,-2.75);}
$\,.
This isomorphism is certainly the unique isomorphism intertwining $R$ and $R'$, and $S$ and $S'$.  Our goal is to show that $v$ is unitary.
In other words, we need to show that $v$ is equal to $v^*{}^{-1}$; note that $\tikzmath[scale=.1]{
\fill[fill=\colA, rounded corners=4]
(6,-2.75) -- (6,2.75) [sharp corners]-- (3,2.75) [rounded corners = 4]-- (3,-1.5) -- (0,-1.5) -- (0,1.5) -- (-3,1.5) [sharp corners]-- (-3,-2.75) [rounded corners=4]-- cycle;
\fill[fill=\colB, rounded corners=4]
(-6,2.75) -- (-6,-2.75) [sharp corners]-- (-3,-2.75) [rounded corners = 4]-- (-3,1.5) -- (0,1.5) -- (0,-1.5) -- (3,-1.5) [sharp corners]-- (3,2.75) [rounded corners=4]-- cycle;
\draw[rounded corners = 4, very thick, densely dotted] (-1.5,1.5) -- (-3,1.5) -- (-3,-2.75);
\draw[rounded corners = 4] (-1.5,1.5)  -- (0,1.5) -- (0,-1.5) -- (1.5,-1.5);
\draw[rounded corners = 4, very thick] (1.5,-1.5) -- (3,-1.5) -- (3,2.75);}$
is the inverse of $v^*$.
We can rewrite $R'$ and $S'$ as
\[
R'=\tikzmath[scale=\squarescale]{\clip[rounded corners=10] (-12,-6) rectangle (13,12); 
	\fill[rounded corners=10, fill=\colA] (-12,-12) rectangle (13,12);\fill[rounded corners=10, fill=\colB] (-5,-12) rectangle (5,8);
	\draw[rounded corners=10] (-5,-6) -- (-5,8) -- (0,8);\draw[rounded corners=10, very thick] (5,1) -- (5,8) -- (0,8);
	\draw[rounded corners=10, very thick, densely dotted] (5,-6) -- (5,1);\draw (5,1) node[fill=white, draw]{$v$};}\qquad
S'=\tikzmath[scale=\squarescale]{\clip[rounded corners=10] (-13,-6) rectangle (12,12); 
	\fill[rounded corners=10, fill=\colB] (-13,-12) rectangle (12,12);\fill[rounded corners=10, fill=\colA] (-5,-12) rectangle (5,8);
	\draw[rounded corners=10] (5,-6) -- (5,8) -- (0,8);\draw[rounded corners=10, very thick] (-5,1) -- (-5,8) -- (0,8);
	\draw[rounded corners=10, very thick, densely dotted] (-5,-6) -- (-5,1);\draw (-5,1) node[fill=white, draw, inner sep=2]{$v^*{}^{-1}$};}\,.
\]
Equation \eqref{eq:duality normalization} for $R'$ and $S'$ then reads
\begin{equation}\label{eq: normalization for R' and S'}
\tikzmath[scale=\squarescale]
	{\fill[rounded corners=10, fill=\colA] (-15,-17) rectangle (15,17);\fill[rounded corners=11, fill=\colB] (-6,-14) rectangle (6,14);
	\draw[rounded corners=11] (0,-14) -- (-6,-14) -- (-6,14) -- (0,14);\draw[rounded corners=11, very thick] (6,5) -- (6,14) -- (0,14);
	\draw[rounded corners=11, very thick] (6,-5) -- (6,-14) -- (0,-14);\draw[rounded corners=11, very thick, densely dotted] (6,-5) -- (6,5);
	\draw (6,7) node[fill=white, draw, inner xsep=4]{$v$};\draw (6,-7) node[fill=white, draw, inner sep=2]{$v^*$};
	\draw (-6,0) node[fill=white, draw]{$x$};\node at (-11.5,0) {$p$};\node at (-.5,0) {$q$};}
\,\,\,=\,\,\,\tikzmath[scale=\squarescale]
	{\fill[rounded corners=10, fill=\colB] (-15,-17) rectangle (15,17);\fill[rounded corners=11, fill=\colA] (-6,-14) rectangle (6,14);
	\draw[rounded corners=11] (0,-14) -- (6,-14) -- (6,14) -- (0,14);\draw[rounded corners=11, very thick] (-6,5) -- (-6,14) -- (0,14);
	\draw[rounded corners=11, very thick] (-6,-5) -- (-6,-14) -- (0,-14);\draw[rounded corners=11, very thick, densely dotted] (-6,-5) -- (-6,5);
	\draw (-6,7) node[fill=white, draw, inner xsep=0, inner ysep=2]{$\,v^*{}^{-1}$};\draw (-6,-7) node[fill=white, draw, inner sep=2]{$v^{-1}$};
	\draw (6,0) node[fill=white, draw]{$x$};\node at (.5,0) {$p$};\node at (11.5,0) {$q$};}\,.
\end{equation}

Given minimal central projections $p\in A$ and $q\in B$, the map
\begin{equation*}\label{eq: tr_pq}
tr_{pq}\,\,\,:\,\,\,y\,\,\mapsto\,\,\tikzmath[scale=0.075]{\fill[rounded corners=10, fill=\colB] (-14,-12) rectangle (11,12);
\draw[rounded corners=10, fill=\colA] (-5,-7.5) rectangle (5,7.5);\draw[rounded corners=10, very thick] (0,-7.5) -- (-5,-7.5) -- (-5,7.5) -- (0,7.5);
\draw (-5,0) node[fill=white, draw]{$y$};\node at (-10.5,0) {$q$};\node at (.5,0) {$p$};
}
\end{equation*}
is a trace on $\mathrm{End}({}_B\bar H_A)$, as can be seen by applying Lemma \ref{lem: phi is a trace} to the bimodule ${}_{qB}(q\bar Hp)_{pA}$.
Applying Lemma \ref{lem: Lemma B} to each summand in the decomposition
$\bar H=\bigoplus_{pq}q\bar Hp$, and using the fact that $\mathrm{End}(\bar H)=\bigoplus_{pq}\mathrm{End}(q\bar Hp)$, it follows that the traces $tr_{pq}$ are jointly faithful.
That is, given a positive element $y$, there exists at least one $tr_{pq}$ such that $tr_{pq}(y)\not=0$.
Letting $\bar x$ be as in \eqref{eq: left half twirl = right half twirl}, equation \eqref{eq: normalization for R' and S'} implies
\[
tr_{pq}(v^*v\,\bar x)=tr_{pq}(v^{-1}v^*{}^{-1}\bar x)\qquad\forall  x\in \mathrm{End}({}_AH_B).
\]
This being true for all $p$, $q$, it follows that $v^*v=v^{-1}v^*{}^{-1}$\!. In other words, $v^*v=(v^*v)^{-1}$.
Since $v^*v$ is positive, we must have $v^*v=1$.
\end{proof}

\section{Statistical dimension and index}\label{sec: statistical dimension and minimal index}

The following definition is well known. Our approach follows \cite{Longo-Roberts(A-theory-of-dimension)}.

\begin{definition}\label{def: statistical dimension}
If $A$ and $B$ are factors, then the \emph{statistical dimension} of a dualizable bimodule ${}_AH_B$ is given by :
\begin{equation*}
\qquad\dim ({}_A H_B) := R^*R = S^* S\,\in\,\IR_{\ge 0}.
\end{equation*}
For non-dualizable bimodules, one simply declares $\dim({}_AH_B)$ to be $\infty$.
\end{definition}

The basic properties of the statistical dimension can be found in many places 
\cite{Kosaki(Type-III-factors-and-index-theory), Kosaki-Longo(A-remark-on-the-minimal-index),
Longo(Index-of-subfactors-and-statistics-of-quantum-fields-I), Longo-Roberts(A-theory-of-dimension)}.
We include some proofs for completeness.

\begin{proposition}\label{prop:basic properties of index} 
The statistical dimension of a non-zero bimodule ${}_AH_B$ is always $\ge\!1$, and is equal to $1$ if and only if $H$ is invertible.
The statistical dimension is additive under direct sums, and multiplicative under Connes fusion\footnote{For this to always be true, it is appropriate to use the convention $0\hspace{-.05cm}\cdot\hspace{-.05cm} \infty = 0$.}.
It is also multiplicative under external tensor product.
In other words, we have:
\begin{alignat}{1}
\dim({}_AH_B) \in \{0\}\cup[1,\infty&], \text{ \rm and it is $0$ iff } H=0\label{eq:properties of index 1}\\
\dim({}_AH_B) =&\, 1 \text{ \rm iff } A'=B\label{eq:properties of index 2}\\
\dim({}_A(H\oplus K)_B) =&\, \dim({}_AH_B)+ \dim({}_AK_B)\label{eq:properties of index 3}\\
\dim({}_AH\boxtimes_B K_C) =&\, \dim({}_AH_B) \dim({}_BK_C)\label{eq:properties of index 4}\\
\dim(({}_AH_B)\otimes_\IC ({}_CK_D)) =&\, \dim({}_AH_B) \dim({}_CK_D)\label{eq:properties of index 5}
\end{alignat}
\end{proposition}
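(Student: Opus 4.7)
I will prove the five properties in the order \eqref{eq:properties of index 1}, \eqref{eq:properties of index 3}, \eqref{eq:properties of index 4}, \eqref{eq:properties of index 2}, \eqref{eq:properties of index 5}, since the harder direction of \eqref{eq:properties of index 2} is easiest to extract once multiplicativity under Connes fusion is in hand. All four algebras are factors by hypothesis, so $R^*R$ and $S^*S$ are non-negative real scalars. For \eqref{eq:properties of index 1}, the zero bimodule is dualizable with zero structure maps, so $\dim = 0$; for $H \neq 0$, Lemma \ref{lem: D >= 1} gives $(R^*R)(S^*S) \geq 1$, while the normalization condition \eqref{eq:duality normalization} applied with $x = 1_H$, $p = 1_A$, $q = 1_B$ forces $R^*R = S^*S$, so $\dim(H) \geq 1$. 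For \eqref{eq:properties of index 3}, I just apply Lemma \ref{lem: sum of dualizables}: its duality data for $H \oplus K$ land in the mutually orthogonal summands $H \boxtimes_B \bar H$ and $K \boxtimes_B \bar K$ of $(H \oplus K) \boxtimes_B (\bar H \oplus \bar K)$, so taking $R^*R$ of the direct-sum data gives $\dim(H) + \dim(K)$.

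For \eqref{eq:properties of index 4}, I apply Lemma \ref{lem: product of dualizables}, which supplies the structure map $\hat R = (1_H \otimes R_K \otimes 1_{\bar H}) R_H$, and compute
\[
\hat R^* \hat R \;=\; R_H^* (1_H \otimes R_K^* R_K \otimes 1_{\bar H}) R_H \;=\; \dim(K) \cdot R_H^* R_H,
\]
using that $R_K^* R_K = \dim(K) \cdot 1_{L^2(B)}$ is a central scalar in $Z(B) = \mathbb{C}$. The degenerate cases are handled by the convention $0 \cdot \infty = 0$ and by the forthcoming Corollary \ref{cor: HK finite ==> H finite}. For \eqref{eq:properties of index 2}: if $H$ is invertible, Proposition \ref{prop: characterization of invertible bimodules} yields isomorphisms $H \boxtimes_B \bar H \cong L^2(A)$ and $\bar H \boxtimes_A H \cong L^2(B)$, which, after a unique rescaling to arrange the normalization, can serve as $R$ and $S$; these are then unitary, so $\dim(H) = 1$. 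Conversely, if $\dim(H) = 1$, then $R$ is an isometry and, by \eqref{eq:properties of index 4}, $\dim(H \boxtimes_B \bar H) = 1$. Decomposing $H \boxtimes_B \bar H = L^2(A) \oplus W$ and applying \eqref{eq:properties of index 3} together with \eqref{eq:properties of index 1} forces $W = 0$, so $R$ is unitary; the same argument applied to $\bar H \boxtimes_A H$ shows $S$ is unitary, and Proposition \ref{prop: characterization of invertible bimodules} then concludes $A' = B$.

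Finally, for \eqref{eq:properties of index 5}, I identify $L^2(A \bar \otimes C) \cong L^2(A) \otimes L^2(C)$ via Example \ref{ex:standard-forms-ox}, and take the duality data for $H \otimes K$ to be $R_H \otimes R_K$ and $S_H \otimes S_K$. The duality equations \eqref{eq: duality equations} factor across the tensor product, and since $A \bar \otimes C$ and $B \bar \otimes D$ are again factors, the normalization \eqref{eq:duality normalization} reduces to the product of the corresponding normalizations for $H$ and $K$. Thus $(R_H \otimes R_K)^* (R_H \otimes R_K) = (R_H^* R_H)(R_K^* R_K) = \dim(H) \dim(K)$. I expect the main obstacle to be the converse direction of \eqref{eq:properties of index 2}, where one must combine the additivity, multiplicativity, and lower-bound properties together with Proposition \ref{prop: characterization of invertible bimodules} to pass all the way from $\dim = 1$ to $A' = B$.
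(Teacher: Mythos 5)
Your treatment of \eqref{eq:properties of index 1}, \eqref{eq:properties of index 3} and \eqref{eq:properties of index 4} matches the paper's: Lemma \ref{lem: D >= 1} for the lower bound, Lemmas \ref{lem: sum of dualizables} and \ref{lem: product of dualizables} for the explicit duality data, and the same deferral of the case $H\neq 0$, $\dim(K)=\infty$ to Corollary \ref{cor: HK finite ==> H finite}. Where you genuinely diverge is in \eqref{eq:properties of index 2} and \eqref{eq:properties of index 5}. For the implication $\dim(H)=1\Rightarrow A'=B$ the paper argues with the Jones projections $e_1,e_2$ of Lemma \ref{lem: D >= 1}: $\dim(H)=1$ forces $e_1=e_2$, from which $RR^*=1$ follows diagrammatically, so $R$ and $S$ are invertible. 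Your alternative --- $R$ is an isometry, $\dim(H\boxtimes_B\bar H)=\dim(H)\dim(\bar H)=1$ by the already-established finite case of \eqref{eq:properties of index 4}, then split off $L^2A$ and kill the complement $W$ using additivity and the lower bound --- is valid and trades the diagrammatic manipulation for arithmetic of dimensions; but note that to apply additivity in the form you proved it you must first know that $W$ is dualizable, which is exactly Lemma \ref{lem: direct summand of dualizable bimodule} applied to the dualizable bimodule $H\boxtimes_B\bar H$ (you never cite it). In the converse direction your ``unique rescaling'' is really needed to arrange the duality equations \eqref{eq: duality equations}; the normalization then comes for free from irreducibility, exactly as in the paper. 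For \eqref{eq:properties of index 5} the paper simply rewrites the external tensor product as $(H\otimes L^2C)\boxtimes_{B\bar\otimes C}(L^2B\otimes K)$ and invokes \eqref{eq:properties of index 4}, whereas you build duality data $R_H\otimes R_K$, $S_H\otimes S_K$ directly. That works, but checking the normalization \eqref{eq:duality normalization} for \emph{all} $x\in\mathrm{End}(H\otimes K)$ requires the identification $\mathrm{End}(H\otimes K)\cong\mathrm{End}(H)\otimes\mathrm{End}(K)$, a commutation-theorem fact you pass over; the paper's reduction avoids this entirely.

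One genuine (though easily repaired) omission: in \eqref{eq:properties of index 3} you only treat the case where both summands are dualizable, while the statement also asserts that both sides are $\infty$ when one summand is not; this is precisely where the paper invokes Lemma \ref{lem: direct summand of dualizable bimodule} to see that $H\oplus K$ cannot be dualizable if a summand fails to be. The same remark applies to the infinite case of \eqref{eq:properties of index 5}, which your direct construction does not address but the paper's reduction to \eqref{eq:properties of index 4} covers automatically.
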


\begin{proof} 
\def\colA{black!10}
\def\colB{black!30}
{\it i.}
If $H\not = 0$, then $\dim({}_AH_B)\ge 1$ by Lemma \ref{lem: D >= 1}.
If $H=0$, then clearly $R^*R=0$.

{\it ii.}
Let $e_1$, $e_2$ be as in Lemma \ref{lem: D >= 1}.
If $\dim({}_AH_B)=1$, then $e_1 = e_1 e_2 e_1$ and $e_2 = e_2 e_1 e_2$.
As $e_1$ and $e_2$ are projections, the first equation implies $e_2 \geq e_1$,
while the second implies $e_1 \geq e_2$. 
Thus $e_1 = e_2$.
From this (and a reflection along a vertical axis of the argument so far), we get
$
\tikzmath[scale=\squarescale]{
\clip[rounded corners=2] (-4,-3) rectangle (10,3);
\fill[fill=\colA] 
(-4,-3) [rounded corners=2]-- (-4,3) [sharp corners]-- (-2,3) [rounded corners = 4]-- (-2,1) -- (2,1) [sharp corners]-- (2,3) --
(5,3) -- (5,-3) -- (2,-3) [rounded corners = 4]-- (2,-1) -- (-2,-1) [sharp corners]-- (-2,-3) [rounded corners=2]-- cycle;
\fill[fill=\colB] (-2,-3) -- (2,-3) [rounded corners = 4]-- (2,-1) -- (-2,-1) [sharp corners]-- cycle;
\fill[fill=\colB] (-2,3) -- (2,3) [rounded corners = 4]-- (2,1) -- (-2,1) [sharp corners]-- cycle;
\draw (2,3) [rounded corners = 4]-- (2,1) -- (-2,1) -- (-2,3);
\draw (2,-3) [rounded corners = 4]-- (2,-1) -- (-2,-1) -- (-2,-3);
\fill[fill=\colB] (8,-3) -- (8,3) -- (5,3) -- (5,-3) -- cycle;
\draw (5,3) -- (5,-3);
\fill[fill=\colA] (10,-3) [rounded corners=2]-- (10,3) [sharp corners]-- (8,3) -- (8,-3) [rounded corners=2]-- cycle;
\draw (8,3) -- (8,-3);
}\,=\,
\tikzmath[scale=\squarescale]{
\clip[rounded corners=2] (-7,-3) rectangle (7,3);
\fill[fill=\colB] 
(-5,-3) -- (-5,3) -- (-2,3) [rounded corners = 4]-- (-2,1) -- (2,1) [sharp corners]-- (2,3) --
(5,3) -- (5,-3) -- (2,-3) [rounded corners = 4]-- (2,-1) -- (-2,-1) [sharp corners]-- (-2,-3) -- cycle;
\fill[fill=\colA] (-2,-3) -- (2,-3) [rounded corners = 4]-- (2,-1) -- (-2,-1) [sharp corners]-- cycle;
\fill[fill=\colA] (-2,3) -- (2,3) [rounded corners = 4]-- (2,1) -- (-2,1) [sharp corners]-- cycle;
\draw (2,3) [rounded corners = 4]-- (2,1) -- (-2,1) -- (-2,3);
\draw (2,-3) [rounded corners = 4]-- (2,-1) -- (-2,-1) -- (-2,-3);
\fill[fill=\colA] (7,-3) [rounded corners=2]-- (7,3) [sharp corners]-- (5,3) -- (5,-3) [rounded corners=2]-- cycle;
\draw (5,3) -- (5,-3);
\fill[fill=\colA] (-7,-3) [rounded corners=2]-- (-7,3) [sharp corners]-- (-5,3) -- (-5,-3) [rounded corners=2]-- cycle;
\draw (-5,3) -- (-5,-3);
}\,=\,
\tikzmath[scale=\squarescale]{
\clip[rounded corners=2] (-10,-3) rectangle (4,3);
\fill[fill=\colA] 
(4,-3) [rounded corners=2]-- (4,3) [sharp corners]-- (2,3) [rounded corners = 4]-- (2,1) -- (-2,1) [sharp corners]-- (-2,3) --
(-5,3) -- (-5,-3) -- (-2,-3) [rounded corners = 4]-- (-2,-1) -- (2,-1) [sharp corners]-- (2,-3) [rounded corners=2]-- cycle;
\fill[fill=\colB] (2,-3) -- (-2,-3) [rounded corners = 4]-- (-2,-1) -- (2,-1) [sharp corners]-- cycle;
\fill[fill=\colB] (2,3) -- (-2,3) [rounded corners = 4]-- (-2,1) -- (2,1) [sharp corners]-- cycle;
\draw (-2,3) [rounded corners = 4]-- (-2,1) -- (2,1) -- (2,3);
\draw (-2,-3) [rounded corners = 4]-- (-2,-1) -- (2,-1) -- (2,-3);
\fill[fill=\colB] (-8,-3) -- (-8,3) -- (-5,3) -- (-5,-3) -- cycle;
\draw (-5,3) -- (-5,-3);
\fill[fill=\colA] (-10,-3) [rounded corners=2]-- (-10,3) [sharp corners]-- (-8,3) -- (-8,-3) [rounded corners=2]-- cycle;
\draw (-8,3) -- (-8,-3);
}
$.
As $A$ is a factor and ${}_AH_B \neq 0$, the latter is a faithful $A$-module.
Lemma~\ref{lem: D >= 1} implies that the projection
$R R^* = 
\def\colA{black!10}
\def\colB{black!30}
\tikzmath[scale=\squarescale]{
\clip[rounded corners=2] (-4,-3) rectangle (4,3);
\fill[fill=\colA] 
(-3.9,-3) [rounded corners=2]-- (-3.9,3) [sharp corners]-- (-2,3) [rounded corners = 4]-- (-2,1) -- (2,1) [sharp corners]-- (2,3) [rounded corners=2]--
(3.9,3) -- (3.9,-3) [sharp corners]-- (2,-3) [rounded corners = 4]-- (2,-1) -- (-2,-1) [sharp corners]-- (-2,-3) [rounded corners=2]-- cycle;
\fill[fill=\colB] (-2,-3) -- (2,-3) [rounded corners = 4]-- (2,-1) -- (-2,-1) [sharp corners]-- cycle;
\fill[fill=\colB] (-2,3) -- (2,3) [rounded corners = 4]-- (2,1) -- (-2,1) [sharp corners]-- cycle;
\draw (2,3) [rounded corners = 4]-- (2,1) -- (-2,1) -- (-2,3);
\draw (2,-3) [rounded corners = 4]-- (2,-1) -- (-2,-1) -- (-2,-3);
}$
is non-trivial.
Thus, the previous equation implies 
$
\def\colA{black!10}
\def\colB{black!30}
\tikzmath[scale=\squarescale]{
\fill[fill=\colA] 
(-3.9,-3) [rounded corners=2]-- (-3.9,3) [sharp corners]-- (-2,3) [rounded corners = 4]-- (-2,1) -- (2,1) [sharp corners]-- (2,3) [rounded corners=2]--
(3.9,3) -- (3.9,-3) [sharp corners]-- (2,-3) [rounded corners = 4]-- (2,-1) -- (-2,-1) [sharp corners]-- (-2,-3) [rounded corners=2]-- cycle;
\fill[fill=\colB] (-2,-3) -- (2,-3) [rounded corners = 4]-- (2,-1) -- (-2,-1) [sharp corners]-- cycle;
\fill[fill=\colB] (-2,3) -- (2,3) [rounded corners = 4]-- (2,1) -- (-2,1) [sharp corners]-- cycle;
\draw (2,3) [rounded corners = 4]-- (2,1) -- (-2,1) -- (-2,3);
\draw (2,-3) [rounded corners = 4]-- (2,-1) -- (-2,-1) -- (-2,-3);
}\,=\,
\tikzmath[scale=\squarescale]{
\fill[fill=\colA] (-3.9,-3) [rounded corners=2]-- (-3.9,3) [sharp corners]-- (-1.7,3) -- (-1.7,-3) [rounded corners=2]-- cycle;
\fill[fill=\colA] (3.9,-3) [rounded corners=2]-- (3.9,3) [sharp corners]-- (1.7,3) -- (1.7,-3) [rounded corners=2]-- cycle;
\fill[fill=\colB] (-1.7,-3) rectangle (1.7,3);
\draw (1.7,3) -- (1.7,-3)(-1.7,3) -- (-1.7,-3);
}
$\,. 
The map $R$ is therefore invertible, and similarly for $S$.
Having shown ${}_B\bar H\boxtimes_A H_B\cong L^2B$ and 
${}_A H\boxtimes_B\bar H_A\cong L^2A$,
the result follows from 
Proposition \ref{prop: characterization of invertible bimodules}.

Conversely, if $H$ is invertible, there exist unitary maps 
$\tilde R \colon L^2(A) \to H \boxtimes_B \bar H$ and 
$S \colon L^2(B) \to \bar H \boxtimes_A H$.
Since ${}_{A}H_B$ is irreducible, 
$\lambda := (\tilde R^*\otimes 1)(1\otimes S)$ is a scalar, and so $R :=  \lambda \tilde R$ and $S$ satisfy~\eqref{eq: duality equations}.
Again because ${}_{A}H_B$ is irreducible (and $R$ and $S$ are unitary),
the normalization condition~\eqref{eq:duality normalization} is satisfied
as well.
Thus  $d= R^* R = 1$.

{\it iii.} If either $H$ or $K$ is not dualizable, then both sides of \eqref{eq:properties of index 3} are infinite by Lemma~\ref{lem: direct summand of dualizable bimodule}.
If they are both dualizable, then Lemma \ref{lem: sum of dualizables} provides a description of the duality maps for $H\oplus K$, which we can use to compute
\[
\dim(H\oplus K)=\!\tikzmath{
\node[scale = .7] at (0,0) {$\textstyle\begin{pmatrix}R \\ 0 \\ 0\\ \tilde R\end{pmatrix}$};
\node[scale = .7] at (.7,0) {$\textstyle\begin{pmatrix}R \\ 0 \\ 0\\ \tilde R\end{pmatrix}$};
\node[scale = .7] at (.3,.55) {$*$};
}\!
=R^*R+\tilde R^*\tilde R\,=\,\dim(H)+ \dim(K).
\]

{\it iv.} 
If both $H$ and $K$ are dualizable, then using the duality maps described in Lemma \ref{lem: product of dualizables},
we compute:
\[
\dim(H\boxtimes_B K)=R^*(1\otimes \tilde R^*\!\otimes 1)(1\otimes \tilde R\otimes 1)R=R^*\dim(K)R=\dim(H)\dim(K).
\]
If either $H$ or $K$ is zero, then the equation clearly holds.
The remaining case $H\not =0$, $\dim(K)=\infty$ requires different techniques\footnote{Note that the special case $\dim(H)=1$, $\dim(K)=\infty$ is straightforward, as fusing with an invertible bimodule certainly doesn't change the property of having a dual or not.\label{fn: tensor with invertible}
} and will be treated later, in Corollary~\ref{cor: HK finite ==> H finite}.

{\it v.} Apply equation \eqref{eq:properties of index 4} to the decomposition
\[
({}_AH_B)\otimes_\IC ({}_CK_D)\cong 
\big(({}_AH_B)\otimes_\IC ({}_CL^2C_C)\big)
\underset{B\bar\otimes C}\boxtimes
\big(({}_BL^2B_B)\otimes_\IC ({}_CK_D)\big). \qedhere
\]
\end{proof}

\begin{remark}
As was shown in the celebrated papers \cite{Jones(Index-for-subfactors), Kosaki(Extension-of-Jones-index-to-arbitrary-factors)},
equation \eqref{eq:properties of index 1} can be improved: the statistical dimension of a bimodule can only take values in the set
$\{2\cos(\frac \pi n)\,;\,n=2,3,4,\ldots\}\cup[2,\infty]$.
\end{remark}

If the von Neumann algebras $A=\bigoplus A_i$ and $B=\bigoplus B_j$ are finite direct sums of factors (in other words have finite-dimensional centers), then any $A$-$B$-bimodule $H$ can be written as a direct sum
\begin{equation}\label{eq: H = oplus H_ij}
H=\bigoplus H_{ij}
\end{equation}
of $A_i$-$B_j$-bimodules.
The statistical dimension of ${}_AH_B$ is then best taken to be a matrix of numbers
\cite{Teruya(Index-vN-algebra-fd-center)}:
\[
\dim({}_AH_B)_{ij} := \dim({}_{A_i}{H_{ij}}\,{}_{B_j}).
\]
The matrix-valued statistical dimension satisfies the same formal properties \eqref{eq:properties of index 1}--\eqref{eq:properties of index 5} as above,
provided the right hand sides of \eqref{eq:properties of index 4} and \eqref{eq:properties of index 5} are interpreted in terms of matrix and Kronecker products, respectively.

As will be shown later, in Corollary \ref{cor: justification of the name minimal},
the following definition of index is equivalent to other definitions that exist in the literature \cite{Hiai(Minimizing-indices), Kosaki(Extension-of-Jones-index-to-arbitrary-factors), Kosaki(Type-III-factors-and-index-theory), Longo(Index-of-subfactors-and-statistics-of-quantum-fields-I), Pimsner-Popa}:

\begin{definition}\label{def: min index}
The \emph{index} $[B:A]$ of an inclusion of factors ${\iota:A \to B}$ is the square of the statistical dimension of ${}_AL^2B_B$.
\end{definition}

\begin{warning}\label{warn: Jones index}
The above definition does not always agree with Jones' original definition \cite{Jones(Index-for-subfactors)}.
It agrees if and only if the type $\mathit{II}_1$ subfactor $A \subset B$ is extremal, that is, the normalized traces on 
$A'$ and $B$ coincide on $A'\cap B$.
\end{warning}

Let $\iota:A\to B$ be a subfactor.
If the  index $[B:A]$ is finite, we say that $\iota$ is a finite homomorphism.
More generally, if $A$ and $B$ are von Neumann algebras with finite-dimensional centers, we say that a homomorphism $A \to B$ is finite if all the matrix entries of $\dim({}_AL^2B_B)$ are finite.  Of course, this simply amounts to the following definition:

\begin{definition}\label{def: finite homomrphism}
A homomorphism $A \to B$ between von Neumann algebras with finite-dimensional centers is \emph{finite} if the associated bimodule ${}_AL^2B_B$ is dualizable.
\end{definition}

When dealing with inclusions of von Neumann algebras with finite-dimensional center, the matrix $\dim({}_AL^2B_B)$ is much better behaved than the 
corresponding matrix of indices.
We propose a new notation for it:

\begin{definition}\label{def: [[ B : A ]]}
Given a finite homomorphism $f:A\to B$ between von Neumann algebras with finite-dimensional center,
we let $\llbracket B : A\rrbracket:=\dim({}_AL^2B_B)$ denote the matrix of statistical dimensions of ${}_AL^2B_B$.
\end{definition}

\noindent Following \eqref{eq:properties of index 4}, the matrix of statistical dimensions satisfies
\begin{equation}\label{eq:properties of matrix of stat dim}
\llbracket B : A\rrbracket\llbracket C : B\rrbracket = \llbracket C : A\rrbracket.
\end{equation}

As an corollary of Lemma \ref{lem: Lemma B}, we have:

\begin{lemma}\label{lem: rel comm is finite dim}
Let $f:A\to B$ be a finite homomorphism between von Neumann algebras with finite-dimensional center.
Then the relative commutant of $f(A)$ in $B$ is finite-dimensional.
\end{lemma}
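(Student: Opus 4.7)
The plan is to identify the relative commutant with the endomorphism algebra of the bimodule ${}_AL^2B_B$, and then reduce to the factor case handled by Lemma \ref{lem: Lemma B}.

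First I would establish the identification $f(A)'\cap B\cong \mathrm{End}({}_AL^2B_B)$. Since $L^2B$ is a standard form for $B$, its right $B$-linear endomorphisms are exactly the left multiplications by elements of $B$. Imposing additionally $A$-linearity (where $A$ acts on $L^2B$ via $f$ composed with left multiplication) cuts this down to left multiplications by elements $b\in B$ with $bf(a)=f(a)b$ for all $a\in A$, i.e.\ to $f(A)'\cap B$. So it suffices to prove that $\mathrm{End}({}_AL^2B_B)$ is finite-dimensional.

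Next I would decompose along central projections. Write $A=\bigoplus_iA_i$ and $B=\bigoplus_jB_j$ with $A_i,B_j$ factors, and let $p_i\in Z(A)$, $q_j\in Z(B)$ be the associated minimal central projections. By Lemma \ref{lem: L^2(pAp) = pL^2(A)p} we have $L^2B=\bigoplus_jL^2B_j$, and the $A$-$B$-bimodule ${}_AL^2B_B$ splits as an orthogonal direct sum of $A_i$-$B_j$-sub-bimodules $H_{ij}:=f(p_i)L^2(B)q_j$. Any endomorphism of ${}_AL^2B_B$ must respect the central projections on both sides, so
\[
\mathrm{End}({}_AL^2B_B)\,=\,\bigoplus_{i,j}\mathrm{End}\bigl({}_{A_i}(H_{ij})_{B_j}\bigr).
\]
Since $A$ and $B$ have finite center, this sum is finite, so it will be enough to prove that each summand is finite-dimensional.

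Finally I would apply Lemma \ref{lem: Lemma B} summand by summand. Each $H_{ij}$ is a direct summand of the dualizable bimodule ${}_AL^2B_B$ (cut out by the bimodule projection $f(p_i)q_j$), hence dualizable as an $A$-$B$-bimodule by Lemma \ref{lem: direct summand of dualizable bimodule}; because the $A$- and $B$-actions on $H_{ij}$ factor through $A_i$ and $B_j$, the same duality data exhibit $H_{ij}$ as a dualizable $A_i$-$B_j$-bimodule. Lemma \ref{lem: Lemma B} then gives $\dim_\IC\mathrm{End}({}_{A_i}(H_{ij})_{B_j})<\infty$, and taking the (finite) direct sum over $i,j$ finishes the proof. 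The only mildly subtle point is checking that dualizability transfers from the $A$-$B$-bimodule structure to the $A_i$-$B_j$-bimodule structure, which is immediate once one notes that $R$ and $S$ automatically factor through $L^2A_i$ and $L^2B_j$ after cutting by $p_i$ and $q_j$.
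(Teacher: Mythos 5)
Your proposal is correct and follows essentially the same route as the paper: identify $f(A)'\cap B$ with $\mathrm{End}({}_AL^2B_B)$, decompose this bimodule into its $A_i$-$B_j$-summands $H_{ij}$ as in \eqref{eq: H = oplus H_ij}, and apply Lemma \ref{lem: Lemma B} to each summand. Your extra care in transferring the duality data to each $H_{ij}$ (via Lemma \ref{lem: direct summand of dualizable bimodule} and the observation that the maps factor through $L^2A_i$ and $L^2B_j$) just makes explicit what the paper leaves implicit.
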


\begin{proof}
The relative commutant of $f(A)$ in $B$ is the endomorphism algebra of the bimodule ${}_AL^2(B)_B$.
Apply Lemma \ref{lem: Lemma B} to every summand in the decomposition \eqref{eq: H = oplus H_ij} of that bimodule.
\end{proof}

\begin{lemma}\label{lem: ind = dim}
Let ${}_AH_B$ be a bimodule between von Neumann algebras with finite-dimensional center.
Assume $B$ acts faithfully, and let $B'\supset A$ 
be the commutant of $B$ on $H$.
Then $\dim({}_AH_B)=\llbracket B':A\rrbracket$.
\end{lemma}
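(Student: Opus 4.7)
The plan is to exhibit ${}_{B'}H_B$ as a Morita equivalence, factor ${}_AH_B$ through it, and then invoke multiplicativity of the statistical dimension.

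First I would apply Proposition~\ref{prop: characterization of invertible bimodules} to the bimodule ${}_{B'}H_B$: the right $B$-action is faithful by hypothesis, the left $B'$-action is faithful because $B'$ is by definition a subalgebra of $\bfB(H)$, and the commutant of $B$ in $\bfB(H)$ is $B'$ tautologically. Hence ${}_{B'}H_B$ is invertible with respect to Connes fusion. To control its statistical dimension, decompose $B=\bigoplus_j B_j$ into factors with minimal central projections $e_j\in Z(B)\subset B'$; faithfulness of the $B$-action yields a block decomposition $H=\bigoplus_j H_j$ with $H_j:=e_jH$, and the commutant splits as $B'=\bigoplus_j B_j'$ where $B_j'$ is the factor commutant of $B_j$ on $H_j$. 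So $B'$ is again a finite direct sum of factors, sharing minimal central projections with $B$. Since each $H_j$ is an invertible bimodule between the factors $B_j'$ and $B_j$, Proposition~\ref{prop:basic properties of index}\eqref{eq:properties of index 2} gives $\dim({}_{B_j'}(H_j)_{B_j})=1$, so the matrix $\dim({}_{B'}H_B)$ is the identity.

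Next, the unit property of $L^2(B')$ for Connes fusion provides a canonical $A$--$B$-bimodule isomorphism ${}_AH_B\cong {}_AL^2(B')_{B'}\boxtimes_{B'}{}_{B'}H_B$. Taking statistical dimensions and applying Proposition~\ref{prop:basic properties of index}\eqref{eq:properties of index 4} (in its matrix-valued form) gives
\[
\dim({}_AH_B)=\dim({}_AL^2(B')_{B'})\cdot\dim({}_{B'}H_B)=\llbracket B':A\rrbracket\cdot I=\llbracket B':A\rrbracket.
\]
Entries where one side is infinite need no separate treatment: by Lemma~\ref{lem: product of dualizables} applied to the invertible bimodule $\bar H$, dualizability of ${}_AH_B$ in a given block is equivalent to dualizability of $L^2(B')\cong H\boxtimes_B\bar H$ in the corresponding block, and the matrix-product formula respects the convention $0\cdot\infty=0$ from Proposition~\ref{prop:basic properties of index}.

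The main obstacle is simply the index bookkeeping: one must verify that the minimal central projections of $B$ and $B'$ agree, so that the identity matrix $\dim({}_{B'}H_B)$ sits in the correct indexing and the final product is genuinely $\llbracket B':A\rrbracket$ rather than a permutation of it. This follows directly from the block-diagonal form of the faithful $B$-action together with the double commutant theorem for factors.
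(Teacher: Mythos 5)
Your proof is correct and follows essentially the same route as the paper's: exhibit ${}_{B'}H_B$ as a Morita equivalence with identity matrix of statistical dimensions, write ${}_AH_B\cong {}_AL^2(B')\boxtimes_{B'}H_B$, and apply multiplicativity. Your extra care with the infinite entries (reducing to fusion with an invertible bimodule rather than invoking full multiplicativity) matches the paper's own footnoted caveat and correctly avoids circularity with Corollary~\ref{cor: HK finite ==> H finite}.
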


\begin{proof}
The bimodule ${}_{B'}H_B$ is a Morita equivalence, and its matrix of
statistical dimensions is therefore an identity matrix.
We have
\[
\dim({}_AH_B)=\dim({}_AL^2B'\boxtimes_{B'}H_B)=\dim({}_AL^2B'_{B'})\dim({}_{B'}H_B)=\dim({}_AL^2B'_{B'}).
\]
The last expression is the definition of the matrix $\llbracket B':A\rrbracket$.
\end{proof}

\begin{corollary}\label{cor: [B:A]=[A':B']}
If $A\subset B\subset \bfB(H)$ are von Neumann algebras with finite-dimensional centers, then $\llbracket B:A\rrbracket=\llbracket A':B'\rrbracket^T$.
In particular, if $A$ and $B$ are factors, then $[B:A]=[A':B']$.
\end{corollary}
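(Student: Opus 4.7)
The plan is to deduce the corollary from two applications of Lemma~\ref{lem: ind = dim}, each to a different bimodule structure on the single Hilbert space $H$ appearing in the hypothesis.

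First, I would view $H$ as a bimodule ${}_AH_{(B')^{\op}}$, where the left action of $A$ comes from the inclusion $A\subset \bfB(H)$ and the right action of $(B')^{\op}$ is the one induced by the inclusion $B'\subset\bfB(H)$ (turned into a right action in the standard way, via the opposite). The two actions commute because $B'$ commutes with $B\supset A$. Note that both $A$ and $(B')^{\op}$ have finite center: indeed $Z(B')=B'\cap B''=B'\cap B=Z(B)$, and this is finite dimensional by hypothesis. The commutant of the right action is $B''=B\supset A$. Lemma~\ref{lem: ind = dim} therefore yields
\[
\dim\bigl({}_AH_{(B')^{\op}}\bigr)=\llbracket B:A\rrbracket .
\]
Applying the same reasoning to the inclusion $B'\subset A'$ on $H$ (using that $A'\supset B'$, and that $Z(A')=Z(A)$ is also finite dimensional) gives
\[
\dim\bigl({}_{B'}H_{A^{\op}}\bigr)=\llbracket A':B'\rrbracket .
\]

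The two bimodules ${}_AH_{(B')^{\op}}$ and ${}_{B'}H_{A^{\op}}$ are the same Hilbert space equipped with the left/right roles of its algebras exchanged. I would argue that this swap transposes the matrix of statistical dimensions. Concretely, let $\{p_i\}$ be the minimal central projections of $A$ and $\{q_j\}$ those of $B'$; these are also the minimal central projections of $A^{\op}$ and $(B')^{\op}$. Then $p_iHq_j$ and $q_jHp_i$ are the same Hilbert space, and the $(i,j)$-entry of $\dim({}_AH_{(B')^{\op}})$ is the statistical dimension of $p_iHq_j$ as an $Ap_i$--$(B')^{\op}q_j$ bimodule, while the $(j,i)$-entry of $\dim({}_{B'}H_{A^{\op}})$ is the statistical dimension of the same space as a $B'q_j$--$A^{\op}p_i$ bimodule. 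So the required transpose relation reduces to the fact that swapping left and right preserves the statistical dimension for a bimodule between factors; this follows from Remark~\ref{rem: L^2(A^op)}, since the isomorphisms $L^2(M)\cong L^2(M^{\op})$ and $L^2(N)\cong L^2(N^{\op})$ allow the duality maps $R,S$ for ${}_MV_N$ to be reinterpreted as duality maps for ${}_{N^{\op}}V_{M^{\op}}$, yielding the same scalar $R^*R=S^*S$. This gives
\[
\llbracket B:A\rrbracket=\llbracket A':B'\rrbracket^T .
\]

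For the factor case, note that the commutant of a factor is a factor, so when $A$ and $B$ are factors, both matrices collapse to single scalars and the transpose is vacuous. Combining with Definition~\ref{def: min index} (which sets $[B:A]=\llbracket B:A\rrbracket^2$) then gives $[B:A]=[A':B']$. The main subtlety in this plan is the opposite-swap invariance of the statistical dimension, but this is essentially built into the formalism of the paper and requires no substantial new input beyond Remark~\ref{rem: L^2(A^op)}; the rest of the argument is a bookkeeping exercise in applying Lemma~\ref{lem: ind = dim}.
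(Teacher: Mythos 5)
Your proposal is correct and follows essentially the same route as the paper: the paper likewise applies Lemma~\ref{lem: ind = dim} twice (once with right algebra $B'$, whose commutant is $B$, and once after reversing, with commutant $A'$) and then transposes. The only difference is cosmetic — the paper implements the left--right reversal by passing to the conjugate bimodule ${}_{B'}\overline{H}_A$ with the actions of Proposition~\ref{prop: characterization of invertible bimodules}, whereas you keep $H$ and pass to the opposite algebras via Remark~\ref{rem: L^2(A^op)}; both hinge on the same fact that reversing a bimodule transposes the matrix of statistical dimensions.
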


\begin{proof}
Let $\overline{H}$ denote the complex conjugate of $H$, with
actions as in Proposition \ref{prop: characterization of invertible bimodules}.  Applying Lemma \ref{lem: ind = dim} twice, we have
$\llbracket B \!:\! A\rrbracket=\dim({}_AH_{B'})\!=\dim({}_{B'}\overline{H}_A)^T\!=\llbracket A' \!: B'\rrbracket^T$.
\end{proof}

\begin{lemma}\label{lem: non-factor in factor}
Let $B$ be a factor, and let $A\subset B$ be a subalgebra with finite-dimensional center. Call its minimal central projections $p_1, \ldots, p_n$.
Then $\sum [p_iBp_i:p_iA] = \| \llbracket B:A \rrbracket \|^2$, where $\|\,\,\|$ stands for the $\ell^2$-norm of a vector.
\end{lemma}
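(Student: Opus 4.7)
The plan is to reduce the identity to a single application of Lemma \ref{lem: ind = dim}, corner by corner.

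First I would unpack the left hand side. Since $A = \bigoplus_i p_iA$ is the decomposition of $A$ into factors (each $p_iA$ being a factor because $p_i$ is minimal in $Z(A)$) and $B$ is a single factor, the matrix of statistical dimensions $\llbracket B:A\rrbracket = \dim({}_AL^2(B)_B)$ is a column vector of length $n$ whose $i$-th entry is $\dim({}_{p_iA}(p_iL^2(B))_B)$, coming from the bimodule decomposition ${}_AL^2(B) = \bigoplus_i {}_{p_iA}(p_iL^2(B))$. Hence
\[
\|\llbracket B:A\rrbracket\|^2 = \sum_i \dim({}_{p_iA}(p_iL^2(B))_B)^2.
\]

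Next I would identify the commutant of $B$ acting on the right of $p_iL^2(B)$. On the standard form $L^2(B)$, the right commutant of $B$ is $B$ acting on the left; restricting to the subspace $p_iL^2(B)$ (cutting down by the projection $p_i\in B$ acting on the left), the commutant becomes $p_iBp_i$, acting by left multiplication. Because $p_i$ is central in $A$, we have $p_iA = p_iAp_i \subseteq p_iBp_i$, so ${}_{p_iA}(p_iL^2(B))_B$ is indeed a bimodule over algebras with finite center, with $B$ acting faithfully (nontrivial projections in the factor $B$ always act faithfully on the standard form). Lemma \ref{lem: ind = dim} then yields
\[
\dim({}_{p_iA}(p_iL^2(B))_B) = \llbracket p_iBp_i : p_iA\rrbracket.
\]

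Since both $p_iA$ and $p_iBp_i$ are factors, $\llbracket p_iBp_i : p_iA\rrbracket$ is a single number; by Definition \ref{def: min index} its square is exactly the minimal index $[p_iBp_i:p_iA]$. Squaring and summing over $i$ then gives
\[
\sum_i [p_iBp_i:p_iA] = \sum_i \dim({}_{p_iA}(p_iL^2(B))_B)^2 = \|\llbracket B:A\rrbracket\|^2,
\]
which is the desired identity. The main substantive step is the commutant computation $B' = p_iBp_i$, together with verifying the faithfulness hypothesis of Lemma \ref{lem: ind = dim}; once that is in hand, everything else is bookkeeping.
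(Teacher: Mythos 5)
Your proof is correct, and it follows the same skeleton as the paper's: decompose $\llbracket B:A\rrbracket$ into its entries $\dim({}_{p_iA}(p_iL^2B)_B)$ and identify each entry with $\sqrt{[p_iBp_i:p_iA]}$. The identification, however, is done by a different mechanism. You compute the commutant of the right $B$-action on $p_iL^2B$ (namely $p_iBp_i$ acting by left multiplication), verify faithfulness, and invoke Lemma \ref{lem: ind = dim}; the paper instead fuses with the invertible bimodule ${}_B(L^2Bp_i)_{p_iBp_i}$ and uses Lemma \ref{lem: L^2(pAp) = pL^2(A)p} to recognize $p_iL^2Bp_i$ as $L^2(p_iBp_i)$. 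The two routes are essentially equivalent, since the proof of Lemma \ref{lem: ind = dim} is itself the Morita-equivalence argument that the paper runs by hand here; your version buys a slightly shorter argument at the cost of the commutant computation $\bigl(B|_{p_iL^2B}\bigr)'=p_iBp_i$ (which is fine: on the standard form the commutant of the right action is the left action, and cutting by the projection $p_i$, which lies in that commutant, gives the corner), while the paper's version stays entirely at the level of explicit bimodules and the corner lemma. One small quibble on wording: the faithfulness of the right $B$-action on $p_iL^2B$ holds because a nonzero normal representation of a factor is automatically faithful, not because ``nontrivial projections act faithfully''; the conclusion you need is right, but that is the correct justification.
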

\begin{proof}
The $i$th entry in the vector $\llbracket B:A \rrbracket=\dim({}_AL^2B_B)$ is by definition
\[
\dim({}_{p_iA}(p_iL^2B)_B)=\dim({}_{p_iA}(p_iL^2Bp_i)_{p_iBp_i})=\dim({}_{p_iA}L^2(p_iBp_i)_{p_iBp_i}),
\]
where the first equality holds because ${}_B(L^2Bp_i)_{p_iBp_i}$ is an invertible bimodule, and the second one follows from Lemma \ref{lem: L^2(pAp) = pL^2(A)p}.
Therefore, $\dim({}_{p_iA}(p_iL^2B)_B)^2=[p_iBp_i:p_iA]$. The results now follows by summing over all the indices $i$.
\end{proof}

For more results about statistical dimension and index, we refer the reader to
\cite{Kawakami-Watatani, Kosaki(Extension-of-Jones-index-to-arbitrary-factors), 
Kosaki(Type-III-factors-and-index-theory), Kosaki-Longo(A-remark-on-the-minimal-index), Longo(Minimal-index-and-braided-subfactors), Longo-Rehren(Nets-of-subfactors)}.

\section{Functoriality of the $L^2$-space and of Connes fusion}\label{sec: functoriality of L2 and of Connes fusion}

\subsection*{The inner product on \emph{L}${}^\mathbf 2$(\hspace{-.15mm}\emph{A}\hspace{-.15mm})}
We mentioned earlier that for a von Neumann algebra $A$, its $L^2$-space is 
a completion of the vector space $\bigoplus_{\phi\in L^1_+(A)} \IC\textstyle\sqrt{\phi}$ with respect to some pre-inner product.
To define $\langle\sqrt\phi,\sqrt\psi\,\rangle$, 
one considers the function $f(t):=\phi([D\phi:D\psi]_t)$, where 
$[D\phi:D\psi]_t \in A$ denotes Connes' Radon-Nikodym 
derivative\footnote{We work with a definition of the Radon-Nikodym derivative $[D\phi:D\psi]_t$ that does not require $\phi$ and $\psi$ to be faithful; it satisfies  $[D\phi:D\psi]_t\in \bfs_\phi A\, \bfs_\psi$ where
  $\bfs_\phi$ and $\bfs_\psi$ are the support projections
  of $\phi$ and $\psi$.}.
The function $f$ can be analytically continued from $\IR$ to the strip 
$\Im \mathrm m(t)\in[0,1]$, and the value of the inner product is then given 
by $f(i/2)$:
\begin{equation}\label{eq: def inner product on L^2A}
\langle\sqrt\phi,\sqrt\psi\,\rangle 
    :=\, \underset{t\to i/2}{\text{anal.\,cont.}}\,\,\phi([D\phi:D\psi]_t).
\end{equation}
In particular, we have $\|\sqrt{\phi}\,\|^2=\phi(1)$.

The cone of positive elements in $L^2A$ is given by 
$L^2_+(A):=\{\sqrt\phi\,\,|\,\phi\in L^1_+(A)\}$, and
the two actions of $A$ on $L^2A$ are prescribed by
\[
\langle a\sqrt\phi\, b,\sqrt\psi\rangle 
  :=\, \underset{t\to i/2}{\text{anal.\,cont.}}\,\,
           \phi\big([D\phi:D\psi]_t\sigma_t^\psi(b)a\big),
\] 
where $\sigma_t^\psi$ is the modular flow\footnote{We do not assume that $\psi$ is faithful in defining the modular flow $\sigma_t^\psi$.  For $a \in A$, we have $\sigma_t^\psi(a)\in \bfs_\psi A\, \bfs_\psi$.}.
The space $L^2A$ is also equipped with the modular conjugation $J_A$, that
sends $\lambda\sqrt{\phi}$ to $\bar\lambda\sqrt{\phi}$ for $\lambda\in\IC$, 
and satisfies
\begin{equation}\label{eq: main property of J}
J_A(a\xi b)=b^*J_A(\xi)a^*.
\end{equation}
Altogether, the triple $(L^2(A),J_A,L^2_+(A))$ is a standard form for the 
von Neumann algebra $A$; compare \cite[p.528]{Connes(Non-commutative-geometry)}.

The  above constructions are compatible with spatial tensor product in 
the sense that there is a natural isomorphism 
$L^2(A\,\bar\otimes\, B)\cong L^2(A)\otimes L^2(B)$
that respects the left and right $A\,\bar\otimes\, B$-actions, 
and intertwines the modular involutions --- see Example~\ref{ex:standard-forms-ox}.

\begin{remark}[The modular algebra]\label{rem: algebra L_*(A)}
  The construction of $L^2A$ is best understood in the larger context of
  the modular algebra \cite{Pavlov(PhD-thesis), 
              Yamagami(Algebraic-aspects-in-modular-theory)} --- recall Remark \ref{rem: algebra L_*(A) -- PRE}.
  The modular algebra is 
  \[
  L^*A :=  \bigoplus_{p\in \IC^\times_{\Re \mathrm e\ge 0} \cup \{\infty\}} L^pA,
  \]
  and can be represented as an algebra of unbounded operators on a Hilbert space.
  The product sends $L^p(A)\times L^q(A)$ to $L^\frac1{1/p+1/q}(A)$, and $L^\infty (A)$ is a synonym for $A$.
  Given $p \in \IC^\times_{\Re \mathrm e\ge 0}$, then for every $\phi \in L^1_+A$,
  its $p$th root $\phi^{1/p}$ (in the sense of functional calculus) belongs to $L^pA$.
  In particular, we have $\sqrt\phi\equiv\phi^{1/2}\in L^2(A)$.
  The modular conjugation $J_A:L^2(A)\to L^2(A)$ is then simply the restriction of the $*\text{-operation}$ on $L^*A$.
  There is also a faithful normal trace $ Tr:L^*A\to \IC$ given by
\[
\qquad Tr(\phi)=\begin{cases}\phi(1)&\text{for}\,\, \phi\in L^1(A)\\
0&\text{for}\,\, \phi\in L^p(A),\,\,\,\,p\not =1.
\end{cases}
\]
  By definition, it satisfies $Tr(\phi a) = \phi(a)$ for $\phi \in L^1 A$ and $a \in A$.

  Using complex exponentiation in the algebra $L^*A$, the Radon-Nikodym derivative and the modular flow can be recovered\footnote{Unfortunately, one  
  cannot use \eqref{eq: [Dphi:Dpsi] and sigma_t^psi(a)} to \emph{define} $[D\phi:D\psi]_t$ and $\sigma_t^\psi$, as the Radon-Nikodym derivative and the modular flow are needed for the construction of the modular algebra --- see \cite{Yamagami(Algebraic-aspects-in-modular-theory)}.} as
\begin{equation}\label{eq: [Dphi:Dpsi] and sigma_t^psi(a)}
\begin{split}\qquad\qquad
[D\phi:D\psi]_t =\,&\phi^{it}\psi^{-it}\\
\sigma_t^\psi(a) =\psi^{it}&a\,\psi^{-it}
\end{split}\qquad\quad\,\,\,\,\, (t \in \IR).
\end{equation}
  We can therefore rewrite the quantity that appears in the right hand side of \eqref{eq: def inner product on L^2A} as
  \[
   \phi([D\phi:D\psi]_t) = 
      Tr(\phi [D\phi:D\psi]_t ) = Tr(\phi \phi^{it} \psi^{-it}) 
       =  Tr( \phi^{1+it} \psi^{-it}).
  \]
  The last expression $Tr( \phi^{1+it} \psi^{-it})$ can be evaluated for any $t$ in the strip $\Im \mathrm m(t)\in[0,1]$, because $\Re \mathrm e(1+it)$ and $\Re \mathrm e(-it)$ are both non-negative there.
  Moreover, the dependence on $t$ is analytic by \cite[Corollary 2.6]{Yamagami(Algebraic-aspects-in-modular-theory)}.
  One can therefore rewrite the inner product on $L^2(A)$ as
\[
\langle \sqrt{\phi}, \sqrt{\psi}\, \rangle = Tr( \phi^{1+it} \psi^{-it})|_{t=i/2} = Tr ( \phi^{1/2} \psi^{1/2}),
\]
  and the fact that it is symmetric follows from the trace property.
  The inner product also admits the following alternative definition:
\[
\langle\sqrt\phi,\sqrt\psi\,\rangle 
    :=\, \underset{t\to -i/2}{\text{anal.\,cont.}}\,\,\psi([D\phi:D\psi]_t).
\]
  This definition agrees with definition \eqref{eq: def inner product on L^2A}
  because $\psi([D\phi:D\psi]_t) = Tr (\psi \phi^{it} \psi^{-it}) = Tr (\phi^{it}\psi^{1-it})$ and $Tr (\phi^{it}\psi^{1-it})|_{t=-i/2}=Tr ( \phi^{1/2} \psi^{1/2})$.
\end{remark}

We will need the following lemma later on in order to identify the dual of the bimodule ${}_AL^2B_B$ associated to a finite homomorphism $A \to B$.
\begin{lemma}\label{lem: p_i sqrt(phi) p_j = 0}
Let $\{p_i\in A\}$ be orthogonal projections adding up to $1$.
If $\phi\in L^1_+(A)$ satisfies $p_i \phi\, p_j = 0$ for all $i$ and $j$ with $i \neq j$, then $p_i \sqrt{\phi}\, p_j = 0$ for all $i$ and $j$ with $i \neq j$.
\end{lemma}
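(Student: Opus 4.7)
The plan is to construct the positive square root of $\phi$ explicitly as a sum of positive square roots in the corner algebras $p_iAp_i$, and then to invoke uniqueness of positive square roots in the modular algebra $L^*A$ of Remark~\ref{rem: algebra L_*(A)} to force $\sqrt\phi$ to inherit the block-diagonal structure.

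First I would set $\phi_i:=p_i\phi p_i\in L^1_+(p_iAp_i)$ and observe that $\phi=\sum_i\phi_i$ in $L^1(A)$: using $\sum_ip_i=1$ we have $\phi=\sum_{i,j}p_i\phi p_j$, and by hypothesis only the diagonal terms survive. Next, Lemma~\ref{lem: L^2(pAp) = pL^2(A)p} supplies a canonical isometry $\iota_i\colon L^2(p_iAp_i)\to p_iL^2(A)p_i\subset L^2(A)$ carrying $\sqrt{\phi_i}$ to an element $\eta_i\in L^2_+(A)\cap p_iL^2(A)p_i$ (preservation of the positive cone is a consequence of the uniqueness of standard forms). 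I then set $\eta:=\sum_i\eta_i$, which lies in $L^2_+(A)$ because that cone is convex.

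Next I would verify $\eta^2=\phi$ inside the modular algebra. The embeddings $L^*(p_iAp_i)\hookrightarrow L^*(A)$ are multiplicative, so $\eta_i^2=\phi_i$; and for $i\ne j$ the product $\eta_i\eta_j$ vanishes because $\eta_i=\eta_ip_i$ and $p_ip_j=0$. Summing yields $\eta^2=\sum_i\phi_i=\phi$, and uniqueness of positive square roots of positive elements of $L^*A$ then gives $\sqrt\phi=\eta=\sum_i\eta_i$. Hence $p_i\sqrt\phi\,p_j=\delta_{ij}\eta_i$ vanishes whenever $i\ne j$, which is the claim.

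The main obstacle is verifying that the corner identification $L^2(p_iAp_i)\cong p_iL^2(A)p_i$ interacts correctly with the multiplication and the positive cone of the ambient modular algebra; both compatibilities ultimately rest on the uniqueness of the standard form referenced just after Lemma~\ref{lem: L^2(pAp) = pL^2(A)p}. An alternative route would bypass the modular algebra entirely by observing that the hypothesis is equivalent to $\phi$ commuting (as a positive operator affiliated with $A$) with each $p_i$, and then invoking functional calculus to conclude the same for $\sqrt\phi$; but the corner-based argument fits more naturally with the tools the paper has already developed.
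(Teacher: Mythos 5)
Your argument is correct, but it is not the paper's: the paper's proof is exactly the ``alternative route'' you mention at the end and then set aside --- it views $\phi$ as an unbounded operator via the representation of the modular algebra, notes that $p_i\phi\,p_j=0$ for $i\neq j$ means this operator is block-diagonal with respect to the $p_i$, and concludes in one line that functional calculus (which produces $\sqrt\phi$ from $\phi$) preserves block-diagonal form. Your main, corner-based argument is a genuinely different and more constructive route: it exhibits $\sqrt\phi$ explicitly as $\sum_i\eta_i$ with $\eta_i\in L^2_+(A)\cap p_iL^2(A)p_i$, which in fact gives the slightly stronger statement $p_i\sqrt\phi\,p_i=\sqrt{p_i\phi\,p_i}$ under the identification of Lemma \ref{lem: L^2(pAp) = pL^2(A)p}, at the cost of being longer and of invoking facts the paper does not spell out. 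Two small simplifications would tighten it: positivity of $\eta_i$ needs no appeal to uniqueness of standard forms, since Lemma \ref{lem: L^2(pAp) = pL^2(A)p} says the image of $\sqrt{\phi_i}$ is literally $\sqrt{\phi_i\circ E_i}=(p_i\phi\,p_i)^{1/2}\in L^2_+(A)$; and you do not need multiplicativity of the embeddings $L^*(p_iAp_i)\hookrightarrow L^*(A)$, because $\eta_i^2=p_i\phi\,p_i$ then holds directly by functional calculus in $L^*A$ (Remark \ref{rem: algebra L_*(A)}), after which your computation $\eta^2=\phi$, together with the uniqueness of the positive square root asserted in Remark \ref{rem: algebra L_*(A) -- PRE}, finishes the proof exactly as you say. (If the family $\{p_i\}$ is infinite one should also note that $\sum_i\eta_i$ converges in $L^2(A)$ since $\sum_i\|\eta_i\|^2=\phi(1)<\infty$ and the cone is closed; for the paper's application only finitely many projections occur.)
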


\begin{proof}
Applying functional calculus to an (unbounded) operator in block diagonal form yields an operator in block diagonal form.
The result follows since the modular algebra has a representation by unbounded operators \cite{Yamagami(Algebraic-aspects-in-modular-theory)},
and $\sqrt{\phi}$ is obtained from $\phi$ by functional calculus.
\end{proof}

In our analysis of conditional expectations in section \ref{sec: Pimsner-Popa inequality}, we will use the following general fact relating Radon-Nikodym 
derivatives in different 
algebras --- see \cite[Lemma 1.4.4]{Connes(Une-classification-type-III)} and \cite[Theorem 4.7]{Haagerup(Operator-valued-weights)}.
Let $A\subset B$ be a subalgebra, and let $E \colon B \to A$ be a faithful completely positive normal map such that
$E(axb) =a E(x) b$ for $x \in B$, $a,b \in A$; in this case,
\begin{equation}
  \label{eq:Radon-Nikodym-E}
   [D(\phi \circ E) : D(\psi \circ E)]_t = [D \phi : D \psi]_t.
\end{equation}

\subsection*{Functoriality of the \emph{L}${}^\mathbf 2$-space}

The following theorem is closely related to some known results \cite{Kawakami-Watatani, Kosaki-Longo(A-remark-on-the-minimal-index)}. Nevertheless, it appears to be new:

\begin{theorem}\label{thm: functoriality of L2}
The assignment $A\mapsto L^2(A)$ defines a functor from the category of von Neumann algebras with finite-dimensional center and finite homomorphisms,
to the category of Hilbert spaces and bounded linear maps.
\end{theorem}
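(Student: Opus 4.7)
The plan is to define, for each finite homomorphism $f:A\to B$, a bounded linear map $L^2(f):L^2(A)\to L^2(B)$ extracted from the duality data of the bimodule ${}_AL^2(B)_B$, and then verify functoriality using the composition formula for duals in Lemma \ref{lem: product of dualizables}.

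First I would construct $L^2(f)$ as follows. Since $f$ is finite, the bimodule ${}_AL^2(B)_B$ is dualizable, so by Theorem \ref{thm: normalizing duality data} it admits normalized duality data $R:L^2(A)\to L^2(B)\boxtimes_B\overline{L^2(B)}$ and $S:L^2(B)\to \overline{L^2(B)}\boxtimes_A L^2(B)$, which by Theorem \ref{thm: well defined up to unique unitary iso} is unique up to unique unitary isomorphism. Then define
\[
L^2(f) \;:\; L^2(A) \xrightarrow{\,R\,} L^2(B)\underset{B}{\boxtimes}\overline{L^2(B)} \xrightarrow{\,\sim\,} \overline{L^2(B)} \xrightarrow{\,J_B\,} L^2(B),
\]
where the middle arrow is the canonical identity-bimodule collapse $L^2(B)\boxtimes_B X \cong X$ applied to the left $B$-module $\overline{L^2(B)}$, and $J_B$ is the modular conjugation viewed as a complex-linear isomorphism $\overline{L^2(B)}\to L^2(B)$. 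Boundedness is immediate, and well-definedness follows from the uniqueness of duality data.

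For the identity axiom, the bimodule ${}_AL^2(A)_A$ carries canonical normalized duality data in which $R$ is the inverse of the collapse isomorphism followed by (the inverse of) $J_A$; one checks directly that equation \eqref{eq:duality normalization} holds for this choice, and then unwinding the definition gives $L^2(\mathrm{id}_A)=J_A\circ J_A=\mathrm{id}_{L^2(A)}$. For the composition axiom, suppose $f:A\to B$ and $g:B\to C$ are finite. By Lemma \ref{lem: product of dualizables} the combination $R_{gf}:=(1\otimes R_g\otimes 1)R_f$ makes ${}_AL^2(B)\boxtimes_BL^2(C)_C$ dualizable; using the natural bimodule isomorphism ${}_AL^2(B)\boxtimes_BL^2(C)_C\cong{}_AL^2(C)_C$ (which identifies the left $A$-action via $g\circ f$) and the uniqueness of normalized duality data, $R_{gf}$ differs from the canonical data for $g\circ f$ by a unique unitary, and hence determines the same map $L^2(g\circ f)$. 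Tracing through, the outer collapse of $L^2(B)\boxtimes_B$ combined with $J_B$ cuts off $L^2(f)$, while the inner $R_g$ together with its own collapse and $J_C$ assembles into $L^2(g)$; combining these yields $L^2(g\circ f)=L^2(g)\circ L^2(f)$.

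The main obstacle will be the bookkeeping in the composition step: one must carefully verify that the interaction of the various natural isomorphisms (the identity-bimodule collapses at $B$ and $C$, the modular conjugations $J_B$ and $J_C$, and the canonical identification of the dual with the complex conjugate) conspires to turn the ``telescoped'' fusion $(1\otimes R_g\otimes 1)R_f$ into the honest composition $L^2(g)\circ L^2(f)$. A secondary worry is ensuring that the normalization condition \eqref{eq:duality normalization} prevents spurious scalar factors from appearing in either axiom; this is essentially why normalized duality data (rather than merely duality data in the sense of \eqref{eq: duality equations}) is needed for the construction to produce a strict functor.
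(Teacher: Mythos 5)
Your construction has a genuine gap at the very first step: the composite $L^2(A)\xrightarrow{R}L^2(B)\boxtimes_B\overline{L^2(B)}\cong\overline{L^2(B)}\xrightarrow{J_B}L^2(B)$ is not well defined by the data you invoke. To apply $J_B$ you must treat the dual as the \emph{concrete} conjugate Hilbert space of $L^2(B)$; but dualizability only produces an abstract $({}_B\bar H_A,R,S)$, and the identification of $\bar H$ with the complex conjugate bimodule is Corollary \ref{cor: contragredient bimodule}, whose proof in the paper \emph{uses} Theorem \ref{thm: functoriality of L2} -- so as it stands your route is circular within this development. Worse, even after fixing the underlying space to be the conjugate, the normalization \eqref{eq:duality normalization} does not pin down $R$: if $(R,S)$ is normalized and $v$ is any unitary in $\mathrm{End}({}_B\overline{L^2B}_A)\cong f(A)'\cap B$, then $((1\otimes v)R,(v\otimes1)S)$ is again normalized duality data on the same bimodule, and your composite changes by a nontrivial unitary whenever the relative commutant is bigger than $\IC$ (e.g.\ already for $\IC\subset M_n(\IC)$). ``Uniqueness up to unique unitary isomorphism'' (Theorem \ref{thm: well defined up to unique unitary iso}) does not rescue this, because your recipe depends on the underlying Hilbert space of the dual, not just on its isomorphism class. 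This ambiguity also infects the composition step, where you would need compatibilities of the type recorded in \eqref{eq: iso between dual of A--L2B--B and B--L2B--A} that you have not established.

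The paper's proof avoids exactly this problem: it first forms $E_{A,B}(b):=R^*(b\otimes1)R$, which \emph{is} invariant under the unitary ambiguity $R\mapsto(1\otimes v)R$ (since $R^*(b\otimes v^*v)R=R^*(b\otimes1)R$), satisfies $E_{A,B}\circ E_{B,C}=E_{A,C}$ by Lemma \ref{lem: product of dualizables}, and then defines $L^2(f):\sqrt\phi\mapsto\sqrt{\phi\circ E_{A,B}}$. With that definition functoriality is immediate, but the real work -- which your proposal omits entirely by declaring boundedness ``immediate'' -- is showing that $\sqrt\phi\mapsto\sqrt{\phi\circ E_{A,B}}$ extends to a well-defined bounded map on linear combinations of the vectors $\sqrt{\phi_j}$; this uses the Radon--Nikodym identity \eqref{eq:Radon-Nikodym-E} and the centrality of $E_{A,B}(1)$ to get the bound with constant $\|E_{A,B}(1)\|$. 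The formula you wrote does recover $L^2(f)$ (this is the displayed composite following Corollary \ref{cor: contragredient bimodule}), but only after the identification of the dual with the conjugate has been \emph{chosen} using $L^2(f)$ itself; it cannot serve as the definition without an independent, choice-free way to fix that identification.
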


\begin{proof}
Given a finite homomorphism $A\to B$ between von Neumann algebras with finite-dimensional center,
let $E_{A,B}:B\to A$ be the map given by
\begin{equation}\label{eq: minimal conditional expectation}
E_{A,B}(b)\xi:=R^*(b\otimes 1)R\,\xi\,,\qquad\quad\xi \in L^2(A),
\end{equation}
where $R:{}_AL^2(A)_A\to{}_AL^2(B)\boxtimes_B\overline{L^2(B)}{}_A$ is as in \eqref{eq:duality maps},
and the $b$ that appears in the right hand side of \eqref{eq: minimal conditional expectation}
acts by left multiplication on $L^2(B)$.
Graphically, this~is:
\[
\def\colA{black!10}
\def\colB{black!30}
\tikzmath[scale=\squarescale]
	{\fill[rounded corners=10, fill=\colA] (-15,-16) rectangle (13,16);
	\coordinate (x) at (-15,0);
	\draw 	(-5,0) node(a)[fill=white]{$E_{A,B}(b)$};
	\draw (a.north east -| x) -- (a.north east) -- (a.south east) -- (a.south east -| x);
	}
\;\;:=\;\;
\tikzmath[scale=\squarescale]
	{\fill[rounded corners=10, fill=\colA] (-15,-16) rectangle (13,16);
	\draw[rounded corners=10, fill=\colB] (-6,-9) rectangle (5,9);
	\coordinate (x) at (-15,0); \coordinate (x') at (-15.1,0);
	\draw 	(-6,0) node(a)[fill=white]{$b$};
	\fill[fill=white] (a.north east -| x') -- (a.north east) -- (a.south east) -- (a.south east -| x');
	\draw (a.north east -| x) -- (a.north east) -- (a.south east) -- (a.south east -| x);
	\node(a) at (-8,0) [inner sep=0]{$b$};
	}\,.
\]
As before, the two shades represent the algebras $A$ and $B$, and the lines stand for the bimodule ${}_AL^2B_B$ and its dual.
The fact that the box labeled $E_{A,B}(b)$ extends to the left of the picture refers to the fact that the map
$E_{A,B}(b):{}_AL^2A_A\to{}_AL^2A_A$ is only right $A$-linear.

The map \eqref{eq: minimal conditional expectation} satisfies $E_{A,B}(aba')=aE_{A,B}(b)a'$ for any $a,a'\in A$ and $b\in B$.
Moreover, for every sequence $A\to B\to C$ of composable arrows, we have
\begin{equation}\label{eq:E circ E = E}
E_{A,B}\circ E_{B,C}=E_{A,C}
\end{equation}
by Lemma~\ref{lem: product of dualizables}.
The map $L^2(f):L^2(A)\to L^2(B)$ associated to the finite homomorphism $f: A \to B$ is then defined by
\begin{equation}\label{eq: sqrt(phi o E)}
L^2(f)\,:\,\sqrt\phi\mapsto\sqrt{\phi\circ E_{A,B}}.
\end{equation}
To see that this map is well defined and bounded, we exhibit a constant $C$ such that
\[
\qquad\Big\|\sum_jc_j\sqrt{\phi_j\circ E_{A,B}}\Big\|^2\le\, C\cdot\Big\|\sum_jc_j\sqrt{\phi_j}\Big\|^2\qquad \forall \,c_j\in \IC,\,\,\phi_j\in L^1_+(A).
\]
Let $\{p_\alpha\}$ be the minimal central projections of $A$.
Since $E_{A,B}(1)$ is central, we can write it as
$E_{A,B}(1)=\sum_\alpha C_\alpha p_\alpha$ for some given constants $C_\alpha$.
Let
\[
C:=\max_\alpha\, C_\alpha=\|E_{A,B}(1)\|.
\]
Using the shorthand notation $\phi_{j,\alpha}:=\phi_j p_\alpha$, we then have
\[
\begin{split}
\textstyle\big\|\sum_j&c_j\textstyle\sqrt{\phi_j\circ E_{A,B}}\big\|^2
=\sum_{j,k}c_j\bar c_k \big\langle\sqrt{\phi_j\circ E_{A,B}},\sqrt{\phi_k\circ E_{A,B}}\,\big\rangle\\
&\textstyle\quad=\sum_{j,k}c_j\bar c_k\, \underset{t\to i/2}{\text{anal.\,cont.}}\,\,\phi_j\circ E_{A,B}\big([D(\phi_j\circ E_{A,B}):D(\phi_k\circ E_{A,B})]_t\big)\\
&\textstyle\quad=\sum_{j,k}c_j\bar c_k\, \underset{t\to i/2}{\text{anal.\,cont.}}\,\,\phi_j\circ E_{A,B}\big([D\phi_j:D\phi_k]_t\big)\\
&\textstyle\quad=\sum_{j,k}c_j\bar c_k\, \underset{t\to i/2}{\text{anal.\,cont.}}\,\,\phi_j \big(E_{A,B}(1)\,[D\phi_j:D\phi_k]_t\big)\\
&\textstyle\quad=\sum_{\alpha,j,k}c_j\bar c_k\, \underset{t\to i/2}{\text{anal.\,cont.}}\,\,\phi_{j,\alpha} \big(C_\alpha[D\phi_{j,\alpha}:D\phi_{k,\alpha}]_t\big)\\
&\textstyle\quad=\sum_\alpha C_\alpha \textstyle\big\|\sum_jc_j\sqrt{\phi_{j,\alpha}}\big\|^2
\le C\cdot\sum_\alpha \textstyle\big\|\sum_jc_j\sqrt{\phi_{j,\alpha}}\big\|^2
=C\cdot\big\|\sum_jc_j\sqrt{\phi_j}\big\|^2,
\end{split}
\]
where the third equality follows from~\eqref{eq:Radon-Nikodym-E}
and the fourth one follows from the $A$-linearity of~$E_{A,B}$.

The compatibility of \eqref{eq: sqrt(phi o E)} with composition follows from \eqref{eq:E circ E = E}.
\end{proof}

\begin{remark}
Given a finite homomorphism $f:A\to B$ between von Neumann algebras with finite-dimensional centers, one can also define
$$L^p(f):L^pA\to \,\,L^pB;\, \phi^{1/p}\,\,\mapsto\,\,{(\phi\circ E_{A,B})^{1/p}}.$$
These assemble to a $*$-algebra homomorphism $\bigoplus L^pA\to \bigoplus L^pB$; see \cite[section~3]{Yamagami(Algebraic-aspects-in-modular-theory)}.
\end{remark}

\begin{corollary}\label{cor: contragredient bimodule}
Let ${}_AH_B$ be a bimodule between von Neumann algebras with finite-dimensional center.
Then its dual bimodule, if it exists, is canonically isomorphic to the complex conjugate Hilbert space, with actions given by $b\bar \xi a:= \overline{a^* \xi b^*}$.
\end{corollary}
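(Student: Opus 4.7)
By Theorem \ref{thm: well defined up to unique unitary iso}, the normalized dual of a dualizable bimodule is unique up to unique unitary isomorphism, so it suffices to exhibit $\overline{H}$, equipped with the stated $B$-$A$-bimodule structure $b\bar\xi a := \overline{a^*\xi b^*}$, as a normalized dual of ${}_AH_B$; the canonical unitary identification with any chosen abstract dual $({}_B\bar H_A, R_0, S_0)$ is then immediate from that theorem. I would first reduce to the case where $A$ and $B$ are factors: writing $A = \bigoplus A_i$, $B = \bigoplus B_j$, and $H = \bigoplus H_{ij}$ as in \eqref{eq: H = oplus H_ij}, one observes that complex conjugation respects direct sums and that the $L^2$-spaces split via Lemma \ref{lem: L^2(pAp) = pL^2(A)p}, so duality data assemble summand-wise.

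In the factor case, the plan is to construct a canonical $B$-$A$-linear unitary isomorphism $u \colon \overline{H} \to \bar H$; pulling back $R_0$ and $S_0$ via $u$ then yields duality maps for $\overline{H}$ that automatically satisfy \eqref{eq: duality equations} and \eqref{eq:duality normalization}. The key idea is to exploit the hom-tensor adjunction built into the duality: $B$-$A$-linear maps $\overline{H} \to \bar H$ correspond bijectively to $B$-$B$-linear maps $\overline{H}\boxtimes_A H \to L^2(B)$. On the dense subspace spanned by vectors $\bar\xi \boxtimes \eta$ with $\xi \in H$ right-$B$-bounded, I would define such a map $\phi$ by
\[
  \phi(\,\bar\xi \boxtimes \eta\,) := \xi^*\eta,
\]
where $\xi^* \colon H \to L^2(B)$ is the adjoint of the bounded right-$B$-linear map $\sqrt{\psi} \mapsto \xi\sqrt{\psi}$. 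A short computation, relying on the identity $(a^*\xi)^* = \xi^* \circ a$ for $a\in A$ and on the definition of the conjugate bimodule structure, verifies that $\phi$ is $A$-balanced and $B$-$B$-bilinear, so the adjunction produces the desired $u$.

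The main obstacle is proving that $u$ is unitary, equivalently that the transported duality data on $\overline{H}$ satisfy \eqref{eq:duality normalization}. This comes down to computing $\langle \phi(\bar\xi \boxtimes \eta),\phi(\bar\xi' \boxtimes \eta')\rangle_{L^2(B)}$ and matching it against the Hilbert-space inner product on $\overline{H} \boxtimes_A H$. I would carry out this comparison using the explicit $L^2$-inner product formulas recalled at the beginning of this section, the trace property established in Lemma \ref{lem: phi is a trace}, and the compatibility between the $B$-valued pairing $\xi^*\eta$ on right-$B$-bounded vectors and the modular flow on $L^2(B)$. Density of right-$B$-bounded vectors in $H$ then extends the isometry to all of $\overline{H}$, and dense image of $u$ follows from the duality equations applied to $R_0$ and $S_0$, yielding the sought unitary isomorphism.
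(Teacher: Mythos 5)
Your overall strategy (exhibit $\overline{H}$ itself as a dual and invoke uniqueness) is legitimate and genuinely different from the paper's, but as written it has two real gaps, both concentrated in the pairing $\phi$. First, $\phi$ is not yet a map. Simple tensors $\bar\xi\boxtimes\eta$ of arbitrary vectors do not represent elements of $\overline{H}\boxtimes_A H$ (see the remark following the definition of Connes fusion); the fusion over $A$ is built from vectors bounded relative to $A$ (Remark \ref{rmk: alternative def of fusion}), whereas your $\xi$ is assumed right-$B$-bounded, a different and a priori unrelated condition, so it is not even clear your formula is defined on a dense subspace of the fusion. More importantly, boundedness of the densely defined pairing $\bar\xi\boxtimes\eta\mapsto\xi^*\eta$ is exactly where dualizability has to enter, and your construction of $\phi$ never uses it: for ${}_\IC H_\IC$ with $\dim H=\infty$ the same formula is the trace pairing $\overline{H}\otimes H\to\IC$, which is unbounded. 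Without a bounded $B$-$B$-linear $\phi$ there is nothing to feed into the adjunction, so ``the adjunction produces the desired $u$'' is not yet justified.

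Second, the unitarity of $u$ is not a short computation: the naive $B$-valued pairing and the normalized counit $S_0^*$ differ, on each isotypic component of $\overline{H}\boxtimes_A H$, by a positive scalar --- this is precisely what the normalization condition \eqref{eq:duality normalization} and the correcting element $x$ in the proof of Theorem \ref{thm: normalizing duality data} govern --- so the $u$ you obtain is in general only a bounded isomorphism with block scalars, and showing those scalars equal $1$ requires an argument of the type carried out in Proposition \ref{prop: unitarity of (6.14)} for the special case $H=L^2B$; the paper itself states the unitarity in general without proof (the remark following the corollary). Since the corollary only asserts an isomorphism, you could repair the plan by transporting the duality data along any bimodule isomorphism and renormalizing via Theorem \ref{thm: normalizing duality data}, rather than insisting on a unitary $u$ --- but you would still need the bounded $\phi$. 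For contrast, the paper sidesteps all of this: it reduces to irreducible $H$ with faithful actions, observes that ${}_{B'}H_B$ is invertible with inverse the conjugate (Proposition \ref{prop: characterization of invertible bimodules}), hence ${}_AH\boxtimes_B\overline{H}{}_A\cong{}_AL^2(B')_A$, produces a nonzero element of $\hom_{A,A}(L^2A,\,H\boxtimes_B\overline{H})$ from functoriality of $L^2$ applied to the inclusion $A\subset B'$, which is finite by Lemma \ref{lem: ind = dim} (Theorem \ref{thm: functoriality of L2}, equation \eqref{eq: norm of L2(iota)}), and concludes with Lemma \ref{lem: characterization of duals}.
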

\begin{proof}
Let ${}_AH_B$ be dualizable.
By Lemma \ref{lem: Lemma B} and the decomposition \eqref{eq: H = oplus H_ij}, this bimodule is a finite direct sum of 
irreducible bimodules.
Both duals and complex conjugates being compatible with the direct sum operation, it is enough to treat the irreducible case.
We assume for simplicity that the action $\rho:A\to\bfB(H)$ is faithful.
The general case follows. 

Let ${}_B \overline H^c\!\!{}_A$ denote the complex conjugate of ${}_AH_B$, and let $B'$ be the commutant of $B$ on $H$.
By Proposition \ref{prop: characterization of invertible bimodules}, we have ${}_{B'}H\boxtimes_B\overline H^c\!\!{}_{B'}\cong {}_{B'}L^2(B')_{B'}$, and so
\[
\begin{split}
{}_AH\boxtimes_B\overline H^c\!\!{}_A 
&\cong {}_AL^2(B') \boxtimes_{B'} H\boxtimes_B\overline H^c \boxtimes_{B'} L^2(B')_A\\
&\cong\, {}_AL^2(B')\,\boxtimes_{B'} L^2(B')\boxtimes_{B'} L^2(B')_A
\,\cong {}_AL^2(B')_A.
\end{split}
\]
By Theorem \ref{thm: functoriality of L2}, 
we therefore get a map ${}_AL^2(A)_A\to {}_AH\boxtimes_B\overline H^c\!\!{}_A$
which is non-trivial by construction --- see for instance equation \eqref{eq: norm of L2(iota)}.
The result now follows from Lemma \ref{lem: characterization of duals}.
\end{proof}

\begin{remark}
The isomorphism between any dual and the complex conjugate bimodule constructed in the proof of Corollary \ref{cor: contragredient bimodule} is in fact unitary.
We do not include a proof --- see Proposition \ref{prop: unitarity of (6.14)} for a related result.
\end{remark}

In the special case of the bimodule ${}_AL^2(B)_B$ associated to a finite homomorphism $f:A\to B$, together with a chosen dual $({}_B\overline{L^2B}_A, R, S)$, the isomorphism $\overline{L^2B} \cong \overline{L^2B}^c$ is given by
\begin{equation*}
\begin{split}
\overline{L^2B}\cong\overline{L^2B}\boxtimes_A L^2A 
\xrightarrow{\!1\otimes L^2\!(f)\!}\, 
\overline{L^2B}\boxtimes_A L^2B                                         
    \cong \,\,&
\overline{L^2B}\boxtimes_A L^2B\boxtimes_B L^2B                       
          \\\xrightarrow{S^*\otimes 1}\,&
L^2B\boxtimes_B L^2B\cong L^2B 
   \xrightarrow{J}
\overline{L^2B}^c,
\end{split}
\end{equation*} 
where $J$ is the modular conjugation.
This isomorphism $\overline{L^2B} \cong \overline{L^2B}^c$ is chosen so as to make the composite
\begin{equation*}
\begin{split}
{}_AL^2(A)_A \xrightarrow {\,\,R\,\,} {}_A&L^2(B)\boxtimes_B \overline{L^2(B)}{}_A\\ \cong {}_A&L^2(B)\boxtimes_B \overline{L^2(B)}^c\!\!{}_A \xrightarrow {1\otimes J}
{}_AL^2(B)\boxtimes_B L^2(B)_A\cong {}_AL^2(B)_A
\end{split}
\end{equation*} 
equal to $L^2(f)$.

Instead of identifying the dual of ${}_AL^2B_B$ with ${}_B\overline{L^2B}^c\!\!{}_A$, we can identify it with ${}_BL^2B_A$, as follows.  There is an isomorphism $\Phi$ between any dual of ${}_AL^2(B)_B$ and ${}_BL^2(B)_A$ given by
\begin{equation}\label{eq: It's L2(f)}
\begin{split}
{}_B\overline{L^2B}_A\cong\,&{}_B\overline{L^2B}\boxtimes_A L^2A_A  \xrightarrow{\!1\otimes L^2\!(f)\!}\, 
{}_B\overline{L^2B}\boxtimes_A L^2B_A                                              \\&\cong\,\,
{}_B\overline{L^2B}\boxtimes_A L^2B\boxtimes_B L^2B_A      
           \xrightarrow{S^*\otimes 1}
{}_BL^2 B \boxtimes_B L^2B \cong {}_BL^2B_A \, .
\end{split}
\end{equation} 
In graphical notation we have
\[
\def\colA{black!10}
\def\colB{black!30}
\Phi\,:=\,\,\,\tikzmath[scale=\squarescale]
	{\fill[rounded corners=10, fill=\colA] (-19,-14) rectangle (26,14);
	\fill[rounded corners=9, fill=\colB] (19,-14) -- (19,6) -- (0,6) -- (0,-6) -- (-12,-6) [sharp corners]-- (-12,14)  [rounded corners=10]-- (-19,14) -- (-19,-14) [sharp corners]-- cycle;
	\draw[rounded corners=9] (19,-14) -- (19,6) -- (0,6) -- (0,-6) -- (-12,-6) -- (-12,14);
	\node[above,yshift=-2] at (-12,14) {$\scriptstyle {}_B\overline{L^2B}_A$};
	\node[below,yshift=2] at (19,-14) {$\scriptstyle {}_B L^2B_A$};
	\draw (-6,-6) node[fill=white, draw]{$S^*\!$} (9.5,6) node[fill=white, draw, inner xsep=2]{$L^2(f)$};}\,.
\]
The isomorphism $\Phi$ makes the following diagram commutative:
\begin{equation}\label{eq: iso between dual of A--L2B--B and B--L2B--A}
\tikzmath{
\matrix [matrix of math nodes,column sep=1.5cm,row sep=8mm]
{ 
|(a)| {}_AL^2A_A \pgfmatrixnextcell |(b)| {}_AL^2(B)\boxtimes_B \overline{L^2(B)}{}_A \\ 
|(c)| {}_AL^2B_A \pgfmatrixnextcell |(d)| {}_AL^2(B)\boxtimes_B L^2(B){}_A\\ 
}; 
\draw[->] (a) --node[above]{$\scriptstyle R$} (b);
\draw[->] (a) --node[left]{$\scriptstyle L^2(f)$} (c);
\draw[->] (b) --node[right]{$\scriptstyle 1\otimes\Phi$} (d);
\draw[->] (c) --node[above]{$\scriptstyle \cong$} (d);
}\, .
\end{equation} 

\begin{proposition}\label{prop: unitarity of (6.14)}
Let $f: A \to B$ be a finite homomorphism, and let $(\overline{L^2B}, R, S)$ be a chosen dual to the bimodule ${}_AL^2B_B$ associated to $f$.  The isomorphism $\Phi := (S^* \otimes 1)(1 \otimes L^2(f))$ from ${}_B\overline{L^2B}_A$ to ${}_BL^2B_A$ is unitary.
\end{proposition}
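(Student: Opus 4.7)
The plan is to apply the uniqueness of normalized duals from Theorem~\ref{thm: well defined up to unique unitary iso}. The strategy is to endow the $B$-$A$-bimodule $L^2(B)$ (with $A$ acting on the right via $f$) with a second normalized dual structure for ${}_AL^2(B)_B$ whose first structure map is exactly $L^2(f)$ under the natural identification $L^2(B) \boxtimes_B L^2(B) \cong L^2(B)$. By uniqueness, the canonical unitary connecting the two normalized duals must coincide with $\Phi$.

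To construct this auxiliary dual, I use the modular conjugation $J_B: L^2(B) \to L^2(B)$ to define a unitary $B$-$A$-bimodule isomorphism $\tilde J: \overline{L^2(B)}^c \to L^2(B)$ by $\tilde J(\bar\xi) := J_B(\xi)$, where $\overline{L^2(B)}^c$ is the contragredient bimodule of Corollary~\ref{cor: contragredient bimodule}. Tracing through the proof of that corollary, $\overline{L^2(B)}^c$ becomes a dual of ${}_AL^2(B)_B$ via the identification $L^2(B) \boxtimes_B \overline{L^2(B)}^c \cong L^2(B')$ which, using the standard-form property that the commutant $B'$ acts on $L^2(B)$ by left multiplication, identifies with $L^2(B)$ itself. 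Under this identification, the first structure map $R^c$ is postcomposition with $L^2(f)$. Choosing a compatible $S^c$ and then normalizing via Theorem~\ref{thm: normalizing duality data}, and finally transporting the normalized pair through the unitary $\tilde J$, I obtain a normalized dual $(L^2(B), R', S')$ of ${}_AL^2(B)_B$ in which $R' = L^2(f)$ under the identification $L^2(B) \boxtimes_B L^2(B) \cong L^2(B)$.

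By Theorem~\ref{thm: well defined up to unique unitary iso}, there is a unique unitary $B$-$A$-bimodule isomorphism $U: \overline{L^2(B)} \to L^2(B)$ with $(1 \otimes U) R = R' = L^2(f)$ and $(U \otimes 1) S = S'$. On the other hand, diagram~\eqref{eq: iso between dual of A--L2B--B and B--L2B--A} gives $(1 \otimes \Phi) R = L^2(f)$ as well. The assignment $V \mapsto (1 \otimes V) R$ on $B$-$A$-bimodule maps $\overline{L^2(B)} \to L^2(B)$ is injective: applying $S^* \otimes 1$ and using both the duality equation $(S^* \otimes 1)(1 \otimes R) = 1_{\overline{L^2(B)}}$ and the bifunctoriality of Connes fusion gives the inverse $g \mapsto (S^* \otimes 1)(1 \otimes g)$. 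Consequently $U = \Phi$, so $\Phi$ is unitary.

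The principal obstacle is arranging the normalization in the second step so that it preserves the equality $R^c = L^2(f)$ exactly, rather than introducing a rescaling. The procedure in Theorem~\ref{thm: normalizing duality data} replaces $R^c$ by $(y \otimes 1) R^c$ and $S^c$ by $(1 \otimes y^{-1}) S^c$ for some positive invertible $y \in \mathrm{End}({}_AL^2(B)_B) = f(A)' \cap B$, and this $y$ is generally nontrivial. I expect to resolve this by verifying, via a blockwise analysis over the factor decompositions of $A$ and $B$, that the natural pair $(L^2(f), S^c)$ arising from modular conjugation is already normalized---equivalently, that the trace $\varphi$ of Lemma~\ref{lem: phi is a trace} on $\mathrm{End}({}_AL^2(B)_B)$ associated to this pair agrees with the trace induced by the minimal conditional expectation $E_{A,B}$. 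Should $y \neq 1$ arise, an alternative is to absorb the correction into the definition of $S^c$ using the freedom to rescale $(R^c, S^c)$ compatibly with the duality equations, which suffices for the uniqueness argument above.
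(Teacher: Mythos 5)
There is a genuine gap, and it sits exactly where you flag the ``principal obstacle.'' Your reduction is fine as far as it goes: if one can produce $S'$ such that $({}_BL^2B_A,\,L^2(f),\,S')$ satisfies both \eqref{eq: duality equations} and \eqref{eq:duality normalization}, then Theorem \ref{thm: well defined up to unique unitary iso} applies and its canonical unitary $(S^*\otimes 1)(1\otimes R')=(S^*\otimes 1)(1\otimes L^2(f))$ is literally $\Phi$ (your injectivity detour via $V\mapsto(1\otimes V)R$ is correct but not even needed). The problem is that the existence of such a \emph{normalized} pair with first map exactly $L^2(f)$ is equivalent to the proposition itself. Indeed, once $R'=L^2(f)$ is fixed, the duality equations determine the compatible $S'$ uniquely (the usual snake-equation uniqueness of the coevaluation given the evaluation), and it is $S'=((\Phi^*)^{-1}\otimes 1)S$; a direct computation shows this pair satisfies \eqref{eq:duality normalization} if and only if $\Phi^*\Phi=1$. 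So the claim that the pair ``arising from modular conjugation is already normalized'' is not a wrinkle to be smoothed out by Theorem \ref{thm: normalizing duality data} --- it \emph{is} the statement to be proved, and your proposal offers only an intention (``I expect to resolve this by a blockwise analysis'') in its place. Your fallback is also not viable: with $R^c=L^2(f)$ there is no remaining freedom in $S^c$, and if instead you pass to the normalized pair $((y\otimes 1)L^2(f),\,(1\otimes y^{-1})S^c)$ with $y\neq 1$, the unique unitary $U$ of Theorem \ref{thm: well defined up to unique unitary iso} then satisfies $(1\otimes U)R=(y\otimes 1)L^2(f)$, whereas $(1\otimes\Phi)R=L^2(f)$ by \eqref{eq: iso between dual of A--L2B--B and B--L2B--A}; hence $U\neq\Phi$ and the unitarity of $U$ gives no information about $\Phi$.

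For comparison, the paper's proof supplies precisely the quantitative input your outline lacks. It decomposes along the minimal projections $p_i$ of the finite-dimensional relative commutant $A'\cap B=\mathrm{End}({}_AL^2B_B)$, shows $E(p_jbp_i)=0$ for $i\neq j$ and invokes Lemma \ref{lem: p_i sqrt(phi) p_j = 0} to conclude that both $L^2(f)$ and $R$ are block-diagonal, so that $\Phi=\bigoplus\Phi_i$ with each $\Phi_i$ a map between irreducible bimodules, hence a scalar multiple of a unitary; the scalar is then pinned down by the norm computation $\|p_i\sqrt{\phi\circ E}\|^2=\phi(E(p_i))=\|p_iR\sqrt{\phi}\|^2$ on positive vectors, giving $\|\Phi_i\|=\|[L^2(f)]_i\|/\|R_i\|=1$. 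Some comparison of this kind between $L^2(f)$ and $R$ (equivalently, a verification of the normalization condition for the pair $(L^2(f),S^c)$, e.g.\ via the trace of Lemma \ref{lem: phi is a trace}) is unavoidable, and nothing in your proposal currently carries it out; also note that the identification of $\overline{L^2B}{}^c$ with a dual via Corollary \ref{cor: contragredient bimodule} is itself only constructed up to a not-obviously-unitary isomorphism (see the remark following that corollary), so care is needed there to avoid circularity.
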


\begin{proof}
The algebra $B\cap A'=\mathrm{End}({}_AL^2B_B)$ is finite-dimensional by Lemma \ref{lem: Lemma B} and decomposition \eqref{eq: H = oplus H_ij}.
Let $p_1, \ldots, p_n\in B\cap A'$ be mutually orthogonal minimal projections adding up to $1$,
and let $\bar p_i:=(S^*\otimes 1)(1\otimes p_i\otimes 1)(1\otimes R)$ be the dual projection defined in equation \eqref{eq: pibar}.
Let $E:B\to A$ be as in \eqref{eq: minimal conditional expectation}.
For every $i\neq j$ and $\phi\in L^1_+A$, the element $p_i(\phi\circ E)p_j\in L^1(B)$ is zero, as $(\,p_i(\phi\circ E)p_j)(b)=\phi\circ E(p_jb\,p_i)$ 
and
\[
\def\colA{black!10}
\def\colB{black!30}
E(p_jb\,p_i)=\,
\tikzmath[scale=\squarescale]
	{\fill[rounded corners=10, fill=\colA] (-15,-16) rectangle (12,16);
	\draw[rounded corners=10, fill=\colB] (-6,-13.5) rectangle (5,13.5);
	\coordinate (x) at (-15,0); \coordinate (x') at (-15.1,0);
	\draw 	(-5.35,0) node(a)[fill=white, inner ysep=3]{$b$};
	\fill[fill=white] (a.north east -| x') -- (a.north east) -- (a.south east) -- (a.south east -| x');
	\draw (a.north east -| x) -- (a.north east) -- (a.south east) -- (a.south east -| x);
	\node at (-6,0) [inner sep=0]{$b$};
	\node[draw, fill=white,inner sep=3] at (-6,8) {$p_i$};
	\node[draw, fill=white,inner sep=2.5] at (-6,-8) {$p_j$};
	}
\,=\,\tikzmath[scale=\squarescale]
	{\fill[rounded corners=10, fill=\colA] (-15,-16) rectangle (13,16);
	\draw[rounded corners=10, fill=\colB] (-6,-12) rectangle (5,12);
	\coordinate (x) at (-15,0); \coordinate (x') at (-15.1,0);
	\draw 	(-5.35,5) node(a)[fill=white, inner ysep=3]{$b$};
	\fill[fill=white] (a.north east -| x') -- (a.north east) -- (a.south east) -- (a.south east -| x');
	\draw (a.north east -| x) -- (a.north east) -- (a.south east) -- (a.south east -| x);
	\node at (-6,5) [inner sep=0]{$b$};
	\node[draw, fill=white,inner sep=3] at (5,0) {$\bar p_i$};
	\node[draw, fill=white,inner sep=2.5] at (-6,-5) {$p_j$};
	}
\,=\,\tikzmath[scale=\squarescale]
	{\fill[rounded corners=10, fill=\colA] (-15,-16) rectangle (12,16);
	\draw[rounded corners=10, fill=\colB] (-6,-13.5) rectangle (5,13.5);
	\coordinate (x) at (-15,0); \coordinate (x') at (-15.1,0);
	\draw 	(-5.35,7.8) node(a)[fill=white, inner ysep=3]{$b$};
	\fill[fill=white] (a.north east -| x') -- (a.north east) -- (a.south east) -- (a.south east -| x');
	\draw (a.north east -| x) -- (a.north east) -- (a.south east) -- (a.south east -| x);
	\node at (-6,7.8) [inner sep=0]{$b$};
	\node[draw, fill=white,inner sep=3] at (-6,-8) {$p_i$};
	\node[draw, fill=white,inner sep=2.5] at (-6,-.2) {$p_j$};
	}
=0.
\]
It follows from Lemma \ref{lem: p_i sqrt(phi) p_j = 0} that $p_i\sqrt{\phi\circ E\,}p_j=0$ for $i \neq j$.
The map $L^2(f):\sqrt{\phi}\mapsto\sqrt{\phi\circ E\,}$ therefore factors as
\[
\tikzmath{
\node[name = a, inner xsep=4] at (0,0) {$L^2(f)\,\,:\,\,L^2A$};
\node[name = b] at (5,0) {$L^2(B) \cong \bigoplus_{ij} p_iL^2(B) p_j$};
\node[name = c] at (5.7,-1) {$\bigoplus_i p_iL^2(B) p_i$};
\draw[->] (a) -- (b.west |- a);
\draw[->] ($(c.north) + (-.1,.05)$) arc (-180:0:.05) -- (b.south -| c);
\draw[->, dashed] ($(a.east)+(0,-.15)$) --node[below]{$\scriptstyle \bigoplus [L^2(f)]_i$} ($(c.west)+(0,.1)$);}\,.
\]

\noindent We have a similar factorization of $R$ by Lemma \ref{lem: sum of dualizables}:
\[
\tikzmath{
\node[name = a, inner xsep=4] at (0,0.1) {$R\,\,:\,\,L^2A$};
\node[name = b] at (5,0) {$L^2(B)\underset B\boxtimes \overline{L^2(B)} \cong \bigoplus_{ij} p_iL^2(B)\underset B\boxtimes \bar p_j\overline{L^2(B)}$};
\node[name = c] at (6.47,-1) {$\bigoplus_i p_iL^2(B)\boxtimes_B \bar p_i\overline{L^2(B)}$};
\draw[->] (a) -- (b.west |- a);
\draw[->] ($(c.north) + (-.1,.05)$) arc (-180:0:.05) -- (b.south -| c);
\draw[->, dashed] ($(a.east)+(0,-.15)$) --node[below]{$\scriptstyle \bigoplus R_i$} ($(c.west)+(0,.1)$);}\,.
\]

Let us write
\[
\Phi_{jk}\,:\,\,\bar p_j\overline{L^2(B)} \,\rightarrow\, L^2(B) p_k
\]
for the components of $\Phi$.
Given that ${}_B\,\bar p_j\overline{L^2(B)}{}_A$ and ${}_BL^2(B)p_k\,{}_A$ are irreducible bimodules, the maps $\Phi_{jk}$ are either zero or a scalar multiple of some unitary.
By the commutativity of \eqref{eq: iso between dual of A--L2B--B and B--L2B--A} (and since $R_i\neq 0$),
the subspace $\bigoplus_i p_iL^2(B)\boxtimes_B \bar p_i\overline{L^2(B)}$ of $L^2(B)\boxtimes_B \overline{L^2(B)}$ goes to $\bigoplus_i p_iL^2(B) p_i$ under the map
\[
\textstyle 1\otimes \Phi:\bigoplus_{ij}p_iL^2(B)\underset B \boxtimes \bar p_j\overline{L^2(B)}\to \bigoplus_{ik}p_iL^2(B)\boxtimes_B L^2(B)p_k\cong \bigoplus_{ik}p_iL^2Bp_k.
\]
It follows that $\Phi_{jk}=0$ whenever $j\neq k$. We can therefore rewrite $\Phi$ as
\[
\textstyle \Phi=\bigoplus \Phi_i: \bigoplus_i\bar p_i\overline{L^2(B)} \to \bigoplus_iL^2(B)p_i,
\]
where each $\Phi_i$ is a scalar multiple of some unitary.

To finish the argument, we show that each $\Phi_i$ has norm $1$.
Let $q_i\in Z(A')=Z(A)$ be the central support projection of $p_i\in A'$.
The maps
\[
\begin{split}
&[L^2(f)]_i:{}_AL^2A_A\to {}_A\,p_iL^2(B) p_i{}_A\quad\,\,\,\text{and}\\
&\,\,R_i\,:\,{}_AL^2A_A\to {}_A\,p_iL^2(B)\boxtimes_B \bar p_i\overline{L^2(B)}{}_A
\end{split}
\]
factor through ${}_A\,q_i(L^2A)_A$, and are therefore scalar multiples of partial isometries.
Given $\phi\in q_i(L^1_+A)$, we have
\[
\def\colA{black!10}
\def\colB{black!30}
\begin{split}
\big\|[L^2(f)]_i({\textstyle\sqrt{\phi}})\big\|^2&=\big\|p_iL^2(f)({\textstyle\sqrt{\phi}})\big\|^2=\big\|p_i{\textstyle\sqrt{\phi\circ E}}\big\|^2\\
&=\big\langle p_i{\textstyle\sqrt{\phi\circ E}},{\textstyle\sqrt{\phi\circ E}}\big\rangle=\phi\circ E(p_i)=E(p_i)\cdot \phi(1),
\end{split}
\]
where $E(p_i)\in q_iZ(A)\cong \IC$.
Similarly, we have
\[\vspace{.2cm}
\def\colA{black!10}
\def\colB{black!30}
\begin{split}
\big\|R_i(&\textstyle\sqrt{\phi})\big\|^2=\big\|p_iR(\sqrt{\phi})\big\|^2\\
&=\,\Big\langle\,\,
\tikzmath[scale=\squarescale]{\clip (-9.5,-4.5) rectangle (4,13);\fill[\colA] (-9.5,-7) rectangle (4,7);
\draw[rounded corners=6, fill=\colB] (-5,-10) rectangle (1,5);\draw (-5,0) node[fill=white, draw, inner ysep=2, inner xsep=2]{$p_i$};
\draw (-9.5,7) --node[above, xshift = -.5, yshift = -.5]{$\scriptstyle \sqrt{\phi}$} (4,7);}\,,\,
\tikzmath[scale=\squarescale]
{\clip (-9.5,-4.5) rectangle (4,13);\fill[\colA] (-9.5,-7) rectangle (4,7);\draw[rounded corners=6, fill=\colB] (-5,-10) rectangle (1,5);
\draw (-5,0) node[fill=white, draw, inner ysep=2, inner xsep=2]{$p_i$};\draw (-9.5,7) --node[above, xshift = -.5, yshift = -.5]{$\scriptstyle \sqrt{\phi}$} (4,7);}\,\,\Big\rangle
\,=\,
\tikzmath[scale=\squarescale]{\useasboundingbox (-9.5,-10) rectangle (4,10);\fill[\colA] (-9.5,-7) rectangle (4,7);\draw[rounded corners=6, fill=\colB] (-5,-5) rectangle (1,5);
\draw (-5,0) node[fill=white, draw, inner ysep=2, inner xsep=2]{$p_i$};\draw (-9.5,7) --node[above, xshift = -.5, yshift = -.5]{$\scriptstyle \sqrt{\phi}$} (4,7);
\draw (-9.5,-7) --node[below, xshift = 1, yshift = .5]{$\scriptstyle {\sqrt{\phi}\,}^*$} (4,-7);}
\,=\,\tikzmath[scale=\squarescale]
{\fill[rounded corners=5, fill=\colA] (-9.5,-7) rectangle (4,7);\draw[rounded corners=6, fill=\colB] (-5,-5) rectangle (1,5);
\draw (-5,0) node[fill=white, draw, inner ysep=2, inner xsep=2]{$p_i$};}\cdot
\textstyle\langle\sqrt{\phi},\sqrt{\phi}\,\rangle=E(p_i)\cdot\phi(1).
\end{split}
\]
It follows that $\|R_i\|^2=\|[L^2(f)]_i\|^2=E(p_i)$.
Since \eqref{eq: iso between dual of A--L2B--B and B--L2B--A} is commutative, we thus get $\|\Phi_i\|=\|[L^2(f)]_i\|/\|R_i\|=1$,
and the map $\Phi=\bigoplus \Phi_i$ is therefore unitary.
\end{proof}

The reader may wonder whether the condition of finite-dimensional center was really needed in Theorem \ref{thm: functoriality of L2}.
We saw in Theorem \ref{thm: normalizing duality data} that a bimodule between von Neumann algebras with finite-dimensional centers is dualizable if and only if
there exist maps $R$ and $S$ satisfying \eqref{eq: duality equations}: 
though a priori dualizability requires both conditions \eqref{eq: duality equations} and \eqref{eq:duality normalization}, in fact it is detected by condition \eqref{eq: duality equations} only.
If the centers of $A$ and $B$ are not atomic (that is, if one of them contains $L^\infty([0,1])$), then we do not know how to formulate \eqref{eq:duality normalization}.
We therefore do not have a good notion of duality in that context; however, we may still define a homomorphism $f:A\to B$ between arbitrary von Neumann algebras to be finite
if there exist maps $R$ and $S$ satisfying \eqref{eq: duality equations}, that is giving a not-necessarily normalized dual for the bimodule ${}_AL^2B_B$.

\begin{conjecture}\label{conj: L2 functoriality}
The assignment $A\mapsto L^2A$ extends to a functor from the category of all von Neumann algebras and finite homomorphisms
to the category of Hilbert spaces and bounded linear maps.
\end{conjecture}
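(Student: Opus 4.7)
I would mirror the proof of Theorem~\ref{thm: functoriality of L2}, constructing $L^2(f)$ for a finite homomorphism $f \colon A \to B$ as $\sqrt{\phi} \mapsto \sqrt{\phi \circ E_{A,B}}$, where $E_{A,B} \colon B \to A$ is an $A$-bilinear, completely positive, normal conditional expectation. Any pair $(R,S)$ satisfying \eqref{eq: duality equations} already produces some such $E$ via $E(b)\xi := R^*(b \otimes 1)R\xi$, with $E(1) = R^*R$ bounded in $Z(A)$. Once $E_{A,B}$ is in place, the boundedness estimate extends verbatim from the finite-center case: invoking the Radon-Nikodym identity~\eqref{eq:Radon-Nikodym-E} and $A$-bilinearity of $E_{A,B}$, one obtains $\|L^2(f)(\xi)\|^2 \le \|E_{A,B}(1)\| \cdot \|\xi\|^2$, with the finite sum $E_{A,B}(1) = \sum_\alpha C_\alpha p_\alpha$ over minimal central projections of $A$ replaced by a direct integral over $\mathrm{Spec}(Z(A))$. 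Functoriality reduces to the composition law $E_{A,B} \circ E_{B,C} = E_{A,C}$, which follows from Lemma~\ref{lem: product of dualizables} provided the canonicalization of $(R,S)$ is compatible with the composition of duality data.

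The substantive difficulty is canonically pinning down the pair $(R,S)$, or equivalently $E_{A,B}$, in the absence of finite-dimensional center. The normalization condition~\eqref{eq:duality normalization} is inherently scalar, pairing minimal central projections $p \in Z(A)$, $q \in Z(B)$ and comparing two complex numbers; when the centers are non-atomic this must be recast in center-valued terms. I would require that, for every $x \in \mathrm{End}({}_AL^2(B)_B)$, the elements $R^*(x \otimes 1)R$ and $S^*(1 \otimes x)S$ coincide under a canonical comparison induced by a center-valued trace on $\mathrm{End}({}_AL^2(B)_B)$. The analog of Theorem~\ref{thm: normalizing duality data} would then assert that any non-normalized dual can be rescaled by a positive invertible $x \in \mathrm{End}({}_AL^2(B)_B)$, solving the operator equation $xax = x^{-1}bx^{-1}$ exactly as in the finite-center proof, but now with $a$, $b$ positive invertible elements whose existence requires the analog of Lemma~\ref{lem: Lemma B} in the general setting.

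The hardest step will be establishing that $\mathrm{End}({}_AL^2(B)_B)$ admits a canonical center-valued trace when the centers are non-atomic, since Lemma~\ref{lem: Lemma B} (finite-dimensionality of the endomorphism algebra) does not directly extend: in general, $\mathrm{End}({}_AL^2(B)_B)$ is only \emph{finite over the center} in an appropriate sense. I expect this to connect with Haagerup's operator-valued weight theory and with the minimal conditional expectation of Hiai--Kosaki--Longo, and I would ultimately seek to identify $E_{A,B}$ with the unique normal conditional expectation that is minimal subject to the existence of a compatible $(R,S)$. Once this canonical $E_{A,B}$ is established, the remaining steps should go through essentially as in Theorem~\ref{thm: functoriality of L2}, with measure-theoretic adjustments replacing finite direct sums by direct integrals.
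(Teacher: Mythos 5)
The statement you are addressing is not proved in the paper at all: it is stated as Conjecture~\ref{conj: L2 functoriality}, and the authors explicitly explain, in the paragraph preceding it, why their proof of Theorem~\ref{thm: functoriality of L2} does not extend --- when the centers are non-atomic they do not know how to formulate the normalization condition~\eqref{eq:duality normalization}, and hence have no canonical choice of duality data $(R,S)$, equivalently no canonical expectation $E_{A,B}$. Your proposal correctly identifies this as the crux, but it does not resolve it; it only names the missing ingredients (a center-valued replacement for~\eqref{eq:duality normalization}, a canonical center-valued trace on $\mathrm{End}({}_AL^2(B)_B)$, an analog of Lemma~\ref{lem: Lemma B}, a uniqueness statement pinning down a ``minimal'' expectation) and defers them to future work. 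That is precisely the open problem, so what you have written is a research plan rather than a proof, and it cannot be compared favorably against a paper argument that does not exist.

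To see that the gap is substantive and not merely expository: if $(R,S)$ is any solution of~\eqref{eq: duality equations} and $z\in Z(A)$ is positive, invertible and non-scalar, then $((z\otimes 1)R,\,(1\otimes z^{-1})S)$ is another solution, and the two choices give genuinely different maps $E_{A,B}$ and hence different candidates for $L^2(f)$; so without a normalization the assignment $f\mapsto L^2(f)$ is not even well defined, let alone functorial, and the composition law $E_{A,B}\circ E_{B,C}=E_{A,C}$ that you invoke for functoriality presupposes a coherent choice across all finite homomorphisms. Your suggested fix --- a canonical center-valued trace on $\mathrm{End}({}_AL^2(B)_B)$ together with a rescaling argument solving $xax=x^{-1}bx^{-1}$ --- requires first knowing that this endomorphism algebra is a finite von Neumann algebra with a distinguished trace and that the relevant weights $\varphi,\psi$ have bounded, invertible densities against it; none of this is established (Lemma~\ref{lem: Lemma B} uses factoriality and scalar-valuedness of $R^*R$, $S^*S$ in an essential way), and the paper's own Remark~\ref{rem: algebra L_*(A)}-style machinery does not supply it. Likewise the boundedness estimate in Theorem~\ref{thm: functoriality of L2} uses that $E_{A,B}(1)$ is a finite sum $\sum_\alpha C_\alpha p_\alpha$ to pull the central element out of the analytic continuation; replacing this by a direct integral over $\mathrm{Spec}(Z(A))$ is plausible but must be justified, since the Radon--Nikodym cocycles and the analytic continuation have to be controlled uniformly over the fibers. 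Until the canonical normalization problem is actually solved, the conjecture remains open and your outline does not close it.
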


The following two lemmas describe how the functor $L^2$ interacts with the basic operations of taking corner and block-diagonal subalgebras.
Recall from Lemma \ref{lem: L^2(pAp) = pL^2(A)p} that the $L^2$-space of the corner algebra $A_0:=pAp$ is given by $L^2(A_0)=p(L^2A)p$.

\begin{lemma}\label{lem: L^2 of pAp --> pBp}
Let $f:A\to B$ be a finite homomorphism between von Neumann algebras with finite-dimensional centers.
Given a projection $p\in A$, let $A_0:=pAp$, $B_0:=pBp$, and $f_0:=f|_{A_0}:A_0\to B_0$, where we identify $p$ with its image $f(p)\in B$.
Then the homomorphism $f_0$ is finite, and we have
\[
L^2(f_0)=L^2(f)|_{L^2(A_0)},
\]
where we have identified $L^2(A_0)$ and $L^2(B_0)$ 
with the subspaces $pL^2(A)p$ and $pL^2(B)p$ of $L^2(A)$ and $L^2(B)$ respectively.
\end{lemma}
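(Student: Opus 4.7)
The plan is to prove the statement in two stages: first showing that $f_0$ is finite, then identifying $L^2(f_0)$ with the restriction of $L^2(f)$ via a comparison of the associated minimal conditional expectations. For finiteness, I would use Lemma~\ref{lem: L^2(pAp) = pL^2(A)p} to identify $L^2(B_0) \cong pL^2(B)p$ as an $A_0$-$B_0$-bimodule, then decompose it as the triple Connes fusion
\[
pL^2(B)p \;\cong\; pL^2(A) \boxtimes_A L^2(B) \boxtimes_B L^2(B)p.
\]
The outer factors ${}_{A_0}pL^2(A)_A$ and ${}_B L^2(B)p_{B_0}$ have finite matrix-valued statistical dimensions---their entries are $0$ or $1$ according to whether a given minimal central projection of $A$, respectively $B$, survives the cutoff by $p$---and are hence dualizable. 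Combined with the dualizability of ${}_AL^2(B)_B$ (the hypothesis that $f$ is finite), two applications of Lemma~\ref{lem: product of dualizables} show that ${}_{A_0}pL^2(B)p_{B_0}$ is dualizable, i.e., that $f_0$ is finite.

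For the equality $L^2(f_0) = L^2(f)|_{L^2(A_0)}$, I would reduce it to the identity $E_{A_0,B_0} = E_{A,B}|_{B_0}$ of minimal conditional expectations; the restriction makes sense since $A$-bilinearity of $E_{A,B}$ together with $p \in A$ gives $E_{A,B}(b_0) = E_{A,B}(pb_0 p) = p E_{A,B}(b_0) p \in A_0$ for $b_0 \in B_0$. To prove the equality, fix normalized duality data $(R,S)$ for ${}_AL^2(B)_B$ and construct restricted data $R_0 := pRp$, $S_0 := pSp$, using the canonical identification
\[
p\bigl(L^2(B) \boxtimes_B \overline{L^2(B)}\bigr)p \;\cong\; pL^2(B)p \boxtimes_{B_0} p\overline{L^2(B)}p
\]
induced by the Morita structure. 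The duality equations~\eqref{eq: duality equations} for $(R_0, S_0)$ follow by cutting those for $(R, S)$ by $p$, so once the normalization~\eqref{eq:duality normalization} is verified, the defining formula~\eqref{eq: minimal conditional expectation} yields
\[
E_{A_0,B_0}(b_0)\,\xi \;=\; R_0^*(b_0 \otimes 1)R_0\,\xi \;=\; R^*(b_0 \otimes 1)R\,\xi \;=\; E_{A,B}(b_0)\,\xi
\]
for $b_0 \in B_0$ and $\xi \in L^2(A_0) \subset L^2(A)$. The main obstacle I anticipate is this normalization check: minimal central projections of $A_0 = pAp$ are of the form $pe_i$ for $e_i \in Z(A)$ minimal central (and similarly for $B_0$), and every $x_0 \in \mathrm{End}({}_{A_0}pL^2(B)p_{B_0})$ should arise as $p\hat x p$ for some $\hat x \in A' \cap B$ via a Morita correspondence induced by the outer factors of the triple fusion; the $A$-bilinearity of $R, S$ together with centrality of the $e_i$, $e_j'$ in $A$, $B$ should then reduce the normalization for $(R_0, S_0)$ to that of $(R, S)$.

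To conclude, for $\phi \in L^1_+(A_0)$ let $\tilde\phi := \phi \circ E_p^A$ denote its extension to $A$, with $E_p^A(a) := pap$; Lemma~\ref{lem: L^2(pAp) = pL^2(A)p} identifies $\sqrt{\phi} \in L^2(A_0)$ with $\sqrt{\tilde\phi} \in pL^2(A)p$. Then
\[
L^2(f)\bigl(\sqrt{\tilde\phi}\bigr) \;=\; \sqrt{\tilde\phi \circ E_{A,B}} \;=\; \sqrt{\phi \circ (E_p^A \circ E_{A,B})},
\]
and $A$-bilinearity of $E_{A,B}$ yields $(E_p^A \circ E_{A,B})(b) = pE_{A,B}(b)p = E_{A,B}(pbp) = (E_{A,B}|_{B_0} \circ E_p^B)(b)$ for all $b \in B$. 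Hence $L^2(f)(\sqrt{\tilde\phi})$ corresponds under Lemma~\ref{lem: L^2(pAp) = pL^2(A)p} to $\sqrt{\phi \circ E_{A,B}|_{B_0}} \in L^2(B_0)$, which by the equality $E_{A_0,B_0} = E_{A,B}|_{B_0}$ established above equals $L^2(f_0)(\sqrt\phi)$, completing the proof.
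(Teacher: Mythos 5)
Your proposal is essentially correct, and its core is the same as the paper's argument: restrict the duality data $R,S$ of ${}_AL^2(B)_B$ by cutting with $p$, use the invertibility of ${}_BL^2(B)p_{B_0}$ to identify $p\bigl(L^2(B)\boxtimes_B\overline{L^2(B)}\bigr)p$ with $pL^2(B)p\boxtimes_{B_0}p\overline{L^2(B)}p$, deduce $E_{A_0,B_0}=E_{A,B}|_{B_0}$ from formula \eqref{eq: minimal conditional expectation}, and then verify the claimed equality on the dense set of vectors $\sqrt{\phi}$ via Lemma \ref{lem: L^2(pAp) = pL^2(A)p} --- your final computation is exactly the one in the paper. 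The two points where you deviate are minor. First, you secure finiteness of $f_0$ separately, by writing $pL^2(B)p\cong pL^2(A)\boxtimes_AL^2(B)\boxtimes_BL^2(B)p$ and invoking Lemma \ref{lem: product of dualizables} twice; this works (the cut-off bimodules are direct sums of corner Morita equivalences and zero blocks, hence dualizable), but the paper gets finiteness for free from the same restricted maps $R_0,S_0$ once they are known to satisfy \eqref{eq: duality equations} and \eqref{eq:duality normalization}, so your extra argument is a valid but redundant safety net. Second, the normalization \eqref{eq:duality normalization} for $(R_0,S_0)$, which you flag as the main obstacle and only sketch, is indeed genuinely needed for the identity $E_{A_0,B_0}(b)\xi=R_0^*(b\otimes1)R_0\xi$ (the minimal expectation is defined through \emph{normalized} data, and Theorem \ref{thm: well defined up to unique unitary iso} makes it independent of the choice); the paper, however, asserts this normalization without further detail, so your proposal is at the same level of rigor there, and your outline of the verification --- $Z(A_0)=Z(A)p$, $Z(B_0)=Z(B)p$, $\mathrm{End}({}_{A_0}pL^2(B)p_{B_0})=p(A'\cap B)p$, then reduce to the normalization of $(R,S)$ using $A$-$B$-bilinearity --- is the right way to complete it.
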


\begin{proof}
The structure maps \eqref{eq:duality maps} for the dual of ${}_AL^2(B)_B$ restrict to maps
\[
\begin{split}
R_0:{}_{A_0}L^2A_0{}_{A_0}={}_{A_0}pL^2\!Ap_{A_0}&\to
{}_{A_0}pL^2B\boxtimes_{B}\overline{L^2B}p{}_{A_0}=
{}_{A_0}pL^2Bp\boxtimes_{B_0}p\overline{L^2B}p{}_{A_0},\\
S_0:{}_{B_0}L^2B_0{}_{B_0}={}_{B_0}pL^2\!Bp_{B_0}&\to
{}_{B_0}p\overline{L^2B}\boxtimes_{A} L^2B p{}_{B_0}=
{}_{B_0}p\overline{L^2B}p\boxtimes_{A_0}pL^2Bp{}_{B_0}.
\end{split}
\]
Here we use the invertibility of ${}_BL^2Bp_{B_0}$ to rewrite
the targets of $R_0$ and $S_0$. 
These satisfy the duality equations \eqref{eq: duality equations} and the normalization \eqref{eq:duality normalization}, and therefore exhibit ${}_{B_0}(p\overline{L^2(B)}p){}_{A_0}$ as the dual of 
${}_{A_0}L^2({B_0})_{B_0}$.
For every $b\in B_0$, we have
\[
\begin{split}
E_{A,B}(b)\xi = R^*(b\otimes 1)R\xi = R_0^*(b\otimes 1)R_0\xi = E_{A_0,B_0}(b)\xi\qquad&\text{for } \xi\in L^2(A_0)\,,\\
E_{A,B}(b)\xi = R^*(b\otimes 1)R\xi = R^*(pb\otimes 1)R(1-p)\xi = 0\hspace{1.1cm}&\text{for } \xi\in L^2(A_0)^\bot\!,
\end{split}
\]
from which it follows that $E_{A_0,B_0}=E_{A,B}|_{B_0}$.
Given a state $\phi:A_0\to \IC$, the image of $L^2(f_0)(\sqrt\phi)$
in $L^2(B)$ is the square root of
\[
b\mapsto \phi(E_{A_0,B_0}(pbp))=\phi(E_{A,B}(pbp))=\phi(pE_{A,B}(b)p),
\]
and is thus equal to the image of $\sqrt{a\mapsto \phi(pap)}$ under $L^2(f)$.
\end{proof}

\begin{lemma}\label{lem: L^2 of (+)p_iAp_i --> A}
Let $A$ be a factor, and $p_1,\ldots,p_n\in A$ be a collection of orthogonal projections that add up to $1$.
Let $\iota:\bigoplus p_iAp_i\to A$ denote the inclusion.
Then $L^2(\iota)$ is the natural inclusion
\[
L^2\big({\textstyle\bigoplus}\, p_iA\,p_i\big)\,\cong\, \bigoplus\, p_iL^2(A)\,p_i\,\hookrightarrow\, L^2(A),
\]
where the first isomorphism is given by Lemma \ref{lem: L^2(pAp) = pL^2(A)p}. In particular, $L^2(\iota)$ is an isometry.
\end{lemma}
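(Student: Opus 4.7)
The plan is to reduce the statement to the preceding Lemma \ref{lem: L^2 of pAp --> pBp} by applying it separately to each minimal central projection of $B = \bigoplus_i p_iAp_i$. First I would verify that $\iota$ is a finite homomorphism, so that $L^2(\iota)$ is defined via Theorem \ref{thm: functoriality of L2}. The bimodule ${}_BL^2(A)_A$ decomposes as $\bigoplus_i {}_{p_iAp_i}(p_iL^2(A))_A$, and each summand is a Morita equivalence: on the subspace $p_iL^2(A)\subset L^2(A)$, the commutant of the right $A$-action is $p_iAp_i$ (any bounded operator on $p_iL^2(A)$ commuting with the right $A$-action extends by zero to an operator on $L^2(A)$ commuting with right $A$, hence lies in left $A$ and satisfies $p_i x p_i = x$). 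Proposition \ref{prop: characterization of invertible bimodules} then makes each summand invertible, and Lemma \ref{lem: sum of dualizables} makes the whole direct sum dualizable.

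For each $i$, I would apply Lemma \ref{lem: L^2 of pAp --> pBp} to $\iota: B \to A$ with the projection $p_i \in B$. The relevant corners are $p_iBp_i = p_iAp_i$ on both sides, and the restricted homomorphism $\iota_0: p_iBp_i \to p_iAp_i$ is the identity; hence $L^2(\iota_0) = \mathrm{id}_{L^2(p_iAp_i)}$. Lemma \ref{lem: L^2 of pAp --> pBp} then tells us that, under the canonical identifications of Lemma \ref{lem: L^2(pAp) = pL^2(A)p}, the restriction of $L^2(\iota)$ to $p_iL^2(B)p_i$ is the identity of $L^2(p_iAp_i)$, with target viewed as $p_iL^2(A)p_i \subset L^2(A)$. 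To assemble these block-wise statements, I would use the compatibility of $L^2$ with direct sums: $L^2(B) = \bigoplus_i L^2(p_iAp_i) = \bigoplus_i p_iL^2(B)p_i$. This exhibits $L^2(\iota)$ as the direct sum of the canonical identifications $p_iL^2(B)p_i \cong L^2(p_iAp_i) \cong p_iL^2(A)p_i$ followed by the inclusion $\bigoplus_i p_iL^2(A)p_i \hookrightarrow L^2(A)$, which is precisely the natural inclusion in the statement.

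For isometricity, each block identification provided by Lemma \ref{lem: L^2(pAp) = pL^2(A)p} is already unitary, so it suffices to observe that the subspaces $p_iL^2(A)p_i$ are mutually orthogonal inside $L^2(A)$; this is immediate from the standard form identity $\langle a\xi b,\eta\rangle = \langle \xi, a^*\eta b^*\rangle$ applied with $a=p_i$ and $b=p_j$ for $i\neq j$. The argument is essentially bookkeeping once Lemma \ref{lem: L^2 of pAp --> pBp} is available; the main subtlety requiring care is to keep straight the two distinct applications of Lemma \ref{lem: L^2(pAp) = pL^2(A)p}—one to $B$ (identifying $p_iL^2(B)p_i$ with $L^2(p_iAp_i)$) and one to $A$ (identifying $L^2(p_iAp_i)$ with $p_iL^2(A)p_i$)—both of which produce isomorphisms out of the same space $L^2(p_iAp_i)$ but into different ambient Hilbert spaces.
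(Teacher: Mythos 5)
Your argument is correct, but it takes a different route from the paper. The paper's proof is a direct computation: it writes down explicit duality maps $R:\bigoplus p_iL^2(A)p_i\hookrightarrow L^2(A)\cong L^2(A)\boxtimes_A L^2(A)$ and $S:L^2(A)\hookrightarrow\bigoplus L^2(A)\cong L^2(A)\boxtimes_{\oplus p_iAp_i}L^2(A)$ (which at the same time shows $\iota$ is finite), computes from $R$ that the conditional expectation is $E_{\oplus p_iAp_i,\,A}(a)=\bigoplus p_iap_i$, and then reads off $L^2(\iota)(\sqrt{\oplus\phi_i})=\sum\sqrt{\phi_i\circ q_i}$, which is exactly the map of Lemma \ref{lem: L^2(pAp) = pL^2(A)p}. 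You instead treat $L^2(\iota)$ as a black box, establish finiteness by decomposing ${}_BL^2(A)_A$ into invertible corner bimodules, and then reduce everything to Lemma \ref{lem: L^2 of pAp --> pBp} applied blockwise with $\iota_0=\mathrm{id}$, assembling via the central decomposition of $L^2(B)$; this is legitimate and non-circular, since Lemma \ref{lem: L^2 of pAp --> pBp} does not use the present statement, and it buys you freedom from recomputing the expectation, at the cost of a separate finiteness argument that the paper gets for free from its explicit $R$ and $S$. Two small points to tighten: Proposition \ref{prop: characterization of invertible bimodules} also requires faithfulness of both actions, and faithfulness of the right $A$-action on $p_iL^2(A)$ uses that $A$ is a factor (a nonzero element of $A$ times a nonzero element of $A'$ is nonzero); and Lemma \ref{lem: sum of dualizables} is stated for summands over a fixed pair of algebras, so you should note that each invertible $p_iAp_i$-$A$-bimodule, extended by zero to a $B$-$A$-bimodule over $B=\bigoplus p_iAp_i$, remains dualizable (routine, exactly as in the last paragraph of the proof of Theorem \ref{thm: normalizing duality data}) before summing. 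With those remarks your reduction, including the orthogonality and unitarity argument for isometricity, is complete.
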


\begin{proof}
We write $A_i$ for $p_iAp_i$.
The inclusions
\[
R:{}_{\oplus A_i}L^2\big({\textstyle\bigoplus} A_i\big){}_{\oplus A_i} \cong {\textstyle\bigoplus} p_i L^2(A) p_i\hookrightarrow L^2(A)\cong
{}_{\oplus A_i}L^2(A)\boxtimes_A L^2(A)_{\oplus A_i}
\]
\[
S:{}_AL^2(A)_A\hookrightarrow \underset i{\textstyle\bigoplus} L^2(A) \cong
\underset i{\textstyle\bigoplus} L^2(A)p_i\underset{\oplus A_i}\boxtimes p_iL^2(A)\cong {}_A L^2(A)\underset{\oplus A_i}\boxtimes L^2(A)_A
\]
exhibit ${}_{\oplus A_i}L^2(A)_A$ as the dual of ${}_A L^2(A)_{\oplus A_i}$.
For $\xi_i\in L^2(A_i)$ and $a\in A$, equation \eqref{eq: minimal conditional expectation} reads
\[
\underset i{\textstyle\bigoplus}\,\xi_i\stackrel R\mapsto \underset i{\textstyle\sum}\, \xi_i \stackrel a \mapsto \underset i{\textstyle\sum}\, a\xi_i \stackrel {R^*}\mapsto 
\underset j{\textstyle\bigoplus}\, p_j(\underset i{\textstyle\sum}a\xi_i)p_j = \underset i{\textstyle\bigoplus}\, p_iap_i\xi_i = \big(\underset i{\textstyle\bigoplus}p_iap_i\big)(\underset i{\textstyle\bigoplus}\xi_i).
\]
The map $E:=E_{\oplus A_i,\, A}$ is therefore given by $E(a)=\bigoplus q_i(a)$,
where $q_i(a):=p_iap_i$.
It follows that $L^2(\iota)(\sqrt{\oplus\phi_i})=\sqrt{(\oplus\phi_i)\circ E}=\sqrt{{\scriptstyle\sum}\, \phi_i\circ q_i}=\sum \sqrt{\phi_i\circ q_i}$.
Since $\sqrt{\phi_i}\in L^2(A_i)$ maps to $\sqrt{\phi_i\circ q_i}\in L^2(A)$ under the map described in Lemma \ref{lem: L^2(pAp) = pL^2(A)p}, this finishes the proof.
\end{proof}

One drawback of the construction presented in Theorem \ref{thm: functoriality of L2}
is that the maps $L^2( f):L^2(A)\to L^2(B)$ are not isometric.
For example, if $\iota:A\to B$ is a finite map between factors, then $L^2(\iota)$ is $\sqrt[4]{[B:A]}=\sqrt{\llbracket B : A\rrbracket}$ times an isometry.
This can be checked on positive vectors: since $\|\sqrt{\phi}\,\|^2=\phi(1)$ and $\|\sqrt{\phi\circ E_{A,B}}\,\|^2=\phi(E_{A,B}(1))=E_{A,B}(1)\phi(1)$,
it follows that
\begin{equation} \label{eq: norm of L2(iota)}
{\|L^2(\iota)({\textstyle\sqrt{\phi}})\|}\big/{\textstyle\|\sqrt{\phi}\|}=\sqrt{\phantom{R*|}\hspace{.55cm}}\hspace{-1.25cm}{E_{A,B}(1)}=\sqrt{R^*R\phantom{|}}=\sqrt{\dim(_{A}L^2B_B)}
\end{equation}
for any $\sqrt{\phi}\in L^2_+(A)$.
In some sense, that is inevitable.
Assuming that $\iota$ is injective, let $L^2(\iota)_\mathrm{iso}$ denote the isometry in the polar decomposition of $L^2(\iota)$.
The assignment
\begin{equation}\label{eq: the functor L^2_iso}
\big(\iota:A\to B\big)\,\,\mapsto\,\, \big(L^2(\iota)_\mathrm{iso}:L^2(A)\to L^2(B)\big)
\end{equation}
is not a functor --- this issue is already visible with finite-dimensional commutative von Neumann algebras.
Nevertheless, we have:

\begin{proposition}\label{prop: functoriality of L^2_iso}
When restricted to the subcategory of von Neumann algebras with finite-dimensional center and injective finite homomorphisms $\iota:A\to B$ that satisfy $Z(B)\subset \iota(A)$,
the assignment $\iota \mapsto L^2(\iota)_\mathrm{iso}$ is a functor.
\end{proposition}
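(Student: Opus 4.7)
My plan is to compute the polar decomposition of $L^2(\iota)$ explicitly, showing that its positive part is left multiplication by a central element $c_\iota\in Z(A)_+$. To begin, I would note that for any finite homomorphism $\iota:A\to B$ the map $L^2(\iota):L^2(A)\to L^2(B)$ is $A$-$A$-bilinear with $A$ acting on $L^2(B)$ via $\iota$; this follows from the remark after Theorem~\ref{thm: functoriality of L2}, by which $L^2(\iota)$ is the restriction of a $*$-algebra homomorphism $L^*(A)\to L^*(B)$ agreeing with $\iota$ on $A$. Consequently $L^2(\iota)^*L^2(\iota)\in\mathrm{End}({}_AL^2(A)_A)$ is left multiplication by some $c_\iota\in Z(A)_+$, and evaluating $\|L^2(\iota)\sqrt\phi\,\|^2=\phi(E_{A,B}(1))$ on positive vectors identifies $c_\iota=E_{A,B}(1)$. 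When $\iota$ is injective I would verify that $c_\iota$ is invertible in $Z(A)$ by decomposing $R=\bigoplus R_{ij}$ along the factor blocks of $A$ and $B$ as in the proof of Theorem~\ref{thm: normalizing duality data}: injectivity forces $R_{ij}\ne 0$ for some $j$ in each row $i$, and Lemma~\ref{lem: D >= 1} then supplies a positive lower bound on $R_{ij}^*R_{ij}$ in each central summand. Hence the polar decomposition takes the form $L^2(\iota)=L^2(\iota)_{\mathrm{iso}}\,P_{c_\iota^{1/2}}$, where $P_x$ denotes left multiplication by $x$; since $P_{c_\iota^{1/2}}$ is central multiplication, the $A$-$A$-bilinearity of $L^2(\iota)$ passes to $L^2(\iota)_{\mathrm{iso}}$.

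For functoriality, consider composable morphisms $A\xrightarrow{\iota}B\xrightarrow{\kappa}C$ in the subcategory. The injectivity of $\iota$ combined with $Z(B)\subset\iota(A)$ gives $Z(B)=\iota(Z(A))$ (any $z=\iota(y)\in Z(B)$ commutes with $\iota(A)$, and injectivity transfers this to $y\in Z(A)$), so that $c_\kappa^{1/2}\in Z(B)$ writes as $\iota(d)$ for a unique $d\in Z(A)_+$. The crucial step is the commutation
\[
P_{c_\kappa^{1/2}}\,L^2(\iota)_{\mathrm{iso}} \;=\; L^2(\iota)_{\mathrm{iso}}\,P_d,
\]
which is immediate from the left $\iota(A)$-linearity of $L^2(\iota)_{\mathrm{iso}}$. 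Combined with Theorem~\ref{thm: functoriality of L2}, this yields
\[
L^2(\kappa\iota)\;=\;L^2(\kappa)\,L^2(\iota)\;=\;L^2(\kappa)_{\mathrm{iso}}\,L^2(\iota)_{\mathrm{iso}}\,P_{d\,c_\iota^{1/2}},
\]
exhibiting a polar decomposition of the injective operator $L^2(\kappa\iota)$: the first two factors compose to an isometry, and the third is left multiplication by a positive invertible central element of $A$. Uniqueness of polar decomposition then delivers $L^2(\kappa\iota)_{\mathrm{iso}}=L^2(\kappa)_{\mathrm{iso}}\,L^2(\iota)_{\mathrm{iso}}$; the identity morphism case is immediate.

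The hard part is exactly this commutation step: without the hypothesis $Z(B)\subset\iota(A)$ the element $c_\kappa\in Z(B)$ need not lift to a central element of $A$ and the rewriting collapses, matching the remark preceding the proposition that functoriality of $\iota\mapsto L^2(\iota)_{\mathrm{iso}}$ genuinely fails on the larger category of all injective finite homomorphisms, already for finite-dimensional commutative examples.
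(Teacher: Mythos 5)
Your argument is correct, but it takes a genuinely different route from the paper's. The paper uses the hypothesis $Z(B)\subset\iota(A)$ to decompose everything into blocks of factor inclusions $A_{ijk}\hookrightarrow p_{ijk}Bp_{ijk}\hookrightarrow p_{ijk}Cp_{ijk}$: there $L^2$ of a subfactor inclusion is a scalar multiple of an isometry by \eqref{eq: norm of L2(iota)}, the corner-sum inclusions are isometries by Lemma \ref{lem: L^2 of (+)p_iAp_i --> A}, and functoriality of the isometric parts follows from a diagram chase together with compatibility of polar decomposition with postcomposition by isometries. You instead compute the polar decomposition globally: since $L^2(\iota)$ is an $A$-$A$-bimodule map, $L^2(\iota)^*L^2(\iota)$ lies in $\mathrm{End}({}_AL^2A_A)\cong Z(A)$ and equals multiplication by $E_{A,B}(1)=R^*R$, which is invertible when $\iota$ is injective; hence $|L^2(\iota)|$ is multiplication by a positive invertible central element, $L^2(\iota)_{\mathrm{iso}}$ is again a bimodule map, and the hypothesis $Z(B)\subset\iota(A)$ enters exactly once, to lift $c_\kappa^{1/2}$ to a positive central element of $A$ and slide it across $L^2(\iota)_{\mathrm{iso}}$, after which uniqueness of the polar decomposition finishes. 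Your approach buys an explicit formula for the positive part and pinpoints where the hypothesis is needed, at the cost of requiring the bilinearity of $L^2(\iota)$ --- which, within the paper, is most directly read off from the commutative square \eqref{eq: iso between dual of A--L2B--B and B--L2B--A} (all three other maps there are $A$-$A$-bimodule maps), though the modular-algebra remark you cite also suffices. Two small repairs: your parenthetical proves only the inclusion $Z(B)\subset\iota(Z(A))$, not the asserted equality $Z(B)=\iota(Z(A))$ (already false for the diagonal inclusion $\IC^2\subset M_2(\IC)$), but the inclusion is all you use; and in the invertibility step you should note that the blockwise components $R_{ij}$, $S_{ij}$ of the duality data do satisfy \eqref{eq: duality equations} for each block $H_{ij}=p_iL^2(B)q_j$ --- because $R$ and $S$ are bimodule maps and hence respect the central decompositions --- so that Lemma \ref{lem: D >= 1} applies blockwise and, since some $H_{ij}\neq 0$ in every row $i$ by injectivity, each diagonal block of $R^*R$ is strictly positive.
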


\begin{proof}
We can write $\iota:A\to B$ as a direct sum of maps $\iota_j:A_j\to B_j$, where each $B_j$ is a factor.
Let us decompose each $A_j$ as a direct sum of factors $A_j=\bigoplus_i A_{ij}$, where $A_{ij}=p_{ij}A_j$, and $p_{ij}$ are the minimal central projections of $A_j$.
We can then factor $\iota$ as
\[
\iota\,\,:\,\,\textstyle A=\underset{ij}\bigoplus\, A_{ij}\to \underset{ij}\bigoplus\, p_{ij}B_j p_{ij} \to \underset{j}\bigoplus\, B_j = B.
\]
Applying the functor $L^2$ (as defined in Theorem \ref{thm: functoriality of L2}) to the above maps, we get
\[
L^2(\iota)\,\,:\,\,\textstyle L^2(A)=\underset{ij}\bigoplus\, L^2(A_{ij})\to \underset{ij}\bigoplus\, L^2(p_{ij}B_j p_{ij}) \xrightarrow{\star} \underset{j}\bigoplus\, L^2(B_j) = L^2(B).
\]
The map $\star$ is an isometry by Lemma \ref{lem: L^2 of (+)p_iAp_i --> A}.
The isometry $L^2(\iota)_{\mathrm{iso}}$ is therefore the composite of 
$L^2_{\mathrm{iso}}:\bigoplus\, L^2(A_{ij})\to \bigoplus\, L^2(p_{ij}B_j p_{ij})$
with the natural inclusion $\bigoplus\, L^2(p_{ij}B_j p_{ij}) \hookrightarrow \bigoplus\, L^2(B_j)$
described in Lemma \ref{lem: L^2(pAp) = pL^2(A)p}.

Given two composable inclusions $\iota:A\to B$ and $\kappa:B\to C$ with $Z(B) \subset \iota(A)$ and $Z(C) \subset \kappa(B)$,
we now show that $L^2(\kappa\circ \iota)_\mathrm{iso}=L^2(\kappa)_\mathrm{iso}\circ L^2(\iota)_\mathrm{iso}$.
Let us write $C=\bigoplus C_k$, $B=\bigoplus B_{jk}$, and $A=\bigoplus A_{ijk}$ as sums of factors, where $\iota(A_{ijk})\subset B_{jk}$ and $\kappa(B_{jk})\subset C_k$.
The corresponding minimal central projections are denoted $p_{ijk}\in A_{ijk}$ and $q_{jk}\in B_{jk}$.
To compare $L^2(\kappa\circ \iota)_\mathrm{iso}$ with $L^2(\kappa)_\mathrm{iso}\circ L^2(\iota)_\mathrm{iso}$, we consider the following diagram
\[
\qquad\tikzmath{
\matrix [matrix of math nodes,column sep=1cm,row sep=5mm]
{ 
|(a)| \bigoplus_{jk} L^2(B_{jk}) \pgfmatrixnextcell |(b)| \bigoplus_{jk} L^2(q_{jk}Cq_{jk}) \pgfmatrixnextcell |(c)[xshift=.4]| \bigoplus_{k} L^2(C_{k})\\ 
|(d)| \bigoplus_{ijk} L^2(p_{ijk}Bp_{ijk}) \pgfmatrixnextcell |(e)| \bigoplus_{ijk} L^2(p_{ijk}Cp_{ijk})\\ 
|(f)[yshift=-.1]| \bigoplus_{ijk} L^2(A_{ijk})\\
}; 
\foreach \source/\target/\pos in {a/b/above,d/e/above,f/d/left,f/e/below}
{\draw[->] (\source) -- node[\pos]{$\scriptstyle L^2_\mathrm{iso}$} (\target);}
\foreach \source/\target in {d/a,e/b}
{\draw[->] ($(\source.north) + (-.1,.05)$) arc (-180:0:.05) -- (\target);}
\draw[->] ($(b.east) + (.05,.1)$) arc (90:270:.05) -- (c);
\draw[->] ($(e.north) + (.9,.05)$) arc (110:290:.05) -- (c);}
\]
The upper right triangle is a diagram of inclusions and commutes for obvious reasons.
The upper left rectangle commutes by the functoriality of the $L^2$ construction (Theorem \ref{thm: functoriality of L2}) and
by the compatibility of polar decomposition with the operation of composing with an isometry.
Finally, note that whenever we have a subfactor inclusion $f:N\to M$
then, by equation \eqref{eq: norm of L2(iota)}, the corresponding map $L^2(f)$ is a scalar multiple of an isometry.
The commutativity of the bottom triangle thus holds because $A_{ijk}\hookrightarrow p_{ijk}Bp_{ijk}\hookrightarrow p_{ijk}Cp_{ijk}$ are subfactor inclusions.
\end{proof}

\subsection*{Functoriality of Connes fusion}

By construction, the operation of Connes fusion $(H_A,\, {}_AK)\mapsto H\boxtimes_AK$ is a functor in $H$ and $K$.
We now investigate in what sense it is a functor of the \emph{three} variables $H$, $A$, and $K$.
Consider the following category.
Its objects are triples $(H,A,K)$ consisting of a von Neumann algebra $A$ with finite-dimensional center, a right module $H$, and a left module $K$.
A morphism from $(H_1,A_1,K_1)$ to $(H_2,A_2,K_2)$ is a  triple $\alpha:A_1\to A_2$, $h:H_1\to H_2$, $k:K_1\to K_2$,
where $\alpha$ is a finite homomorphism, and $h$ and $k$ are $A_1$-linear maps.

\begin{theorem}\label{thm: functoriality of Connes fusion}
The assignment
\[
(H,\,A,\,K)\;\mapsto\; H\boxtimes_AK
\]
extends to a functor from the category described above to the category of Hilbert spaces and bounded linear maps.
\end{theorem}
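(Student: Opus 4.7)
The plan is to construct, for each morphism $(h,\alpha,k):(H_1,A_1,K_1)\to(H_2,A_2,K_2)$, an explicit bounded linear map
$$\mu_{(h,\alpha,k)}:H_1\boxtimes_{A_1}K_1\to H_2\boxtimes_{A_2}K_2,$$
and then to verify that this assignment preserves identities and composition. The construction combines the $A_1$-linearity of $h$ and $k$ (which suffices to produce a map between the $A_1$-fusions) with the duality data of the finite homomorphism $\alpha$ (which provides the bridge from fusion over $A_1$ to fusion over $A_2$).

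To define $\mu$, I first fix a dual of the bimodule ${}_{A_1}L^2(A_2)_{A_2}$. By Corollary \ref{cor: contragredient bimodule} together with Proposition \ref{prop: unitarity of (6.14)}, the dual admits a canonical unitary identification with ${}_{A_2}L^2(A_2)_{A_1}$ as an $A_2$-$A_1$-bimodule. Under this identification, the co-evaluation $S$ of \eqref{eq:duality maps} becomes, after taking adjoints, a bounded $A_2$-bilinear map
$$\sigma:L^2(A_2)\boxtimes_{A_1}L^2(A_2)\to L^2(A_2),$$
which plays the role of an internal multiplication. The map $\mu$ is then the composition
\begin{equation*}
\begin{split}
H_1\boxtimes_{A_1}K_1 \xrightarrow{h\,\boxtimes\, k}\; &H_2\boxtimes_{A_1}K_2 \;\cong\; H_2\boxtimes_{A_2}L^2(A_2)\boxtimes_{A_1}L^2(A_2)\boxtimes_{A_2}K_2 \\
\xrightarrow{1\,\boxtimes\,\sigma\,\boxtimes\,1}\; &H_2\boxtimes_{A_2}L^2(A_2)\boxtimes_{A_2}K_2\;\cong\; H_2\boxtimes_{A_2}K_2,
\end{split}
\end{equation*}
where the first isomorphism uses the identity property of $L^2(A_2)$ under Connes fusion and the associativity of fusion to insert a copy of $L^2(A_2)$ on either side of $\boxtimes_{A_1}$. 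Every arrow is bounded, so $\mu$ is bounded, and Theorem \ref{thm: well defined up to unique unitary iso} guarantees that $\mu$ is independent of the chosen dual. When $\alpha=\mathrm{id}_A$, the identification of $L^2(A)$ with its own dual makes $\sigma$ the canonical isomorphism $L^2(A)\boxtimes_A L^2(A)\cong L^2(A)$, so $\mu$ reduces to the usual $h\boxtimes k$ and one recovers the standard functoriality in $h$ and $k$.

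Functoriality with respect to identities is then immediate. The substantive check is composition: for composable morphisms $(h_1,\alpha_1,k_1)$ and $(h_2,\alpha_2,k_2)$ I need
$$\mu_{(h_2,\alpha_2,k_2)}\circ\mu_{(h_1,\alpha_1,k_1)}=\mu_{(h_2h_1,\,\alpha_2\alpha_1,\,k_2k_1)}.$$
The essential input is Lemma \ref{lem: product of dualizables}, which gives the co-evaluation of the fused bimodule ${}_{A_1}L^2(A_2)\boxtimes_{A_2}L^2(A_3)_{A_3}$ as $\widehat S=(1\otimes S_1\otimes 1)S_2$ in terms of the individual $S_i$. Via the canonical isomorphism ${}_{A_1}L^2(A_2)\boxtimes_{A_2}L^2(A_3)_{A_3}\cong{}_{A_1}L^2(A_3)_{A_3}$ and Theorem \ref{thm: well defined up to unique unitary iso}, this formula must match the co-evaluation attached to $\alpha_2\alpha_1$, which forces the corresponding $\sigma$-maps to compose correctly; the compatibility at the $L^2$ level is already recorded in Theorem \ref{thm: functoriality of L2}. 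The verification is most transparent in the string-diagram calculus of Section \ref{sec: Preliminaries}: the required equation becomes a planar isotopy between diagrams built from the co-evaluations and the maps $h_i,k_i$, together with the associators of Connes fusion. I expect this final composition check to be the principal technical obstacle, as it requires simultaneously tracking three different fusion algebras and carefully invoking both Lemma \ref{lem: product of dualizables} and the uniqueness of duals to identify the two possible parenthesizations of the duality data.
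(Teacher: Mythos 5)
Your construction is the same as the paper's: the paper also identifies the dual of ${}_{A_1}L^2(A_2)_{A_2}$ unitarily with ${}_{A_2}L^2(A_2)_{A_1}$ (via \eqref{eq: It's L2(f)} and Proposition \ref{prop: unitarity of (6.14)}), and its "concise form" of $h\boxtimes_\alpha k$ is exactly your composite $(1\boxtimes\sigma\boxtimes 1)\circ(h\boxtimes k)$ with $\sigma$ the adjoint of the (identified) coevaluation, with boundedness read off from this factorization and compatibility with composition deduced from Lemma \ref{lem: product of dualizables}. So the proposal is correct and follows essentially the paper's route, differing only in presentation (the paper first spells the map out on elementary tensors of the defining dense subspace before passing to this concise description).
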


\begin{proof}
Given a morphism $(h,\alpha,k):(H_1,A_1,K_1)\to(H_2,A_2,K_2)$ of the above category, 
we describe the induced map $h\boxtimes_\alpha k:H_1\boxtimes_{A_1}K_1\to H_2\boxtimes_{A_2}K_2$.
Recall that the composite \eqref{eq: It's L2(f)} provides an isomorphism $\Phi$ between the dual of the bimodule ${}_{A_1}L^2({A_2})_{A_2}$ and the bimodule ${}_{A_2} L^2({A_2})_{A_1}$.
Let
\[
\begin{split}
R\,\,&:\,\,{}_{A_1}L^2({A_1})_{A_1}\to {}_{A_1}L^2({A_2})\boxtimes_{A_2} \overline{L^2({A_2})}_{A_1}\xrightarrow{1\otimes \Phi} {}_{A_1}L^2({A_2})\boxtimes_{A_2}
L^2({A_2})_{A_1}\\
S\,\,&:\,\,{}_{A_2}L^2({A_2})_{A_2}\to {}_{A_2}\overline{L^2({A_2})}\boxtimes_{A_1} L^2({A_2})_{A_2}\xrightarrow{\Phi \otimes 1} {}_{A_2}L^2({A_2})\boxtimes_{A_1} L^2({A_2})_{A_2}
\end{split}
\]
denote the composition of the normalized duality maps \eqref{eq:duality maps} with the aforementioned isomorphism.

We define the image of an element 
\begin{equation*}
\phi_1\otimes\xi_1\otimes \psi_1 \in \hom_{A_1}\!\big(L^2A_1,H_1\big)\otimes L^2A_1\otimes \hom_{A_1}\!\big(L^2A_1,K_1\big)
\end{equation*}
under the map $h\boxtimes_\alpha k$ to be $\phi_2\otimes\xi_2\otimes \psi_2$,
where $\phi_2\in\hom_{A_2}(L^2A_2,H_2)$ and $\psi_2\in\hom_{A_2}(L^2A_2,K_2)$ are given by
\[
\begin{split}
\phi_2:\,L^2A_2\cong L^2A_1\underset{A_1}\boxtimes L^2A_2\xrightarrow{\phi_1\otimes 1} &H_1\underset{A_1}\boxtimes L^2A_2
\xrightarrow{h\otimes 1} H_2\underset{A_1}\boxtimes L^2A_2\\&\cong
H_2\underset{A_2}\boxtimes L^2A_2\underset{A_1}\boxtimes L^2A_2
\xrightarrow{1\otimes S^*\!\!} H_2\underset{A_2}\boxtimes L^2A_2\cong H_2\phantom{\,,}\end{split}
\]
\[
\begin{split}
\psi_2:\,L^2A_2\cong L^2A_2\underset{A_1}\boxtimes L^2A_1\xrightarrow{1\otimes\psi_1} &L^2A_2\underset{A_1}\boxtimes K_1
\xrightarrow{1\otimes k} L^2A_2\underset{A_1}\boxtimes K_2\\&\cong
L^2A_2\underset{A_1}\boxtimes L^2A_2\underset{A_2}\boxtimes K_2
\xrightarrow{S^*\!\otimes 1} L^2A_2\underset{A_2}\boxtimes K_2\cong K_2\,,
\end{split}
\]
and $\xi_2:=R(\xi_1)\in L^2(A_2)\boxtimes_{A_2} L^2(A_2) \cong L^2(A_2)$.
Note that $\xi_2=L^2(\alpha)(\xi_1)$ by diagram \eqref{eq: iso between dual of A--L2B--B and B--L2B--A}.
Graphically, the above map sends
\[
\def\colA{black!10}
\def\colB{black!30}
\tikzmath[scale=\squarescale]{
\coordinate	(lower left corner) at (-10,-16);
\coordinate	(upper right corner) at (2,16);
\coordinate	(upper left corner) at ($(upper right corner -| lower left corner) + (-5,0)$); \coordinate(lower right corner) at (upper right corner |- lower left corner);
\draw 		(-9,0) node (a) {$\phantom{O.}$};
\draw 		(a) node {$\phi_1$};
\fill[\colA] (lower left corner) -- (a.south east -| lower left corner)
-- (a.south east) -- (a.north east) -- (a.north east -| upper left corner) {[rounded corners=10] --  (upper left corner) -- (upper right corner) -- (lower right corner)} -- cycle;
\draw (lower left corner) node[below,yshift=2,xshift=1]{$\scriptstyle H_1$} -- (a.south east -| lower left corner)
(a.south east -| upper left corner) -- (a.south east) -- (a.north east) -- (a.north east -| upper left corner);
	}
\otimes
\tikzmath[scale=\squarescale]{
\coordinate	(lower left corner) at (-7,-16);
\coordinate	(upper right corner) at (7,16);
\coordinate	(upper left corner) at (upper right corner -| lower left corner); \coordinate(lower right corner) at (upper right corner |- lower left corner);
\useasboundingbox ($(lower left corner)-(0,5.4)$) rectangle (upper right corner);
\def\v{5}
\fill[\colA, rounded corners=10] (lower left corner) {[sharp corners] --  ($(upper left corner) - (0,\v)$) -- ($(upper right corner) - (0,\v)$)} -- (lower right corner) -- cycle;
\draw ($(upper left corner) - (0,\v)$) --node[above]{$\xi_1$} ($(upper right corner) - (0,\v)$);
	}
\otimes
\tikzmath[scale=\squarescale]{
\coordinate	(lower left corner) at (-2,-16);
\coordinate	(upper right corner) at (15,16);
\coordinate	(upper left corner) at (upper right corner -| lower left corner); \coordinate(lower right corner) at ($(upper right corner |- lower left corner)-(5,0)$);
\draw 		(9,0) node (a) {$\phantom{O.}$};
\draw 		(a) node {$\psi_1$};
\fill[\colA] (lower right corner) -- (a.south west -| lower right corner)
-- (a.south west) -- (a.north west) -- (a.north west -| upper right corner) {[rounded corners=10] --  (upper right corner) -- (upper left corner) -- (lower left corner)} -- cycle;
\draw (lower right corner)node[below,yshift=2,xshift=1]{$\scriptstyle K_1$} -- (a.south west -| lower right corner)(a.south west -| upper right corner) -- (a.south west) -- (a.north west) -- (a.north west -| upper right corner);
	}
\quad\,\,\text{to}\,\,\quad
\tikzmath[scale=\squarescale]{
\def \roundedsmall {8}
\coordinate	(lower left corner) at (-12,-16);
\coordinate	(upper right corner) at (6,16);
\coordinate	(lower right corner of middle line) at (0,-12);
\coordinate	(upper left corner) at ($(upper right corner -| lower left corner)-(3,0)$); \coordinate(lower right corner) at (upper right corner |- lower left corner);
\draw 		(-9,6) node (a) {$\phantom{O.}$};
\draw 		(a) node {$\phi_1$};
\draw 		(-9,-4) node (b) {$\phantom{O.}$};
\draw 		(-10,-4) node {$h$};
\fill[\colB] (lower left corner) -- 
(b.south east -| lower left corner) -- (b.south)[rounded corners=\roundedsmall] -- (b.south |- lower right corner of middle line) -- (lower right corner of middle line)[sharp corners] -- 
(lower right corner of middle line |- upper left corner)[rounded corners=10] --
(upper right corner) -- (lower right corner)[sharp corners] -- cycle;
\fill[\colA](b.south) -- (b.south east) -- (b.north east) -- ($(b.north east -| lower left corner)+(2,0)$) --
($(a.south east -| lower left corner)+(2,0)$) -- (a.south east) -- (a.north east) -- (a.north east -| upper left corner)[rounded corners=10] -- 
(upper left corner)[sharp corners] -- (lower right corner of middle line |- upper left corner)[rounded corners=\roundedsmall]
-- (lower right corner of middle line) -- (b.south |- lower right corner of middle line)[sharp corners] -- cycle;
\draw (lower left corner)node[below,yshift=2,xshift=1]{$\scriptstyle H_2$} -- 
(b.south east -| lower left corner)(b.south east -| upper left corner) -- (b.south east) -- (b.north east) -- (b.north east -| upper left corner)
($(b.north east -| lower left corner)+(2,0)$) -- 
($(a.south east -| lower left corner)+(2,0)$)(a.south east -| upper left corner) -- (a.south east) -- (a.north east) -- (a.north east -| upper left corner);
\draw[rounded corners=\roundedsmall] (b.south) -- (b.south |- lower right corner of middle line) -- (lower right corner of middle line) -- 
(lower right corner of middle line |- upper left corner);
	}
\otimes
\tikzmath[scale=\squarescale]{
\coordinate	(lower left corner) at (-10,-16);
\coordinate	(upper right corner) at (10,16);
\coordinate	(upper left corner) at (upper right corner -| lower left corner); \coordinate(lower right corner) at (upper right corner |- lower left corner);
\useasboundingbox ($(lower left corner)-(0,5.3)$) rectangle (upper right corner);
\def\v{5}
\fill[\colA, rounded corners=10] (lower left corner) {[sharp corners] --  ($(upper left corner) - (0,\v)$) -- ($(upper right corner) - (0,\v)$)} -- (lower right corner) -- cycle;
\draw ($(upper left corner) - (0,\v)$) --node[above]{$\xi_1$} ($(upper right corner) - (0,\v)$);
\filldraw[fill=\colB, rounded corners=8] (-4,-16) -- (-4,0) -- (4,0) -- (4,-16);
	}
\otimes
\tikzmath[scale=\squarescale]{
\def \roundedsmall {8}
\coordinate	(lower left corner) at (-6,-16);
\coordinate	(upper right corner) at (15,16);
\coordinate	(lower left corner of middle line) at (0,-12);
\coordinate	(upper left corner) at (upper right corner -| lower left corner); \coordinate(lower right corner) at ($(upper right corner |- lower left corner)-(3,0)$);
\draw 		(9,6) node (a) {$\phantom{O.}$};
\draw 		(a) node {$\psi_1$};
\draw 		(9,-4) node (b) {$\phantom{O.}$};
\draw 		(10,-4) node {$k$};
\fill[\colB] (lower right corner) -- 
(b.south west -| lower right corner) -- (b.south)[rounded corners=\roundedsmall] -- (b.south |- lower left corner of middle line) -- (lower left corner of middle line)[sharp corners] -- 
(lower left corner of middle line |- upper right corner)[rounded corners=10] --
(upper left corner) -- (lower left corner)[sharp corners] -- cycle;
\fill[\colA](b.south) -- (b.south west) -- (b.north west) -- ($(b.north west -| lower right corner)-(2,0)$) --
($(a.south west -| lower right corner)-(2,0)$) -- (a.south west) -- (a.north west) -- (a.north west -| upper right corner)
[rounded corners=10] -- 
(upper right corner)[sharp corners] -- (lower left corner of middle line |- upper right corner)[rounded corners=\roundedsmall]
-- (lower left corner of middle line) -- (b.south |- lower left corner of middle line)[sharp corners] -- cycle;
\draw (lower right corner)node[below,yshift=2,xshift=1]{$\scriptstyle K_2$} --
(b.south west -| lower right corner)(b.south west -| upper right corner) -- (b.south west) -- (b.north west) -- (b.north west -| upper right corner)($(b.north west -| lower right corner)-(2,0)$) -- 
($(a.south west -| lower right corner)-(2,0)$)(a.south west -| upper right corner) -- (a.south west) -- (a.north west) -- (a.north west -| upper right corner);
\draw[rounded corners=\roundedsmall] (b.south) -- (b.south |- lower left corner of middle line) -- (lower left corner of middle line) -- 
(lower left corner of middle line |- upper right corner);
	}\,,
\]
and is therefore given by
\begin{equation}\label{eq: h boxtimes k in pictures}
\def\colA{black!10}
\def\colB{black!30}
h\boxtimes_\alpha k\,\,:\,\,\tikzmath[scale=\squarescale]{
\coordinate	(lower left corner) at (-9,-16);
\coordinate	(upper right corner) at (14,18);
\coordinate	(upper left corner) at ($(upper right corner -| lower left corner) + (-5,0)$); 
\coordinate(lower right corner) at ($(upper right corner |- lower left corner)-(5,0)$);
\draw 		(-8,0) node (a) {$\phantom{O.}$};
\draw 		(a) node {$\phi_1$};
\draw 		(8,0) node (b) {$\phantom{O.}$};
\draw 		(b) node {$\psi_1$};
\def\v{5}
\fill[\colA] (lower left corner) -- (a.south east -| lower left corner)
-- (a.south east) -- (a.north east) -- (a.north east -| upper left corner) 
--  ($(upper left corner) - (0,\v)$) -- ($(upper right corner) - (0,\v)$) 
-- (b.north west -| upper right corner) 
-- (b.north west) -- (b.south west) -- (b.south west -| lower right corner) -- (lower right corner) -- cycle;
\draw (lower left corner) node[below,yshift=2,xshift=1]{$\scriptstyle H_1$} -- (a.south east -| lower left corner)
(a.south east -| upper left corner) -- (a.south east) -- (a.north east) -- (a.north east -| upper left corner);
\draw (lower right corner)node[below,yshift=2,xshift=1]{$\scriptstyle K_1$} -- (b.south west -| lower right corner)(b.south west -| upper right corner) -- (b.south west) -- (b.north west) -- (b.north west -| upper right corner);
\draw ($(upper left corner) - (0,\v)$) --node[above]{$\xi_1$} ($(upper right corner) - (0,\v)$);
	}
\,\,\,\,\mapsto\,\,\,\,
\tikzmath[scale=\squarescale]{
\def \roundedsmall {8}
\coordinate	(lower left corner) at (-15,-16);
\coordinate	(upper right corner) at (18,18);
\coordinate	(center of middle line) at (0,-13);
\coordinate	(upper left corner) at ($(upper right corner -| lower left corner)-(3,0)$);
\coordinate	(lower right corner) at ($(upper right corner |- lower left corner)-(3,0)$);
\draw 		(-12,5) node (a) {$\phantom{O.}$};
\draw 		(a) node {$\phi_1$};
\draw 		(-12,-5) node (b) {$\phantom{O.}$};
\draw 		(-13,-5) node {$h$};
\draw 		(12,5) node (c) {$\phantom{O.}$};
\draw 		(12,-5) node (d) {$\phantom{O.}$};
\draw 		(13,-5) node {$k$};
\draw 		(c) node {$\psi_1$};
\def\v{5}

\fill[\colB] (lower left corner) -- (b.south east -| lower left corner) -- (b.south)[rounded corners=\roundedsmall] -- (b.south |- center of middle line) --
($(center of middle line) + (-4,0)$) --
($(center of middle line) + (-4,24)$) --
($(center of middle line) + (4,24)$) --
($(center of middle line) + (4,0)$) --
(d.south |- center of middle line) [sharp corners] -- (d.south) -- 
(d.south west -| lower right corner) --
(lower right corner) -- cycle; 
\fill[\colA](b.south) -- (b.south east) -- (b.north east) -- ($(b.north east -| lower left corner)+(2,0)$) --
($(a.south east -| lower left corner)+(2,0)$) -- (a.south east) -- (a.north east) -- (a.north east -| upper left corner)
--  ($(upper left corner) - (0,\v)$) -- ($(upper right corner) - (0,\v)$)
-- (c.north west -| upper right corner) -- (c.north west)  -- (c.south west) --
($(c.south west -| lower right corner)-(2,0)$) -- ($(d.north west -| lower right corner)-(2,0)$)
-- (d.north west) -- (d.south west) -- (d.south)
[rounded corners=\roundedsmall] -- (d.south |- center of middle line) -- 
($(center of middle line) + (4,0)$) --
($(center of middle line) + (4,24)$) --
($(center of middle line) + (-4,24)$) --
($(center of middle line) + (-4,0)$) --
(b.south |- center of middle line)[sharp corners] -- cycle;
\draw[rounded corners=\roundedsmall] (b.south) -- (b.south |- center of middle line) --
($(center of middle line) + (-4,0)$) --
($(center of middle line) + (-4,24)$) --
($(center of middle line) + (4,24)$) --
($(center of middle line) + (4,0)$)
-- (d.south |- center of middle line) [sharp corners] -- (d.south);
\draw ($(upper left corner) - (0,\v)$) --node[above]{$\xi_1$} ($(upper right corner) - (0,\v)$);
\draw (lower left corner)node[below,yshift=2,xshift=1]{$\scriptstyle H_2$} -- 
(b.south east -| lower left corner)
(b.south east -| upper left corner) -- (b.south east) -- (b.north east) -- (b.north east -| upper left corner)
($(b.north east -| lower left corner)+(2,0)$) --
($(a.south east -| lower left corner)+(2,0)$)(a.south east -| upper left corner) -- (a.south east) -- (a.north east) -- (a.north east -| upper left corner);
\draw (lower right corner)node[below,yshift=2,xshift=1]{$\scriptstyle K_2$} --
(d.south west -| lower right corner)(d.south west -| upper right corner) -- (d.south west) -- (d.north west) -- (d.north west -| upper right corner)($(d.north west -| lower right corner)-(2,0)$) -- 
($(c.south west -| lower right corner)-(2,0)$)(c.south west -| upper right corner) -- (c.south west) -- (c.north west) -- (c.north west -| upper right corner);
	}
\,=\,
\tikzmath[scale=\squarescale]{
\def \roundedsmall {11}
\coordinate	(lower left corner) at (-11,-16);
\coordinate	(upper right corner) at (14,18);
\coordinate	(center of middle line) at (0,-13);
\coordinate	(upper left corner) at ($(upper right corner -| lower left corner)-(3,0)$);
\coordinate	(lower right corner) at ($(upper right corner |- lower left corner)-(3,0)$);
\draw 		(-8,6) node (a) {$\phantom{O.}$};
\draw 		(a) node {$\phi_1$};
\draw 		(-8,-4) node (b) {$\phantom{O.}$};
\draw 		(-9,-4) node {$h$};
\draw 		(8,6) node (c) {$\phantom{O.}$};
\draw 		(c) node {$\psi_1$};
\draw 		(8,-4) node (d) {$\phantom{O.}$};
\draw 		(9,-4) node {$k$};
\def\v{5}

\fill[\colB] (lower left corner) -- (b.south east -| lower left corner) -- (b.south)[rounded corners=\roundedsmall] -- (b.south |- center of middle line)
 -- (d.south |- center of middle line) [sharp corners] -- (d.south) -- 
(d.south west -| lower right corner) --
(lower right corner) -- cycle; 
\fill[\colA](b.south) -- (b.south east) -- (b.north east) -- ($(b.north east -| lower left corner)+(2,0)$) --
($(a.south east -| lower left corner)+(2,0)$) -- (a.south east) -- (a.north east) -- (a.north east -| upper left corner)
--  ($(upper left corner) - (0,\v)$) -- ($(upper right corner) - (0,\v)$)
-- (c.north west -| upper right corner) -- (c.north west)  -- (c.south west) --
($(c.south west -| lower right corner)-(2,0)$) -- ($(d.north west -| lower right corner)-(2,0)$)
-- (d.north west) -- (d.south west) -- (d.south)
[rounded corners=\roundedsmall] -- (d.south |- center of middle line) -- (b.south |- center of middle line)[sharp corners] -- cycle;
\draw[rounded corners=\roundedsmall] (b.south) -- (b.south |- center of middle line) -- (d.south |- center of middle line) [sharp corners] -- (d.south);
\draw ($(upper left corner) - (0,\v)$) --node[above]{$\xi_1$} ($(upper right corner) - (0,\v)$);
\draw (lower left corner)node[below,yshift=2,xshift=1]{$\scriptstyle H_2$} -- 
(b.south east -| lower left corner)
(b.south east -| upper left corner) -- (b.south east) -- (b.north east) -- (b.north east -| upper left corner)
($(b.north east -| lower left corner)+(2,0)$) -- 
($(a.south east -| lower left corner)+(2,0)$)(a.south east -| upper left corner) -- (a.south east) -- (a.north east) -- (a.north east -| upper left corner);
\draw (lower right corner)node[below,yshift=2,xshift=1]{$\scriptstyle K_2$} --
(d.south west -| lower right corner)(d.south west -| upper right corner) -- (d.south west) -- (d.north west) -- (d.north west -| upper right corner)($(d.north west -| lower right corner)-(2,0)$) -- 
($(c.south west -| lower right corner)-(2,0)$)(c.south west -| upper right corner) -- (c.south west) -- (c.north west) -- (c.north west -| upper right corner);
	}\,.
\end{equation}
Here, the two shades correspond to the algebras $A_1$ and $A_2$, the unlabeled line between those shades corresponds to the bimodule 
${}_{A_1}L^2(A_2)_{A_2}$ and its dual bimodule ${}_{A_2}L^2(A_2)_{A_1}$, and the isomorphism \eqref{eq: It's L2(f)} has been suppressed from the notation.
Abstracting out $\xi_1$, $\phi_1$, $\psi_1$ from \eqref{eq: h boxtimes k in pictures}, we can rewrite $h\boxtimes_\alpha k$ in a more concise form, as
\[
\def\colA{black!10}
\def\colB{black!30}
h\boxtimes_\alpha k\quad=\quad
\tikzmath[scale=\squarescale]{
\coordinate	(lower left corner) at (-8,-12);
\coordinate	(upper right corner) at (8,10);
\coordinate	(upper left corner) at (upper right corner -| lower left corner); 
\coordinate(lower right corner) at (upper right corner |- lower left corner);
\draw 		(-8,0) node[inner xsep=6] (a) {$h$};
\draw 		(8,0) node[inner xsep=6] (b) {$k$};
\def\v{5}
\fill[\colA] (lower left corner) -- (a.south east -| lower left corner)
-- (a.south east) -- (a.north east) -- (a.north east -| upper left corner) 
--  (upper left corner) -- (upper right corner) 
-- (b.north west -| upper right corner) 
-- (b.north west) -- (b.south west) -- (b.south west -| lower right corner) -- (lower right corner) -- cycle;
\fill[\colB] (lower left corner) -- (a.south east -| lower left corner)
-- ($(a.south east)-(2,0)$) [rounded corners = 10]-- ($(a.south east)-(2,5)$) -- ($(b.south west)+(2,-5)$) [sharp corners]-- ($(b.south west)+(2,0)$) 
-- (b.south west -| lower right corner) -- (lower right corner) -- cycle;
\draw ($(a.south east)-(2,0)$) [rounded corners = 10]-- ($(a.south east)-(2,5)$) -- ($(b.south west)+(2,-5)$) [sharp corners]-- ($(b.south west)+(2,0)$);
\draw (lower left corner) node[below,yshift=2,xshift=1]{$\scriptstyle H_2$} -- (a.south east -| lower left corner)
($(a.south east -| lower left corner)-(5,0)$) -- (a.south east) -- (a.north east) -- ($(a.north east -| upper left corner)-(5,0)$)
(a.north east -| upper left corner) -- (upper left corner) node[above,yshift=-2,xshift=1]{$\scriptstyle H_1$};
\draw (lower right corner)node[below,yshift=2,xshift=1]{$\scriptstyle K_2$} -- (b.south west -| lower right corner)
($(b.south west -| upper right corner)+(5,0)$) -- (b.south west) -- (b.north west) -- ($(b.north west -| upper right corner)+(5,0)$)
(b.north west -| upper right corner) -- (upper right corner) node[above,yshift=-2,xshift=1]{$\scriptstyle K_1$};
	}\,.
\]
The latter description also makes it clear that $h\boxtimes_\alpha k$ is bounded.
Compatibility with composition follows from Lemma \ref{lem: product of dualizables}.
\end{proof}

We record the following lemma for future use.
Once again, we make implicit use of the identification \eqref{eq: It's L2(f)} and of its basic property \eqref{eq: iso between dual of A--L2B--B and B--L2B--A}.

\begin{lemma}
Let $f:A\to B$ be a finite map between von Neumann algebras with finite-dimensional center.
Then the map $B\to \hom(L^2A_A,L^2B_A)$ given by
\[
\def\colA{black!10}
\def\colB{black!30}
b\,\,\mapsto\,\, (b\otimes 1)L^2(f) \,=\, 
\tikzmath[scale=.06]{
\def \h {4.8}
\def \midcurv {6}
\coordinate	(lower left corner) at (-8,-8);
\coordinate	(upper right corner) at (9,11);
\coordinate	(lower right corner) at (upper right corner |- lower left corner);
\coordinate	(upper left corner) at (lower left corner |- upper right corner);
\clip[rounded corners=9] (upper left corner) rectangle (lower right corner);
\draw 		(-3.1,0) node[inner xsep=3.5, inner ysep=2.1] (a) {$b$};
\coordinate	(middle right) at ($(a.north east)+(3.8,3.5)$);
\fill[\colB] ($(a.north east -| upper left corner)+(\h,0)$) -- (a.north east) -- (a.south east) -- (a.south east -| lower left corner) --  (lower left corner) -- (lower left corner -| middle right) {[rounded corners=\midcurv] -- (middle right) -- ($(middle right -| upper left corner)+(\h,0)$)} -- cycle;
\fill[\colA] (upper left corner) -- (upper right corner) -- (lower right corner) [sharp corners]-- (lower left corner -| middle right) {[rounded corners=\midcurv] -- (middle right) -- ($(middle right -| upper left corner)+(\h,0)$)}
-- ($(a.north east -| upper left corner)+(\h,0)$) -- (a.north east -| upper left corner) -- cycle; 
\draw {[rounded corners=\midcurv](lower left corner -| middle right) -- (middle right) -- ($(middle right -| upper left corner)+(\h,0)$) -- ($(a.north east -| upper left corner)+(\h,0)$)}
(a.north east -| lower left corner) -- (a.north east) -- (a.south east) -- (a.south east -| lower left corner);
} 
\]
is an isomorphism.
\end{lemma}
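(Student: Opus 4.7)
The plan is to identify the map $\Psi\colon b\mapsto (b\otimes 1)L^2(f)$ with the standard adjunction isomorphism arising from the duality $({}_B\overline{L^2B}_A,R,S)$ of the bimodule ${}_AL^2B_B$. As a formal consequence of the triangle identities \eqref{eq: duality equations}, this duality produces, for any right $A$-module $X$ and right $B$-module $Y$, a natural bijection
\[
\hom\bigl((X\boxtimes_A L^2B)_B,\,Y_B\bigr)\;\cong\;\hom\bigl(X_A,\,(Y\boxtimes_B\overline{L^2B})_A\bigr),
\]
in which a right $B$-linear $\alpha$ on the left corresponds to $\beta:=(\alpha\otimes 1_{\overline{L^2B}})\circ(1_X\otimes R)$ on the right. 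Both sides carry the operator norm and this correspondence is bounded in both directions, so it is in fact a topological isomorphism of Banach spaces.

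I would then specialize to $X=L^2A$ and $Y=L^2B$ and collapse both sides to concrete spaces. On the left the canonical isomorphism $L^2A\boxtimes_A L^2B\cong L^2B$ of $A$-$B$-bimodules identifies the hom-space with $\hom(L^2B_B,L^2B_B)=B$, where an element $b\in B$ corresponds to left multiplication $\alpha_b$. On the right the isomorphism $\Phi\colon {}_B\overline{L^2B}_A\cong {}_BL^2B_A$ constructed just before Proposition~\ref{prop: unitarity of (6.14)} identifies $L^2B\boxtimes_B\overline{L^2B}$ with $L^2B\boxtimes_B L^2B\cong L^2B$ as $A$-$A$-bimodules, so the hom-space becomes $\hom(L^2A_A,L^2B_A)$. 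Composing these two identifications with the adjunction above yields a natural isomorphism $B\cong\hom(L^2A_A,L^2B_A)$.

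It remains to check that this isomorphism coincides with the map in the statement. Unwinding the adjunction, the partner of $\alpha_b$ is $\beta_b=(b\otimes 1)\circ R\colon L^2A\to L^2B\boxtimes_B\overline{L^2B}$. Since left multiplication by $b$ acts on the first tensor factor while $\Phi$ acts on the second, the two commute, so post-composing with $1\otimes\Phi$ gives
\[
(1\otimes\Phi)\circ\beta_b\;=\;(b\otimes 1)\circ(1\otimes\Phi)\circ R\;=\;(b\otimes 1)\circ L^2(f)\;=\;\Psi(b),
\]
the middle equality being exactly the commutativity of diagram \eqref{eq: iso between dual of A--L2B--B and B--L2B--A}. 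The main bookkeeping subtlety will be in tracking how the Connes-fusion associators and the canonical identification $L^2A\boxtimes_A L^2B\cong L^2B$ interact with the duality unit $R$; the string-diagram calculus introduced in Section~\ref{sec: Preliminaries} is designed precisely to make such a verification essentially automatic.
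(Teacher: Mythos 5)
Your argument is correct and is essentially the paper's proof in more formal dress: the paper simply writes down the inverse map directly, sending $x\in\hom(L^2A_A,L^2B_A)$ to the string-bent composite built from $S^*$ (an element of $\hom(L^2B_B,L^2B_B)\cong B$), which is exactly the inverse of your adjunction bijection after the identifications you spell out. Your explicit use of $\Phi$ and of diagram \eqref{eq: iso between dual of A--L2B--B and B--L2B--A} matches the paper's remark that these identifications are being used implicitly, so no gap remains.
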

\begin{proof}
The inverse map is\,
$
\def\colA{black!10}
\def\colB{black!30}
\def \h {7}
\tikzmath[scale=.06]{
\coordinate	(upper left corner) at (-8,9);
\coordinate	(lower right corner) at (8,-9);
\coordinate	(upper right corner) at (lower right corner |- upper left corner);
\coordinate	(lower left corner) at (upper left corner |- lower right corner);
\clip[rounded corners=9] (upper left corner) rectangle (lower right corner);
\draw 		(-1,0) node (a) {$x$};
\fill[\colA] ($(lower left corner)+(\h,0)$) -- ($(a.south east -| lower left corner)+(\h,0)$)
-- (a.south east) -- (a.north east) -- (a.north east -| upper left corner) --  (upper left corner) -- (upper right corner) -- (lower right corner) -- cycle;
\fill[\colB] (lower left corner) -- ($(lower left corner)+(\h,0)$) -- ($(a.south east -| lower left corner)+(\h,0)$) -- (a.south east -| lower left corner) -- cycle; 
\draw ($(lower left corner)+(\h,0)$) -- ($(a.south east -| lower left corner)+(\h,0)$)
(a.south east -| upper left corner) -- (a.south east) -- (a.north east) -- (a.north east -| upper left corner);
} 
\,\mapsto\,
\tikzmath[scale=.06]{
\def \h {4.4}
\def \midcurv {6}
\coordinate	(upper left corner) at (-8,8);
\coordinate	(lower right corner) at (8,-10);
\coordinate	(upper right corner) at (lower right corner |- upper left corner);
\coordinate	(lower left corner) at (upper left corner |- lower right corner);
\clip[rounded corners=9] (upper left corner) rectangle (lower right corner);
\draw 		(-3.2,0) node (a) {$x$};
\coordinate	(middle right) at ($(a.south east)+(3.2,-3.5)$);
\fill[\colA] ($(a.south east -| lower left corner)+(\h,0)$) -- (a.south east) -- (a.north east) -- (a.north east -| upper left corner) --  (upper left corner) -- (upper left corner -| middle right) {[rounded corners=\midcurv] -- (middle right)
-- ($(middle right -| lower left corner)+(\h,0)$)} -- cycle;
\fill[\colB] (lower left corner) -- (lower right corner) -- (upper right corner) [sharp corners]-- (upper left corner -| middle right) {[rounded corners=\midcurv] -- (middle right) -- ($(middle right -| lower left corner)+(\h,0)$)}
-- ($(a.south east -| lower left corner)+(\h,0)$) -- (a.south east -| lower left corner) [rounded corners=9] -- cycle; 
\draw {[rounded corners=\midcurv](upper left corner -| middle right) -- (middle right) -- ($(middle right -| lower left corner)+(\h,0)$) -- ($(a.south east -| lower left corner)+(\h,0)$)}
(a.south east -| upper left corner) -- (a.south east) -- (a.north east) -- (a.north east -| upper left corner);
} 
\,\in\, \hom(L^2B_B,L^2B_B) \cong B$.
\end{proof}

\section{Index via conditional expectations}\label{sec: Pimsner-Popa inequality}

In this section, we recall the work of Pimsner and Popa on conditional expectations, and use it to establish the equivalence 
between the definition of index via statistical dimension (Definitions \ref{def: statistical dimension} and \ref{def: min index}) and other
notions of index that exist in the literature \cite{Hiai(Minimizing-indices), Kosaki(Extension-of-Jones-index-to-arbitrary-factors), Kosaki(Type-III-factors-and-index-theory), Longo(Index-of-subfactors-and-statistics-of-quantum-fields-I), Pimsner-Popa}.
The basic inequality \eqref{eq:Pimsner-Popa inequality} was introduced in 
\cite{Pimsner-Popa} for type \emph{II} von Neumann algebras,
and later in \cite{Kosaki-Longo(A-remark-on-the-minimal-index), Longo(Index-of-subfactors-and-statistics-of-quantum-fields-I), Longo(Index-of-subfactors-and-statistics-of-quantum-fields-II)} 
for arbitrary von Neumann algebras.
Further references include \cite[section 1.1]{Popa(Classification-of-subfactors-and-their-endomorphisms)} and \cite[section 3.4]{Kosaki(Type-III-factors-and-index-theory)}.

Given a subfactor $N\subset M$ \footnote{Note that in this section, we usually (but not always) use the letters $N$ and $M$ to refer to factors, as is traditional, and use the letters $A$ and $B$ to refer to more general von Neumann algebras.}, a completely positive normal map $E:M\to N$ is called a \emph{conditional expectation} if $E(1)=1$ and $E(axb)=aE(x)b$ for all $x\in M$ and $a,b\in N$.  It may happen that, for some $\lambda$, the conditional expectation satisfies the \emph{Pimsner--Popa inequality}: 
\[ E(x) \ge \lambda^{-1}x,\quad\forall x\in M_+\, . \]
Following \cite{Longo(Index-of-subfactors-and-statistics-of-quantum-fields-I)}, the index of the conditional expectation is the smallest possible such~$\lambda$:
\begin{equation}\label{Ind(E)}
\Ind(E):=\inf\big\{\lambda\,\big|\,E(x)\ge\lambda^{-1}x,\,\,\forall x\in M_+\big\}.
\end{equation}
We call a conditional expectation finite if its index is finite.
For subfactors admitting finite conditional expectations, 
Longo proves \cite[Theorem 5.5]{Longo(Index-of-subfactors-and-statistics-of-quantum-fields-I)} that there exists a unique conditional expectation minimizing $\Ind(E)$ --- see also \cite{Hiai(Minimizing-indices), Kosaki-Longo(A-remark-on-the-minimal-index)}.
For a general subfactor, he defines the \emph{minimal index} to be
\begin{equation}\label{eq: [M:N] = inf_E IndE}
\Ind(N,M) := \inf_E\, \Ind(E) = \inf_E\,\, \inf\big\{\lambda\,\big|\,E(x)\ge\lambda^{-1}x,\,\,\forall x\in M_+\big\},
\end{equation}
where the infimum runs over all conditional expectations $E:M\to N$.
We will show later, in Corollary~\ref{cor: justification of the name minimal}, that the index (Definition \ref{def: min index})
agrees with the minimal index if $N$ and $M$ are infinite-dimensional --- see Warning~\ref{warn: Longo index is bad}.

If the subfactor has finite  index, then an example of a conditional expectation is given by $[M: N]^{-\frac12}$ times the map \eqref{eq: minimal conditional expectation}:
\[
\def\colA{black!10}
\def\colB{black!30}
E_0(b)\;\;:=\;\;[M: N]^{-\frac12}\cdot\,
\tikzmath[scale=\squarescale]
	{\fill[rounded corners=10, fill=\colA] (-14,-15) rectangle (13,15);
	\draw[rounded corners=10, fill=\colB] (-5,-9) rectangle (5,9);
	\coordinate (x) at (-14,0); \coordinate (x') at (-14.1,0);
	\draw 	(-5,0) node(a)[fill=white]{$b$};
	\fill[fill=white] (a.north east -| x') -- (a.north east) -- (a.south east) -- (a.south east -| x');
	\draw (a.north east -| x) -- (a.north east) -- (a.south east) -- (a.south east -| x);
	\node(a) at (-8,0) [inner sep=0]{$b$};
	}\,\,
=\,\,
\big(\tikzmath[scale=.085]{ \fill[fill=\colA, rounded corners=3.2] (-3,-2.5) rectangle (3,2.5); \filldraw[fill=\colB] (0,0) circle (1.4);} 
\big)^{-1}\!\cdot\,
\tikzmath[scale=\squarescale]
	{\fill[rounded corners=10, fill=\colA] (-14,-15) rectangle (13,15);
	\draw[rounded corners=10, fill=\colB] (-5,-9) rectangle (5,9);
	\coordinate (x) at (-14,0); \coordinate (x') at (-14.1,0);
	\draw 	(-5,0) node(a)[fill=white]{$b$};
	\fill[fill=white] (a.north east -| x') -- (a.north east) -- (a.south east) -- (a.south east -| x');
	\draw (a.north east -| x) -- (a.north east) -- (a.south east) -- (a.south east -| x);
	\node(a) at (-8,0) [inner sep=0]{$b$};
	}\,.
\] 
We call $E_0$ the \emph{minimal conditional expectation}.
We will show later, in Proposition~\ref{lem: Extremality properties of E_0}, that the minimal conditional expectation minimizes $\Ind(E)$, thus justifying its name.

We begin by observing that the index of a subfactor provides an upper bound on the 
index of the minimal conditional expectation:
\begin{proposition}
The minimal conditional expectation $E_0$ satisfies the inequality
\begin{equation}\label{eq:Pimsner-Popa inequality}
\qquad E_0(x) \ge [M: N]^{-1} x\quad\quad\forall x\in M_+.
\end{equation}
In other words, $\Ind(E_0)\le[M:N]$.
\end{proposition}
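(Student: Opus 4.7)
The plan is to prove the operator inequality $[M{:}N]\cdot E_0(x)\ge x$ on $L^2M$ for every $x\in M_+$ by applying the Cauchy--Schwarz inequality to two distinct embeddings of $L^2M$ into the triple Connes fusion
\[
H:=L^2M\boxtimes_M\overline{L^2M}\boxtimes_N L^2M.
\]
Since every positive element of $M$ has the form $y^*y$, it suffices to prove $d^2\,E_0(y^*y)\ge y^*y$ in $M$, where $d:=[M{:}N]^{1/2}=\dim({}_NL^2M_M)$.

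First I would introduce the two embeddings. Using the canonical identification $L^2M\cong L^2N\boxtimes_N L^2M$, set $v:=R\otimes 1_{L^2M}\colon L^2M\to H$. Using $L^2M\cong L^2M\boxtimes_M L^2M$, set $w:=1_{L^2M}\otimes S\colon L^2M\to H$. The first duality equation in \eqref{eq: duality equations} gives $v^*w=1_{L^2M}$, and the normalizations $R^*R=S^*S=d$ give $v^*v=w^*w=d\cdot 1_{L^2M}$.

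Next, for $y\in M$ let $T:=y\otimes 1\otimes 1$ denote left multiplication by $y$ on the first tensor factor of $H$. I would then verify the three identities
\[
v^*T^*Tv\,=\,d\,E_0(y^*y),\qquad w^*T^*Tw\,=\,d\,y^*y,\qquad v^*T^*Tw\,=\,y^*y,
\]
as operators on $L^2M$. The first follows from the definition \eqref{eq: minimal conditional expectation} of $E_0$ (so that $d\,E_0(b)=R^*(b\otimes 1)R$); the second uses $S^*S=d$ together with the $M$-$M$-bilinearity of $L^2M\boxtimes_M L^2M\cong L^2M$; the third uses $v^*w=1_{L^2M}$ together with the commutation $T^*Tw=w\circ L_{y^*y}$, valid because $w$ acts trivially on the first tensor factor of $H$. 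Cauchy--Schwarz applied to the positive operator $T^*T$ and the vectors $w\xi,v\xi\in H$ then yields, for any $\xi\in L^2M$,
\[
\langle y^*y\xi,\xi\rangle^2\,=\,|\langle T^*Tw\xi,v\xi\rangle|^2\,\le\,\langle T^*Tw\xi,w\xi\rangle\,\langle T^*Tv\xi,v\xi\rangle\,=\,d^2\langle y^*y\xi,\xi\rangle\,\langle E_0(y^*y)\xi,\xi\rangle.
\]
Dividing by $\langle y^*y\xi,\xi\rangle$ when it is nonzero (the vanishing case being immediate since $E_0(y^*y)\ge 0$) gives $y^*y\le d^2\,E_0(y^*y)$ on $L^2M$, which is the Pimsner--Popa inequality $E_0(x)\ge [M{:}N]^{-1}x$.

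The main obstacle lies in the careful verification of the three identities involving $v$, $w$, and $T$, which requires tracking how left multiplication by $y\in M$ interacts with the two different decompositions of $L^2M$. The key asymmetry is that $L^2M\boxtimes_M L^2M\cong L^2M$ is $M$-$M$-bilinear (so $T$ commutes with $w$ in the expected way), whereas $L^2N\boxtimes_N L^2M\cong L^2M$ is only $N$-$M$-bilinear, so that pulling $T^*T$ back through $v$ produces the full expression $R^*(y^*y\otimes 1)R=d\,E_0(y^*y)$ rather than something simpler. In the string diagram calculus developed earlier in the paper these identities become essentially tautological cup/cap manipulations, which should make the final write-up quite clean.
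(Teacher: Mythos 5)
Your argument is correct, and it lives in the same setting as the paper's proof --- the triple fusion $L^2M\boxtimes_M\overline{L^2M}\boxtimes_N L^2M$, the two canonical maps $v=R\otimes 1$ and $w=1\otimes S$ into it, the duality equation giving $v^*w=1$, the normalization $S^*S=d$, and the defining formula $d\,E_0(b)=R^*(b\otimes 1)R$ --- but the final positivity step is packaged differently. The paper stays entirely at the operator/diagrammatic level: it observes that the Jones projection $d^{-1}SS^*$ (equivalently $d^{-1}ww^*$) is a projection, hence $\le 1$, conjugates this inequality by $(\sqrt{x}\otimes 1\otimes 1)(R\otimes 1)$ using $a\le b\Rightarrow yay^*\le yby^*$, and then recognizes the conjugated left-hand side as $d^{-1}x$ by sliding $\sqrt{x}$ through the snake identity (the same commutation-past-$(1\otimes S)$ that you use, applied to $\sqrt{x}$ instead of to $y^*y$). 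You instead avoid square roots and the Jones projection altogether: you write $x=y^*y$, run a vector-by-vector Cauchy--Schwarz for the positive form $\langle T^*T\,\cdot\,,\cdot\,\rangle$ on the pair $w\xi$, $v\xi$, and then divide by $\langle y^*y\xi,\xi\rangle$, handling the degenerate case by positivity of $E_0$. The two finishes are essentially interchangeable manifestations of the same positivity; the paper's version buys a single operator inequality with no vectors and no case analysis, while yours is marginally more elementary (no square root, no appeal to the projection property --- note you never actually need $v^*v=d$, only $w^*w=d$). Your three identities check out as stated, and the passage from the operator inequality on $L^2M$ back to an inequality in $M$ is harmless since $M\subset\bfB(L^2M)$ and positivity there is intrinsic.
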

\begin{proof}
Let $x$ be a positive element of $M$, and let us write $d:=[M: N]^{\frac12}$ for the statistical dimension of ${}_NL^2M_M$.
Because the map
\(
\def\colA{black!10}
\def\colB{black!30}
d^{-1}\,\tikzmath[scale=\squarescale]{
\clip[rounded corners=2] (-4,-3) rectangle (4,3);
\fill[fill=\colB] 
(-4,-3) [rounded corners=2]-- (-4,3) [sharp corners]-- (-2,3) [rounded corners = 4]-- (-2,1) -- (2,1) [sharp corners]-- (2,3) [rounded corners=2]--
(4,3) -- (4,-3) [sharp corners]-- (2,-3) [rounded corners = 4]-- (2,-1) -- (-2,-1) [sharp corners]-- (-2,-3) [rounded corners=2]-- cycle;
\fill[fill=\colA] (-2,-3) -- (2,-3) [rounded corners = 4]-- (2,-1) -- (-2,-1) [sharp corners]-- cycle;
\fill[fill=\colA] (-2,3) -- (2,3) [rounded corners = 4]-- (2,1) -- (-2,1) [sharp corners]-- cycle;
\draw (2,3) [rounded corners = 4]-- (2,1) -- (-2,1) -- (-2,3);
\draw (2,-3) [rounded corners = 4]-- (2,-1) -- (-2,-1) -- (-2,-3);
}
\)
is a projection, we have
\(
\def\colA{black!10}
\def\colB{black!30}
d^{-1}\,\tikzmath[scale=\squarescale]{
\clip[rounded corners=2] (-4,-3) rectangle (4,3);
\fill[fill=\colB] 
(-4,-3) [rounded corners=2]-- (-4,3) [sharp corners]-- (-2,3) [rounded corners = 4]-- (-2,1) -- (2,1) [sharp corners]-- (2,3) [rounded corners=2]--
(4,3) -- (4,-3) [sharp corners]-- (2,-3) [rounded corners = 4]-- (2,-1) -- (-2,-1) [sharp corners]-- (-2,-3) [rounded corners=2]-- cycle;
\fill[fill=\colA] (-2,-3) -- (2,-3) [rounded corners = 4]-- (2,-1) -- (-2,-1) [sharp corners]-- cycle;
\fill[fill=\colA] (-2,3) -- (2,3) [rounded corners = 4]-- (2,1) -- (-2,1) [sharp corners]-- cycle;
\draw (2,3) [rounded corners = 4]-- (2,1) -- (-2,1) -- (-2,3);
\draw (2,-3) [rounded corners = 4]-- (2,-1) -- (-2,-1) -- (-2,-3);
}
\,\leq\,
\tikzmath[scale=\squarescale]{
\clip[rounded corners=2] (-4,-3) rectangle (4,3);
\fill[fill=\colB] (-4,-3) [rounded corners=2]-- (-4,3) [sharp corners]-- (-1.8,3) -- (-1.8,-3) [rounded corners=2]-- cycle;
\fill[fill=\colB] (4,-3) [rounded corners=2]-- (4,3) [sharp corners]-- (1.8,3) -- (1.8,-3) [rounded corners=2]-- cycle;
\fill[fill=\colA] (-1.8,-3) rectangle (1.8,3);
\draw (1.8,3) -- (1.8,-3)(-1.8,3) -- (-1.8,-3);
}
\)\,. 
As a consequence of the general fact $(a\le b) \Rightarrow (yay^*\le yby^*)$, it follows that
\[
\def\colA{black!10}
\def\colB{black!30}
d^{-1}\,\tikzmath[scale=\squarescale]{
\coordinate (x) at (-9,0);
\fill[rounded corners=10, fill=\colA] (3,-20) -- (-9,-20) -- (-9,20) [sharp corners]-- (3,20) -- cycle;
\fill[rounded corners=10, fill=\colB] (3,-20) -- (10,-20) -- (10,20) [sharp corners]-- (3,20) -- cycle;
                                             \draw (3,-20) -- (3,20);
\draw (-2,0) node(a) [fill=white, inner sep =5]{\hspace{.3cm} $x$\hspace{.1cm}};
\draw (a.south west -| x) -- (a.south east) -- (a.north east) -- (a.north west -| x);
} 
\,\,=\,\,
d^{-1}\,\tikzmath[scale=\squarescale]{
\coordinate (x) at (-12,0);
\fill[rounded corners=10, fill=\colA] 
(7,20) -- (-12,20) -- (-12,-20) [sharp corners]-- (7,-20) [rounded corners=7]-- (7,-3) -- (0,-3) -- (0,-15) -- (-7,-15) -- (-7,15) -- (0,15) -- (0,3) -- (7,3) [sharp corners]-- cycle;
\fill[rounded corners=10, fill=\colB] 
(7,20) -- (12,20) -- (12,-20) [sharp corners]--    (7,-20) [rounded corners=7]-- (7,-3) -- (0,-3) -- (0,-15) -- (-7,-15) -- (-7,15) -- (0,15) -- (0,3) -- (7,3) [sharp corners]-- cycle;
                                                                \draw[rounded corners=7] (7,-20) -- (7,-3) -- (0,-3) -- (0,-15) -- (-7,-15) -- (-7,15) -- (0,15) -- (0,3) -- (7,3) -- (7,20);
\draw (-8.5,8) node(a)[fill=white, inner sep =2]{$\sqrt x$};
\draw (-8.5,-8) node(b)[fill=white, inner sep =2]{$\sqrt x$};
\draw (a.south west -| x) -- (a.south east) -- (a.north east) -- (a.north west -| x);
\draw (b.south west -| x) -- (b.south east) -- (b.north east) -- (b.north west -| x);
} 
\,\,=\,\,
\tikzmath[scale=\squarescale]{
\coordinate (x) at (-12,0);
\fill[rounded corners=10, fill=\colA] 
(7,20) -- (-12,20) -- (-12,-20) [sharp corners]-- (7,-20) [rounded corners=7]-- (7,-2) -- (0,-2) -- (0,-17) -- (-7,-17) -- (-7,17) -- (0,17) -- (0,2) -- (7,2) [sharp corners]-- cycle;
\fill[rounded corners=10, fill=\colB] 
(7,20) -- (12,20) -- (12,-20) [sharp corners]--    (7,-20) [rounded corners=7]-- (7,-2) -- (0,-2) -- (0,-17) -- (-7,-17) -- (-7,17) -- (0,17) -- (0,2) -- (7,2) [sharp corners]-- cycle;
                                                                \draw[rounded corners=7] (7,-20) -- (7,-2) -- (0,-2) -- (0,-17) -- (-7,-17) -- (-7,17) -- (0,17) -- (0,2) -- (7,2) -- (7,20);
\draw (-9,10.5) node(a)[fill=white, inner sep =2]{$\sqrt x$};
\draw (-9,-10.5) node(b)[fill=white, inner sep =2]{$\sqrt x$};
\draw (a.south west -| x) -- (a.south east) -- (a.north east) -- (a.north west -| x);
\draw (b.south west -| x) -- (b.south east) -- (b.north east) -- (b.north west -| x);
\draw node at (-2,0) {$\scriptstyle d^{-1}$};
\draw[dashed] (-6,-5) rectangle (9,5);
} 
\,\,\le\,\,
\tikzmath[scale=\squarescale]{
\coordinate (x) at (-12,0);
\fill[rounded corners=10, fill=\colA] (6,-20) -- (-12,-20) -- (-12,20) [sharp corners]-- (6,20) -- cycle;
\fill[rounded corners=10, fill=\colB] (6,-20) -- (12,-20) -- (12,20) [sharp corners]-- (6,20) -- cycle;
                                             \draw (6,-20) -- (6,20);
\filldraw[rounded corners=7, fill=\colB] (-7,-17) rectangle (0,17);
\draw (-9,10) node(a)[fill=white, inner sep =2]{$\sqrt x$};
\draw (-9,-10) node(b)[fill=white, inner sep =2]{$\sqrt x$};
\draw (a.south west -| x) -- (a.south east) -- (a.north east) -- (a.north west -| x);
\draw (b.south west -| x) -- (b.south east) -- (b.north east) -- (b.north west -| x);
\draw[dashed] (-3,-4.5) rectangle (9,4.5);
} 
\,\,=\,\,
\tikzmath[scale=\squarescale]{
\coordinate (x) at (-12,0);
\fill[rounded corners=10, fill=\colA] (6,-20) -- (-12,-20) -- (-12,20) [sharp corners]-- (6,20) -- cycle;
\fill[rounded corners=10, fill=\colB] (6,-20) -- (12,-20) -- (12,20) [sharp corners]-- (6,20) -- cycle;
                                             \draw (6,-20) -- (6,20);
\filldraw[rounded corners=7, fill=\colB] (-7,-12) rectangle (0,12);
\draw (-9,0) node(a) [fill=white, inner sep =5]{\,$x$};
\draw (a.south west -| x) -- (a.south east) -- (a.north east) -- (a.north west -| x);
}\,. 
\]
Now multiply both sides by $d^{-1}$ to get the desired inequality.
\end{proof}

The following proposition establishes the connection between 
the Pimsner--Popa inequality and dualizability.

\begin{proposition}\label{Prop. 8.3}
Let $A\subset B$ be von Neumann algebras with finite-dimensional centers, and let $E:B\to A$ be a conditional expectation.
If there exists a constant $\mu>0$ such that $E(x) \ge \mu x$ for all $x\in B_+$, then ${}_A L^2B_B$ is a dualizable bimodule.
\end{proposition}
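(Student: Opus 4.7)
The goal is to exhibit a dual for the bimodule ${}_AL^2B_B$. By Theorem \ref{thm: normalizing duality data}, it is enough to produce not-necessarily-normalized structure maps $\tilde R\colon L^2A \to L^2B \boxtimes_B \overline{L^2B}$ and $\tilde S\colon L^2B \to \overline{L^2B} \boxtimes_A L^2B$ satisfying the duality equations \eqref{eq: duality equations}. My plan is to build these using the Jones basic construction associated to $E$, with the Pimsner--Popa inequality providing precisely the estimate that bounds the norm of $\tilde S$.

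The conditional expectation provides a canonical isometric embedding $\iota_*\colon L^2A \hookrightarrow L^2B$, $\sqrt\phi \mapsto \sqrt{\phi\circ E}$ (isometric because $E(1)=1$). Let $e \in \bfB(L^2B)$ denote the orthogonal projection onto its image; then $e$ commutes with the right $A$-action and satisfies the Jones relation $ebe = E(b)e$ for $b \in B$. I would define $\tilde R$ as the composition of $\iota_*$ with the identification $L^2B \cong L^2B \boxtimes_B L^2B$ and the map $1 \otimes J_B \colon L^2B \boxtimes_B L^2B \to L^2B \boxtimes_B \overline{L^2B}$, where $J_B$ is the modular conjugation and the first isomorphism is the one furnished by Proposition \ref{prop: characterization of invertible bimodules}.

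For $\tilde S$ the natural candidate, after using the identification \eqref{eq: It's L2(f)} between the dual bimodule and the complex conjugate, is the bounded extension of the map ``multiplication by the Jones projection $e$''. Concretely, I would extract from the inequality $E(x) \geq \mu x$ a Pimsner--Popa quasi-basis $\{\lambda_i\} \subset B$ satisfying $\sum_i \lambda_i E(\lambda_i^* \,\cdot\,) = \mathrm{id}_B$ (as a strongly convergent sum), and let $\tilde S$ be the $A$-$A$-bilinear map determined by this quasi-basis. The Pimsner--Popa inequality translates---via the Jones relation $ebe = E(b)e$ and a brief computation---into the operator bound $\|\tilde S\|^2 \leq \mu^{-1}$, so $\tilde S$ extends to a bounded map. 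The duality equations \eqref{eq: duality equations} then reduce to routine string-diagrammatic manipulations using the Jones relation together with the identity $E|_A = \mathrm{id}_A$.

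The main obstacle is establishing the existence of the Pimsner--Popa quasi-basis, and the boundedness of $\tilde S$, in full generality---particularly for type III factors. In the type II setting this is the content of the original Pimsner--Popa paper and uses the trace in an essential way; for other types, it requires the more elaborate theory of operator-valued weights on the basic construction $\langle B, e\rangle$ developed by Kosaki and Longo, where the quasi-basis arises from a canonical $B$-valued weight on $\langle B,e\rangle$ dual to $E$. A self-contained alternative that I would prefer here is to bypass quasi-bases entirely and prove $\|\tilde S\|^2 \leq \mu^{-1}$ directly from the Pimsner--Popa inequality by an inner product computation in $\overline{L^2B} \boxtimes_A L^2B$, using the modular algebra of Remark \ref{rem: algebra L_*(A)} to make sense of formal products of $L^2$-vectors with $e$ in the type III case.
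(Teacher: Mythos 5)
Your overall strategy is the same as the paper's: take $\tilde R:\sqrt\phi\mapsto\sqrt{\phi\circ E}$ (an isometry by \eqref{eq:Radon-Nikodym-E}), build $\tilde S$ out of a Pimsner--Popa (quasi-)basis so that the zig-zag identities \eqref{eq: duality equations} follow from the Jones relation, and then invoke Theorem \ref{thm: normalizing duality data} to normalize. However, the step you yourself single out as ``the main obstacle'' --- the existence of a bounded quasi-basis, equivalently the boundedness of $\tilde S$, for arbitrary (in particular type \emph{III}) von Neumann algebras --- is exactly the analytic heart of the proposition, and your proposal never establishes it. Your first fallback (Kosaki--Longo operator-valued weights on the basic construction) would work but is not carried out, and your ``preferred self-contained alternative'' is only a declaration of intent: without a basis or the dual operator-valued weight, the inner-product estimate $\|\tilde S\|^2\le\mu^{-1}$ on the algebraic tensor product $\overline{L^2B}\otimes_A L^2B$ is not a routine computation, and the appeal to the modular algebra of Remark \ref{rem: algebra L_*(A)} does not by itself produce it. So as written the argument has a genuine gap precisely where the hypothesis $E(x)\ge\mu x$ must be converted into boundedness.

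The paper closes this gap by citing \cite[Theorem 1.1.6]{Popa(Classification-of-subfactors-and-their-endomorphisms)}, which holds for arbitrary von Neumann algebras under exactly the bound $E(x)\ge\mu x$: it yields elements $b_j\in B$ such that the $b_j e b_j^*$ (with $e=\tilde R\tilde R^*$ the Jones projection) are mutually orthogonal projections summing to $1$ and such that $\sum b_j b_j^*$ is bounded. Boundedness of $\sum b_j b_j^*$ gives bounded maps $m:K\boxtimes_A L^2B\to L^2B$ and $\bar m:L^2B\boxtimes_A\bar K\to L^2B$ (with $K=\bigoplus_j L^2A$), from which $S$ is written explicitly, and the identity $\sum b_j e b_j^*=1$ is precisely what makes the duality equations hold. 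If you replace your existence discussion by this citation (or reproduce Popa's orthogonalization argument), your construction goes through; one further small simplification the paper makes is to use ${}_BL^2B_A$ itself as the candidate dual via $L^2B\cong L^2B\boxtimes_BL^2B$, avoiding the detour through $\overline{L^2B}$ and $J_B$, though your version of $\tilde R$ is equivalent to theirs.
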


\begin{proof}
We show that ${}_B L^2B_A$ is the dual of ${}_A L^2B_B$.
To do so, we construct maps
\begin{equation}\label{eq: construction of R and S}
\def\colA{black!10}
\def\colB{black!30}
\begin{split}
\qquad R\,=\,\tikzmath[scale=.1]{
\fill[fill=\colA] 
(-4,-2.5) [rounded corners=4]-- (-4,3) --
(4,3) -- (4,-2.5) [sharp corners]-- (2,-2.5) [rounded corners = 5.5]-- (2,.5) -- (-2,.5) [sharp corners]-- (-2,-2.5) [rounded corners=4]-- cycle;
\fill[fill=\colB] (-2,-2.5) -- (2,-2.5) [rounded corners = 5.5]-- (2,.5) -- (-2,.5) [sharp corners]-- cycle;
\draw (2,-2.5) [rounded corners = 5.5]-- (2,.5) -- (-2,.5) -- (-2,-2.5);}
\,\,&:\,\,\,{}_AL^2(A)_A\,\to\, {}_AL^2B_A\,\cong\,{}_AL^2(B)\boxtimes_B L^2(B)_A \\
S\,=\,\tikzmath[scale=.1]{
\fill[fill=\colB] 
(-4,-2.5) [rounded corners=4]-- (-4,3) --
(4,3) -- (4,-2.5) [sharp corners]-- (2,-2.5) [rounded corners = 5.5]-- (2,.5) -- (-2,.5) [sharp corners]-- (-2,-2.5) [rounded corners=4]-- cycle;
\fill[fill=\colA] (-2,-2.5) -- (2,-2.5) [rounded corners = 5.5]-- (2,.5) -- (-2,.5) [sharp corners]-- cycle;
\draw (2,-2.5) [rounded corners = 5.5]-- (2,.5) -- (-2,.5) -- (-2,-2.5);}
\,\,&:\,\,{}_BL^2(B)_B\,\to\, {}_BL^2(B)\boxtimes_A L^2(B)_B
\end{split}
\end{equation}
satisfying the duality equations \eqref{eq: duality equations}, and 
appeal to Theorem \ref{thm: normalizing duality data}
in order to achieve the normalization \eqref{eq:duality normalization}.

Using equation \eqref{eq:Radon-Nikodym-E} we see that the map $R$ defined by 
$\sqrt{\phi}\mapsto \sqrt{\phi\circ E}$ is an isometry.
Let $e:=RR^*
\def\colA{black!10}
\def\colB{black!30}
=\,\tikzmath[scale=\squarescale]{
\clip[rounded corners=2] (-4,-3) rectangle (4,3);
\fill[fill=\colA] 
(-4,-3) [rounded corners=2]-- (-4,3) [sharp corners]-- (-2,3) [rounded corners = 4]-- (-2,1) -- (2,1) [sharp corners]-- (2,3) [rounded corners=2]--
(4,3) -- (4,-3) [sharp corners]-- (2,-3) [rounded corners = 4]-- (2,-1) -- (-2,-1) [sharp corners]-- (-2,-3) [rounded corners=2]-- cycle;
\fill[fill=\colB] (-2,-3) -- (2,-3) [rounded corners = 4]-- (2,-1) -- (-2,-1) [sharp corners]-- cycle;
\fill[fill=\colB] (-2,3) -- (2,3) [rounded corners = 4]-- (2,1) -- (-2,1) [sharp corners]-- cycle;
\draw (2,3) [rounded corners = 4]-- (2,1) -- (-2,1) -- (-2,3);
\draw (2,-3) [rounded corners = 4]-- (2,-1) -- (-2,-1) -- (-2,-3);
}
$ be the corresponding Jones projection.  By \cite[Theorem 1.1.6]{Popa(Classification-of-subfactors-and-their-endomorphisms)}, there exists a set of elements $b_j\in B$ such that
$\{b_j e b_j^*\}$ are mutually orthogonal projections forming a partition of unity, and such that $\sum b_j b_j^*\in B$ is a bounded operator.
Here, both $b_j$ and $b_j^*$ refer to left multiplication operators on $L^2B$.
It follows that the map $\sum b_j:\bigoplus_j L^2(B) \to L^2(B)$ is also bounded.
Let $K$ be the right $A$-module $\bigoplus_j L^2A$, and let $m$ and $\bar m$ be the following two maps:
\[
\textstyle m: K \underset A \boxtimes L^2B \cong \underset j \bigoplus L^2(B) \xrightarrow{\sum (b_j\cdot)} L^2B,\quad
\bar m: L^2B \underset A \boxtimes \bar K \cong \underset j \bigoplus L^2(B) \xrightarrow{\sum (\cdot b_j)} L^2B.
\]
Graphically, the equation $\sum b_j e b_j^*=1$ means that the map
\[
\tikzmath[scale=\squarescale]{
\def \roundedsmall {8}
\def\colA{black!10}
\def\colB{black!30}
\coordinate	(Dupper left corner) at (-13,-11);
\coordinate	(Dlower right corner) at (4,-29);
\coordinate	(Dupper right corner of middle line) at (-2,-14);
\coordinate	(Dlower left corner) at ($(Dlower right corner -| Dupper left corner)-(2,0)$); \coordinate(Dupper right corner) at (Dlower right corner |- Dupper left corner);
\coordinate	(lower left corner) at (-13,-12);
\coordinate	(upper right corner) at (4,6.5);
\coordinate	(lower right corner of middle line) at (-2,-9);
\coordinate	(upper left corner) at ($(upper right corner -| lower left corner)-(2,0)$); \coordinate(lower right corner) at (upper right corner |- lower left corner);
\draw 		(-10,-2) node[inner xsep = 1.5, inner ysep = 2.5] (b) {$m^*$};
\draw 		(-10,-21) node (a) {$m$};
\fill[\colA] (Dupper left corner) -- 
(a.north east -| Dupper left corner) -- (a.north)[rounded corners=\roundedsmall] -- (a.north |- Dupper right corner of middle line) -- (Dupper right corner of middle line)[sharp corners] -- 
(Dupper right corner of middle line |- Dlower left corner)[rounded corners=10] --
(Dlower right corner) -- (upper right corner) [sharp corners]-- (lower right corner of middle line |- upper left corner)
[rounded corners=\roundedsmall]-- (lower right corner of middle line) -- (b.south |- lower right corner of middle line)
[sharp corners]-- (b.south) -- (b.south east -| lower left corner) -- cycle;
\fill[\colB](a.north) -- (a.north east) -- (a.south east) -- (a.south east -| Dlower left corner) [rounded corners=10] -- 
(Dlower left corner)[sharp corners] -- (Dupper right corner of middle line |- Dlower left corner)[rounded corners=\roundedsmall]
-- (Dupper right corner of middle line) -- (a.north |- Dupper right corner of middle line)[sharp corners] -- cycle;
\draw (a.north east -| Dlower left corner) -- (a.north east) -- (a.south east) -- (a.south east -| Dlower left corner)
($(a.south east -| Dupper left corner)+(2,0)$);
\draw[rounded corners=\roundedsmall] (a.north) -- (a.north |- Dupper right corner of middle line) -- (Dupper right corner of middle line) -- 
(Dupper right corner of middle line |- Dlower left corner);
\fill[\colB](b.south) -- (b.south east) -- (b.north east) -- (b.north east -| upper left corner) [rounded corners=10] -- 
(upper left corner)[sharp corners] -- (lower right corner of middle line |- upper left corner)[rounded corners=\roundedsmall]
-- (lower right corner of middle line) -- (b.south |- lower right corner of middle line)[sharp corners] -- cycle;
\draw[densely dotted] (a.north east -| Dupper left corner)
-- (b.south east -| lower left corner);
\draw (b.south east -| upper left corner) -- (b.south east) -- (b.north east) -- (b.north east -| upper left corner)
($(b.north east -| lower left corner)+(2,0)$)
;
\draw[rounded corners=\roundedsmall] (b.south) -- (b.south |- lower right corner of middle line) -- (lower right corner of middle line) -- 
(lower right corner of middle line |- upper left corner);
	}
\,\,:\,\,L^2(B)\,\to\, L^2(B)
\]
is the identity, where the dotted line stands for $K$.
It is then easy to check that, along with $R$, the map
\[
\def\colA{black!10}
\def\colB{black!30}
\def \roundedsmall {8}
S \,\,:=\,\, 
\tikzmath[scale=\squarescale]{
\coordinate	(lower left middle line) at (-4,-9);
\coordinate	(upper left corner) at (-16,6);
\draw 		(-12,-2) node[inner xsep = 1.5, inner ysep = 2.5] (a) {$m^*$};
\fill[\colB](a.south) -- (a.south east) -- (a.north east) -- (a.north east -| upper left corner) [rounded corners=10] -- 
(upper left corner)[sharp corners] -- (lower left middle line |- upper left corner)[rounded corners=\roundedsmall]
-- (lower left middle line) -- (a.south |- lower left middle line)[sharp corners] -- cycle;
\coordinate	(lower right middle line) at (4,-9);
\coordinate	(upper right corner) at (16,6);
\draw 		(12,-2) node[inner xsep = 1.5, inner ysep = 2.5] (b) {$\bar m^*$};
\fill[\colB](b.south) -- (b.south west) -- (b.north west) -- (b.north west -| upper right corner) [rounded corners=10] -- 
(upper right corner)[sharp corners] -- (lower right middle line |- upper right corner)[rounded corners=\roundedsmall]
-- (lower right middle line) -- (b.south |- lower right middle line)[sharp corners] -- cycle;
\coordinate (L) at ($(a.south) - (2.5,0)$);
\coordinate (R) at ($(b.south) + (2.5,0)$);
\draw		(-8,-17) node[inner xsep=8] (c) {$m$};
\draw		(8,-17) node[inner ysep = 2.7, inner xsep=8] (d) {$\bar m$};
\fill[\colA] (c.north -| L) -- (L) -- (a.south) [rounded corners=\roundedsmall]-- (a.south |- lower left middle line) -- (lower left middle line) [sharp corners]-- (lower left middle line |- upper right corner)
-- (lower right middle line |- upper right corner)[rounded corners=\roundedsmall]
-- (lower right middle line) -- (b.south |- lower right middle line)[sharp corners] -- (b.south)
-- (R) -- (d.north -| R);
\draw (a.south west -| upper left corner) -- (a.south east) -- (a.north east) -- (a.north east -| upper left corner) 
(lower left middle line |- upper left corner)[rounded corners=\roundedsmall]
-- (lower left middle line) -- (a.south |- lower left middle line)[sharp corners] -- (a.south);
\draw (b.south east -| upper right corner) -- (b.south west) -- (b.north west) -- (b.north west -| upper right corner) 
(lower right middle line |- upper right corner)[rounded corners=\roundedsmall]
-- (lower right middle line) -- (b.south |- lower right middle line)[sharp corners] -- (b.south);
\fill[\colB] (c.south east) -- (c.north east) -- ($(c.north)+(3,0)$) [rounded corners=10]-- ($(c.north)+(3,5)$) -- ($(d.north)+(-3,5)$) [sharp corners]-- ($(d.north)+(-3,0)$) -- (d.north west) -- (d.south west)
-- (d.south -| upper right corner)
[rounded corners=10]-- ($(d.south -| upper right corner) - (0,5)$)
-- ($(c.south -| upper left corner) - (0,5)$)
[sharp corners]-- (c.south -| upper left corner);
\draw[rounded corners=10] ($(c.north)+(3,0)$) -- ($(c.north)+(3,5)$) -- ($(d.north)+(-3,5)$) -- ($(d.north)+(-3,0)$);
\draw (c.south west -| upper left corner) -- (c.south east) -- (c.north east) -- (c.north east -| upper left corner);
\draw (d.south east -| upper right corner) -- (d.south west) -- (d.north west) -- (d.north west -| upper right corner);
\draw[densely dotted] (L) -- (c.north -| L);
\draw[densely dotted] (R) -- (d.north -| R);
} 
\,\,\,=\,\,\,
\tikzmath[scale=\squarescale]{
\coordinate	(lower left middle line) at (-4,-9);
\coordinate	(upper left corner) at (-16,6);
\draw 		(-12,-2) node[inner xsep = 1.5, inner ysep = 2.5] (a) {$m^*$};
\fill[\colB](a.south) -- (a.south east) -- (a.north east) -- (a.north east -| upper left corner) [rounded corners=10] -- 
(upper left corner)[sharp corners] -- (lower left middle line |- upper left corner)[rounded corners=\roundedsmall]
-- (lower left middle line) -- (a.south |- lower left middle line)[sharp corners] -- cycle;
\coordinate	(lower right middle line) at (4,-9);
\coordinate	(upper right corner) at (7.5,6);
\coordinate (L) at ($(a.south) - (2.5,0)$);
\draw		(-4,-17) node[inner xsep=10] (c) {$m$};
\coordinate (N) at ($(c.north) + (5.5,0)$);
\fill[\colA] (c.north -| L) -- (L) -- (a.south) [rounded corners=\roundedsmall]-- (a.south |- lower left middle line) -- (lower left middle line) [sharp corners]-- (lower left middle line |- upper right corner)
-- (N |- upper left corner) -- (N);
\fill[\colB] (c.south east) -- (c.north east) -- (N) -- (N |- upper left corner) [rounded corners=10]-- (upper right corner)
-- ($(c.south -| upper right corner) - (0,5)$)
-- ($(c.south -| upper left corner) - (0,5)$)
[sharp corners]-- (c.south -| upper left corner);
\draw (a.south west -| upper left corner) -- (a.south east) -- (a.north east) -- (a.north east -| upper left corner) 
(lower left middle line |- upper left corner)[rounded corners=\roundedsmall]
-- (lower left middle line) -- (a.south |- lower left middle line)[sharp corners] -- (a.south)
(N |- upper left corner) -- (N);
\draw (c.south west -| upper left corner) -- (c.south east) -- (c.north east) -- (c.north east -| upper left corner);
\draw[densely dotted] (L) -- (c.north -| L);
} 
\,\,\,=\,\,\,
\tikzmath[scale=\squarescale]{
\coordinate	(lower right middle line) at (4,-9);
\coordinate	(upper right corner) at (16,6);
\draw 		(12,-2) node[inner xsep = 1.5, inner ysep = 2.5] (a) {$\bar m^*$};
\fill[\colB](a.south) -- (a.south west) -- (a.north west) -- (a.north west -| upper right corner) [rounded corners=10] -- 
(upper right corner)[sharp corners] -- (lower right middle line |- upper right corner)[rounded corners=\roundedsmall]
-- (lower right middle line) -- (a.south |- lower right middle line)[sharp corners] -- cycle;
\coordinate	(lower left middle line) at (4,-9);
\coordinate	(upper left corner) at (-7.5,6);
\coordinate (R) at ($(a.south) + (2.5,0)$);
\draw		(4,-17) node[inner ysep = 2.7, inner xsep=10] (c) {$\bar m$};
\coordinate (N) at ($(c.north) - (5.5,0)$);
\fill[\colA] (c.north -| R) -- (R) -- (a.south) [rounded corners=\roundedsmall]-- (a.south |- lower right middle line) -- (lower right middle line) [sharp corners]-- (lower right middle line |- upper left corner)
-- (N |- upper right corner) -- (N);
\fill[\colB] (c.south west) -- (c.north west) -- (N) -- (N |- upper right corner) [rounded corners=10]-- (upper left corner)
-- ($(c.south -| upper left corner) - (0,5)$)
-- ($(c.south -| upper right corner) - (0,5)$)
[sharp corners]-- (c.south -| upper right corner);
\draw (a.south east -| upper right corner) -- (a.south west) -- (a.north west) -- (a.north west -| upper right corner) 
(lower right middle line |- upper right corner)[rounded corners=\roundedsmall]
-- (lower right middle line) -- (a.south |- lower right middle line)[sharp corners] -- (a.south)
(N |- upper right corner) -- (N);
\draw (c.south east -| upper right corner) -- (c.south west) -- (c.north west) -- (c.north west -| upper right corner);
\draw[densely dotted] (R) -- (c.north -| R);
} 
\]
satisfies the duality equations \eqref{eq: duality equations}.
\end{proof}

The above proof also shows that the following 
variant of Proposition~\ref{Prop. 8.3} holds.

\begin{proposition}
Let $f:A\to B$ be a map between arbitrary von Neumann algebras, and let $E:B\to A$ be a conditional expectation such that $E(x) \ge \mu x$ for all $x\in B_+$.
Then $f$ is a finite homomorphism in the sense (see the discussion before Conjecture \ref{conj: L2 functoriality}) that ${}_AL^2B_B$ admits a not-necessarily normalized dual bimodule.~\hfill~$\square$
\end{proposition}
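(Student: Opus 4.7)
The plan is to run the construction from the proof of Proposition \ref{Prop. 8.3} verbatim and observe that the finite-center hypothesis enters only at the very end, where Theorem \ref{thm: normalizing duality data} is invoked to upgrade non-normalized duality data to normalized duality data. Since the variant asks only for a non-normalized dual, we simply drop that last step and keep the rest of the argument.

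More concretely, I would first define $R \colon {}_A L^2(A)_A \to {}_A L^2(B)\boxtimes_B L^2(B)_A \cong {}_A L^2(B)_A$ by $\sqrt\phi \mapsto \sqrt{\phi\circ E}$. That $R$ is a well-defined $A$-$A$-bimodule isometry follows from $E(1)=1$ together with the Radon--Nikodym identity \eqref{eq:Radon-Nikodym-E}, exactly as in the original proof and without any restriction on the centers. I would then form the Jones projection $e := RR^*$ and invoke \cite[Theorem 1.1.6]{Popa(Classification-of-subfactors-and-their-endomorphisms)} --- whose hypothesis is precisely the Pimsner--Popa inequality $E(x)\ge \mu x$ --- to produce elements $b_j \in B$ such that $\{b_j e b_j^*\}_j$ is an orthogonal partition of unity and $\sum_j b_j b_j^*$ is a bounded operator in $B$.

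With these data in hand, I would set $K := \bigoplus_j L^2(A)$ and build the bounded maps $m\colon K\boxtimes_A L^2(B)\to L^2(B)$ and $\bar m\colon L^2(B)\boxtimes_A \bar K\to L^2(B)$ as in the original proof; then $S\colon {}_B L^2(B)_B \to {}_B L^2(B)\boxtimes_A L^2(B)_B$ is defined by the same graphical recipe displayed at the end of the proof of Proposition \ref{Prop. 8.3}. The duality equations \eqref{eq: duality equations} reduce graphically to the identity $\sum_j b_j e b_j^* = 1$, exactly as before. This exhibits $({}_B L^2(B)_A, R, S)$ as a non-normalized dual for ${}_A L^2(B)_B$, which by definition means that $f$ is finite in the sense of the discussion preceding Conjecture \ref{conj: L2 functoriality}.

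The only real thing to verify is that none of the ingredients above secretly used finite centers, and they do not: the Haagerup $L^2$-space and its Radon--Nikodym calculus are constructed for arbitrary von Neumann algebras, Popa's partition-of-unity theorem is formulated for general inclusions equipped with a conditional expectation satisfying the Pimsner--Popa inequality, and the construction of $S$ out of $m$, $\bar m$, and the Jones projection is a purely formal graphical manipulation. Thus there is no genuine obstacle; the variant is literally the proof of Proposition \ref{Prop. 8.3} with its terminal normalization step deleted.
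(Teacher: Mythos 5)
Your proposal is correct and matches the paper's own argument: the paper proves this variant with the single remark that ``the above proof also shows'' it, i.e.\ exactly by observing that the construction of $R$ and $S$ in Proposition \ref{Prop. 8.3} (the isometry $\sqrt{\phi}\mapsto\sqrt{\phi\circ E}$, the Jones projection, Popa's partition of unity, and the graphical construction of $S$) never uses finite centers, and that only the final appeal to Theorem \ref{thm: normalizing duality data} does. Spelling that out, as you do, is precisely the intended proof.
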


As a first application of the Pimsner--Popa inequality, we have:

\begin{lemma}\label{lem: N subset P subset M}
Let $N\subset P\subset M$ be factors. Then $[M:N]<\infty\Rightarrow [P:N]<\infty$.
\end{lemma}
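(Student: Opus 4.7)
The plan is to combine the Pimsner--Popa inequality for the minimal conditional expectation $E_0 \colon M \to N$ with Proposition~\ref{Prop. 8.3} to transfer dualizability from ${}_NL^2M_M$ to ${}_NL^2P_P$.

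Assuming $[M:N] < \infty$, the bimodule ${}_NL^2M_M$ is dualizable, so we have at our disposal the minimal conditional expectation $E_0 \colon M \to N$ defined by $E_0(b) = [M:N]^{-1/2} R^*(b \otimes 1)R$. By the inequality \eqref{eq:Pimsner-Popa inequality} proved just above, $E_0(x) \geq [M:N]^{-1} x$ for every $x \in M_+$.

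I would then consider the restriction $E := E_0|_P \colon P \to N$. Since $N \subset P \subset M$ and $E_0$ is $N$-$N$-bilinear, completely positive, normal, and unital, the restriction $E$ is a conditional expectation $P \to N$ (the $N$-$N$-bilinearity property $E(axb) = aE(x)b$ for $a,b \in N$, $x \in P$, is inherited directly from $E_0$, and $E(P) \subseteq E_0(M) \subseteq N$). The Pimsner--Popa inequality automatically descends to $E$: for any $x \in P_+ \subseteq M_+$ we have $E(x) = E_0(x) \geq [M:N]^{-1} x$. Since $N$ and $P$ are factors, they have finite (indeed trivial) center, so the hypotheses of Proposition~\ref{Prop. 8.3} are met, and we conclude that ${}_NL^2P_P$ is dualizable, i.e., $[P:N] < \infty$.

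There is no substantive obstacle: the main thing to verify is that the restriction of a conditional expectation to an intermediate subalgebra is again a conditional expectation and still satisfies the Pimsner--Popa bound with the same constant, both of which are immediate. (As a bonus, the argument also gives the quantitative bound $[P:N] \leq [M:N]$, since $\mathrm{Ind}(E) \leq [M:N]$ and the minimal index for the subfactor $N \subset P$ is no larger than the index of any particular conditional expectation.)
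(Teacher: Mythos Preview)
Your proof is correct and essentially identical to the paper's: restrict the minimal conditional expectation $E_0:M\to N$ to $P$, observe that the Pimsner--Popa bound persists, and invoke Proposition~\ref{Prop. 8.3}. One small caveat on your parenthetical bonus: the inequality $[P:N]\le\Ind(E)$ relies on the identification of the minimal index with the Longo index (Corollary~\ref{cor: justification of the name minimal}), which is established only later and only for infinite-dimensional factors (cf.\ Warning~\ref{warn: Longo index is bad}).
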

\begin{proof}
Let $E:M\to N$ be the minimal conditional expectation.
Then $E|_P$ is a conditional expectation subject to the same bound: $E|_P(x) \ge [M:N]^{-1} x, {\forall x\in P_+}$.
The subfactor $N \subset P$ satisfies the condition of Proposition \ref{Prop. 8.3}, and so ${}_NL^2P_P$ is dualizable.
\end{proof}

\begin{corollary}\label{cor: HK finite ==> H finite}
Let $N$, $P$, $M$ be factors, and let ${}_NH_P$ and ${}_PK_M$ be non-zero bimodules.
If their fusion ${}_NH\boxtimes_PK_M$ is a dualizable $N$-$M$-bimodule, then ${}_NH_P$ and ${}_PK_M$ are dualizable.
\end{corollary}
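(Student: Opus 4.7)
By symmetry it suffices to show that ${}_NH_P$ is dualizable; ${}_PK_M$ will be treated analogously. My plan is to translate the hypothesis into an intermediate-subfactor statement and apply Lemma~\ref{lem: N subset P subset M}. Since $N$, $P$, $M$ are factors and $H$, $K$ are non-zero, every module action is faithful. Let $P':=\{x\in\bfB(H):[x,P]=0\}$ denote the commutant of $P$ on $H$, and let $M'_{\mathrm{fus}}$ denote the commutant of $M$ on $H\boxtimes_P K$. Both are factors (the commutant of a factor in a faithful representation is a factor), and both contain $N$, since the left $N$-action commutes with the right $P$- and $M$-actions.

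I will next exhibit a tower of factor inclusions $N\subset P'\subset M'_{\mathrm{fus}}$. The map $\iota:\phi\mapsto\phi\boxtimes 1_K$ is well defined on $P'$ (because $\phi$ is right $P$-linear, so it descends to the fusion), lands in $M'_{\mathrm{fus}}$ (because $\phi\boxtimes 1_K$ commutes with the right $M$-action), and restricts to the identity on $N\subset P'$. The step that requires some care is the injectivity of $\iota$. To check it, write $L:=\overline{\phi(H)}\subset H$; if $\phi\boxtimes 1_K=0$ then $L\boxtimes_P K=0$. Using Proposition~\ref{prop:modules-are-summands}, $K\otimes\ell^2\cong L^2P\otimes\ell^2$ as left $P$-modules, hence $L\otimes\ell^2\cong L\boxtimes_P(L^2P\otimes\ell^2)\cong (L\boxtimes_P K)\otimes\ell^2=0$, forcing $L=0$ and therefore $\phi=0$.

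Once the tower $N\subset P'\subset M'_{\mathrm{fus}}$ is set up, the rest of the argument is automatic. Lemma~\ref{lem: ind = dim} applied to the dualizable bimodule ${}_N(H\boxtimes_P K)_M$ gives $[M'_{\mathrm{fus}}:N]=\dim({}_N(H\boxtimes_P K)_M)^2<\infty$. Lemma~\ref{lem: N subset P subset M} then yields $[P':N]<\infty$, and one more application of Lemma~\ref{lem: ind = dim} shows $\dim({}_NH_P)=\llbracket P':N\rrbracket<\infty$, so ${}_NH_P$ is dualizable. For ${}_PK_M$ I will run the mirror argument, with the embedding $\psi\mapsto 1_H\boxtimes\psi$ placing $M\subset P'_K\subset N'_{\mathrm{fus}}$ as a tower of factors (here $P'_K$ is the commutant of $P$ on $K$ and $N'_{\mathrm{fus}}$ is the commutant of $N$ on $H\boxtimes_P K$), using Corollary~\ref{cor: [B:A]=[A':B']} to translate the fusion finiteness into $[N'_{\mathrm{fus}}:M]<\infty$ before invoking Lemmas~\ref{lem: N subset P subset M} and~\ref{lem: ind = dim} as before. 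The only non-routine verification in the whole plan is the injectivity of the two embeddings; everything else is a direct appeal to results already in hand.
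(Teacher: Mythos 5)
Your argument is correct and is essentially the paper's own proof: the same tower of commutants $N\subset P'\subset M'_{\mathrm{fus}}$ coming from the embedding $\phi\mapsto\phi\boxtimes 1_K$, with Lemma~\ref{lem: ind = dim} applied at both ends and Lemma~\ref{lem: N subset P subset M} in the middle (your injectivity check, which the paper leaves implicit, is fine and amounts to Lemma~\ref{lem:stay-injective} applied to $f=\phi$ and the faithful module ${}_PK$), and the mirror argument for ${}_PK_M$ works as you sketch. The only point the paper adds, and which you should too, is the remark that Lemma~\ref{lem: ind = dim} relies only on the special case of multiplicativity \eqref{eq:properties of index 4} for fusion with an invertible bimodule, so that invoking it here is not circular, since the general case of \eqref{eq:properties of index 4} is itself completed by the present corollary.
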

\begin{proof}
We show that ${}_NH_P$ is dualizable.
Let $P'$ be the commutant of $P$ on $H$, and let $M'$ be the commutant of $M$ on $H\boxtimes_PK$.
We have $N\subset P'\subset M'$.
By Lemma \ref{lem: ind = dim}, we have $[M':N]<\infty$, which implies $[P':N]<\infty$ by the above lemma.
By a second application of Lemma~\ref{lem: ind = dim}, we deduce that ${}_NH_P$ is dualizable.

This argument might looks circular at first glance, as Lemma \ref{lem: ind = dim} depends on \eqref{eq:properties of index 4}.
However, Lemma \ref{lem: ind = dim} only depends on the special case of \eqref{eq:properties of index 4} mentioned in footnote ${\text{\footnotesize\ref{fn: tensor with invertible}}}$,
and is thus independent of the result of this corollary.
\end{proof}

Unless the factors are finite-dimensional, 
the Pimsner--Popa inequality also provides a characterization of the minimal conditional expectation and of the index.
For a subfactor $N \subset M$ of finite index, let $E_0(m):=[M:N]^{-\frac12}R^*(m\otimes 1)R$, as before.

\begin{proposition}\label{lem: Extremality properties of E_0}
Assume the factors $N$ and $M$ are infinite-dimensional, and $N \subset M$ is of finite index.  In this case,
\begin{list}{}{\leftmargin = 0pt \labelsep = 4pt \labelwidth = -4pt}
\item[\it a.] if $0<\lambda < [M: N]$, there exists $x\in M_+$ such that 
\begin{equation}\label{eq: characterization Pimsner-Popa 2}
E_0(x) \not \ge \lambda^{-1} x.
\end{equation}
In other words, $\Ind(E_0)\ge[M:N]$, and therefore, by equation \eqref{eq:Pimsner-Popa inequality}, $\Ind(E_0) = [M:N]$.
\item[\it b.] if $E\!:\!M\to N$ is a conditional expectation and $E\not = E_0$, then $\exists\, x\in M_+$ such that 
\begin{equation}\label{eq: characterization Pimsner-Popa 1}
E(x) \not \ge [M: N]^{-1} x.
\end{equation}
In other words, $\Ind(E)>[M:N]$.
\end{list}
\end{proposition}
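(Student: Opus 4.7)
The plan is to derive both parts from a quantitative analysis of Proposition \ref{Prop. 8.3}, where duality maps for ${}_N L^2(M)_M$ are extracted from the Pimsner--Popa inequality for a conditional expectation.

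For part (a), I plan to prove $\Ind(E_0) \geq [M:N]$; combined with \eqref{eq:Pimsner-Popa inequality} this gives equality, so for any $\lambda < [M:N]$ the inequality $E_0(x) \geq \lambda^{-1} x$ must fail on some $x \in M_+$. The key step is to run the construction in Proposition \ref{Prop. 8.3} with $E_0$ itself. The map $R_0 : L^2(N) \to L^2(M)$ defined by $\sqrt{\phi} \mapsto \sqrt{\phi \circ E_0}$ is an isometry (so $R_0^* R_0 = 1$), and the Pimsner--Popa hypothesis produces a partition $\sum_j b_j e_0 b_j^* = 1$ in $\langle M, e_0 \rangle$ with $\sum_j b_j b_j^* \le \Ind(E_0)$, where $e_0 = R_0 R_0^*$ is the Jones projection. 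Following Proposition \ref{Prop. 8.3}, this yields a non-normalized dual $(R_0, S_0)$ with $S_0^* S_0 \le \Ind(E_0)$. Normalizing via Theorem \ref{thm: normalizing duality data} by rescaling $R = \alpha R_0$, $S = \alpha^{-1} S_0$ (a positive scalar in the irreducible case), the condition $R^*R = S^*S = d$ forces $d^2 = R_0^* R_0 \cdot S_0^* S_0 = S_0^* S_0 \le \Ind(E_0)$. The non-irreducible factor case is handled by working over $Z(N' \cap M)$ using the minimality condition \eqref{eq:duality normalization} to control the rescaling block-by-block.

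For part (b), I plan to parametrize conditional expectations $E : M \to N$ as $E(x) = E_0(h^{1/2} x h^{1/2})$ for positive invertible $h \in N' \cap M$ with $E_0(h) = 1$, so that $E \neq E_0$ corresponds to $h \neq 1$. The Pimsner--Popa inequality for $E$ translates, via $y = h^{1/2} x h^{1/2}$, into $h E_0(y) \ge \lambda^{-1} y$ for all $y \in M_+$. Applying part (a) to $E$ yields $\Ind(E) \ge [M:N]$. To upgrade to strict inequality, I invoke Theorem \ref{thm: well defined up to unique unitary iso}: equality $\Ind(E) = [M:N]$ would force the dual maps produced for $E$ (after normalization) to be unitarily equivalent to those for $E_0$; tracing through the construction of $R_E$ in terms of $h$ and using the uniqueness of the normalized dual up to unique unitary isomorphism shows this equivalence forces $h = 1$, contradicting $h \neq 1$.

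The main obstacle will be the quantitative extraction $d^2 \le \Ind(E_0)$ in part (a): in the non-irreducible case the normalization element $x$ from Theorem \ref{thm: normalizing duality data} lies in the finite-dimensional algebra $N' \cap M$, and tracking how $R_0^* R_0$ and $S_0^* S_0$ transform requires using that $\varphi$ from \eqref{eq: varphi:End(H) --> C} is a trace on $N' \cap M$ (Lemma \ref{lem: phi is a trace}). For part (b), the delicate point is promoting $\ge$ to $>$, which uses essentially that the normalized dual is unique not just up to isomorphism but up to \emph{unitary} isomorphism; both steps rely on the infinite-dimensionality of $N$ and $M$, as required to avoid the discrepancies noted in Warning \ref{warn: Jones index}.
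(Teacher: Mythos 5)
Your route is genuinely different from the paper's, and in its present form both halves have gaps at the decisive points. For part (a) the paper does not quantify the construction of Proposition \ref{Prop. 8.3} at all: it exhibits one explicit element, $x=(u\otimes 1)(1\otimes e)(u^*\otimes 1)$, where $e$ is the normalized Jones projection and $u:K\boxtimes_M L^2M\to L^2N$ is a unitary coming from an auxiliary right $M$-module $K$ (this is where infinite-dimensionality of $N$ enters), computes graphically that $E_0(x)=[M:N]^{-1}$, and observes that a non-zero projection cannot satisfy $E_0(x)\ge\mu x$ for any $\mu>[M:N]^{-1}$. Your alternative needs two quantitative facts that you have not established: (i) that the Pimsner--Popa basis of Popa's Theorem 1.1.6 (used only qualitatively in the proof of Proposition \ref{Prop. 8.3}, where all that matters is that $\sum_j b_jb_j^*$ is bounded) yields $S_0^*S_0\le\Ind(E_0)$ --- this requires actually computing $S_0^*S_0$ and relating the optimal Pimsner--Popa constant to $\sum_j b_jb_j^*$; and (ii) the inequality $(R_0^*R_0)(S_0^*S_0)\ge d^2$, $d=\dim({}_NL^2M_M)$, for an \emph{arbitrary} solution of \eqref{eq: duality equations} when $N'\cap M\neq\IC$. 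Note that your claimed equality $d^2=(R_0^*R_0)(S_0^*S_0)$ is false in the reducible case (already for $H=H_1\oplus H_2$ with the two blocks rescaled by different amounts the product strictly exceeds $d^2$); only the inequality $\ge d^2$ holds --- luckily the direction you need --- but proving it requires parametrizing all solutions of \eqref{eq: duality equations} by elements of $N'\cap M$ and a Cauchy--Schwarz/AM--GM argument, not a block-by-block appeal to \eqref{eq:duality normalization}.

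The more serious gap is in part (b). Theorem \ref{thm: well defined up to unique unitary iso} carries no quantitative information: the normalization procedure of Theorem \ref{thm: normalizing duality data}, applied to the pair $(R_E,S_E)$ built from \emph{any} expectation $E$ with a finite Pimsner--Popa bound, always lands on (a unitary conjugate of) the standard normalized pair, whatever the value of $\Ind(E)$, and the expectation read off a normalized pair via \eqref{eq: minimal conditional expectation} is always $E_0$. So uniqueness of the normalized dual up to unitary isomorphism cannot distinguish $E$ from $E_0$, and in particular cannot force $h=1$ from the hypothesis $\Ind(E)=[M:N]$. What strictness actually requires is an equality-case analysis in the variational inequality $\Ind(E)\ge d^2$ --- a finite-dimensional optimization over the positive invertible $h\in N'\cap M$, showing the minimum is attained only at $h=1$. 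This is precisely the content of Theorem 5.5 of \cite{Longo(Index-of-subfactors-and-statistics-of-quantum-fields-I)}, which the paper invokes after verifying its hypotheses for $E_0$: that $E_0|_{N'\cap M}$ is a trace (Lemma \ref{lem: phi is a trace}) and that $E_0(p_i)=d_i/\sum_j d_j$, the latter computation also using part (a) together with Proposition \ref{Prop. 8.3} applied to the irreducible corners $p_iN\subset p_iMp_i$ to identify $d_i$ with the index of the unique expectation there. Either cite Longo's theorem as the paper does, or supply the convexity/equality analysis yourself; as written, the step ``unitary uniqueness of the dual forces $h=1$'' does not go through.
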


\begin{proof}
\def\colA{black!10}
\def\colB{black!30}
{\it a.}
We let
\[
\tikzmath[scale=.085]{ \fill[fill=\colA] (-4,-2.5) [rounded corners=3.5]-- (-4,2.5) --(4,2.5) -- (4,-2.5) [sharp corners]-- (2,-2.5) [rounded corners = 4.8]-- (2,.5) -- (-2,.5) [sharp corners]-- (-2,-2.5) [rounded corners=3.5]-- cycle; \fill[fill=\colB] (-2,-2.5) -- (2,-2.5) [rounded corners = 4.8]-- (2,.5) -- (-2,.5) [sharp corners]-- cycle; \draw (2,-2.5) [rounded corners = 4.8]-- (2,.5) -- (-2,.5) -- (-2,-2.5);}
:L^2N\to L^2M\qquad
\text{and}\qquad 
\tikzmath[scale=.085]{ \fill[fill=\colB] (-4,-2.5) [rounded corners=3.5]-- (-4,2.5) --(4,2.5) -- (4,-2.5) [sharp corners]-- (2,-2.5) [rounded corners = 4.8]-- (2,.5) -- (-2,.5) [sharp corners]-- (-2,-2.5) [rounded corners=3.5]-- cycle; \fill[fill=\colA] (-2,-2.5) -- (2,-2.5) [rounded corners = 4.8]-- (2,.5) -- (-2,.5) [sharp corners]-- cycle;\draw (2,-2.5) [rounded corners = 4.8]-- (2,.5) -- (-2,.5) -- (-2,-2.5);}
:L^2M\to L^2M\boxtimes_N L^2M
\] 
be normalized duality maps for the bimodule ${}_NH{}_M:={}_NL^2M_M$.
Let $d={[M: N]}^{\frac12}$ be the statistical dimension of $H$, and let $e=
\def\colA{black!10}
\def\colB{black!30}
d^{-1}\,\tikzmath[scale=\squarescale]{
\clip[rounded corners=2] (-4,-3) rectangle (4,3);
\fill[fill=\colB] 
(-4,-3) [rounded corners=2]-- (-4,3) [sharp corners]-- (-2,3) [rounded corners = 4]-- (-2,1) -- (2,1) [sharp corners]-- (2,3) [rounded corners=2]--
(4,3) -- (4,-3) [sharp corners]-- (2,-3) [rounded corners = 4]-- (2,-1) -- (-2,-1) [sharp corners]-- (-2,-3) [rounded corners=2]-- cycle;
\fill[fill=\colA] (-2,-3) -- (2,-3) [rounded corners = 4]-- (2,-1) -- (-2,-1) [sharp corners]-- cycle;
\fill[fill=\colA] (-2,3) -- (2,3) [rounded corners = 4]-- (2,1) -- (-2,1) [sharp corners]-- cycle;
\draw (2,3) [rounded corners = 4]-- (2,1) -- (-2,1) -- (-2,3);
\draw (2,-3) [rounded corners = 4]-- (2,-1) -- (-2,-1) -- (-2,-3);
}
$\, be the Jones projection.
Since $\dim(N)=\infty$, one can find a right $M$-module $K_M$ such that 
$K\boxtimes_M L^2(M)_N$ is isomorphic to $L^2N_N$ --- use the classification of modules
over factors of type different from {\it I}${}_n$.
Pick a unitary isomorphism $u:K\boxtimes_M L^2(M)_N\to L^2N_N$ and set $x:=(u\otimes 1)(1\otimes e)(u^*\otimes 1)$.
We then have
\[
E_0\big(x\big) = E_0\bigg(d^{-1}\,
\tikzmath[scale=\squarescale]{
\clip[rounded corners=10] (-12,-17) rectangle (5,17); 
\coordinate (x) at (-12,0);
\fill[rounded corners=10, fill=\colA] 
(-12,10.5) -- (-12,20) -- (12,20) -- (12,-20) -- (-12,-20) [sharp corners] -- (-12,-10.5)  -- (-7,-10.5) [rounded corners=7]-- (-7,-2) -- (0,-2) -- (0,-21) -- (7,-21) -- (7,21) -- (0,21) -- (0,2) -- (-7,2) [sharp corners]-- (-7,10.5) -- cycle;
\fill[fill=\colB] 
(-7,10.5) -- (-10,10.5) -- (-10,-10.5) [sharp corners]--    (-7,-10.5) [rounded corners=7]-- (-7,-2) -- (0,-2) -- (0,-21) -- (7,-21) -- (7,21) -- (0,21) -- (0,2) -- (-7,2) [sharp corners]-- cycle;
                                                                \draw[rounded corners=7] (-7,-10.5) -- (-7,-2) -- (0,-2) -- (0,-21) -- (7,-21) -- (7,21) -- (0,21) -- (0,2) -- (-7,2) -- (-7,10.5);
                                                                \draw[densely dotted] (-10,-10) -- (-10,10);
\draw (-9,8.5) node(a)[fill=white, inner xsep =3, inner ysep =3]{$u^*$};
\draw (-9,-8.5) node(b)[fill=white, inner xsep =5, inner ysep =4]{$u$};
\draw (a.south west -| x) -- (a.south east) -- (a.north east) -- (a.north west -| x);
\draw (b.south west -| x) -- (b.south east) -- (b.north east) -- (b.north west -| x);
} 
\,\,\,\bigg) = d^{-2}
\tikzmath[scale=\squarescale]{
\coordinate (x) at (-12,0);
\fill[rounded corners=10, fill=\colA] 
(-12,10.5) -- (-12,17) -- (12,17) -- (12,-17) -- (-12,-17) [sharp corners] -- (-12,-10.5)  -- (-7,-10.5) [rounded corners=7]-- (-7,-2) -- (0,-2) -- (0,-14) -- (7,-14) -- (7,14) -- (0,14) -- (0,2) -- (-7,2) [sharp corners]-- (-7,10.5) -- cycle;
\fill[fill=\colB] 
(-7,10.5) -- (-10,10.5) -- (-10,-10.5) [sharp corners]--    (-7,-10.5) [rounded corners=7]-- (-7,-2) -- (0,-2) -- (0,-14) -- (7,-14) -- (7,14) -- (0,14) -- (0,2) -- (-7,2) [sharp corners]-- cycle;
                                                                \draw[rounded corners=7] (-7,-10.5) -- (-7,-2) -- (0,-2) -- (0,-14) -- (7,-14) -- (7,14) -- (0,14) -- (0,2) -- (-7,2) -- (-7,10.5);
                                                                \draw[densely dotted] (-10,-10) -- (-10,10);
\draw (-9,8) node(a)[fill=white, inner xsep =3, inner ysep =3]{$u^*$};
\draw (-9,-8) node(b)[fill=white, inner xsep =5, inner ysep =4]{$u$};
\draw (a.south west -| x) -- (a.south east) -- (a.north east) -- (a.north west -| x);
\draw (b.south west -| x) -- (b.south east) -- (b.north east) -- (b.north west -| x);
} 
= [M:N]^{-1}
\tikzmath[scale=\squarescale]{
\coordinate (x) at (-12,0);
\fill[rounded corners=10, fill=\colA] 
(-12,10.5) -- (-12,17) -- (0,17) -- (0,-17) -- (-12,-17) [sharp corners] -- (-12,-10.5)  -- (-7,-10.5) -- (-7,10.5) -- cycle;
\fill[fill=\colB] 
(-7,10.5) -- (-10,10.5) -- (-10,-10.5) [sharp corners]--    (-7,-10.5) -- cycle;
                                                                \draw[rounded corners=7] (-7,-10.5) -- (-7,10.5);
                                                                \draw[densely dotted] (-10,-10) -- (-10,10);
\draw (-9,7.5) node(a)[fill=white, inner xsep =3, inner ysep =3]{$u^*$};
\draw (-9,-7.5) node(b)[fill=white, inner xsep =5, inner ysep =4]{$u$};
\draw (a.south west -| x) -- (a.south east) -- (a.north east) -- (a.north west -| x);
\draw (b.south west -| x) -- (b.south east) -- (b.north east) -- (b.north west -| x);
} 
=[M:N]^{-1},
\]
where the dotted line stands for $K$.
Since $x$ is a non-zero projection and $[M:N]^{-1}$ is a scalar, it follows that $E_0(x) \not \ge \mu x$ for any $\mu > [M:N]^{-1}$.

\noindent
{\it b.}
We need to check that $E_0$ minimizes $\Ind(E)$.
Let $p_i$ be the minimal central projections of $N'\cap M=\mathrm{End}({}_NL^2M_M)$,
let $d={[M:N]}^{\frac12}$, and let $d_i={[p_iMp_i:p_iN]}^{\frac12}$.
Note that $p_iN \subset p_i M p_i$ is an irreducible subfactor,
that is $(p_i N)' \cap p_i M p_i = \IC$.
Thus by \cite[Proposition 5.3]
   {Longo(Index-of-subfactors-and-statistics-of-quantum-fields-I)},
there exits only one  conditional expectation $p_i M p_i \to p_i N$.
Using part ({\emph a}) and Proposition~\ref{Prop. 8.3} we conclude
that for $p_iN \subset p_i M p_i$, the index
coincides with the minimal index.  
Thus $d_i = (\Ind(p_i N, p_i M p_i))^{\frac{1}{2}}$.
According to \cite[Theorem 5.5]{Longo(Index-of-subfactors-and-statistics-of-quantum-fields-I)}, it suffices to check that $E_0|_{N'\cap M}$ is a trace and that
\begin{equation} \label{eq: E_0(p_i) = d_i/d}
E_0(p_i) = \frac{d_i}{\sum d_i}.
\end{equation}
The first condition was proven in Lemma \ref{lem: phi is a trace}.
To check the latter, let $H_i:=p_i(L^2(M))$.
We then have
\[\vspace{.3cm}
\begin{split}
d_i&=\dim({}_{p_iN}\,L^2(p_iMp_i)\,{}_{p_iMp_i})=
\dim({}_{p_iN}\,p_i(L^2M)p_i\,{}_{p_iMp_i})\\&=
\dim({}_{p_iN}\,p_iL^2M\underset M\boxtimes L^2Mp_i\,{}_{p_iMp_i})=
\dim({}_{p_iN}\,p_iL^2M\,{}_M)=\dim(H_i{})=\tikzmath[scale=\squarescale]
	{\useasboundingbox (-11,-4) rectangle (8,4);
	\fill[rounded corners=10, fill=\colA] (-11,-10) rectangle (8,10);
	\draw[rounded corners=8, fill=\colB] (-5,-6) rectangle (3,6);
	\draw 	(-5,0) node[fill=white, draw, inner ysep=3, inner xsep=2]{$p_i$};
	}
,
\end{split}
\]
where the second equality is Lemma \ref{lem: L^2(pAp) = pL^2(A)p}, the fourth one holds by equations \eqref{eq:properties of index 2} and \eqref{eq:properties of index 4},
and the last one is given by Lemma \ref{lem: direct summand of dualizable bimodule}.
Note that $\sum d_i=d$ now follows by equation \eqref{eq:properties of index 3}.
By the definition of $E_0$, we therefore have
\[
\big(\textstyle \sum d_i\big)\cdot E_0(p_i) = d\,E_0(p_i) =
\tikzmath[scale=\squarescale]
	{\fill[rounded corners=10, fill=\colA] (-11,-10) rectangle (8,10);
	\draw[rounded corners=8, fill=\colB] (-5,-6) rectangle (3,6);
	\draw 	(-5,0) node[fill=white, draw, inner ysep=3, inner xsep=2]{$p_i$};
	}
\,=d_i. \qedhere
\]
\end{proof}

\begin{corollary}\label{cor: justification of the name minimal}
Let $N\subset M$ be infinite-dimensional factors, let $[M:N]$ be the  index, as in Definition \ref{def: min index}, and let $\Ind(N,M)$ be the minimal index, as in equation \eqref{eq: [M:N] = inf_E IndE}.
Then
\begin{equation}\label{eq: [M:N] = Ind(N,M)}
[M:N] = \Ind(N,M).
\end{equation}
\end{corollary}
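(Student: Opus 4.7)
The plan is to handle the finite-index and infinite-index cases separately, since Proposition~\ref{lem: Extremality properties of E_0} only applies in the finite-index case and Proposition~\ref{Prop. 8.3} is what bridges the gap in the infinite-index case.

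First I would dispatch the case $[M:N] < \infty$. Here the minimal conditional expectation $E_0$ is defined, and the Pimsner--Popa inequality~\eqref{eq:Pimsner-Popa inequality} already established gives $\Ind(E_0)\le [M:N]$. The reverse inequality $\Ind(E_0)\ge [M:N]$ is exactly Proposition~\ref{lem: Extremality properties of E_0}(a), so $\Ind(E_0)=[M:N]$. Moreover, Proposition~\ref{lem: Extremality properties of E_0}(b) states that $\Ind(E)>[M:N]$ for any other conditional expectation $E\neq E_0$, so $E_0$ realizes the infimum in~\eqref{eq: [M:N] = inf_E IndE}, yielding $\Ind(N,M)=\Ind(E_0)=[M:N]$.

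Next I would handle the case $[M:N]=\infty$, which is where Proposition~\ref{Prop. 8.3} does all the work. Suppose toward contradiction that $\Ind(N,M)<\infty$. Then by definition of the infimum there exists a conditional expectation $E:M\to N$ together with a finite $\lambda$ with $E(x)\ge \lambda^{-1}x$ for every $x\in M_+$. Applying Proposition~\ref{Prop. 8.3} with $\mu=\lambda^{-1}$, the bimodule ${}_NL^2(M)_M$ is dualizable, hence $[M:N]=\dim({}_NL^2M_M)^2<\infty$ by Definitions~\ref{def: statistical dimension} and~\ref{def: min index}, contradicting our assumption. Thus $\Ind(N,M)=\infty=[M:N]$, and combining both cases gives the equality~\eqref{eq: [M:N] = Ind(N,M)}.

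The substantive content of the corollary is entirely contained in the two results being cited, so there is no real obstacle here; this is essentially a bookkeeping argument. If there is any subtlety, it is in the finite-index case ensuring that the infimum in $\Ind(N,M)$ is not pushed below $[M:N]$ by some exotic conditional expectation --- but this is precisely what part (b) of Proposition~\ref{lem: Extremality properties of E_0} rules out, so no further argument is required.
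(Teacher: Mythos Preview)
Your proof is correct and is exactly the argument the paper has in mind; the paper states the corollary without proof because it follows immediately from Proposition~\ref{lem: Extremality properties of E_0} together with Proposition~\ref{Prop. 8.3}, precisely as you have spelled out. Your handling of the infinite-index case via Proposition~\ref{Prop. 8.3} is the intended bridge, and your observation that part~(b) rules out any smaller infimum in the finite-index case is the right way to close the argument.
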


\begin{warning}\label{warn: Longo index is bad}
As noted in \cite{Longo(Index-of-subfactors-and-statistics-of-quantum-fields-I)}, the equality \eqref{eq: [M:N] = Ind(N,M)} fails 
to be true, for example, for the subfactors $\IC\hookrightarrow M_n(\IC)$.
The minimal index $\Ind(N,M)$ is not a good notion of index in the case of finite dimensional factors.
\end{warning}

Now is an appropriate moment to pay our debt to Remark \ref{remark: non-zero maps are enough}, by giving a particularly mild condition that ensures that a bimodule is dualizable --- compare \cite[Theorem
4.1]{Longo(Index-of-subfactors-and-statistics-of-quantum-fields-II)}.

\begin{proposition}\label{Prop: existence of non-zero maps is enough}
Let ${}_AH_B$ and ${}_B K_A$ be irreducible bimodules between von Neumann algebras with finite-dimensional centers.
If there exist non-zero maps $\tilde R:{}_AL^2(A)_A \rightarrow
{}_AH\boxtimes_B K_A$ and
$\tilde S:{}_BL^2(B)_B \rightarrow  {}_BK\boxtimes_A H_B$, then
${}_AH_B$ and ${}_B K_A$ are dualizable.
\end{proposition}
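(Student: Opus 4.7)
The plan is to reduce to the factor case and then use $\tilde R$ to construct a conditional expectation $E\colon B' \to A$ whose Pimsner--Popa inequality is secured by the existence of $\tilde S$; dualizability of $H$ will then follow from Proposition \ref{Prop. 8.3} combined with Lemma \ref{lem: ind = dim}. First, since $H$ and $K$ are irreducible, each is supported on a single pair of minimal central summands of $A$ and $B$; via Lemma \ref{lem: L^2(pAp) = pL^2(A)p} I may therefore assume $A$ and $B$ are factors. After rescaling, arrange that $\tilde R^*\tilde R = 1_{L^2(A)}$ and $\tilde S^*\tilde S = 1_{L^2(B)}$, so that both maps are isometries onto sub-bimodules of $H\boxtimes_B K$ and $K\boxtimes_A H$ respectively.

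Now let $B' \subseteq \bfB(H)$ denote the commutant of the right $B$-action on $H$, so that $A \subseteq B'$. For $T \in B'$, the operator $T \otimes 1_K$ on $H \boxtimes_B K$ is well-defined (because $T$ commutes with the right $B$-action used to form the fusion) and is right $A$-linear, whence $\tilde R^*(T \otimes 1_K)\tilde R$ is a right $A$-linear endomorphism of $L^2(A)$; it therefore equals left multiplication by a unique element $E(T) \in A$. A direct check, using the $A$-bilinearity of $\tilde R$ and the normalization $\tilde R^*\tilde R = 1$, confirms that $E\colon B' \to A$ so defined is a normal conditional expectation: $E(1)=1$, $E(aTb)=aE(T)b$ for $a,b \in A$, and $E(T^*T)\geq 0$. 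The crucial step is to extract a Pimsner--Popa inequality $E(x) \geq c\cdot x$ for some $c>0$ and all $x \in B'_+$; this is where the non-vanishing of $\tilde S$ is essential. In the spirit of \cite[Theorem~4.1]{Longo(Index-of-subfactors-and-statistics-of-quantum-fields-II)}, $\tilde S$ will furnish a finite Pimsner--Popa-type quasi-basis $\{b_j\} \subset B'$ satisfying the Jones-type relation $\sum_j b_j\,\tilde R\tilde R^*\,b_j^* \geq 1_H$, which is precisely what is needed to bound $E$ from below.

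Once the Pimsner--Popa inequality is in hand, Proposition \ref{Prop. 8.3} implies that ${}_AL^2(B')_{B'}$ is dualizable; Lemma \ref{lem: ind = dim} then yields $\dim({}_AH_B) = \llbracket B':A\rrbracket < \infty$, so ${}_AH_B$ is dualizable. Interchanging the roles of $(\tilde R, H, A, B')$ and $(\tilde S, K, B, A')$ (with $A'$ the commutant of $A$ on $K$), the same argument gives dualizability of ${}_BK_A$. The hardest part of the plan is the explicit construction of the quasi-basis $\{b_j\}$ from the single bimodule map $\tilde S$: this amounts to translating Longo's subfactor-theoretic argument into the language of bimodules and Jones projections developed in this paper, keeping track of how $\tilde R\tilde R^* \in \mathrm{End}_{A,A}(H\boxtimes_B K)$ and $\tilde S\tilde S^* \in \mathrm{End}_{B,B}(K\boxtimes_A H)$ together realize the two sides of the would-be duality.
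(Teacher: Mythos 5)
There is a genuine gap, and it sits exactly where you say the hard work is: the passage from the mere existence of a non-zero $\tilde S$ to a Pimsner--Popa bound for $E(x)=\tilde R^*(x\otimes 1)\tilde R$ is asserted ("$\tilde S$ will furnish a finite quasi-basis $\{b_j\}$ with $\sum_j b_j\,\tilde R\tilde R^*\,b_j^*\ge 1$") but never carried out, and it is not a routine translation. Producing such a finite quasi-basis is essentially equivalent to the finiteness of the index $[B':A]$ that you are trying to prove (in the paper, quasi-bases are obtained from a Pimsner--Popa bound via Popa's theorem in the proof of Proposition \ref{Prop. 8.3}, not from a single non-zero intertwiner). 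Worse, your sketch never uses irreducibility at this point, whereas irreducibility is indispensable: if $H=H_1\oplus H_2$ with $H_1$ dualizable and $H_2$ not, and $K=\bar H_1$, then non-zero $\tilde R$ and $\tilde S$ exist but $H$ is not dualizable, and indeed your $E$ kills the projection onto $H_2$, so no Pimsner--Popa inequality (nor even faithfulness) can hold. So the plan as written cannot be completed without injecting the irreducibility hypothesis into the analytic step.

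The paper's proof takes a different route at precisely this point. After the same reduction (factors, $\tilde R,\tilde S$ isometries) and the same definition of $E\colon B'\to A$, it also defines the expectation $F\colon A'\to B$ from $\tilde S$, and then uses irreducibility in the form $A'\cap B'=\mathrm{End}({}_AH_B)=\IC$ to show both $E$ and $F$ are \emph{faithful}: for a non-zero projection $p\in B'$ one has $\bigvee_{u\in U(A)}upu^*=1$, so $E(p)=0$ would force $1=E(1)=0$. Finiteness of $[B':A]$ is then not derived from a Pimsner--Popa bound at all, but quoted from \cite[Proposition 4.4]{Longo(Index-of-subfactors-and-statistics-of-quantum-fields-II)}, which turns the existence of these faithful expectations in both directions into finite index; Lemma \ref{lem: ind = dim} then gives dualizability of ${}_AH_B$, and symmetrically of ${}_BK_A$ (your final steps via Lemma \ref{lem: ind = dim} agree with the paper's). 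If you want to keep your more self-contained plan, you would have to prove a substitute for Longo's Proposition 4.4, i.e., show that the faithfulness coming from irreducibility upgrades to an actual Pimsner--Popa bound; that is the missing argument, not a bookkeeping exercise with Jones projections.
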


\begin{proof}
\def\colA{black!10} \def\colB{black!30}
We denote $\tilde R$ by \,$\tikzmath[scale=.085]{
\fill[fill=\colA] (-4,-2.5) [rounded corners=3.5]-- (-4,2.5) --(4,2.5)
-- (4,-2.5) [sharp corners]-- (2,-2.5) [rounded corners = 4.8]--
(2,.5) -- (-2,.5) [sharp corners]-- (-2,-2.5) [rounded corners=3.5]--
cycle; \fill[fill=\colB] (-2,-2.5) -- (2,-2.5) [rounded corners =
4.8]-- (2,.5) -- (-2,.5) [sharp corners]-- cycle; \draw (2,-2.5)
[rounded corners = 4.8]-- (2,.5) -- (-2,.5) -- (-2,-2.5);}$\,
and $\tilde S$ by \,$\tikzmath[scale=.085]{ \fill[fill=\colB]
(-4,-2.5) [rounded corners=3.5]-- (-4,2.5) --(4,2.5) -- (4,-2.5)
[sharp corners]-- (2,-2.5) [rounded corners = 4.8]-- (2,.5) -- (-2,.5)
[sharp corners]-- (-2,-2.5) [rounded corners=3.5]-- cycle;
\fill[fill=\colA] (-2,-2.5) -- (2,-2.5) [rounded corners = 4.8]--
(2,.5) -- (-2,.5) [sharp corners]-- cycle;\draw (2,-2.5) [rounded
corners = 4.8]-- (2,.5) -- (-2,.5) -- (-2,-2.5);}$\,. 
We may assume without loss of generality that $A$ and $B$ are factors,
and that $\tilde R$ and $\tilde S$ are isometries.
Define conditional expectations $E:B'\to A$ and $F:A'\to B$ by
\[
E:x\,\mapsto\,\tikzmath[scale=\squarescale]
       {\fill[rounded corners=10, fill=\colA] (-13,-13) rectangle (12,13);
       \draw[rounded corners=10, fill=\colB] (-5,-8) rectangle (5,8);
       \coordinate (x) at (-13,0); \coordinate (x') at (-13.1,0);
       \draw   (-5,0) node(a)[fill=white]{$b$};
       \fill[fill=white] (a.north east -| x') -- (a.north east) -- (a.south
east) -- (a.south east -| x');
       \draw (a.north east -| x) -- (a.north east) -- (a.south east) --
(a.south east -| x);
       \node(a) at (-8,0) [inner sep=0]{$x$};
       }\,\,,
\qquad
F:y\,\mapsto\,\tikzmath[scale=\squarescale]
       {\fill[rounded corners=10, fill=\colB] (-12,-13) rectangle (13,13);
       \draw[rounded corners=10, fill=\colA] (-5,-8) rectangle (5,8);
       \coordinate (x) at (13,0); \coordinate (x') at (13.1,0);
       \draw   (5,0) node(a)[fill=white]{$b$};
       \fill[fill=white] (a.north west -| x') -- (a.north west) -- (a.south
west) -- (a.south west -| x');
       \draw (a.north west -| x) -- (a.north west) -- (a.south west) --
(a.south west -| x);
       \node(a) at (8,0) [inner sep=0]{$y$};
       }\,\,,
\]
where the commutants are taken on $H$.

Denote by $U(A)$ the group of unitary elements of $A$.  For any non-zero projection $p\in B'$, the least upper bound
$\bigvee_{u\in U(A)} upu^*$
belongs to $A'\cap B'=\mathrm{End}({}_AH_B)=\IC$ and is therefore equal to $1$.
If $E(p)$ were zero, we would have
\[
\textstyle1=E(1)=E\big( \bigvee upu^*\big)=\bigvee E(upu^*)=\bigvee uE(p)u^*=0 \, .
\]
Thus the conditional expectation $E$ is faithful, and similarly $F$ is faithful.
It follows from \cite[Proposition
4.4]{Longo(Index-of-subfactors-and-statistics-of-quantum-fields-II)}
that the inclusion $A\subset B'$ has finite index.
By Lemma \ref{lem: ind = dim}, we then have $\dim({}_AH_B)=\llbracket
B':A\rrbracket<\infty$, and so ${}_AH_B$ is dualizable.
The bimodule ${}_BK_A$ is dualizable for similar reasons.
\end{proof}

We finish this section by establishing some useful inequalities for the matrix of statistical dimensions $\llbracket B : A \rrbracket$ --- recall Definition \ref{def: [[ B : A ]]} --- associated to a finite homomorphism $A \to B$ of von Neumann algebras with finite-dimensional centers.  Our proofs are all based on the Pimsner--Popa inequality.

Let $A_1,B_1\subset \bfB(H_1)$ and $A_2,B_2\subset \bfB(H_2)$ be von Neumann algebras such that $A_i$ commutes with $B_i$.
The algebras $A_1\vee B_1\subset \bfB(H_1)$ and $A_2\vee B_2\subset \bfB(H_2)$ are therefore completions of the corresponding algebraic tensor products
$A_1\otimes_\alg B_1$ and $A_2\otimes_\alg B_2$.
Given homomorphisms $\alpha:A_1\to A_2$ and $\beta:B_1\to B_2$, 
the induced map $\alpha\otimes \beta:A_1\otimes_\alg B_1\to A_2\otimes_\alg B_2$ 
does not always extend to a map $A_1\vee B_1\to A_2\vee B_2$.
This will however be the case in the presence of 
an $\alpha\otimes \beta$-equivariant surjective  homomorphisms
$h:H_1\to H_2$.

\begin{lemma}\label{lem: A1v B1 --> A2 v B2 finite}
Let $A_i$, $B_i$, $H_i$, and $h$ be as above.
If the algebras $A_i$, $B_i$, and $A_i\vee B_i$ have finite-dimensional centers and the homomorphisms $\alpha:A_1\to A_2$ and $\beta:B_1\to B_2$ are finite,
then the induced map
\begin{equation}\label{eq: A1 v B_1 --> A2 v B2}
\alpha\otimes \beta\,:\, A_1\vee B_1\to A_2\vee B_2
\end{equation} 
is a finite homomorphism.
\end{lemma}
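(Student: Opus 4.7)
My plan is to reduce to the Pimsner--Popa inequality: by Proposition~\ref{Prop. 8.3}, to show that $\alpha \otimes \beta$ is finite it suffices to produce a conditional expectation $E : A_2 \vee B_2 \to A_1 \vee B_1$ together with a positive constant $\mu$ such that $E(x) \geq \mu\, x$ for every $x \in (A_2 \vee B_2)_+$.

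Finiteness of $\alpha$ and $\beta$ supplies minimal conditional expectations $E_\alpha : A_2 \to A_1$ and $E_\beta : B_2 \to B_1$ which, after blockwise application of \eqref{eq:Pimsner-Popa inequality} over the minimal central projections of the $A_i$ and $B_i$, satisfy Pimsner--Popa bounds $E_\alpha(a) \geq \mu_\alpha\, a$ for $a \in (A_2)_+$ and $E_\beta(b) \geq \mu_\beta\, b$ for $b \in (B_2)_+$, with $\mu_\alpha, \mu_\beta > 0$. Their spatial tensor product $E_\alpha \bar\otimes E_\beta : A_2 \bar\otimes B_2 \to A_1 \bar\otimes B_1$ is a normal conditional expectation, and since $E_\alpha - \mu_\alpha\, \id$ and $E_\beta - \mu_\beta\, \id$ are completely positive, so is their tensor product, yielding the product bound $(E_\alpha \bar\otimes E_\beta)(z) \geq \mu_\alpha \mu_\beta\, z$ for all $z \in (A_2 \bar\otimes B_2)_+$. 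Equivalently, $\alpha \bar\otimes \beta$ is already finite, by the external-tensor multiplicativity of statistical dimension \eqref{eq:properties of index 5}.

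The crucial step is to descend $E_\alpha \bar\otimes E_\beta$ to the joins. Since $A_i$ commutes with $B_i$ inside $\bfB(H_i)$, the multiplication map $a \otimes b \mapsto ab$ extends to a normal surjective $*$-homomorphism $\pi_i : A_i \bar\otimes B_i \twoheadrightarrow A_i \vee B_i$; its kernel has the form $(1-p_i)(A_i \bar\otimes B_i)$ for a central projection $p_i \in Z(A_i \bar\otimes B_i)$. The existence of the $\alpha\otimes\beta$-equivariant $h$, which is precisely what guarantees that $\alpha \otimes \beta$ descends to a homomorphism on the joins, translates into the compatibility $\pi_2 \circ (\alpha \bar\otimes \beta) = (\alpha \otimes \beta) \circ \pi_1$, and in particular $(\alpha \bar\otimes \beta)(p_1) \geq p_2$. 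Granted the descent property $(E_\alpha \bar\otimes E_\beta)(\ker \pi_2) \subseteq \ker \pi_1$, the induced quotient map $E : A_2 \vee B_2 \to A_1 \vee B_1$ is a normal conditional expectation inheriting the bound $\mu_\alpha \mu_\beta$, and Proposition~\ref{Prop. 8.3} then finishes the proof.

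The main obstacle is this descent: the inclusion $(E_\alpha \bar\otimes E_\beta)(\ker \pi_2) \subseteq \ker \pi_1$ is not automatic from $(\alpha \bar\otimes \beta)(p_1) \geq p_2$ alone. I would expect the equivariance of $h$ to force the stronger relation $(\alpha \bar\otimes \beta)(p_1) = p_2$, whereupon the $A_1 \bar\otimes B_1$-bilinearity of $E_\alpha \bar\otimes E_\beta$ makes the descent immediate. As a backup, one can work in the corner algebras: restrict $E_\alpha \bar\otimes E_\beta$ to $p_2(A_2 \bar\otimes B_2) \cong A_2 \vee B_2$ and analyze its range via Lemma~\ref{lem: L^2 of pAp --> pBp} applied at the corners cut by $p_1$ and $p_2$.
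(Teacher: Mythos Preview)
Your approach via the tensor product of conditional expectations is natural, but the gap you flag is real and cannot be closed along the lines you suggest. The equality $(\alpha\bar\otimes\beta)(p_1)=p_2$ that you hope $h$ will force simply does not hold in general. For a small counterexample, take $A_1=A_2=B_1=B_2=\IC^2$ with $\alpha=\beta=\id$, let $H_1=\IC^4$ carry the faithful action of $A_1\bar\otimes B_1\cong\IC^4$, and let $H_2=\IC^3$ carry the action that kills one of the four minimal central projections; the projection $h:\IC^4\to\IC^3$ onto the first three coordinates is equivariant. Here $p_1=1$ while $p_2$ has rank three, so $(\alpha\bar\otimes\beta)(p_1)=1\neq p_2$, and since $E_\alpha\bar\otimes E_\beta=\id$ the descent requirement $(E_\alpha\bar\otimes E_\beta)(\ker\pi_2)\subseteq\ker\pi_1$ becomes $\ker\pi_2\subseteq 0$, which is false. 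The lemma's conclusion is of course still true in this example (the induced map $\IC^4\to\IC^3$ is a projection, hence finite), but your mechanism for producing the conditional expectation breaks down. Your backup via corners does not obviously repair this: cutting by $p_1$ does nothing here, and cutting by $p_2$ on the target side does not help because the range of $E_\alpha\bar\otimes E_\beta$ lives upstairs in $A_1\bar\otimes B_1$.

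The paper sidesteps the descent problem entirely by never forming the spatial tensor product. Instead it factors
\[
A_1\vee_{H_1}B_1\;\longrightarrow\;A_1\vee_{H_2}B_1\;\longrightarrow\;A_2\vee_{H_2}B_1\;\longrightarrow\;A_2\vee_{H_2}B_2,
\]
changing one algebra at a time. The first arrow is a surjective $*$-homomorphism between finite-center algebras, hence finite. For the middle arrow one passes to commutants on $H_2$: the minimal conditional expectation $E_0:A_1'\to A_2'$ (which exists because $\alpha$ is finite, via Corollary~\ref{cor: [B:A]=[A':B']}) restricts to a conditional expectation $A_1'\cap B_1'\to A_2'\cap B_1'$, since $\beta(B_1)\subset B_2\subset A_2'$ and the bimodule property of $E_0$ then forces $E_0(x)\in B_1'$ whenever $x\in B_1'$. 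The Pimsner--Popa bound survives restriction, so Proposition~\ref{Prop. 8.3} and another application of Corollary~\ref{cor: [B:A]=[A':B']} give finiteness of the middle arrow; the last arrow is handled symmetrically. The key point you are missing is that working with commutants inside a \emph{fixed} Hilbert space $H_2$ makes the restriction of the conditional expectation automatic, whereas your tensor-product approach has to confront the mismatch between the two different quotients $\pi_1$ and $\pi_2$.
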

\begin{proof}
Let us write $\vee_{H_1}$ and $\vee_{H_2}$ for the completions inside $\bfB(H_1)$ and $\bfB(H_2)$, respectively.
We can then factor the map \eqref{eq: A1 v B_1 --> A2 v B2} as
\[
A_1\vee_{H_1} B_1\longrightarrow A_1\vee_{H_2} B_1\longrightarrow A_2\vee_{H_2} B_1\longrightarrow A_2\vee_{H_2} B_2.
\]
The first map is a projection, and therefore finite.
We analyze the second map --- the third one is similar.
From now on let $\vee$ mean $\vee_{H_2}$.
The restriction to $A_1'\cap B_1'=(A_1\vee B_1)'$ of the minimal conditional expectation $E_0:A_1'\to A_2'$
satisfies the same Pimsner--Popa bound as $E_0$.
The homomorphism $(A_1\vee B_1)'\to (A_2\vee B_1)'$ is therefore finite by Proposition \ref{Prop. 8.3}.
Corollary \ref{cor: [B:A]=[A':B']} finishes the argument.
\end{proof}

\begin{proposition}\label{prop: Lemma C5}
Let $A$ be an infinite-dimensional factor sitting in a von Neumann algebra $B$.
If there exists a conditional expectation $E : B \to A$ satisfying the Pimsner--Popa bound
\begin{equation}\label{eq: bound for mu}
E(x) \ge \mu^{-1} x\qquad   \forall x \in  B_+,
\end{equation}
then $B$ has finite-dimensional center.
Furthermore, letting $p_i$ be the minimal central projections of $B$, we then have $\sum [p_iB:A]\le\mu$.
In other words, we have the inequality
\[
\|\llbracket B : A \rrbracket\| \le \sqrt{\mu}\,,
\]
where $\|\,\,\|$ stands for the $\ell^2$ norm of a vector.
\end{proposition}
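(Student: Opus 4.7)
The plan is to deduce both conclusions by restricting the conditional expectation $E$ to the center of $B$ and to each summand $p_iB$, and then invoking the identification of the minimal index with the Longo index in Corollary~\ref{cor: justification of the name minimal}.

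First I would show that $Z(B)$ is finite-dimensional. Since $A$ is a factor, $Z(A)=\IC$, so for any $z\in Z(B)$ the $A$-bilinearity of $E$ yields $aE(z)=E(az)=E(za)=E(z)a$ for every $a\in A$, and hence $E(z)\in Z(A)=\IC$. Applied to a non-zero projection $q\in Z(B)$, the Pimsner--Popa bound $E(q)\cdot 1 \ge \mu^{-1}q$ forces $E(q)\ge \mu^{-1}$ (evaluating both sides on any unit vector in the range of $q$). If $q_1,\dots,q_n\in Z(B)$ are pairwise orthogonal non-zero projections, then $n\mu^{-1}\le\sum_i E(q_i)=E(\sum_i q_i)\le E(1)=1$, so $n\le \mu$. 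This bounds the number of orthogonal projections in $Z(B)$ and forces $Z(B)$ to be finite-dimensional.

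Next, let $p_1,\dots,p_n\in Z(B)$ be the minimal central projections, so that $B=\bigoplus_i p_iB$ with each $p_iB$ a factor. Since $A$ is simple, the unital homomorphism $\iota_i:A\to p_iB$, $a\mapsto ap_i$, is injective; since $A$ is infinite-dimensional, each $p_iB$ is an infinite-dimensional factor. Setting $c_i:=E(p_i)\in\IR_{>0}$ (these are scalars by the previous paragraph), we have $\sum_i c_i=E(1)=1$ and each $c_i\ge\mu^{-1}$. Now define
\[
\widetilde E_i:p_iB\longrightarrow \iota_i(A),\qquad \widetilde E_i(x):=c_i^{-1}E(x)p_i.
\]
Using that $p_i\in Z(B)$ commutes with everything and that $xp_i=x$ for $x\in p_iB$, a direct computation shows $\widetilde E_i(p_i)=p_i$ and $\widetilde E_i(\iota_i(a)x\iota_i(a'))=\iota_i(a)\widetilde E_i(x)\iota_i(a')$ for $a,a'\in A$. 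For $x\in (p_iB)_+$, multiplying the bound $E(x)\ge \mu^{-1}x$ by the central projection $p_i$ gives $\widetilde E_i(x)\ge (c_i\mu)^{-1}x$.

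Applying Proposition~\ref{Prop. 8.3} we see that $\iota_i$ is a finite homomorphism, and by Corollary~\ref{cor: justification of the name minimal} (which applies because both $A$ and $p_iB$ are infinite-dimensional factors) the minimal index of $\iota_i(A)\subset p_iB$ equals the Longo index, which is bounded above by $\Ind(\widetilde E_i)\le c_i\mu$. Summing,
\[
\sum_i [p_iB:A]\;\le\;\sum_i c_i\mu\;=\;\mu.
\]
Finally, since $A$ is a factor, $\llbracket B:A\rrbracket$ is a row vector whose $i$-th entry is $\dim({}_AL^2(p_iB)_{p_iB})=\sqrt{[p_iB:A]}$ (using Lemma~\ref{lem: L^2(pAp) = pL^2(A)p}), so $\|\llbracket B:A\rrbracket\|^2=\sum_i [p_iB:A]\le \mu$, as required.

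The main obstacle is Step~3: verifying that $\widetilde E_i$ is an honest conditional expectation onto $\iota_i(A)$ satisfying the right Pimsner--Popa bound. The subtlety is that $p_i\notin A$, so one cannot freely pull $p_i$ through $E$; the argument relies crucially on $p_i\in Z(B)$ commuting with $A$ and on the identity $xp_i=x$ for $x\in p_iB$. Everything else is bookkeeping and an appeal to the already-established identification of the minimal index with the Longo index.
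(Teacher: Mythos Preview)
Your proof is correct and follows essentially the same route as the paper: first bound the number of orthogonal central projections in $B$ via the Pimsner--Popa inequality applied to each one (using that $E$ of a central projection lands in $Z(A)=\IC$), then renormalize $E$ on each summand $p_iB$ to obtain a genuine conditional expectation with bound $(c_i\mu)^{-1}$, and finally sum the resulting index bounds. Your formulation is in fact slightly more careful than the paper's in that you explicitly work with $\iota_i(A)\subset p_iB$ rather than tacitly identifying $A$ with its image, and you make explicit the appeal to Corollary~\ref{cor: justification of the name minimal} (and the need for infinite-dimensionality) that the paper leaves implicit in the line ``from which it follows that $[B_i:A]\le F_i(p_i)\mu$''.
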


\begin{proof}
Let $q_i\in B$ be non-zero central projections adding up to $1$.
Since
\[
aE(q_i) = E(aq_i) = E(q_ia) = E(q_i)a
\]
for all $a\in A$, the element $E(q_i)$ is central in $A$, and hence a scalar.
From the bound \eqref{eq: bound for mu}, we conclude that $E(q_i) \ge \mu^{-1}$.
Summing up over all indices $i$, we deduce
\[
\textstyle 1=E(1)=E\big(\sum q_i\big)=\sum_i E(q_i) \ge \sum_i \mu^{-1},
\]
from which it follows that the number of $q_i$'s is at most $\mu$.
The center of $B$ is therefore finite-dimensional.

Now let $p_i$ be the minimal central projections of $B$, and let $B_i:=p_i B$.
The restriction $F_i:=E|_{B_i}:B_i\to A$ satisfies all the properties for being a conditional expectation,
except that it does not send the unit $p_i$ of $B_i$ to $1$.
The map $E_i:=F_i(p_i)^{-1}F_i$ is therefore a conditional expectation.
It satisfies the bound
\[
E_i(x) \ge F_i(p_i)^{-1}\mu^{-1} x\qquad   \forall x \in  B_{i+},
\]
from which it follows that $[B_i:A]\le F_i(p_i)\mu$.
Adding up over indices, we get that $\sum[B_i:A]\le \sum F_i(p_i)\mu =  E(\sum p_i)\mu = E(1)\mu = \mu$.
\end{proof}

The following lemma is, in some sense, dual to Proposition \ref{prop: Lemma C5}:

\begin{proposition}\label{prop: mu ge sum_i [e_i E e_i : e_iA]}
Let $A=\oplus A_i$ be a sum of finitely many infinite-dimensional factors $A_i$,
and suppose that $A$ is a subalgebra of some factor $B$.
Let $E : B \to A$ be a conditional expectation satisfying
\begin{equation}\label{eq: bound for mu bis}
E(x) \ge \mu^{-1} x\qquad   \forall x \in  B_+.
\end{equation}
Letting $p_i$ be the minimal central projections of $A$, we have
\(
\sum\, [p_i B p_i \!:\! A_i]\le\mu
\).
In other words, we have the inequality
\[
\|\llbracket B : A \rrbracket\| \le \sqrt{\mu}\,,
\]
where $\|\,\,\|$ stands for the $\ell^2$ norm of a vector.
\end{proposition}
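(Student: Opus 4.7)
The plan is to reduce this proposition to its companion Proposition \ref{prop: Lemma C5} by passing to commutants. Regard $A \subset B$ as subalgebras of $\bfB(L^2 B)$ in the standard representation, and consider the commutants $B' \subset A'$. Since $B$ is a factor, so is $B'$ (and it is infinite-dimensional, since $B$ is); and since $Z(A)$ has minimal central projections $p_1, \ldots, p_n$, the commutant $A'$ has finite-dimensional center $Z(A') = Z(A)$ with corresponding minimal central projections $q_1, \ldots, q_n$ (one may take $q_i = J p_i J$, where $J$ denotes the modular conjugation of $L^2 B$).

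The crux of the argument is to construct from $E \colon B \to A$ a conditional expectation $E^\vee \colon A' \to B'$ that inherits the Pimsner--Popa bound $\mu^{-1}$. This $E^\vee$ is the dual conditional expectation associated with $E$, and admits several equivalent constructions: via the theory of operator-valued weights of Haagerup and Kosaki (the dual OVW $\widehat{E} \colon A' \to B'$ has $\widehat{E}(1) = \Ind(E) \le \mu$, and normalizing yields a CE); via the Jones basic construction $A \subset B \subset B_1 := \langle B, e_A \rangle$ on $L^2(B, E)$, whose dual CE $E_1 \colon B_1 \to B$ transports to a CE $A' \to B'$ under the natural identification of $B_1$ with the relevant algebra on the commutant side; or directly through bimodule duality, since by Proposition \ref{Prop. 8.3} the bimodule ${}_A L^2 B_B$ is dualizable, so by complex conjugation (Corollary \ref{cor: contragredient bimodule}) the bimodule ${}_{B'}(L^2 B)_{A'}$ is also dualizable, and applying formula \eqref{eq: minimal conditional expectation} to the latter produces a CE $A' \to B'$.

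With $E^\vee$ in hand and satisfying the bound $\mu^{-1}$, apply Proposition \ref{prop: Lemma C5} to the inclusion $B' \subset A'$, where $B'$ is an infinite-dimensional factor: the proposition yields $\sum_j [q_j A' : B'] \le \mu$. To translate this back, invoke Corollary \ref{cor: [B:A]=[A':B']}, which gives $\llbracket B : A \rrbracket = \llbracket A' : B' \rrbracket^T$, so the $\ell^2$-norms of these two matrices of statistical dimensions coincide. Applying Lemma \ref{lem: non-factor in factor} on both sides identifies these squared norms with $\sum_i [p_i B p_i : A_i]$ and $\sum_j [q_j A' : B']$ respectively. The desired inequality $\sum_i [p_i B p_i : A_i] \le \mu$ follows.

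The main obstacle is verifying that $E^\vee$ genuinely inherits the Pimsner--Popa bound $\mu^{-1}$. Preservation of such a bound under commutant / basic-construction duality is classical for factor-to-factor inclusions (see \cite{Pimsner-Popa} and \cite[Section 3.4]{Kosaki(Type-III-factors-and-index-theory)}), but here $A$ and $A'$ fail to be factors, so some care is required; concretely, one casts the Pimsner--Popa inequality as an operator-theoretic statement involving the Jones projection $e_A \in B_1$ and checks its invariance under Tomita--Takesaki conjugation.
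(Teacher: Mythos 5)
Your route—pass to the commutant inclusion $B'\subset A'$ on $L^2B$, transport the expectation, apply Proposition~\ref{prop: Lemma C5} to the factor $B'$ sitting inside $A'$, and translate back via Corollary~\ref{cor: [B:A]=[A':B']} and Lemma~\ref{lem: non-factor in factor}—is genuinely different from the paper's proof, which instead tensors with a type {\it III} factor to make the $p_i$ equivalent, writes $B\cong M_n(C)$, and tests the bound \eqref{eq: bound for mu bis} against an explicit projection $Q$ built from Jones projections $e_i$ and partial isometries $u_{ij}$. Your bookkeeping on the commutant side is essentially sound, up to two small slips: $Z(A')=Z(A)$, so the minimal central projections of $A'$ are the $p_i$ themselves ($J p_i J$ lies in $B'$ and is in general not central in $A'$), and Lemma~\ref{lem: non-factor in factor} as stated treats a non-factor inside a factor, so on the $B'\subset A'$ side you need its mirror (which is in effect what the norm-form conclusion of Proposition~\ref{prop: Lemma C5} supplies). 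With $[p_iA'p_i:p_iB']=[p_iBp_i:A_i]$ on the corner $p_iL^2B$, the translation works.

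The genuine gap is exactly the step you flag and then wave at: the existence of a conditional expectation $E^\vee:A'\to B'$ satisfying the bound with the \emph{same} constant $\mu$. This is not an invariance statement and cannot be obtained by ``checking invariance under Tomita--Takesaki conjugation'' (note $J_BAJ_B\subseteq B'$ is not $A'$, and more importantly the optimal Pimsner--Popa constant is simply not preserved by the duality in general). Concretely, for $\IC\subset M_n(\IC)$ with the trace expectation the optimal constant is $n$, whereas every conditional expectation from $A'\cong M_n\otimes M_n$ onto $B'\cong 1\otimes M_n$ has optimal constant at least $n^2$ (test on entangled rank-one projections); correspondingly the proposition itself is false without the hypothesis that the $A_i$ are infinite-dimensional, yet your argument never uses that hypothesis at the decisive point. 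To close the gap you must feed in the same input the paper uses: by \cite[Theorem 1.1.6]{Popa(Classification-of-subfactors-and-their-endomorphisms)} (this is where infinite-dimensionality of the $A_i$ enters) the optimal constant for $E$ equals $\|E^{-1}(1)\|$, so $E^{-1}(1)\le\mu$; since $E^{-1}(1)$ lies in the extended positive part of $Z(B)$ and $B$ is a factor, it is a scalar $c\le\mu$, and $E^\vee:=c^{-1}E^{-1}$ is a conditional expectation with $(E^\vee)^{-1}(1)=c$; finally the Baillet--Denizeau--Havet inequality $F(x)\ge\|F^{-1}(1)\|^{-1}x$ for finite-index expectations gives $E^\vee(x)\ge c^{-1}x\ge\mu^{-1}x$ on $A'_+$. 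With that step spelled out your proof does go through, but as written the central claim is unproven and the proposed justification would fail.
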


\begin{proof}
Under our assumption on $A$ the optimal
$\mu$ satisfying~\eqref{eq: bound for mu bis} can be identified
with the Kosaki index $\| E^{-1}(1) \|$ of the conditional expectation $E$, 
see \cite[Theorem 1.1.6]
 {Popa(Classification-of-subfactors-and-their-endomorphisms)}.
By its definition \cite{Kosaki(Extension-of-Jones-index-to-arbitrary-factors), Kosaki(Type-III-factors-and-index-theory)}, the Kosaki index does not change under tensor 
product with another factor.
In particular, given a type {\it III} factor $R$, we conclude that the conditional expectation 
$E \otimes R:B\,\bar\otimes\,R\to A\,\bar\otimes\,R$ satisfies
the same bound \eqref{eq: bound for mu bis} as $E$.
The index of $A_i\otimes R$ in $p_i (B\otimes R) p_i$ being equal to
that of $A_i$ in $p_i B p_i$, we may assume without 
loss of generality that $B$ is a type {\it III} factor.

Let us define
\(
B_{ij} := p_i B\, p_j.
\)
If $B$ is a type {\it III} factor, then the projections $p_i$ are all Murray-von Neumann equivalent;
we can therefore identify each matrix block $B_{ij}$ with a given algebra, say $C$, and get an isomorphism
\[
B=\bigoplus_{ij} B_{ij}\cong M_n(C).
\]
Taking the composite  $B_{ii} \hookrightarrow B  \xrightarrow{E}  A  \twoheadrightarrow  A_i$, we get a conditional expectation $E_i : B_{ii} \to A_i$.
Let $\lambda_i$ be the smallest number for which the Pimsner--Popa inequality
\[
E_i(x) \ge \lambda_i^{-1} x\qquad   \forall x \in  B_{ii+}
\]
holds, and note that there exist projections $e_i \in  B_{ii}$ such that $E_i(e_i) = \lambda_i^{-1} p_i$;
for example, we can take $e_i$ to be a Jones projection as in the proof of Proposition \ref{lem: Extremality properties of E_0}{\it a}.

Let $u_{ij} \in C$ be partial isometries with $u_{ij} u_{ij}^* = e_i$, $u_{ij}^* = u_{ji}$, and $u_{ij} u_{jk} = u_{ik}$.
In particular, we have $u_{ii} = e_i$.
Consider now the projection $Q\in M_n(C)$ given by
\[
Q_{ij} := \textstyle\frac{\,\sqrt{\lambda_i \lambda_j}\,}{\underset k \sum \lambda_k}\, u_{ij}.
\]
We then have
\[
E(Q) = \bigoplus E_i(Q_{ii}) =  \bigoplus E_i\big({\textstyle\frac{\lambda_i}{\underset k \sum \lambda_k}e_i}\big)
=\bigoplus {\textstyle\frac{\lambda_i}{\underset k \sum \lambda_k}} E_i(e_i)
=\bigoplus {\textstyle\frac{1}{\underset k \sum \lambda_k}} p_i=\textstyle\frac{\textstyle 1}{\,\underset k \sum\, \textstyle \lambda_k^{\phantom{t}}}.
\]
Combined with the bound \eqref{eq: bound for mu bis}, the above estimate shows that $\mu\ge \sum\lambda_k$.
To finish the proof,
we use the inequality $\lambda_i \ge [p_i B p_i : p_iA]$, which follows from \eqref{eq: characterization Pimsner-Popa 2} and \eqref{eq: characterization Pimsner-Popa 1}.
\end{proof}

\begin{remark}
We expect that, analogously to Proposition \ref{prop: Lemma C5}, when $A \subset B$ with $B$ a factor,
the existence of a conditional expectation $B \to A$ satisfying a Pimsner--Popa bound actually implies that $A$ has finite-dimensional center.
\end{remark}

Given the results of Propositions \ref{prop: Lemma C5} and \ref{prop: mu ge sum_i [e_i E e_i : e_iA]} it is natural to ask the following:

\begin{question}
Let $A\subset B$ be von Neumann algebras with finite-dimensional center, and let $E:B\to A$ be a conditional expectation 
satisfying the Pimsner--Popa bound ${E(x) \ge \mu^{-1} x}$, $\forall x \in  B_+$.
For which norm $\|\,\,\|$ on matrices do we then get the inequality
$\|\llbracket B : A \rrbracket\| \le \sqrt{\mu}\,$ ?
\end{question}

Finally, we use the previous two propositions to explain the relationship between 
index and the operations of relative commutant and of completed tensor product.

\begin{corollary}
Let $N\subset M\subset A\subset \bfB(H)$ be subalgebras with $N$ and $M$ factors and $[M:N] < \infty$.
Suppose that one of the two relative commutants $N'\cap A$ or $M'\cap A$ is a factor, and that the other one has finite-dimensional center.
In this case,
\[
\big\| \llbracket N'\cap A : M'\cap A\rrbracket \big\|\, \,\,\le\,\, \llbracket M:N\rrbracket.
\]
\end{corollary}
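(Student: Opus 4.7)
The plan is to construct a conditional expectation $E\colon N'\cap A\to M'\cap A$ satisfying the Pimsner--Popa bound $E(y)\ge [M:N]^{-1}\,y$ for all $y\in(N'\cap A)_+$, and then invoke Proposition~\ref{prop: Lemma C5} or~\ref{prop: mu ge sum_i [e_i E e_i : e_iA]}. The factoriality hypothesis places the inclusion $M'\cap A\subset N'\cap A$ squarely in the scope of exactly one of these propositions: if $M'\cap A$ is a factor and $N'\cap A$ has finite-dimensional center, Proposition~\ref{prop: Lemma C5} applies with $A$ there being $M'\cap A$ and $B$ being $N'\cap A$; if instead $N'\cap A$ is a factor and $M'\cap A$ has finite-dimensional center (hence is a finite direct sum of factors), Proposition~\ref{prop: mu ge sum_i [e_i E e_i : e_iA]} applies. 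In either case, setting $\mu=[M:N]$ yields $\|\llbracket N'\cap A:M'\cap A\rrbracket\|\le\sqrt{[M:N]}=\llbracket M:N\rrbracket$, which is the claim.

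To construct $E$, I would start from the minimal conditional expectation $E_0\colon M\to N$, whose sharp Pimsner--Popa bound $E_0(m)\ge[M:N]^{-1}m$ for $m\in M_+$ follows from the finiteness of $[M:N]$ together with~\eqref{eq:Pimsner-Popa inequality} and Proposition~\ref{lem: Extremality properties of E_0}. The strategy is to build a spatial dual of $E_0$ acting between the commutants, and to check that it preserves $A$. One concrete route is via a Pimsner--Popa basis $\{m_i\}\subset M$ for $N\subset M$, characterized by the identity $\sum_i m_i e_N m_i^*=1$ in the Jones basic construction $\langle M,e_N\rangle$. For $y\in N'\cap A$, one then sets
\[
E(y)\;:=\;[M:N]^{-1}\sum_i m_i\,y\,m_i^*,
\]
with the sum interpreted ultraweakly. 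Since each $m_i\in M\subset A$ and $y\in A$, every summand lies in $A$; manipulations using the Pimsner--Popa reconstruction identity, combined with the fact that $y$ commutes with the $N$-valued quantities $E_0(\cdot)$ that appear, show that $E(y)$ commutes with $M$, so $E(y)\in M'\cap A$. Unitality is the identity $\sum_i m_i m_i^*=[M:N]\cdot 1$ (a standard consequence of the basis relation, obtained by applying the conditional expectation $\langle M,e_N\rangle\to M$ to $\sum_i m_i e_N m_i^*=1$), and positivity and $M'\cap A$-bilinearity are immediate.

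The analytic heart of the argument is the Pimsner--Popa bound $E(y)\ge[M:N]^{-1}y$ on $(N'\cap A)_+$. This is the spatial dual of the Pimsner--Popa bound for $E_0$: by Kosaki's duality between conditional expectations and their dual operator-valued weights~\cite{Haagerup(Operator-valued-weights)}, the Pimsner--Popa constant is preserved under the duality $M\to N$ versus $N'\to M'$, and the minimality of $E_0$ ensures its dual is itself a finite conditional expectation rather than merely an unbounded operator-valued weight, with dual index $[M:N]$ and Pimsner--Popa constant $[M:N]^{-1}$. The basis formula above identifies $E$ with this canonical spatial dual, restricted from $\bfB(H)$ to $N'\cap A$, so the bound transfers.

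The main obstacle is the interplay of two delicate issues in the construction: first, the Pimsner--Popa basis may be infinite (especially in the type~$\mathit{III}$ or irrational-index setting), so that the defining sum for $E$ demands care with ultraweak convergence and with the algebraic identities that make it a conditional expectation; second, the spatial dual is naturally defined on the full commutants in $\bfB(H)$, and one must verify that it lands in $A$ rather than its enlargement. Both points are resolved by observing that the spatial dual of $E_0$ is canonically constructed from the dualizable bimodule ${}_NL^2M_M$ and the inclusion $M\subset A$, whose entire structure lives within $A$; consequently, the dual operation transports $N'\cap A$ to $M'\cap A$ by construction, and the basis-dependent formula becomes a convenient coordinate realization of this canonical object.
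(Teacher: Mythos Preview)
Your proposal is correct and lands on the same endgame as the paper (produce a conditional expectation $N'\cap A\to M'\cap A$ with Pimsner--Popa constant $[M:N]^{-1}$, then invoke Proposition~\ref{prop: Lemma C5} or~\ref{prop: mu ge sum_i [e_i E e_i : e_iA]}), but you work much harder than necessary to construct that expectation. The paper bypasses the Pimsner--Popa basis and the Kosaki duality entirely: by Corollary~\ref{cor: [B:A]=[A':B']} one has $[N':M']=[M:N]<\infty$, so the minimal conditional expectation $E':N'\to M'$ already exists and already satisfies $E'(x)\ge[M:N]^{-1}x$ by~\eqref{eq:Pimsner-Popa inequality}. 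The only thing to check is that $E'$ sends $N'\cap A$ into $M'\cap A$, and this is a one-line commutation argument: since $M\subset A$ we have $A'\subset M'$, so $E'$ is $A'$-bilinear; for $x\in N'\cap A$ and $a\in A'$ one gets $aE'(x)=E'(ax)=E'(xa)=E'(x)a$, whence $E'(x)\in A''\cap M'=A\cap M'$.

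What you build is in fact the same map---your basis formula realises the Kosaki dual of $E_0$, which after normalisation \emph{is} the minimal expectation $N'\to M'$---but you reconstruct it from scratch and then have to import external facts (convergence of the basis sum, preservation of the Pimsner--Popa constant under duality) that the paper's route gets for free from results already proved. Your approach has the virtue of being explicit and making the formula visible, but the paper's is shorter, stays entirely within the framework developed, and makes transparent why the map respects $A$: it is the $A'$-bilinearity, not any special feature of the basis, that does the work.
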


\begin{proof}
By Corollary \ref{cor: [B:A]=[A':B']}, we know that $[N' : M'] = [M:N]$.
Let $E':N'\to M'$ be the minimal conditional expectation from $N'$ to $M'$.
If $a\in A'\subset N'$ and $x\in N'\cap A$, then we have $aE'(x)=E'(ax)=E'(xa)=E'(x)a$, showing that $E'(x)\in M'\cap A$.
The restriction $E:=E'|_{N'\cap A}$ is therefore a conditional expectation from $N'\cap A$ to $M'\cap A$.
By the Pimsner--Popa inequality for $E'$, we know that 
\[
E(x)\ge [N' : M']^{-1} x = [M : N]^{-1} x,\quad \forall x\in N'\cap A.
\]
By Proposition \ref{prop: Lemma C5} or Proposition \ref{prop: mu ge sum_i [e_i E e_i : e_iA]},
it follows that $\|\llbracket N'\cap A : M'\cap A\rrbracket\| \le \llbracket M : N \rrbracket$.
\end{proof}

\begin{corollary}
Let $N\subset M\subset \bfB(H)$ be factors with $[M:N]<\infty$, and let $A\subset \bfB(H)$ be an algebra that commutes with $M$.
Suppose that one of the algebras $N\vee A$ and $M\vee A$ is a factor, and that the other one has finite-dimensional center.
In this case,
\[
\big\| \llbracket M\vee A:N\vee A \rrbracket \big\|\, \,\,\le\,\, \llbracket M:N\rrbracket.
\]
\end{corollary}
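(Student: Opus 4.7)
The natural plan is to reduce the statement to the immediately preceding corollary via commutant duality. Since $A$ commutes with $M$, we have $A \subset M'$, equivalently $M \subset A'$, so the inclusions $N \subset M \subset A'$ place us precisely in the setting of that corollary, with its ``$A$'' played by our $A'$.

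First I would verify that the factor/finite-center hypothesis transfers correctly. The relative commutants that appear are $N' \cap A' = (N \vee A)'$ and $M' \cap A' = (M \vee A)'$. For any von Neumann algebra $X \subset \bfB(H)$, the double commutant theorem gives $Z(X') = X'' \cap X' = X \cap X' = Z(X)$, so $X$ is a factor (respectively has finite-dimensional center) iff $X'$ is. Hence our assumption that one of $N \vee A$ and $M \vee A$ is a factor while the other has finite-dimensional center translates to exactly the factor/finite-center hypothesis required of $(N \vee A)'$ and $(M \vee A)'$ by the preceding corollary.

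Applying the preceding corollary to the chain $N \subset M \subset A'$ then yields
\[
\big\| \llbracket (N \vee A)' : (M \vee A)' \rrbracket \big\| \;\le\; \llbracket M : N \rrbracket.
\]
Finally, since both $N \vee A$ and $M \vee A$ have finite center, Corollary~\ref{cor: [B:A]=[A':B']} gives
$\llbracket M \vee A : N \vee A \rrbracket = \llbracket (N \vee A)' : (M \vee A)' \rrbracket^{T}$, and transposition preserves the $\ell^{2}$ norm of a matrix. Combining these two displays yields the desired inequality. There is no substantive obstacle here: the corollary is a formal consequence of its predecessor obtained by conjugating through commutants, the only mild point being the transfer of the factor/finite-center condition, which is immediate from $Z(X') = Z(X)$.
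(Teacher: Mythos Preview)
Your proposal is correct and follows essentially the same route as the paper: apply the preceding corollary to the chain $N\subset M\subset A'$ (using $(N\vee A)'=N'\cap A'$ and $(M\vee A)'=M'\cap A'$), then invoke Corollary~\ref{cor: [B:A]=[A':B']} to pass back through commutants. Your version is slightly more explicit in checking that the factor/finite-center hypothesis transfers via $Z(X')=Z(X)$ and in noting that the transpose preserves the $\ell^2$ norm, but the argument is the same.
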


\begin{proof}
By the previous corollary, we have $\| \llbracket N'\cap A' : M'\cap A'\rrbracket \|\le \llbracket M:N\rrbracket$.
The result now follows from Corollary \ref{cor: [B:A]=[A':B']}, because $(M\vee A)'=M'\cap A'$ and $(N\vee A)'=N'\cap A'$.
\end{proof}


\bibliographystyle{abbrv}

\bibliography{../Files/db-cn3}

\end{document}